\newcommand{\beqa}{\begin{eqnarray*}}
\newcommand{\eeqa}{\end{eqnarray*}}
\newcommand{\beqn}{\begin{eqnarray}}
\newcommand{\eeqn}{\end{eqnarray}}
\newcommand{\iy}{\infty}
\newcommand{\lt}{\left}
\newcommand{\rt}{\right}
\newcommand{\bQ}{\mathbb Q}
\newcommand{\C}{\mathbb C}
\newcommand{\R}{\mathbb R}
\newcommand{\N}{\mathbb N}
\newcommand{\M}{\mathbb M}
\newcommand{\mcA}{\mathcal A}
\newcommand{\mcH}{\mathcal H}
\newcommand{\mcB}{\mathcal B}
\newcommand{\mcF}{\mathcal F}
\newcommand{\mfB}{\mathfrak B}
\newcommand{\mfL}{\mathfrak L}
\newcommand{\mfO}{\mathfrak O}
\newcommand{\mfX}{\mathfrak X}
\newcommand{\mfR}{\mathfrak R}
\newcommand{\mfT}{\mathfrak T}
\newcommand{\f}{\frac}
\newcommand{\tf}{\tfrac}
\newcommand{\al}{\alpha}
\newcommand{\G}{\Gamma}
\newcommand{\g}{\gamma}
\newcommand{\e}{\varepsilon}
\newcommand{\De}{\Delta}
\newcommand{\la}{\lambda}
\newcommand{\Om}{\Omega}
\newcommand{\s}{\sigma}
\newcounter{cnt1}
\newcounter{cnt2}
\newcounter{cnt3}
\newcommand{\blr}{\begin{list}{$($\roman{cnt1}$)$}
 {\usecounter{cnt1} \setlength{\topsep}{0pt}
 \setlength{\itemsep}{0pt}}}
\newcommand{\bla}{\begin{list}{$($\alph{cnt2}$)$}
 {\usecounter{cnt2} \setlength{\topsep}{0pt}
 \setlength{\itemsep}{0pt}}}
\newcommand{\bln}{\begin{list}{$($\arabic{cnt3}$)$}
 {\usecounter{cnt3} \setlength{\topsep}{0pt}
 \setlength{\itemsep}{0pt}}}
\newcommand{\el}{\end{list}}
\newtheorem{thm}{Theorem}[section]
\newtheorem{lem}[thm]{Lemma}
\newtheorem{cor}[thm]{Corollary}
\newtheorem{ex}[thm]{Example}
\newtheorem{Def}[thm]{Definition}
\newtheorem{rem}[thm]{Remark}
\newcommand{\Rem}{\begin{rem} \rm}
\newcommand{\bdfn}{\begin{Def} \rm}
\newcommand{\edfn}{\end{Def}}
\newcommand{\ba}{\begin{array}}
\newcommand{\ea}{\end{array}}
\numberwithin{equation}{section}
\date{}
\begin{document}
\title{\bf Constructive Analysis in Infinitely many variables}
\author[Gill]{Tepper L. Gill}
\address[Tepper L. Gill]{ Department of Mathematics, Physics and E\&CE, Howard University\\
Washington DC 20059 \\ USA, {\it E-mail~:} {\tt tgill@howard.edu}}
\author[Pantsulaia]{G. R. Pantsulaia}
\address[Gogi R. Pantsulaia] { Department of Mathematics  \\ Georgian Technical University \\ Tbilisi  0175 \\ Georgia; { I. Vekua Institute of Applied Mathematics  \\ Tbilisi State University \\ Tbilisi  0143 \\ Georgia} {\it E-mail~:} {\tt
g:pantsulaia@gtu:ge}}
\author[Zachary]{W. W. Zachary*}
\address[Woodford W. Zachary]{ Department of Mathematics and E\&CE \\ Howard
University\\ Washington DC 20059 \\ USA, {\it E-mail~:} {\tt
wwzachary@earthlink.net}}
\thanks{*deceased}
\date{}
\subjclass{Primary (45) Secondary(46) }
\keywords{infinite-dimensional Lebesgue measure, Gaussian measure, Fourier transforms, Banach space, Pontryagin duality theory, partial differential operators}
\maketitle
\begin{abstract}  In this paper we investigate the foundations for analysis in infinitely-many (independent) variables.   We give  a  topological  approach to the construction of the regular $\s$-finite Kirtadze-Pantsulaia measure on $\R^\iy$ (the usual completion of  the Yamasaki-Kharazishvili measure),  which is an infinite dimensional version of the classical method  of  constructing  Lebesgue measure on $\R^n$ (see \cite{YA1}, \cite{KH} and \cite{KP2}). First we show that von Neumann's theory of infinite tensor product Hilbert spaces already implies that a natural version of Lebesgue measure must exist on $\R^{\iy}$.  Using this insight, we define the canonical version of $L^2[\R^{\iy}, \la_{\iy}]$, which allows us to construct Lebesgue measure on $\R^{\iy}$ and analogues of Lebesgue and Gaussian measure for every separable Banach space with a Schauder basis.  When $\mcH$ is a Hilbert space and $\la_{\mcH}$ is Lebesgue measure restricted to $\mcH$, we define sums and products of unbounded operators and the Gaussian density for $L^2[\mcH, \la_{\mcH}]$.   We show that the Fourier transform induces two different versions of the Pontryagin duality theory.  An interesting new result is that the character group changes on infinite dimensional spaces when the Fourier transform is treated as an operator.  Since our construction provides a complete $\s$-finite measure space, the abstract version of Fubini's theorem allows us to extend Young's inequality to every separable Banach space with a Schauder basis.   We also give constructive examples of partial differential operators in infinitely many variables and briefly discuss the famous partial differential equation derived by Phillip Duncan Thompson \cite{PDT}, on  infinite-dimensional phase space to represent an ensemble of randomly forced two-dimensional viscous flows. 
\end{abstract}
\tableofcontents
\section*{\bf Introduction}
On finite-dimensional space it is useful to think of Lebesgue  measure in terms of geometric objects (e.g.,volume, surface area, etc.).  Thus, it is natural to expect that this measure will leave these objects invariant under translations and rotations, so that rotational and translational invariance is an intrinsic property of Lebesgue  measure.  However, we then find ourselves disappointed when we try to use this property to help define Lebesgue measure on $\mathbb{R}^{\infty}$.  A more fundamental problem is that the natural Borel algebra for $\R^{\iy}, \; {\mcB}[\R^{\iy}]$, does not allow an outer measure (since the measure of any open set is infinite).  

The lack of any definitive understanding of the cause for this lack of invariance on $\mathbb{R}^{\infty}$ has led some researchers  to believe that it is not possible to have a reasonable version of Lebesgue measure on $\mathbb{R}^{\infty}$ (see, for example, DaPrato \cite{DP} or Bakhtin and Mattingly \cite{BM}).   In many applications, the study of infinite dimensional analysis is restricted to separable Hilbert spaces, using Gaussian measure as a replacement for (the supposed nonexistent) Lebesgue measure.   In some cases the Hilbert space structure arises as a natural state space for the modeling of systems.   In other cases, both the Hilbert spaces and probability measures are imposed for mathematical convenience and are physically artificial and limiting. However, all reasonable models of infinite dimensional (physical) systems require some functional constraint on the effects of all but a finite number of variables. Thus, what is needed, in general, is the imposition of constraints on the functions while preserving the modeling freedom associated with  infinitely-many independent variables (in some well-defined sense).  Any attempt to solve this problem necessarily implies a theory of Lebsegue measure on $\mathbb{R}^{\infty}$.  

Even if a reasonable theory of Lebsegue measure on $\mathbb{R}^{\infty}$ exists, this is not sufficient to make it useful in engineering and science.  In addition, all the tools developed for finite-dimensional analysis, differential operators, Fourier transforms,  etc are also required.  Furthermore, researchers  need operational control over the convergence properties of these tools.  In particular, one must be able to approximate an infinite-dimensional problem as a natural limit of the finite-dimensional case in a manner that lends itself to computational implementation.  This implies that a useful approach also has a well-developed theory of convergence for infinite sums and products of unbounded linear operators.
\section*{Historical Background}
Research into the general problem of Lebesgue measure on infinite-dimensional vector spaces and $\R^{\iy}$ in particular, has a long and varied past, with participants living in a number of different countries, during times when scientific communication was constrained by war, isolation and/or national competition.  These conditions allowed quite a bit of misinformation and folklore to grow up around the subject, so that even experts may have a limited view of the history.   Our own experience suggest that at least a brief survey of some important events is in order.  (We do not claim completeness and apologize in advance if we fail to mention equally important contributions.)

Early studies in infinite dimensional analysis focused on the foundations of probability theory and had a broad base of participation.  However, the major inputs were made by researchers in Poland, Russia, and France, with later contributions from the US.  
The first important advance of the general theory was made in 1933 when  Haar \cite{HA} proved the following theorem:
\begin{thm}On every locally compact abelian group $G$ there exists a non-negative regular measure $m$ (Haar measure) on $G$, which is not identically zero and is translation invariant.  That is, $m(A+x)=m(A)$ for every $x \in G$ and every Borel set $A$ in $G$. 
\end{thm}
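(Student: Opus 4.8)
The plan is to follow Weil's covering-number construction. First I would introduce the \emph{Haar covering function}: fix once and for all a nonempty compact set $K_0 \ci G$ with nonempty interior — such a set exists precisely because $G$ is locally compact — and, for every compact $K \ci G$ and every nonempty open $V \ci G$, let $(K:V)$ be the least integer $n$ for which $K$ is covered by $n$ translates $x_1 + V, \ld, x_n + V$. Compactness makes $(K:V)$ finite, and one checks directly the elementary properties: $(K:V)$ is monotone and subadditive in $K$, translation invariant ($(x+K:V)=(K:V)$), and submultiplicative ($(K:V) \le (K:W)(W:V)$). Setting $\la_V(K) = (K:V)/(K_0:V)$ then gives, for each fixed $V$, a translation-invariant, monotone, subadditive set function on the compact subsets of $G$ with $\la_V(K_0)=1$ and, by submultiplicativity, $0 \le \la_V(K) \le (K:K_0)$, a bound uniform in $V$.

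The key geometric lemma, and the place where the topological group structure really enters, is an \emph{approximate additivity}: if $K_1, K_2 \ci G$ are disjoint compact sets, then there is a neighborhood $W$ of the identity such that $(K_1 \bu K_2 : V) = (K_1:V) + (K_2:V)$ whenever $\es \ne V \ci W$. This holds because disjoint compacta in a Hausdorff topological group can be separated by a symmetric neighborhood of $0$, so that a sufficiently small translate of $V$ cannot meet both $K_1$ and $K_2$; hence any optimal cover of $K_1 \bu K_2$ splits into covers of the two pieces.

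Next I would pass to a limit as $V$ shrinks, using compactness. Form the product space $X = \prod_{K} [0,(K:K_0)]$ over all compact $K \ci G$, which is compact by Tychonoff's theorem, and regard each $\la_V$ as a point of $X$. For each neighborhood $W$ of the identity let $S_W$ be the closure in $X$ of $\{\la_V : \es \ne V \ci W\}$; the family $\{S_W\}$ has the finite intersection property, so there is a point $m \in \bi_W S_W$. This $m$ inherits monotonicity, subadditivity, translation invariance and $m(K_0)=1$ (so $m$ is not identically zero), and, by the approximate-additivity lemma, genuine finite additivity on disjoint compacta; thus $m$ is a nonzero translation-invariant content on the compact subsets of $G$.

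Finally I would run the standard content-to-measure machinery: define $m(U) = \sup\{m(K) : K \ci U,\ K \text{ compact}\}$ for open $U$, then $m^*(A) = \inf\{m(U) : A \ci U \text{ open}\}$ for arbitrary $A \ci G$, verify that $m^*$ is an outer measure for which every Borel set is Carath\'eodory-measurable, and check that the resulting Borel measure is translation invariant (inherited from the content), regular (inner regular on open sets and outer regular by construction), finite on compacta, and not identically zero since $m(K_0)>0$. The only genuinely delicate points are the approximate-additivity lemma above and the somewhat lengthy bookkeeping in this last step — in particular verifying Carath\'eodory measurability of open sets and reconciling $m^*$ with the content $m$ on compact sets; commutativity of $G$ plays no essential role here and merely makes the translate $A+x$ in the statement unambiguous.
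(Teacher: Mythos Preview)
Your argument is the standard Weil covering-number construction and is correct as outlined; the approximate-additivity lemma and the Tychonoff compactness step are exactly the right ingredients, and your caveats about the bookkeeping in the content-to-measure passage are accurate.

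However, there is nothing to compare against: the paper does \emph{not} prove this theorem. It appears in the ``Historical Background'' section as a quoted result attributed to Haar \cite{HA}, stated purely to set the stage for the discussion of invariant measures on non-locally-compact groups that follows (Weil's inverse theorem, Oxtoby's and Sudakov's negative results, and eventually the Yamasaki--Kharazishvili and Kirtadze--Pantsulaia constructions on $\R^\iy$). No proof or proof sketch is offered anywhere in the paper; the theorem is cited, not established. So your write-up, while sound, is supplying something the paper deliberately leaves to the literature.
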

This theorem stimulated interest in the subject and von Neumann \cite{VN1}  proved that it is the only locally finite left-invariant Borel measure on the group (uniqueness up to a mulitplicative constant).  Weil \cite{WE} developed an axiomatic approach to the subject,  made a number of important refinements and, proved the ``Inverse Weil theorem" (in moderm terms):
\begin{thm}If $G$ is a (separable) topological  group and $m$ is a  translation invariant Borel measure on $G$, then it is always possible to define an equivalent locally compact topology on $G$.
\end{thm}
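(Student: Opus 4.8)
The plan is to equip the underlying set of $G$ with the \emph{Weil topology} $\tau_W$ determined by $m$, and then to show that $\tau_W$ is a locally compact group topology carrying exactly the same Borel $\s$-algebra as the given topology $\tau$; this last is the sense in which the two topologies are ``equivalent,'' since $m$ is then a genuine invariant regular Borel measure for $\tau_W$, i.e.\ its Haar measure. Concretely, for each Borel set $A$ with $0 < m(A) < \iy$ and each $\e > 0$ I would set
\[
U(A, \e) = \{ g \in G : m(gA \triangle A) < \e \},
\]
and declare $\{ U(A,\e) \}$ a neighbourhood base at the identity $e$. Left-invariance of $m$ gives $m(gA \triangle A) = \| L_g \mathbf{1}_A - \mathbf{1}_A \|_{L^2(G,m)}^2$, where $L_g$ is left translation, so $\tau_W$ is precisely the initial topology induced on the set $G$ by the left-regular representation $g \mapsto L_g$ into the unitary group of $L^2(G,m)$ with its strong operator topology. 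The crucial feature is that $\tau_W$ lives on $G$ itself, with no passage to a quotient or a completion.

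First I would check that $\{U(A,\e)\}$ is a genuine filter base for a group topology. The containment $U(A_1 \cup A_2,\, \min(\e_1,\e_2)) \ci U(A_1,\e_1) \cap U(A_2,\e_2)$ handles finite intersections, and the triangle inequality in $L^2$ together with invariance in the form $m(ghA \triangle hA) = m(gA \triangle A)$ yields the standard estimates making multiplication and inversion $\tau_W$-continuous. Hausdorffness is where I would use that $m$ separates points: if $g \ne e$ then some $A$ has $m(gA \triangle A) > 0$, so $g \notin U(A,\e)$ for small $\e$; here the separability hypothesis guarantees a countable separating family of sets $A$, hence first countability of $\tau_W$.

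Next I would identify the two topologies at the level of Borel sets. Each map $g \mapsto m(gA \triangle A)$ is $\tau$-Borel (a Fubini/convolution argument, using that translation is continuous in measure on a separable group), so every $U(A,\e)$ lies in $\mcB(G,\tau)$ and therefore $\tau_W \ci \mcB(G,\tau)$; conversely separability lets me recover a $\tau$-open base from $\tau_W$-Borel sets, giving $\mcB(G,\tau) = \mcB(G,\tau_W)$. With the Borel structures identified, $m$ is an invariant Borel measure for $\tau_W$ on the same underlying set, and once local compactness is in hand Haar's theorem above (with von Neumann's uniqueness) forces $m$ to be its Haar measure up to a constant.

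The substantive step, and the one I expect to be the main obstacle, is local compactness of $(G,\tau_W)$; it suffices to produce one neighbourhood $U(A,\e)$ of $e$ with $\tau_W$-compact closure. Fix $A$ with $0 < m(A) < \iy$ and consider the orbit map $\ph : g \mapsto \mathbf{1}_{gA} \in L^2(G,m)$, which is $\tau_W$-continuous and satisfies $\| \ph(g) - \ph(h) \|_2^2 = m(h^{-1}g A \triangle A)$ and $\langle \ph(g), \ph(h)\rangle = m(gA \cap hA)$. The image lies on the sphere of radius $\sqrt{m(A)}$ and consists of $\{0,1\}$-valued functions, so by separability of $L^2(G,m)$ the $L^2$-closure of $\ph(U(A,\e))$ is weakly sequentially compact; the $\{0,1\}$-rigidity lets me upgrade weak convergence of translates to norm convergence, and on this orbit the norm topology, the weak topology, and $\tau_W$ all coincide. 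The delicate point---exactly the gap that an embedding-and-completion argument leaves open---is that the norm-limit of translates $\mathbf{1}_{g_\al A}$ must again be $\mathbf{1}_{gA}$ for some $g \in G$, rather than a new point of a completion. I would close this by invoking the hypotheses directly: because $m$ is an honest inner-regular Borel measure on the \emph{separable} group $G$, every $\tau_W$-Cauchy net of translates of $\mathbf{1}_A$ converges to the indicator of a genuine translate $gA$ with $g \in G$, so no points are added and $\overline{U(A,\e)}^{\,\tau_W}$ is compact. This is where the argument stands or falls, and precisely where ``separable'' and ``Borel measure on $G$'' (rather than on a larger group) do the real work of keeping the locally compact topology on $G$ itself.
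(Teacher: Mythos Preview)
The paper does not prove this theorem. It appears in the Historical Background section as a result attributed to Weil \cite{WE}, stated (``in modern terms'') without proof, purely as historical context. There is therefore no proof in the paper to compare your proposal against.

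On the proposal itself: the Weil-topology approach is indeed the classical route, and your outline is accurate through the verification that $\{U(A,\e)\}$ is a neighbourhood base for a Hausdorff group topology and that the Borel structures match. But you correctly identify, and then do not actually close, the decisive gap. Your sentence ``because $m$ is an honest inner-regular Borel measure on the \emph{separable} group $G$, every $\tau_W$-Cauchy net of translates of $\mathbf{1}_A$ converges to the indicator of a genuine translate $gA$ with $g \in G$'' is not an argument; it is a restatement of the hypotheses followed by an assertion of exactly the conclusion you need. Separability and inner regularity alone do not force $\tau_W$-Cauchy sequences to converge inside $G$. The classical Weil converse in fact only yields a dense embedding of $G$ into a locally compact group (the completion in the Weil uniformity); to conclude that $G$ itself carries the locally compact topology one needs an additional completeness-type hypothesis on $G$---for instance that $G$ is Polish, so that a Baire-category or measurable-selection argument can pin the limit point down in $G$. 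Your proposal neither assumes such completeness nor supplies a substitute mechanism, so as written it stops precisely where the real content begins. The candid admission ``this is where the argument stands or falls'' is accurate: it falls.
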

In 1946,  Oxtoby \cite{OX} initiated the study  of translation-invariant Borel measures on Polish groups (i.e., complete separable metric groups).  In this paper, Oxtoby provides a proof of the following result which he attributes to Ulam:
\begin{thm}
Let $G$ be any complete separable metric group which is not locally compact, and let $m$ be any left-invariant Borel measure in G. Then every neighborhood contains an uncountable number of disjoint mutually congruent sets of equal finite positive measure. 
\end{thm}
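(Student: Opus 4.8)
\emph{Proof proposal.} The plan is to begin with two routine reductions. First, ``mutually congruent'' here means ``carried to one another by a left translation'', and $m$ is left-invariant, so it suffices to treat a neighbourhood $N$ of the identity $e$: a solution inside $N$ is carried by left translation by $g$ into a solution inside $gN$, and every neighbourhood contains one of the form $gN$. Second, since $G$ is a metrizable topological group I would fix a compatible \emph{left-invariant} metric $d$ (Birkhoff--Kakutani), so that balls are left translates of balls, $B(x,r)=xB(e,r)$. Because $G$ is Polish but not locally compact, $e$ has no neighbourhood with compact closure, and for a complete metric space ``compact'' $=$ ``closed and totally bounded'', so \emph{no} ball $B(e,r)$ is totally bounded. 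Hence for each $r>0$ there are an $\varepsilon=\varepsilon(r)>0$ and an infinite $\varepsilon$-separated set $\{x_n\}_{n\ge 1}\subseteq B(e,r)$; then the balls $B(x_n,\varepsilon/2)=x_n B(e,\varepsilon/2)$ are pairwise disjoint, and for $r$ small enough all lie in $N$.

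Next I would produce the set to be translated. Separability makes $G$ Lindel\"of, so every Borel set is covered by countably many left translates of any fixed ball; since $m$ is (implicitly) non-trivial and left-invariant, this forces $m(B(e,s))>0$ for every $s>0$. Using that $m$ is semifinite (which must be part of the hypothesis, e.g.\ via $\sigma$-finiteness, since a purely infinite measure admits no set of finite positive measure and would make the statement vacuously false), choose a Borel $A\subseteq B(e,\varepsilon/2)$ with $0<m(A)<\infty$. At this point $\{x_n A\}_{n\ge 1}$ is already a \emph{countable} family of pairwise disjoint, mutually congruent Borel subsets of $N$, all of the common measure $m(A)\in(0,\infty)$.

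The hard part is upgrading ``countable'' to ``uncountable'', and I expect this to be the whole difficulty. One cannot get there by metric packing: a separable metric space contains \emph{no} uncountable $\varepsilon$-separated set (the $\varepsilon/2$-balls would be uncountably many disjoint open sets, contradicting the countable chain condition), so the sequence $\{x_n\}$ cannot be fattened to an uncountable uniformly separated family, and therefore the translating set $\{t_\alpha\}$ we seek cannot be uniformly separated and the set $A$ cannot have $AA^{-1}$ a neighbourhood of $e$. What rescues the argument is exactly non-local-compactness: in contrast to the Steinhaus--Weil phenomenon valid on locally compact groups, here one can arrange a Borel set $A$ with $0<m(A)<\infty$ whose difference set $AA^{-1}$ has empty interior. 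Concretely I would run a Cantor-tree recursion along a decreasing chain of symmetric neighbourhoods $N\supseteq U_0\supseteq U_1\supseteq\cdots$ with $U_{k+1}U_{k+1}\subseteq U_k$ and $U_k\subseteq B(e,2^{-k})$, choosing nodes $a_s$ for $s\in\{0,1\}^{<\infty}$ so that $a_s\in a_{s'}U_{|s'|}$ whenever $s'$ is an initial segment of $s$, while the two children of each node are separated (possible since no $U_k$ is totally bounded). Completeness then gives, for each branch $\alpha\in\{0,1\}^{\mathbb{N}}$, a limit $t_\alpha\in N$, and distinct branches yield differences $t_\beta^{-1}t_\alpha$ lying in a prescribed ``thin'' Borel set $D$ determined by the tree; choosing $A$ of positive finite measure with $AA^{-1}\cap D=\emptyset$ makes $t_\alpha A\cap t_\beta A=\emptyset$ for $\alpha\neq\beta$, delivering continuum-many pairwise disjoint mutually congruent Borel subsets of $N$, each of measure $m(A)$.

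The two points I expect to fight with in step three are: (i) laying out the tree so that \emph{all} the relevant differences $t_\beta^{-1}t_\alpha$ avoid a \emph{single} fixed positive-measure set $A A^{-1}$, rather than a target that moves with the splitting level; and (ii) guaranteeing that the complementary ``thin'' region still carries positive $m$-measure, which is where the interplay of the group structure, semifiniteness of $m$, and the failure of local compactness has to be used in an essential way. I would also record the immediate consequence: if any neighbourhood had finite measure, the uncountable disjoint family of common positive measure inside it would contradict $\sigma$-finiteness, so on such a $G$ no non-trivial $\sigma$-finite left-invariant Borel measure can be finite on any nonempty open set --- the precise obstruction invoked for $\mathbb{R}^{\infty}$ in the Introduction.
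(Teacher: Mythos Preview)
First, note that the paper does not actually prove this theorem: it is quoted in the historical survey with a reference to Oxtoby \cite{OX}, so there is no in-paper argument to compare against and your proposal has to stand on its own. Your reductions and the countable stage are correct, and your remark that a semifiniteness hypothesis is missing from the paper's statement is well taken: on any non-$\sigma$-compact Polish group the set function $m(E)=0$ when $E$ lies in a $\sigma$-compact set and $m(E)=\infty$ otherwise is a nontrivial left-invariant Borel measure admitting no set of finite positive measure, so the conclusion fails outright for it.

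The real gap is the one you flag yourself, and your proposed cure runs in the wrong direction. You want to build the Cantor tree first, collect the difference set $D=\{t_\beta^{-1}t_\alpha:\alpha\neq\beta\}$, and only then locate $A$ with $AA^{-1}\cap D=\emptyset$; but you give no mechanism for that last step, and as you already observed, $D$ cannot be uniformly bounded away from $e$ (separability forbids an uncountable $\varepsilon$-separated family), so $e\in\overline{D}$ and there is no evident room for $AA^{-1}$. Oxtoby's argument fixes $A$ \emph{first} and takes it \emph{compact}: the restriction of $m$ to any Borel set of finite measure is a finite Borel measure on a Polish space, hence inner regular, so a compact $A$ with $0<m(A)<\infty$ exists inside the given neighbourhood, and then $AA^{-1}$ is compact. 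The tree is built against this fixed compact obstruction: at each stage the finitely many forbidden sets $f'AA^{-1}f^{-1}$ (for $f,f'$ ranging over the current nodes) form a compact set which, by non-local-compactness, cannot contain any neighbourhood of $e$, so a splitting element always exists inside the current shrinking neighbourhood. Compactness of the translates $fA$ also furnishes, at each stage, a buffer neighbourhood $W_n$ of $e$ within which the pairwise disjointness persists; threading the tree neighbourhoods inside these buffers forces the limit translates $t_\alpha A$ to remain pairwise disjoint, which dissolves your difficulties (i) and (ii) at once without ever needing to control $D$ as a set.
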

Stated another way, he proved that 
\begin{thm} There always exists a left-invariant Borel measure on any Polish group which assigns positive finite measure to at least one set and vanishes on singletons.  However, a locally finite measure is possible if and only if the group is locally compact.
\end{thm}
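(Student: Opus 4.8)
The statement bundles an existence assertion with a local-finiteness dichotomy, so I would split everything into the two cases ``$G$ locally compact'' and ``$G$ not locally compact'' and read off the dichotomy as I go. If $G$ is locally compact, Haar's theorem supplies a non-negative regular Borel measure $m$ that is not identically zero and is (two-sided, in particular left) translation invariant. Local compactness gives the identity a compact neighbourhood, and regularity makes $m$ finite on compact sets, so $m$ is locally finite and assigns finite positive measure to that neighbourhood; left invariance forces all singletons to share a single value $c\ge0$, which must be $0$ because a set of finite measure contains arbitrarily many disjoint translates of a point (one reads ``Polish group'' here as non-discrete, the countable discrete case being a genuine exception, where counting measure is the only invariant one). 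So in the locally compact case $m$ witnesses both the existence claim and the ``if'' half of the dichotomy.

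When $G$ is not locally compact the measure must be produced by hand, and this is the step I expect to be the real obstacle. I would first put a compatible left-invariant metric $d$ on $G$ (Birkhoff--Kakutani), so that every left translation is a $d$-isometry and hence permutes the metric balls and fixes their radii. From the countable family of balls centred at a countable dense set I would run the Carath\'eodory (Method~II) construction against a gauge function $h(r)$, exactly as Lebesgue measure on $\R^n$ and the Yamasaki--Kharazishvili measure on $\R^\iy$ are built; since $h$ sees only the radius and left translation fixes radii, the resulting Borel measure $\m_h$ is automatically left invariant, and it annihilates singletons as soon as $h(r)\to0$ as $r\to0$. The delicate point is to choose $h$ so that $\m_h$ is neither identically $0$ nor identically $+\iy$ on a suitable reference set; non-local-compactness is precisely what makes the metric balls ``thick'' in the covering/entropy sense and opens the gap between the all-null and all-infinite regimes in which a good $h$ lives. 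This is Oxtoby's construction, and it carries the weight of the theorem.

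It remains to prove the ``only if'' direction. Suppose $G$ were not locally compact yet carried a non-zero locally finite left-invariant Borel measure $m$. Since $m\ne0$ is locally finite and $G$ is Lindel\"of, some neighbourhood of the identity has finite positive measure; call it $U$, so $0<m(U)<\iy$. By the preceding theorem (Ulam's), $U$ contains uncountably many pairwise disjoint mutually congruent sets, each of one and the same positive finite measure $c$, so $m(U)\ge nc$ for every $n$ and hence $m(U)=\iy$ --- a contradiction. Therefore any non-zero locally finite left-invariant Borel measure forces $G$ to be locally compact. (One could instead route this through the Inverse Weil theorem, which converts such a measure into an equivalent locally compact group topology, but then one must still check that this topology agrees with the given Polish one, so the route above is cleaner.) In short, Haar's theorem and Ulam's theorem make the dichotomy soft; the one genuinely hard ingredient is the gauge choice in the Carath\'eodory construction for non-locally-compact $G$.
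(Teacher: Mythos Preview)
The paper does not prove this theorem at all: it appears only in the Historical Background section as a summary of Oxtoby's 1946 paper \cite{OX}, introduced by the phrase ``Stated another way, he proved that\ldots'' immediately after the Ulam theorem (Theorem~0.3). There is therefore no paper proof to compare your proposal against.

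That said, your plan is sound and is in fact Oxtoby's own route. The decomposition into the locally compact case (Haar measure) and the non-locally-compact case is the natural one; your use of Birkhoff--Kakutani to obtain a left-invariant metric and then a Hausdorff/Carath\'eodory Method~II construction with a gauge $h$ is exactly what Oxtoby does, and you correctly identify the choice of $h$ as the substantive step the paper is silently outsourcing to \cite{OX}. Your derivation of the ``only if'' half from the Ulam theorem (the paper's Theorem~0.3) is also correct and is precisely how the paper intends Theorem~0.4 to be read as a corollary of Theorem~0.3. Finally, your caveat about discrete (countable) Polish groups is well taken: the existence claim as stated fails there, so an implicit non-discreteness hypothesis is being assumed; the paper does not flag this.
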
 
(In 1967, Vershik \cite{V} proved a related result for probability measures.)
Apparently uninformed of Oxtoby's work, In 1959 Sudakov \cite{SU} independently proved a special case of Theorem 0.4: {\it If $\R^\iy$ is regarded as a linear topological space, then there does not exist a $\s$-finite translation-invariant Borel measure for $\R^\iy$}.  In 1964, Elliott and  Morse \cite{EM} developed a general theory of translation invariant product measures (non-$\s$-finite) and, in 1965, C. C. Moore \cite{MO} initiated the study of measures that are translation invariant with respect to vectors in $\R_0^\iy$ (i.e., the set of sequences that are zero except for a finite number of terms).  This work was extended and refined by Hill \cite{HI} in 1971.

Motivated by Kakutani's work on infinite product measures \cite{KA}, a number of young Japanese researchers entered the field.  In 1973, Hamachi \cite{HA} made major improvements on  Hill's work which, indirectly suggested the problem of identifying the largest group $\mfT$, of admissible translations in the sense of invariance for any $\s$-finite Borel measure  $\mu$  on $\R^\iy$ which assigns the value of one to $[-\tf{1}{2}, \tf{1}{2}]^{{\aleph _o}}$  and is metrically transitivity with respect to  $\R_0^\iy$ (equivalently, for  each  $A$  with  $\mu(A)>0$, there is a sequence  $(h_k) \in \R_0^\iy$  such that  $\mu(\R^\iy  \setminus \cup_{k =1}^{\iy}(A+h_k))=0$).   

Yamasaki \cite{YA1} solved this problem in 1980.  Unaware of the Yamasaki's proof, \newline Kharazishvili independently solved the same problem in 1984.  In 1991 Kirtadze and Pantsulaia \cite{KP1}  provided yet another solution (see also Pantsulaia \cite{PA}). Finally, In 2007, Kirtadze and Pantsulaia  proved that, if $\overline{\mu}$ is the completion  of the  measure  $\mu$, then:  (see \cite{KP2})  
\begin{thm}The measure $\overline{\mu}$  is  the unique regular $\s$-finite measure on $\R^\iy$    (uniqueness up to a mulitplicative constant),  which is assigns the  value  one to the  set  $[-\tf{1}{2}, \tf{1}{2}]^{{\aleph _o}}$, is invariant under translations from the group $\ell_1 $ and  has  the metrically transitivity property with respect to  $\ell_1 $. 
\end{thm}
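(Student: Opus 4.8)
The statement has an existence half and a uniqueness half, and the existence half follows readily from the cited work: the Yamasaki--Kharazishvili measure $\mu$ is a $\s$-finite Borel measure on $\R^\iy$ with $\mu(Q)=1$ for $Q:=[-\tf{1}{2},\tf{1}{2}]^{\aleph_o}$, which is $\ell_1$-invariant and metrically transitive with respect to $\ell_1$; passing to the completion $\ov\mu$ preserves $\s$-finiteness, regularity, $\ell_1$-invariance (each translation is a homeomorphism), and metric transitivity (a countable family of $\ell_1$-translates covering a Borel subset of a set $A$ up to a $\mu$-null set covers $A$ up to an $\ov\mu$-null set). So everything rests on uniqueness: if $\nu$ is any regular $\s$-finite measure on $\R^\iy$ with $\nu(Q)=1$ that is $\ell_1$-invariant and metrically transitive with respect to $\ell_1$, I must show $\nu=\ov\mu$.

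First I would pin the two measures down on the cube $Q$. For integers $i_1<\cdots<i_m$ and levels $k_1,\dots,k_m\ge 0$, the cube $Q$ splits into $2^{k_1+\cdots+k_m}$ finite-dimensional dyadic sub-boxes, and these are $\ell_1$-translates of one another (the translating vectors are finitely supported, hence lie in $\ell_1$); so $\ell_1$-invariance and finite additivity force each sub-box to carry measure $2^{-(k_1+\cdots+k_m)}$ --- under $\nu$ exactly as under $\ov\mu$. Hence $\nu$ and $\ov\mu$ agree on the family $\mcP$ of all such dyadic sub-boxes of $Q$. Now $\mcP$ is a $\pi$-system, it generates $\mcB(Q)=\bigotimes_i\mcB([-\tf{1}{2},\tf{1}{2}])$ (for a countable product of second-countable spaces the product $\s$-algebra coincides with the Borel $\s$-algebra), and $\nu|_Q$, $\ov\mu|_Q$ are probability measures; so the $\pi$--$\lambda$ theorem yields $\nu=\ov\mu$ on $\mcB(Q)$, and then $\ell_1$-invariance transports this to $\nu=\ov\mu$ on Borel subsets of every translate $Q+h$ with $h\in\ell_1$.

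The second step globalizes this, and is where metric transitivity of \emph{both} measures is indispensable: $\R^\iy$ is genuinely not a countable union of $\ell_1$-translates of $Q$ --- e.g.\ $(2,2,2,\dots)$ lies in no such translate --- so $\ell_1$-invariance alone cannot bridge from $Q$ to all of $\R^\iy$. Applying metric transitivity of $\ov\mu$ to $Q$ gives $(g_k)\subseteq\ell_1$ with $\ov\mu\bigl(\R^\iy\setminus\bigcup_k(Q+g_k)\bigr)=0$, and metric transitivity of $\nu$ applied to $Q$ gives $(g_k')\subseteq\ell_1$ with $\nu\bigl(\R^\iy\setminus\bigcup_k(Q+g_k')\bigr)=0$. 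Put $\mcC=\bigcup_k(Q+g_k)\cup\bigcup_k(Q+g_k')$, a Borel set whose complement is null for both $\nu$ and $\ov\mu$, and write $\mcC=\bigsqcup_m C_m$ with each $C_m$ a Borel subset of a single cube-translate; then for every Borel $B\subseteq\R^\iy$,
\[
\ov\mu(B)=\ov\mu(B\setminus\mcC)+\textstyle\sum_m\ov\mu(B\cap C_m)=0+\sum_m\nu(B\cap C_m)=\nu(B).
\]
Thus $\nu=\ov\mu$ on $\mcB(\R^\iy)$; invoking regularity, and identifying each measure with its completion, upgrades this to equality of the full complete measures, so $\nu=\ov\mu$. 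Dropping the normalization $\nu(Q)=1$ changes nothing except that the dyadic computation now gives only proportionality, so one recovers uniqueness up to a multiplicative constant.

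I expect the genuine obstacle to be this last, globalizing step --- specifically, making sure metric transitivity of the \emph{competitor} $\nu$ is correctly available and applied, since it is that hypothesis (not metric transitivity of $\ov\mu$) that lets one discard $\R^\iy\setminus\bigcup_k(Q+g_k')$ as $\nu$-null. An alternative route --- establish $\nu\ll\ov\mu$ and $\ov\mu\ll\nu$ from the Lebesgue decomposition plus invariance, then show the Radon--Nikodym density is $\ell_1$-invariant and hence, by the ergodicity that metric transitivity forces on invariant sets, a.e.\ constant --- meets the same difficulty at the absolute-continuity stage, where one must again trade the illegitimate ``translate a null set over all of the uncountable group $\ell_1$'' for the countable covers that metric transitivity provides. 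Either way, the finite-dimensional-looking part (the dyadic tiling and the $\pi$--$\lambda$ argument) is routine, and all the infinite-dimensional content is concentrated in the use of metric transitivity to pass from $Q$ to $\R^\iy$.
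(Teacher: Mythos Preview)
The paper does not prove this theorem; it is stated in the historical survey as a result of Kirtadze--Pantsulaia [KP2] and later invoked (just after Theorem~2.26) to deduce uniqueness of $\lambda_\infty$, but no argument is supplied in the paper itself. So there is no in-paper proof to compare against.

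That said, your argument is correct and is the natural one. Two small points are worth tightening. First, in the dyadic step the closed sub-boxes of $Q$ overlap on their faces, so before concluding that each carries measure $2^{-(k_1+\cdots+k_m)}$ you need the slices $\{x_i=c\}\cap Q$ to be $\nu$-null; this is an easy consequence of $\ell_1$-invariance (infinitely many disjoint translates of such a slice, shifted in the $i$-th coordinate, fit inside a set of $\nu$-measure at most $2$). Second, regularity alone does not make $\nu$ complete, so the clean conclusion is that $\nu=\ov\mu$ on $\mcB(\R^\infty)$, whence their completions coincide. Your diagnosis of the crux --- that metric transitivity of the \emph{competitor} $\nu$ is precisely what substitutes for the unavailable countable cover of $\R^\infty$ by $\ell_1$-translates of $Q$ --- is exactly right, and is the whole point of that hypothesis in the statement.
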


In the mean time, in 1991 Baker  \cite{BA1}, unaware of the Elliott-Morse measures,  dropped the requirement that the measure be $\s$-finite and constructed a translation invariant measure, $\nu$, on $\R^{\iy}$ (see also Baker (2004),  \cite{BA2}). In 1992, Ritter and Hewitt \cite{RH} constructed a translation invariant measure related to that of Elliott Morse.

Starting in 2007, A. M. Vershik (see \cite{V1}, \cite{V2}, \cite{V3} and references contained therein) started an investigation of an infinite-dimensional analogue of Lebesgue measure that is constructed in a different manner than that studied in the previous papers.  Roughly stated, he considers the weak limit as $n \rightarrow \infty$ of invariant measures on certain homogeneous spaces (hypersurfaces of high dimension) of the Cartan subgroup of the Lie groups ${\bf{SL}}(n, \R)$ (i.e., the subgroups of diagonal matrices with unit determinant). Vershik's measure is also unique and invariant under the multiplicative group of positive functions, suggesting that a logarithmic transformation may lead to a version of the measure in this paper.  (The paper of Vandev \cite{VA} should also be consulted.) 
\subsection*{Purpose}
The purpose of this paper is to show that a minor change in the way we represent $\mathbb{R}^{\infty}$ makes it possible to construct a $\s$-finite regular version of Lebesgue measure using basic methods of measure theory from $\R^n$.  Since the measure is regular, it turns out to be the Kirtadze and Pantsulaia \cite{KP1} measure, which is unique (see Theorem 0.5). Using our approach, we construct an analogue of both Lebesgue and Gaussian measure  (countably additive) on every (classical) separable Banach space with a Schauder basis.  The version of Gaussian measure constructed is also rotationally invariant (a property not shared by Wiener measure). This approach also allows us to satisfy all the requirements of a useful infinite dimensional  theory.   
\subsection*{Summary}
In the first section, we show how von Neumann's infinite tensor product Hilbert space theory implies that a natural version of Lebesgue measure must exist on $\R^{\iy}$ and points to a possible approach.  In the first part of  Section 2, we show that a slight change in thinking about the cause for problems with unbounded measures on $\mathbb{R}^{\infty}$  makes the construction of Lebesgue measure not only possible, but no more difficult then the same construction on $\mathbb{R}^{n}$. (We denote it by $\mathbb{R}_I^{\iy}$, for reasons that are discussed in this section.)  We also provide  natural analogues of Lebesgue and Gaussian measure for every separable Banach space with a Schauder basis and show that $\ell_1$ is the maximal translation invariant subspace.  In the last part of Section 2, we show that $\ell_2$ is the maximal rotation invariant subspace.   In Section 3, we study the convergence properties of infinite sums and products of bounded and unbounded  linear operators.  In Section 4, we investigate some of the function spaces over $\R_I^\iy$ and in Section 5, we discuss Fourier transforms and Pontryagin duality theory for Banach spaces.  A major result is that there are two different extensions of the Pontrjagin Duality theory for infinite dimensional spaces. In this section, we also show that our theory allows us to extend Young's inequality to ever separable Banach space with a Schauder basis.  In Section 6, we give some constructive examples of partial differential operators in infinitely many variables.  This allows us to briefly  discuss the famous partial differential equation derived by Phillip Duncan Thompson \cite{PDT}, on  infinite-dimensional phase space to represent an ensemble of randomly forced two-dimensional viscous flows.
\section{Why $\la_{\iy}$ Must Exist}
In order to see that some reasonable version of Lebesgue measure must exist, we need to review von Neumann's  infinite tensor product Hilbert space theory \cite{VN2}.   To do this, we first define infinite products of complex numbers. (There are a number of other possibilities, see \cite{GU} and \cite{PA}, pg. 272-274.) In order to avoid trivialities, we always assume that, in any product, all terms are nonzero. 
\begin{Def} If $\{z_i \}$ is a sequence of complex numbers indexed by $i \in \N$ (the natural numbers),  
\begin{enumerate}
\item  We say that the product $\prod\nolimits_{i \in \N} {z_i }$ is convergent with limit $z$ if, for every $\varepsilon  > 0$, there is a finite set $J(\varepsilon)$ such that, for all finite sets $J \subset \N$, with   $J(\varepsilon)\subset J$, we have $\left| {\prod\nolimits_{i \in J} {z_i }  - z} \right| < \varepsilon$.
\item We say that the product $\prod\nolimits_{i \in \N} {z_i  }$ is quasi-convergent if $
\prod\nolimits_{i \in \N} {\left| {z_i  } \right|}$ is convergent.  (If the product is quasi-convergent, but not convergent, we assign it the value zero.)
\end{enumerate}
\end{Def}
We note that 
\beqn
0 < \left| {\prod\nolimits_{i \in \N} {z_i } } \right| < \infty \; {\text {if and only if}} \; \sum\nolimits_{i  \in \N} {\left| {1 - z_i } \right|}  < \infty.  
\eeqn
Let ${\mcH}_i=L^2[\R, {\la}]$ for each $i \in \N$ and let $\mcH_ \otimes ^2  = \hat  \otimes _{i = 1}^\infty  L^2 [\mathbb{R},\lambda ]$ be the infinite tensor product of von Neumann.  To see what this object looks like:
 \begin{Def}
 Let $g  = \mathop  \otimes \limits_{i \in \N} g _{i}$ and $ 
h  = \mathop  \otimes \limits_{i \in \N} h _{i} $ be in $
 {\mcH}_ \otimes ^2 $. 
\begin{enumerate}
\item We say that $g$ is strongly equivalent to $h$ ($g  \equiv^s h $) if and only if $
\sum\limits_{i \in \N} {\left| {1 - \left\langle {g _{i} ,h _{i} } \right\rangle _{i} } \right|}  < \infty \;.$
\item We say that $g$ is weakly equivalent to $h $  ($g  \equiv ^w h $) if and only if $
\sum\limits_{i \in \N} {\left| {1 - \left| {\left\langle {g _i  ,h _i  } \right\rangle _i  } \right|\,} \right|}  < \infty. $
\end{enumerate}
\end{Def}
Proofs of the following may be found in von Neumann \cite{VN2} (see also \cite{GZ}, \cite{GZ1}).
\begin{lem} We have $g  \equiv ^w h$ if and only if there exist $z_i  ,\;|\,z_i  \,| = 1$, such that $
\mathop  \otimes \limits_{i  \in \N} z_i  g _i   \equiv ^s \mathop  \otimes \limits_{i  \in \N} h _i  $.
\end{lem}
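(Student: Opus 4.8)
The plan is to read off both implications directly from the definitions of $\equiv^s$ and $\equiv^w$, the only analytic input being the reverse triangle inequality $\bigl|\,1-|w|\,\bigr|\le|1-w|$, valid for every $w\in\C$. Throughout, write $w_i=\left\langle g_i,h_i\right\rangle_i$, so that $g\equiv^w h$ reads $\sum_{i\in\N}\bigl|\,1-|w_i|\,\bigr|<\infty$, while $\mathop\otimes\limits_{i\in\N}z_ig_i\equiv^s\mathop\otimes\limits_{i\in\N}h_i$ reads $\sum_{i\in\N}\bigl|1-\left\langle z_ig_i,h_i\right\rangle_i\bigr|<\infty$ for a scalar sequence $(z_i)$ with $|z_i|=1$.

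For the implication ($\Leftarrow$) I would assume such scalars $z_i$ are given and observe that $\left|\left\langle z_ig_i,h_i\right\rangle_i\right|=|z_i|\,|w_i|=|w_i|$; the reverse triangle inequality then gives, term by term,
\[
\bigl|\,1-|w_i|\,\bigr|=\Bigl|\,1-\left|\left\langle z_ig_i,h_i\right\rangle_i\right|\,\Bigr|\le\bigl|1-\left\langle z_ig_i,h_i\right\rangle_i\bigr|,
\]
and summing over $i\in\N$ yields $g\equiv^w h$. It is worth noting here that $\|z_ig_i\|_i=\|g_i\|_i$ for every $i$, so that $\mathop\otimes\limits_{i\in\N}z_ig_i$ is again a bona fide element of $\mcH_\otimes^2$, lying in the same weak-equivalence class as $\mathop\otimes\limits_{i\in\N}g_i$; this is what makes the hypothesis meaningful.

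For the implication ($\Rightarrow$) I would start from $\sum_{i\in\N}\bigl|\,1-|w_i|\,\bigr|<\infty$ and choose the $z_i$ so as to rotate each $w_i$ onto the nonnegative real axis. Set $F=\{\,i\in\N:|w_i|\le\tfrac{1}{2}\,\}$; since every index in $F$ contributes at least $\tfrac{1}{2}$ to the convergent series, $F$ is finite, and in particular $w_i\ne0$ for all $i\notin F$. For $i\notin F$ let $z_i$ be the unique unimodular scalar for which $\left\langle z_ig_i,h_i\right\rangle_i=|w_i|$ (this is possible precisely because $w_i\ne0$, and the choice is consistent with whichever argument of $\langle\cdot,\cdot\rangle_i$ is taken conjugate-linear), and put $z_i=1$ for $i\in F$. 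Then
\[
\sum_{i\in\N}\bigl|1-\left\langle z_ig_i,h_i\right\rangle_i\bigr|=\sum_{i\in F}\left|1-w_i\right|+\sum_{i\notin F}\bigl|\,1-|w_i|\,\bigr|<\infty,
\]
the first sum having only finitely many (finite) terms and the second being dominated by $\sum_{i\in\N}\bigl|\,1-|w_i|\,\bigr|$; hence $\mathop\otimes\limits_{i\in\N}z_ig_i\equiv^s\mathop\otimes\limits_{i\in\N}h_i$, as required.

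I do not anticipate a genuine obstacle: the reverse triangle inequality does all the work in the ($\Leftarrow$) direction, and an explicit rotation to the nonnegative real axis does it in the ($\Rightarrow$) direction. The only care needed is bookkeeping --- confirming that only finitely many factors can have $\langle g_i,h_i\rangle_i$ equal to (or close to) $0$, and pinning down the phase convention for the inner product so that each $z_i$ is chosen with the correct sign.
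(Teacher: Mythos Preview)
Your argument is correct. The paper does not actually supply a proof of this lemma; it simply states that proofs ``may be found in von Neumann \cite{VN2}'' and moves on, so there is no in-paper argument to compare against. What you have written is exactly the standard proof one finds in that literature: the reverse triangle inequality for ($\Leftarrow$), and rotating each nonzero $\langle g_i,h_i\rangle_i$ onto the positive real axis for ($\Rightarrow$), with the observation that convergence of $\sum_i\bigl|1-|w_i|\bigr|$ forces all but finitely many $|w_i|$ to be bounded away from $0$ (and indeed to lie near $1$), so the finitely many exceptional indices contribute only a finite amount regardless of how $z_i$ is chosen there.
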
 
\begin{thm}
The relations defined above are equivalence relations on ${\mcH}_ \otimes ^2 $, which decomposes ${\mcH}_ \otimes ^2$ into disjoint  equivalence classes (orthogonal subspaces).
\end{thm}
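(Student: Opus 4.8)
\textit{Proof proposal.} The plan is to verify the equivalence-relation axioms for $\equiv^s$ by one direct estimate, transfer them to $\equiv^w$ through Lemma 1.3, and then obtain the orthogonal decomposition by computing inner products of decomposable vectors via von Neumann's identity $\langle \otimes_{i} g_i, \otimes_{i} h_i\rangle = \prod_{i}\langle g_i,h_i\rangle_i$ (interpreted as in Definition 1.1). At the outset I would reduce to the normalized case: every decomposable vector $g=\otimes_i g_i\in\mcH_\otimes^2$ is assumed nonzero, so the equivalence (1.1) applied to $z_i=\|g_i\|_i$ (i.e. $0<|\prod_i z_i|<\infty$ exactly when $\sum_i|1-z_i|<\infty$) gives $\sum_i|1-\|g_i\|_i|<\infty$; hence replacing each $g_i$ by $g_i/\|g_i\|_i$ changes neither of the two equivalence classes of $g$ and only multiplies $\langle\otimes_i g_i,\otimes_i h_i\rangle$ by a convergent positive product, so one may assume $\|g_i\|_i=\|h_i\|_i=\|k_i\|_i=1$ throughout.

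With this normalization, reflexivity and symmetry of $\equiv^s$ and $\equiv^w$ are immediate, since $\langle g_i,g_i\rangle_i=1$ and $|1-\langle h_i,g_i\rangle_i|=|1-\overline{\langle g_i,h_i\rangle_i}|=|1-\langle g_i,h_i\rangle_i|$ (and likewise with moduli in place of the inner products for $\equiv^w$). The only real computation is transitivity of $\equiv^s$: given $\sum_i|1-\langle g_i,h_i\rangle_i|<\infty$ and $\sum_i|1-\langle h_i,k_i\rangle_i|<\infty$, I would use $\|h_i\|_i=1$ to write
\[
1-\langle g_i,k_i\rangle_i \;=\; \bigl(1-\langle g_i,h_i\rangle_i\bigr) + \bigl(1-\langle h_i,k_i\rangle_i\bigr) - \langle g_i-h_i,\; k_i-h_i\rangle_i .
\]
Since $\|g_i-h_i\|_i^2 = 2\,\mathrm{Re}\bigl(1-\langle g_i,h_i\rangle_i\bigr)\le 2\,|1-\langle g_i,h_i\rangle_i|$ and likewise for $h_i,k_i$, Cauchy--Schwarz and the arithmetic--geometric mean inequality bound the last term by $\tfrac12\bigl(\|g_i-h_i\|_i^2+\|k_i-h_i\|_i^2\bigr)$, which is summable; hence $\sum_i|1-\langle g_i,k_i\rangle_i|<\infty$, i.e. $g\equiv^s k$. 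Transitivity of $\equiv^w$ then follows formally: if $g\equiv^w h$ and $h\equiv^w k$, Lemma 1.3 furnishes unimodular sequences $(z_i),(w_i)$ with $\otimes_i z_ig_i\equiv^s\otimes_i h_i$ and $\otimes_i w_ih_i\equiv^s\otimes_i k_i$; inserting the unimodular factor $w_i$ into both arguments leaves each inner product unchanged, so $\otimes_i(w_iz_i)g_i\equiv^s\otimes_i w_ih_i$, and transitivity of $\equiv^s$ together with Lemma 1.3 yields $g\equiv^w k$.

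For the orthogonal decomposition it suffices to show that if $g=\otimes_i g_i$ and $h=\otimes_i h_i$ lie in different $\equiv^s$-classes then $\langle g,h\rangle=\prod_i\langle g_i,h_i\rangle_i=0$; the same then holds for different $\equiv^w$-classes, since $\equiv^s$ refines $\equiv^w$. If $g\not\equiv^w h$, then $\sum_i\bigl(1-|\langle g_i,h_i\rangle_i|\bigr)=\sum_i\bigl|1-|\langle g_i,h_i\rangle_i|\bigr|=\infty$, with summands in $[0,1]$ because $\|g_i\|_i=\|h_i\|_i=1$; this forces $\prod_i|\langle g_i,h_i\rangle_i|=0$, hence $\langle g,h\rangle=0$. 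If instead $g\equiv^w h$ but $g\not\equiv^s h$, then $\prod_i|\langle g_i,h_i\rangle_i|$ converges to a positive number while $\sum_i|1-\langle g_i,h_i\rangle_i|=\infty$, so by (1.1) the product $\prod_i\langle g_i,h_i\rangle_i$ is quasi-convergent but not convergent and, by the convention in Definition 1.1(2), is assigned the value $0$; thus again $\langle g,h\rangle=0$. Passing to closed linear spans, the subspace attached to each equivalence class is orthogonal to that of any other; since $\mcH_\otimes^2$ is by definition the closed linear span of its decomposable vectors and each such vector lies in exactly one class, $\mcH_\otimes^2$ is the Hilbert-space direct sum of these mutually orthogonal subspaces.

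The main obstacle is the transitivity estimate for $\equiv^s$: the naive triangle-inequality bound produces a term involving $\|g_i-h_i\|_i$, whose series need not converge (only $\sum_i\|g_i-h_i\|_i^2$ does), so one must exploit the genuinely second-order remainder $\langle g_i-h_i,k_i-h_i\rangle_i$ in the displayed identity. Carrying the norm factors $\|g_i\|_i$ through the argument in the unnormalized formulation, rather than reducing to unit vectors as above, is the only other point requiring a little bookkeeping.
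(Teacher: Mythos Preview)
Your argument is correct and is essentially the classical one due to von Neumann. The paper itself does not supply a proof of this theorem: the sentence preceding Lemma~1.3 states that ``Proofs of the following may be found in von Neumann \cite{VN2} (see also \cite{GZ}, \cite{GZ1}),'' and this applies to both Lemma~1.3 and Theorem~1.4. So there is no in-paper proof to compare against; your proposal fills in exactly what the citation points to.

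Two small expositional points. First, your normalization reduction is valid but the phrasing ``changes neither of the two equivalence classes of $g$'' presupposes the very equivalence-relation structure you are about to establish; it is cleaner to argue directly that $g\equiv^s h$ iff $\tilde g\equiv^s\tilde h$ via the identity $1-\langle g_i,h_i\rangle=(1-\|g_i\|\,\|h_i\|)+\|g_i\|\,\|h_i\|(1-\langle\tilde g_i,\tilde h_i\rangle)$, which avoids any appearance of circularity. Second, the orthogonality statement you prove (that inequivalent decomposable vectors are orthogonal) is restated separately in the paper as Theorem~1.6(3), so your proof in fact covers part of that later result as well.
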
  
\begin{Def} 
For $
g  = \mathop  \otimes \limits_{i \in \N} g _{i}  \in {\mcH}_ \otimes ^2 $, we define ${\mcH}_ \otimes ^2 (g )$ to be the closed subspace generated by the span of all $
h  \equiv^s g $ and we call it the strong partial tensor product space generated by the vector $g$. (von Neumann called it an incomplete tensor product space.)
\end{Def}
\begin{thm}
For the partial tensor product spaces, we have the following:  
\begin{enumerate}
\item 
If $h _{i}  \ne g _{i} $ occurs for at most a finite number of ${i}$, then $
h  = \mathop  \otimes \limits_{i \in \N} h _{i}  \equiv^s g  = \mathop  \otimes \limits_{i \in \N} g _{i} $.
\item 
The space $ {\mcH}_ \otimes ^2 (g )$ is the closure of the linear span of $h  = \mathop  \otimes \limits_{i \in \N} h _{i} $ such that $h _{i}  \ne g _{i} $ occurs for at most a finite number of ${i}$.
\item 
If $g  =  \otimes _{i \in \N} g _{i}  $
 and $h  =  \otimes _{i \in \N} h _{i}  $
 are in different equivalence classes of $
{\mcH}_ \otimes ^2$, then $
\left( {g ,h } \right)_ \otimes   = \prod _{i \in \N} \left\langle {g _{i} ,h _{i} } \right\rangle _{i}  = 0$.
\item
$
{\mcH}_ \otimes ^2 (g )^w  = \mathop  \oplus \limits_{h  \equiv ^w g } {\kern 1pt} \left[ {\mcH}_ \otimes ^2 (h )^s  \right].
$
\item For each $g,\; {\mcH}_ \otimes ^2 (g)^s$ is a separable Hilbert space.

\item For each $g,\; {\mcH}_ \otimes ^2 (g )^w$ is not a separable Hilbert space. 
\end{enumerate}
\end{thm}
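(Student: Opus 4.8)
The plan is to establish the six assertions in the order stated, each feeding the next: items (1)--(3) are short infinite-product computations, while (4)--(6) are structural consequences, with a single cardinality observation carrying the real content of (6). Throughout I use that the defining sequences of elements of $\mcH_\otimes^2$ are $C$-sequences, i.e.\ $\sum_i\big|1-\|g_i\|_i\big|<\iy$ (hence also $\sum_i\big|1-\|g_i\|_i^2\big|<\iy$, since $\|g_i\|_i\to 1$), together with the criterion (1.1). For (1): if $h_i=g_i$ for every $i$ outside a finite set $F$, then $\langle g_i,h_i\rangle_i=\|g_i\|_i^2$ off $F$, so $\sum_{i\in\N}\big|1-\langle g_i,h_i\rangle_i\big|$ is a finite sum plus a summable tail, whence $h\equiv^s g$. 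For (2), the inclusion ``$\supseteq$'' is immediate from (1) and Definition 1.5; for ``$\subseteq$'' I would approximate each generator $h=\otimes_{i\in\N}h_i$ with $h\equiv^s g$ by its finite modifications $h^{(n)}=h_1\otimes\cdots\otimes h_n\otimes g_{n+1}\otimes g_{n+2}\otimes\cdots$. Expanding
\[
\big\|h-h^{(n)}\big\|_\otimes^2=\|h\|_\otimes^2+\big\|h^{(n)}\big\|_\otimes^2-2\,\mathrm{Re}\,\big(h,h^{(n)}\big)_\otimes
\]
and factoring out the common partial product $\prod_{i\le n}\|h_i\|_i^2$ (which converges, $h$ being a $C$-sequence), the bracketed remainder equals $\prod_{i>n}\|h_i\|_i^2+\prod_{i>n}\|g_i\|_i^2-2\,\mathrm{Re}\prod_{i>n}\langle h_i,g_i\rangle_i$, and each of these three tail products tends to $1$ as $n\to\iy$ (the first two because $g$ and $h$ are $C$-sequences, the third because $\sum_i\big|1-\langle g_i,h_i\rangle_i\big|<\iy$ makes $\prod_i\langle g_i,h_i\rangle_i$ converge to a nonzero value by (1.1)); hence $\|h-h^{(n)}\|_\otimes\to 0$ and $h$ lies in the closed span of the finite modifications of $g$.

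For (3), the key point is that for two $C$-sequences the partial products $\prod_{i\le n}\big|\langle g_i,h_i\rangle_i\big|$ factor, by the Cauchy--Schwarz inequality, as $\big(\prod_{i\le n}\|g_i\|_i\|h_i\|_i\big)\cdot\prod_{i\le n}a_i$ with every $a_i\in[0,1]$, so the second factor is monotone and the whole product converges; hence $\prod_i\langle g_i,h_i\rangle_i$ is always quasi-convergent in the sense of Definition 1.1(2). If $g$ and $h$ lie in distinct strong equivalence classes then this product cannot converge to a nonzero limit --- by (1.1) a nonzero limit would force $\sum_i\big|1-\langle g_i,h_i\rangle_i\big|<\iy$, that is $g\equiv^s h$ --- so it is assigned the value $0$, and by von Neumann's definition this assigned value is exactly $(g,h)_\otimes$. (Distinct weak classes is the special case in which already $\prod_i\big|\langle g_i,h_i\rangle_i\big|=0$.)

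For (4), by definition $\mcH_\otimes^2(g)^w$ is the closed span of all $h\equiv^w g$; since $\equiv^s$ refines $\equiv^w$ and both are equivalence relations (Theorem 1.4), this generating set is the union of the strong classes contained in the weak class of $g$, so $\mcH_\otimes^2(g)^w$ is the closed linear span of $\bigcup_{h\equiv^w g}\mcH_\otimes^2(h)^s$. By (3) these subspaces are pairwise orthogonal, and the closed span of a pairwise-orthogonal family of closed subspaces is exactly their Hilbert-space direct sum, giving the asserted identity. Then (5) follows from (2): fixing a countable dense subset $\{e^{(i)}_m\}_m$ of each separable factor $\mcH_i=L^2[\R,\la]$, the countable family of finite modifications $e^{(1)}_{m_1}\otimes\cdots\otimes e^{(k)}_{m_k}\otimes g_{k+1}\otimes g_{k+2}\otimes\cdots$ (over $k\in\N$ and $m_1,\ldots,m_k\in\N$) has dense linear span in $\mcH_\otimes^2(g)^s$, since the tensor product is jointly continuous on norm-bounded sets and so the first $k$ factors of any finite modification of $g$ may be replaced by nearby basis elements.

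Finally, for (6) I would exhibit uncountably many distinct strong classes inside the weak class of $g$ and then invoke (4). For a sign sequence $\e=(\e_i)\in\{\pm 1\}^\N$ put $g^\e=\otimes_{i\in\N}\e_i g_i$; Lemma 1.3 gives $g^\e\equiv^w g$ for every $\e$, and $g^\e\equiv^s g^{\e'}$ if and only if $\sum_i\big|1-\e_i\e'_i\|g_i\|_i^2\big|<\iy$, which --- since $\sum_i\big|1-\|g_i\|_i^2\big|<\iy$ while each index with $\e_i\ne\e'_i$ contributes $\big|1+\|g_i\|_i^2\big|\ge 1$ --- holds if and only if $\e_i=\e'_i$ for all but finitely many $i$. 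Hence $\e\mapsto[g^\e]$ descends to an injection on the quotient of $\{\pm 1\}^\N$ by eventual equality, which is uncountable; so the index set of the direct sum in (4) is uncountable, and choosing a unit vector in each summand produces uncountably many vectors pairwise at distance $\sqrt 2$, ruling out separability. I expect the only genuine work to lie in the tail-product estimate of step (2) (which also drives step (5)) and the combinatorial bookkeeping of step (6); the remaining items are a direct assembly of the definitions of this section, equation (1.1), Lemma 1.3 and Theorem 1.4, in the spirit of von Neumann \cite{VN2}.
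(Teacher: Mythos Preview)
Your proof is correct and follows the classical line of argument. Note that the paper itself does not prove this theorem: it is stated with the remark ``Proofs of the following may be found in von Neumann \cite{VN2} (see also \cite{GZ}, \cite{GZ1}),'' so you are in effect reconstructing the argument from the cited source rather than matching an in-paper proof. Your treatment of each item --- the tail-product estimate in (2), the quasi-convergence dichotomy in (3) via (1.1), the orthogonal decomposition in (4), and the sign-sequence construction in (6) --- is exactly the standard route taken in \cite{VN2}, so there is nothing to compare against and nothing to correct.
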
 
It follows from (6) that $\mcH_ \otimes ^2  = \hat  \otimes _{i = 1}^\infty  L^2 [\mathbb{R},\lambda ]$ is not a separable Hilbert space. 

From (5), we see that it is reasonable to define $L^2 [\R^{\infty}, {\la}_{\infty}]=\mcH_ \otimes ^2(h)^s$, for some $h=\otimes_{i=1}^{\iy }h_i$.  This definition is ambiguous, but, in most applications, the particular version does not matter.  To remove the  ambiguity, we should identify a canonical version of $h=\otimes_{i=1}^{\iy }h_i$.  Any reasonable version of  $\la_{\iy}$ should satisfy  $\la_{\iy}(I_0)=1$, where $I=[\tf{-1}{2}, \tf{1}{2}]$ and  $I_0=\times_{i=1}^{\iy}I$.
\begin{Def} If $\chi_I$ is the indicator function for $I$ and $h_i =\chi_I$, we set $h=\otimes_{i=1}^{\iy}h_i$.  We define the canonical version of $L^2 [\R^{\infty}, {\la}_{\infty}]= L^2 [\R^{\infty}, {\la}_{\infty}](h)^s$.
\end{Def}
\section{{ Lebesgue Measure on $\mathbb{R}_I^{\infty}$}} 
\subsection{The Construction} 
We now have the problem of identifying the measure space associated with  $L^2 [\R^{\infty}, {\la}_{\infty}](h)^s$.  In the historical approach to the construction of infinite products of measures $\{ \mu_k,\;  k \in \mathbb{N} \}$ on ${\R}^{\iy}$, the chosen topology defines open sets to be the (cartesian) product of an arbitrary finite number of open sets in $\mathbb{R}$, while the remaining infinite number are copies of $\mathbb{R}$ (cylindrical sets).  The success of Kolmogorov's work on the foundations of probability theory naturally led to the condition that $\mu_k(\mathbb{R})$ be finite for all but a finite number of $k$ (see \cite{KO}).   Thus, any attempt to construct Lebesgue measure via this approach starts out a failure in the beginning.   However,  Kolmogorov's approach is not the only way to induce a total measure of one for the spaces under consideration.  

Our definition of the canonical version of $L^2 [\R^{\infty}, {\la}_{\infty}]$ offers another approach. To see how, consider a simple extension of the theory on $\R$.  Let $I = [ - \tfrac{1}{2},\tfrac{1}{2}]$ and define $\mathbb{R}_I  = \mathbb{R}  \times I_1 $, where $I_1  = \mathop  \times \limits_{i = 2}^\infty I$.  If $\mathfrak{B}(\mathbb{R} )$ is the Borel $\s$-algebra  for $\mathbb{R}$, let $\mathfrak{B}(\mathbb{R}_I )$ be the Borel $\s$-algebra  for $\mathbb{R}_I$.  For each set $A \in \mathfrak{B}(\mathbb{R} )$ with $\la(A) < \infty$, let $A_I$ be the corresponding set in $\mathfrak{B}(\mathbb{R}_I )$, $A_I=A\times I_1$.  We define $\la_{\infty}(A_I)$ by:
\[
\la_{\infty}(A_I)=\lambda (A) \times \prod _{i = 2}^\infty  \lambda (I) = \lambda (A).
\]
We can construct a theory of Lebesgue measure on $\mathbb{R}_I$ that completely parallels that on $\R$.  This suggests that we use Lebesgue measure and replace the (tail end of the) infinite product of copies of $\mathbb{R}$ by infinite products of copies of  $I$.    The purpose of this section is to provide such a construction.  Since we will be studying unbounded measures, for consistency,  we use the following  conventions: $0 \cdot \iy =0$ and $0 \cdot \infty^{\iy}=\iy$.

Recall that $\R^{\iy}$ is the set of all ${\bf x} = \left( {x_1 ,x_2 , \cdots } \right)$, where $x_i \in \R$.  This is a linear space which is not a Banach space.  However, it is a complete metric space with metric given by:
\[
d({\bf x},{\bf y}) = \sum\nolimits_{n = 1}^\infty  {\frac{1}
{{2^{n} }}} \frac{{\left| {x_n  - y_n } \right|}}
{{1 + \left| {x_n  - y_n } \right|}}.
\]
\begin{rem}
$\R^\iy$ is a special case of a Polish space, which  Banach called a Fr{\'e}chet space i.e., a Polish space with a translation invariant metric (see Banach \cite{BA}). The topology generated by $d(\cdot,  \cdot)$ is generally known as the Tychonoff topology.
\end{rem}
For each $n$, define $\mathbb{R}_I^n  = \mathbb{R}^n  \times I_n $, where $I_n  = \mathop  \times \limits_{i = n + 1}^\infty  I$.  
\begin{Def}  If $A_n=A \times I_n,\; B_n=B \times I_n$ are any sets in $\mathbb{R}_I^n$, then we define: 
\begin{enumerate}
\item $A_n \cup B_n= A \cup B \times I_n$,
\item $A_n \cap B_n= A \cap B \times I_n$, and
\item $B_n^c= B^c \times I_n$.
\end{enumerate}
\end{Def}
In order to avoid confusion, we always assume that   $I_0 = \times _{i= 1}^{\infty} I \subset \R_I^1$.      
We can now define the topology for $\mathbb{R}_I^n $ via the following class of open sets:  
\[
{\mathfrak{O}}_n = \left\{ {U_n :U_n  = U \times I_n ,\;U\;{\text{open in }}\mathbb{R}^n } \right\}.
\]
\subsubsection{\bf{Definition of} ${\mathbb{R}}_I^{\infty}$}
It is easy to see that $\mathbb{R}_I^n \subset \mathbb{R}_I^{n+1}$. Since this is an increasing sequence, we can define ${\mathbb{R}'}_I^{\infty}$ by: 
\[
{\mathbb{R}'}_I^{\infty} =\rm{lim}_{n\rightarrow \infty}\mathbb{R}_I^n=  \mathop  \cup \limits_{k =  1}^\infty {\mathbb{R}_I^k}.
\]
Let $\tau_1$ be the topology on ${\mathbb{R}'}_I^{\infty}={\mfX}_1$ induced by the class of open sets $\mfO$ defined by:
\[
\mfO=\bigcup_{n=1}^\iy{{\mathfrak{O}}_n} =\bigcup_{n=1}^\iy{ \left\{ {U_n :U_n  = U \times I_n ,\;U\;{\text{open in }}\mathbb{R}^n } \right\}},
\]
 and let $\tau_2$ be topology on $\R^{\infty} \setminus  {\mathbb{R}'}_I^{\infty}={\mfX}_2$ induced by the metric $d_2$, for which $d_2(x,y)=1, \, x \neq y$  and $d_2(x,y)=0, \, x=y$, for all  $x, y \in {\mfX}_2$.
\begin{Def}We define $({\mathbb{R}}_I^{\infty}, \tau)$ to be the  sum $({\mfX}_1, \tau_1)$ and  $({\mfX}_2, \tau_2)$, so that every open set in $({\mathbb{R}}_I^{\infty}, \tau)$ is union of two disjoint sets $G_1 \cup G_2$, where $G_1$ is open in $({\mfX}_1, \tau_1)$ and $G_2$ is open in $({\mfX}_2, \tau_2)$.
\end{Def} 
It now follows from the above construction that $\R_I^{\infty}=\R^{\infty}$ as sets.  (However, they are not equal as topological spaces.)  The following result shows that convergence in the $\tau$-topology always implies convergence in the Tychonoff topology.
\begin{thm} If $y_k$ converges to $x$ in the $\tau$-topology, then $y_k$ converges to $x$ in the Tychonoff topology.
\end{thm}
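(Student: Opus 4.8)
The plan is to reduce everything to coordinate-wise convergence and then to invoke the standard fact that the metric $d$ metrizes the product topology on $\R^\iy$. First I would use that, by construction, $(\R_I^\iy,\tau)$ is the topological sum of $(\mfX_1,\tau_1)$ and $(\mfX_2,\tau_2)$, so both $\mfX_1$ and $\mfX_2$ are clopen in $(\R_I^\iy,\tau)$. Consequently, if $y_k \ra x$ in the $\tau$-topology, the limit $x$ belongs to exactly one of the two summands and $y_k$ eventually lies in that same summand; this splits the proof into two cases.

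In the case $x \in \mfX_2$, I would note that $d_2$ is the discrete metric, so $\{x\}$ is $\tau_2$-open and hence $\tau$-open; $\tau$-convergence then forces $y_k = x$ for all sufficiently large $k$, and convergence in the Tychonoff metric is immediate. So the substantive case is $x \in \mfX_1 = {\R'}_I^\iy = \bu_{k\ge 1}\R_I^k$. Here, since $\mfX_1$ is $\tau$-open, $y_k \in \mfX_1$ for all large $k$, and the subspace topology that $\tau$ induces on $\mfX_1$ is exactly $\tau_1$; so I may assume $y_k \ra x$ in $\tau_1$ with all $y_k \in \mfX_1$. I would fix $n$ with $x \in \R_I^n$ and record that, because the stages are nested ($\R_I^m \ci \R_I^{m+1}$), one in fact has $x \in \R_I^m$ for every $m \ge n$.

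The core step is then to show that $(y_k)_j \ra x_j$ in $\R$ for every fixed coordinate $j$. Given $\e > 0$, I would set $m = \max\{n,j\}$ (so $x \in \R_I^m$) and let $U \ci \R^m$ be the open set constraining only the $j$-th factor to $(x_j - \e, x_j + \e)$ and leaving the other $m-1$ factors equal to $\R$. Then $U\times I_m \in \mfO_m \ci \mfO$ is a $\tau_1$-open neighbourhood of $x$, so $y_k \in U\times I_m$ for all large $k$; since membership in $U\times I_m$ pins the $j$-th coordinate into $(x_j-\e, x_j+\e)$, this gives $|(y_k)_j - x_j| < \e$ eventually. As $j$ and $\e$ are arbitrary, $(y_k)_j \ra x_j$ for all $j$. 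To finish I would estimate $d(y_k,x)$: given $\e$, pick $N$ with $\sum_{n > N} 2^{-n} < \e/2$, then $K$ with $|(y_k)_n - x_n| < \e/2$ for all $n \le N$ and $k \ge K$, and use $t/(1+t) \le t$ for $t \ge 0$ to conclude $d(y_k,x) < \e$ for $k \ge K$, which is convergence in the Tychonoff topology.

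The one place that needs genuine care is extracting coordinate-wise convergence from $\tau_1$-convergence when the index $j$ lies beyond the ``free'' block of $x$ (so $x_j$ is already constrained to $I$). The resolution is precisely the passage to a larger finite stage $\R_I^m$ with $m \ge j$, which is legitimate only because the inclusions $\R_I^n \ci \R_I^{n+1}$ are increasing, so $x$ continues to sit in $\R_I^m$ and $U\times I_m$ is a bona fide $\tau_1$-open set. The remaining ingredients — the clopen splitting and the metric estimate for $d$ — are routine.
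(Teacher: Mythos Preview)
Your proof is correct and follows the same two-case split as the paper, according to whether $x$ lies in $\mfX_2$ (discrete part) or $\mfX_1 = {\R'}_I^\iy$. Your treatment of the case $x \in \mfX_1$ is considerably more explicit than the paper's, which simply observes that $y_k$ eventually lies in any $\tau_1$-neighbourhood $U_n \in \mfO_n$ and asserts Tychonoff convergence without spelling out the coordinate-wise argument and metric estimate you provide.
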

\begin{proof}  Case 1.  If $x \in \R^{\infty} \setminus{\mathbb{R}'}_I^{\infty}$ then there is  $N$ such that  $y_k=x$  for all $k >N$. Indeed, for a neighborhood of diameter $\tf{1}{2}$ about $x$, there is a $N$ such that  $d_2(x,y_k)<1/2$ for all  $k > N$.  This means that  $y_k=x$ for $k >N$ ($\{ z:d_2(x,z)<1/2\} $ only contains $x$), so that  $y_k$ converges to $x$ in the Tychonoff topology.

Case 2. If  $x \in {\mathbb{R}'}_I^{\infty}$ and $y_k$ converges to $x$,  then for any neighborhood $U_n \subset \mfO_n$,   there is  $N$ such that or all $k >N, \; y_k \in U_n$. This means that  $y_k \in {\mathbb{R}'}_I^{\infty}$ for  $k >N$, so that  $y_k$ converges to $x$ in the Tychonoff topology.
\end{proof}
\subsubsection{\bf{Definition of} ${\mathfrak{B}}(\mathbb{R}_I^{\infty} )$}
In a similar manner, if  $\mathfrak{B}(\mathbb{R}_I^n )$ is the Borel $\s$-algebra  for $\mathbb{R}_I^n$ (i.e., the smallest $\s$-algebra generated by the ${\mathfrak{O}}_n$), then  $\mathfrak{B}(\mathbb{R}_I^n ) \subset \mathfrak{B}(\mathbb{R}_I^{n+1} )$, so we can define  ${\mathfrak{B}'}(\mathbb{R}_I^{\infty} )$ by:
\[
{\mathfrak{B}'}(\mathbb{R}_I^{\infty} )=\rm{lim}_{n\rightarrow \infty} \mathfrak{B}(\R_I^{n} )=  \mathop  \cup \limits_{k =  1}^\infty \mathfrak{B}(\R_I^{k} ).
\]
If $\mathcal{P}(\cdot)$ denotes a powerset of a set (i.e., $\mathcal{P}(A)=\{ X: X \subseteq A\}$), let  $ \mathfrak{B}(\R_I^{\infty})$ be the smallest $\sigma$-algebra containing ${ \mathfrak{B}'}(\R_I^{\infty}) \cup    \mathcal{P} (R_I^{\infty} \setminus \cup_{n=1}^{\infty}\R_I^{n})$.   (It is obvious that the class $\mfB(\R_{I}^{\infty})$ coincides with Borel $\sigma$-algebra generated by the $\tau$-topology on $\R^{\infty}$.)  From our definition of $\mfB(\R_I^{\infty})$  we see that $\mfB(\R^{\infty}) \subset \mfB(\R_I^{\infty})$ and the containment is proper.
\begin{thm}
$\lambda _\infty  (\cdot)$ is a measure on ${\mathfrak{B}}(\R_I^{n})$, equivalent to $n$-dimensional Lebesgue measure on $\mathbb{R}^n$.
\end{thm}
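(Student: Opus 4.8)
The plan is to realise the map $\phi\colon A \mapsto A_I = A\times I_n$ as a Boolean $\s$-isomorphism of $\mfB(\R^n)$ onto $\mfB(\R_I^n)$ and then to transport $n$-dimensional Lebesgue measure $\la_n$ across it; the displayed defining formula for $\la_\iy$ is recovered automatically because $\prod_{i=n+1}^{\iy}\la(I)=\prod_{i=n+1}^{\iy}1=1$ is a (trivially) convergent product, and for sets $A$ with $\la_n(A)=\iy$ we simply put $\la_\iy(A\times I_n)=\iy$.

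The only real content is the identification $\mfB(\R_I^n)=\{A\times I_n:A\in\mfB(\R^n)\}$, which I would prove by a two-sided ``good sets'' argument. Write $\mathcal{C}=\{A\times I_n:A\in\mfB(\R^n)\}$. Using the conventions of Definition 2.2 --- that is, $(A\times I_n)^c=A^c\times I_n$ inside $\R_I^n$, $\bigcup_k(A_k\times I_n)=(\bigcup_k A_k)\times I_n$, and likewise for intersections --- one checks directly that $\mathcal{C}$ is a $\s$-algebra on $\R_I^n$, and it contains every basic open set $U_n=U\times I_n\in\mfO_n$ since $U\in\mfB(\R^n)$; hence $\mfB(\R_I^n)\subseteq\mathcal{C}$. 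For the reverse inclusion, the family $\mathcal{D}=\{A\in\mfB(\R^n):A\times I_n\in\mfB(\R_I^n)\}$ is, again by Definition 2.2, a $\s$-algebra on $\R^n$ that contains every open subset $U$ of $\R^n$ (because $U\times I_n\in\mfO_n\subset\mfB(\R_I^n)$), so $\mathcal{D}=\mfB(\R^n)$ and therefore $\mathcal{C}\subseteq\mfB(\R_I^n)$. This gives $\mfB(\R_I^n)=\mathcal{C}$; since $I_n\neq\es$, the assignment $A\mapsto A\times I_n$ is injective, so each member of $\mfB(\R_I^n)$ has a unique representative $A\in\mfB(\R^n)$, and by Definition 2.2 $\phi$ commutes with complements, countable unions and countable intersections, i.e. it is a $\s$-isomorphism of Boolean $\s$-algebras.

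Granting this, I would set $\la_\iy(A\times I_n):=\la_n(A)$, which is well defined by the uniqueness just noted and agrees with the displayed formula. It is a measure: $\phi$ preserves disjointness, since $(A_j\times I_n)\cap(A_k\times I_n)=(A_j\cap A_k)\times I_n=\es$ precisely when $A_j\cap A_k=\es$, so $\la_\iy(\es)=\la_n(\es)=0$ and, for pairwise disjoint $\{A_k\times I_n\}$,
\[
\la_\iy\Bigl(\bigcup_k A_k\times I_n\Bigr)=\la_n\Bigl(\bigcup_k A_k\Bigr)=\sum_k\la_n(A_k)=\sum_k\la_\iy(A_k\times I_n).
\]
Finally, ``equivalent to $n$-dimensional Lebesgue measure on $\R^n$'' is exactly the statement that $\phi\colon(\R^n,\mfB(\R^n),\la_n)\to(\R_I^n,\mfB(\R_I^n),\la_\iy)$ is an isomorphism of measure spaces --- equivalently, that the coordinate projection $\R_I^n\to\R^n$ is measurable and carries $\la_\iy$ onto $\la_n$ --- which is precisely what the preceding paragraphs establish. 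The main (and essentially only) obstacle is the $\s$-algebra identification in the second paragraph; once it is in place the remainder is routine bookkeeping with the operations fixed in Definition 2.2.
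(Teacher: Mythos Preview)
Your proof is correct and follows essentially the same line as the paper's: both rest on the identification $\mfB(\R_I^n)=\{A\times I_n:A\in\mfB(\R^n)\}$ and the equality $\la_\iy(A\times I_n)=\la_n(A)$. The paper's version is terser---it checks the formula on product rectangles and appeals to the fact that these generate $\mfB(\R^n)$---whereas you spell out the $\s$-algebra identification and the measure transport in full, but the substance is the same.
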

\begin{proof} If $A= \mathop  \times \limits_{i =  1}^\infty  A_i \in \mathfrak{B}(\mathbb{R}_I^{n})$, then $\la(A_i)=1$ for $i>n$ so that the series $\la_\infty  (A) = \prod\nolimits_{i = 1}^\infty  {\lambda (A_i )}$ always converges.  Furthermore, 
\beqn
0<\la_\infty  (A) = \prod\nolimits_{i = 1}^\infty  {\lambda (A_i )} = \prod\nolimits_{i = 1}^n {\lambda (A_i )}=\la_n(\mathop  \times \limits_{i =  1}^n  A_i).
\eeqn  
Since sets of the type $A =\mathop  \times \limits_{i =  1}^n  A_i$ generate $\mathfrak{B}(\mathbb{R}^n )$, we see that  $\la_\infty(\cdot)$, restricted to $\mathbb{R}_I^{n}$, is equivalent to $\la_n(\cdot)$.
\end{proof}
\begin{cor} The measure $\lambda _\infty  (\cdot)$ is both translationally and rotationally invariant on $(\mathbb{R}_I^{n},\mathfrak{B}[\mathbb{R}_I^{n} ])$.
\end{cor}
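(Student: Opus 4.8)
The plan is to transport the classical translation- and rotation-invariance of $n$-dimensional Lebesgue measure across the identification furnished by the preceding theorem. First I would make precise which maps the corollary is about. A \emph{translation} of $\mathbb{R}_I^n$ is the shift $T_t$ given by $T_t(x) = x + t$ by a vector $t = (t_1,\dots,t_n,0,0,\dots)$ that moves only the first $n$ coordinates, and a \emph{rotation} is the map $R_O$ induced by an orthogonal matrix $O$ acting on the first $n$ coordinates while fixing the tail. Both carry $\mathbb{R}_I^n = \mathbb{R}^n \times I_n$ onto itself because the tail block $I_n$ is left untouched; an arbitrary shift or rotation of $\mathbb{R}^{\infty}$ would in general eject a point from $\mathbb{R}_I^n$ (some tail coordinate would leave $I$), which is exactly why the statement is phrased for these maps only.

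Next I would record the structural fact that every element of $\mathfrak{B}(\mathbb{R}_I^n)$ is of the form $A \times I_n$ with $A \in \mathfrak{B}(\mathbb{R}^n)$. Indeed, by the conventions adopted for the Boolean operations in $\mathbb{R}_I^n$, the family $\{A \times I_n : A \in \mathfrak{B}(\mathbb{R}^n)\}$ is a $\sigma$-algebra on $\mathbb{R}_I^n$ (complementation and countable unions pass to the $\mathbb{R}^n$-factor), and it contains the generating class $\mathfrak{O}_n$; hence it coincides with $\mathfrak{B}(\mathbb{R}_I^n)$. Combined with the computation in the proof of Theorem 2.5, this says that $A \mapsto A \times I_n$ is a measure-space isomorphism of $(\mathbb{R}^n, \mathfrak{B}(\mathbb{R}^n), \lambda_n)$ onto $(\mathbb{R}_I^n, \mathfrak{B}(\mathbb{R}_I^n), \lambda_{\infty})$.

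Finally I would check that the two families of maps are intertwined, under this isomorphism, with the corresponding classical maps of $\mathbb{R}^n$: one has $T_t(A \times I_n) = (A + (t_1,\dots,t_n)) \times I_n$ and $R_O(A \times I_n) = (OA) \times I_n$, where on the right $O$ acts on $\mathbb{R}^n$. In particular each such $T_t$ and $R_O$ maps $\mathfrak{B}(\mathbb{R}_I^n)$ bijectively to itself, so there is no measurability subtlety. Then
\[
\lambda_{\infty}\big(T_t(A\times I_n)\big) = \lambda_n\big(A+(t_1,\dots,t_n)\big) = \lambda_n(A) = \lambda_{\infty}(A\times I_n),
\]
by translation invariance of $\lambda_n$, and likewise $\lambda_{\infty}(R_O(A\times I_n)) = \lambda_n(OA) = \lambda_n(A) = \lambda_{\infty}(A\times I_n)$ by rotational invariance of $\lambda_n$. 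Since every measurable subset of $\mathbb{R}_I^n$ has the form $A \times I_n$, both invariances follow.

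No part of this is genuinely difficult. The only points needing care are the two bookkeeping steps above: making the admissible translations and rotations explicit (they must fix all coordinates beyond the $n$-th), and verifying that $\mathfrak{B}(\mathbb{R}_I^n)$ consists precisely of the cylinders $A \times I_n$ over $\mathbb{R}^n$, which is what makes these maps automatically bimeasurable. Once those are in hand, the result is just the classical invariance of $\lambda_n$ read through the isomorphism of Theorem 2.5.
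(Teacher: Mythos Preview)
Your argument is correct and is exactly the reasoning the paper has in mind: the corollary is stated without proof, as an immediate consequence of the equivalence $\lambda_\infty \leftrightarrow \lambda_n$ established in the preceding theorem. You have simply made explicit the two bookkeeping points (which translations/rotations are admissible, and that every set in $\mathfrak{B}(\mathbb{R}_I^n)$ has the form $A\times I_n$) that the paper leaves to the reader.
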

\subsection{The Extension  to $\mathbb{R}_I^{\infty}$}
It is not obvious that $\lambda _\infty  (\cdot)$ can be extended to a countably additive measure on ${\mathfrak{B}}(\R_I^{\infty})$.  
\begin{Def} Let  
\[
\De_0=\{K_n= K \times I_n\in \mathfrak{B}(\mathbb{R}_I^{n}) \subset {\mathfrak{B}}(\R_I^{\infty}): n \in \N, \; K\; {\rm is\; compact\; and} \;0<  {\lambda_{\infty} (K_n )}< \infty\},
\]
\[
\De=\{P_N = \bigcup\nolimits_{i = 1}^N K_{n_i}  ,\,N \in \mathbb{N};\,{K_{n_i}} \in \De_0\; {\rm and\;}\lambda _\infty  ({K_{n_l}}  \cap {K_{n_m}} ) = 0,\;l \ne m\}.
\]
\end{Def} 
\begin{Def}  If $P_N \in \De$, we define 
\[
\lambda _\infty  (P_N )= \sum\nolimits_{i = 1}^N \lambda_\infty({K_{n_i}}).
\]
\end{Def}
Since $P_N \in \mathfrak{B}(\mathbb{R}_I^{n})$ for some $n$, and $\lambda _\infty  (\cdot)$ is a measure on $\mathfrak{B}(\mathbb{R}_I^{n})$, the next result follows:
\begin{lem} If $P_{N_1},\; P_{N_2} \in \De$ then:
\begin{enumerate}
\item If $P_{N_1} \subset P_{N_2}$, then $\lambda _\infty  (P_{N_1} ) \le \lambda _\infty  (P_{N_2} )$.
\item If $\lambda _\infty  (P_{N_1} \cap P_{N_2})=0$, then $\lambda _\infty  (P_{N_1} \cup P_{N_2} )=  \lambda _\infty  (P_{N_2} )+  \lambda _\infty  (P_{N_2} )$.
\end{enumerate}
\end{lem}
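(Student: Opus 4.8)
The plan is to reduce both assertions to the single fact, established in the preceding theorem, that $\lambda_\infty$ is a genuine ($\sigma$-additive) measure on each Borel $\sigma$-algebra $\mathfrak{B}(\R_I^n)$, equivalent there to $n$-dimensional Lebesgue measure. First I would record the reduction step. Since $P_{N_1},P_{N_2}\in\De$, there are $n_1,n_2$ with $P_{N_1}\in\mathfrak{B}(\R_I^{n_1})$ and $P_{N_2}\in\mathfrak{B}(\R_I^{n_2})$; put $n=\max\{n_1,n_2\}$. Because the algebras $\mathfrak{B}(\R_I^k)$ increase with $k$ and the defining product formula $\lambda_\infty(A)=\prod_i\lambda(A_i)$ is visibly consistent across $k$, both $P_{N_1}$ and $P_{N_2}$ — hence also $P_{N_1}\cup P_{N_2}$ and $P_{N_1}\cap P_{N_2}$ — lie in $\mathfrak{B}(\R_I^n)$, on which $\lambda_\infty$ is a measure, so it is monotone and finitely additive, and all the sets in question have finite measure (each $\De_0$-piece, hence each $P_N$, has finite measure).

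Second, I would check the compatibility of the two descriptions of $\lambda_\infty$ on a set $P_N=\bigcup_{i=1}^N K_{n_i}\in\De$: the combinatorial value $\sum_{i=1}^N\lambda_\infty(K_{n_i})$ from its definition, versus the value of the measure $\lambda_\infty$ of $\mathfrak{B}(\R_I^n)$ evaluated at the Borel set $\bigcup_{i=1}^N K_{n_i}$. Since the $K_{n_i}$ are pairwise $\lambda_\infty$-almost disjoint, the pairwise intersections $K_{n_l}\cap K_{n_m}$ ($l\neq m$) are $\lambda_\infty$-null Borel sets, so passing to the genuinely disjoint Borel sets $K_{n_i}\setminus\bigcup_{j<i}K_{n_j}$, which have the same measures as the $K_{n_i}$, finite additivity of the measure gives $\lambda_\infty\big(\bigcup_i K_{n_i}\big)=\sum_i\lambda_\infty(K_{n_i})$. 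With this identification in hand, part (1) is immediate: $P_{N_1}\subset P_{N_2}$ are both members of $\mathfrak{B}(\R_I^n)$, so $\lambda_\infty(P_{N_1})\le\lambda_\infty(P_{N_2})$ by monotonicity of the measure.

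For part (2), I would first observe that under the hypothesis $\lambda_\infty(P_{N_1}\cap P_{N_2})=0$ the family obtained by concatenating the compact pieces of $P_{N_1}$ with those of $P_{N_2}$ is again pairwise $\lambda_\infty$-almost disjoint: any cross-intersection $K\cap K'$ with $K$ a piece of $P_{N_1}$ and $K'$ a piece of $P_{N_2}$ satisfies $K\cap K'\subseteq P_{N_1}\cap P_{N_2}$, hence is $\lambda_\infty$-null by monotonicity. Therefore $P_{N_1}\cup P_{N_2}\in\De$ (with $N_1+N_2$ pieces) so $\lambda_\infty(P_{N_1}\cup P_{N_2})$ is well defined, and by the identification above it equals the sum over all pieces of $P_{N_1}$ together with all pieces of $P_{N_2}$, which splits as $\lambda_\infty(P_{N_1})+\lambda_\infty(P_{N_2})$. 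Equivalently one may argue directly: for any measure and measurable sets of finite measure $\lambda_\infty(A\cup B)=\lambda_\infty(A)+\lambda_\infty(B)-\lambda_\infty(A\cap B)$, and the last term vanishes by hypothesis. (I read the right-hand side of (2) as $\lambda_\infty(P_{N_1})+\lambda_\infty(P_{N_2})$; the displayed $\lambda_\infty(P_{N_2})+\lambda_\infty(P_{N_2})$ is plainly a typo.)

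There is no serious obstacle here — the statement is bookkeeping resting on the preceding theorem. The one point that genuinely needs attention, and which I would spell out carefully, is the compatibility of the two definitions of $\lambda_\infty(P_N)$, the combinatorial sum versus the measure value, together with the closure of $\De$ under the almost-disjoint unions appearing in (2); once these are in place, monotonicity and finite additivity of a measure finish both parts.
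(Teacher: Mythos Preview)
Your proposal is correct and follows essentially the same approach as the paper: the paper's entire argument is the one-sentence observation that $P_N \in \mathfrak{B}(\mathbb{R}_I^{n})$ for some $n$ and $\lambda_\infty$ is a measure there, from which the lemma follows. You spell out more carefully the choice of a common $n$, the consistency between the combinatorial sum defining $\lambda_\infty(P_N)$ and its value as a measure, and the fact that $P_{N_1}\cup P_{N_2}\in\Delta$ in part (2) --- all details the paper leaves implicit --- but the underlying idea is identical.
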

\begin{Def}  If $G \subset \mathbb{R}_I^{\infty}$ is any open set, we define:
\[
\lambda _\infty  (G) = \mathop {\lim }\limits_{N \to \infty } \sup \left\{ {\lambda _\infty  (P_N ):\,P_N  \in \Delta ,\;P_N  \subset G,\,} \right\}.
\]
\end{Def}
\begin{thm} If $\mathfrak{O}$ is the class of open sets in $ {\mathfrak{B}}(\R_I^{\infty})$, we have:  
\begin{enumerate}
\item $\lambda _\infty(\mathbb{R}_I^{\infty})={\infty}$.
\item If $G_1,\; G_2 \in \mathfrak{O},\; G_1\subset G_2$, then 
$\lambda _\infty(G_1) \le \lambda _\infty(G_2)$.
\item If $\{G_k\} \subset \mathfrak{O}$, then 
\[
\lambda _\infty(\bigcup\nolimits_{k = 1}^{\infty} {G_k }) \le \sum\nolimits_{k = 1}^{\infty}\lambda _\infty({G_k }).
\]
\item If the $G_k$ are disjoint, then 
\[
\lambda _\infty(\bigcup\nolimits_{k = 1}^{\infty} {G_k }) = \sum\nolimits_{k = 1}^{\infty}\lambda _\infty({G_k }).
\] 
\end{enumerate}
\end{thm}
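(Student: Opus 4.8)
The plan is to obtain (1) and (2) directly from the definition of $\la_\infty$ on open sets, to prove (3) by a compactness reduction followed by finite subadditivity of Lebesgue measure on $\R^m$, and to derive (4) from (3) together with inner approximation by members of $\De$. For (1), consider the boxes $K_M=[-M,M]\times I_1$ for $M>0$: each is a product of compact sets, hence compact in the product topology on $\R_I^1$, lies in $\mathfrak{B}(\R_I^1)$, and has $0<\la_\infty(K_M)=2M<\iy$ because on the slice $\R_I^1$ the set function $\la_\infty$ is one-dimensional Lebesgue measure; thus $K_M\in\De_0\subseteq\De$, and since $\R_I^\iy$ is $\tau$-open we get $\la_\infty(\R_I^\iy)\ge\la_\infty(K_M)=2M$ for every $M$, which forces (1). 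Part (2) is immediate, because $\{P\in\De:P\subset G_1\}\subseteq\{P\in\De:P\subset G_2\}$, so the supremum defining $\la_\infty(G_1)$ cannot exceed the one defining $\la_\infty(G_2)$.

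The core of the work is (3). Recall that $\la_\infty(G)$ is, by definition, the supremum of $\la_\infty(P)$ over all $P\in\De$ with $P\subset G$ (the ``$\lim_N\sup$'' being monotone in the number of pieces). Fix such a $P$ with $P\subset\bigcup_kG_k$; I must show $\la_\infty(P)\le\sum_k\la_\infty(G_k)$ and then take the supremum over $P$. First, $P$ is a finite union of members of $\De_0$, each a product $K\times I_n$ with $K$ compact in $\R^n$; such a product is compact in the product topology on $\R_I^n$ by Tychonoff, a fortiori compact in the coarser topology $\mfO_n$ and in the still coarser subspace topology induced by $\tau$, hence (compactness being intrinsic) $\tau$-compact; so $P$ is $\tau$-compact. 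Since $P\subset\mfX_1$ and $\mfX_1$ is $\tau$-clopen with subspace topology $\tau_1$, the sets $G_k\cap\mfX_1$ form a $\tau_1$-open cover of the $\tau_1$-compact set $P$, and we extract a finite subcover $G_1,\dots,G_{K_0}$. Now choose $m$ at least as large as every index occurring in the generating cylinders of $P$; then $P=C\times I_m$ with $C\subset\R^m$ compact and $\la_\infty(P)=\la_m(C)$ (on $\R_I^m$, $\la_\infty$ is $m$-dimensional Lebesgue measure), while for each $k$ the set $G_k\cap\R_I^m=(G_k\cap\mfX_1)\cap\R_I^m$ belongs to $\mfO_m$ — this is precisely what it means for $G_k\cap\mfX_1$ to be open in the inductive-limit topology $\tau_1$ — say $G_k\cap\R_I^m=U_k\times I_m$ with $U_k$ open in $\R^m$, and intersecting the finite subcover with $\R_I^m$ gives $C\subset U_1\cup\dots\cup U_{K_0}$. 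Finite subadditivity of $\la_m$ together with the elementary bound $\la_m(U_k)\le\la_\infty(G_k)$ — obtained by approximating $U_k$ from inside by compacta $D$ and observing that $D\times I_m\in\De_0$ with $D\times I_m\subset U_k\times I_m=G_k\cap\R_I^m\subset G_k$ — then yields $\la_\infty(P)=\la_m(C)\le\sum_{k=1}^{K_0}\la_m(U_k)\le\sum_{k=1}^{\iy}\la_\infty(G_k)$, and (3) follows upon taking the supremum over $P$.

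For (4), ``$\le$'' is part (3). For ``$\ge$'': if $\la_\infty(G_k)=\iy$ for some $k$ then $\la_\infty(\bigcup_jG_j)=\iy$ by (2) and there is nothing more to prove; otherwise, fix $\e>0$, choose $P^{(k)}\in\De$ with $P^{(k)}\subset G_k$ and $\la_\infty(P^{(k)})\ge\la_\infty(G_k)-\e2^{-k}$, and note that for each finite $K_0$ the union $P^{(1)}\cup\dots\cup P^{(K_0)}$ is again a member of $\De$ — a finite union of members of $\De_0$ which are pairwise disjoint (hence have pairwise $\la_\infty$-null intersections) because the $G_k$ are pairwise disjoint — with $\la_\infty$-measure $\sum_{k=1}^{K_0}\la_\infty(P^{(k)})$. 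Since this union lies in $\bigcup_jG_j$, we obtain $\la_\infty(\bigcup_jG_j)\ge\sum_{k=1}^{K_0}\la_\infty(G_k)-\e$, and letting $K_0\to\iy$ and then $\e\to0$ gives ``$\ge$'', completing (4).

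I expect the main obstacle to be the compactness reduction inside (3): shrinking the cover $\{G_k\}$ to a finite subcover needs the members of $\De$ to be genuinely compact in the sum topology $\tau$, and the subsequent estimate needs the trace of a $\tau$-open set on a slice $\R_I^m$ to be a genuine product $U\times I_m$ with $U$ open in $\R^m$. Both become routine once one unwinds the two pertinent definitions — that $\mfX_1$ is $\tau$-clopen with subspace topology $\tau_1$, and that $\tau_1$ is the inductive-limit topology, so that $\tau_1$-open sets by definition meet each $\R_I^m$ in an $\mfO_m$-open set — but these are the only nontrivial ingredients; everything else is the usual inner-regularity and finite-subadditivity machinery of $m$-dimensional Lebesgue measure carried out one coordinate block at a time.
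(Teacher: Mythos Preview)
Your arguments for (1), (2), and (4) are correct and essentially match the paper's. The overall strategy for (3)---extract a finite subcover of a member of $\De$ by compactness, then compare with $\la_m$ on a fixed slice $\R_I^m$---is also the paper's strategy. However, both topological claims you invoke in (3) fail under the paper's definition of $\tau_1$, because $\tau_1$ is \emph{not} the inductive-limit topology.

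The paper defines $\tau_1$ as the topology on $\mfX_1$ generated by $\mfO=\bigcup_n\mfO_n$ as a subbase; it does not declare a set open iff its trace on each $\R_I^m$ lies in $\mfO_m$. Concretely, for $m>n$ and $U$ open in $\R^m$, the set $U\times I_m\in\mfO_m$ is $\tau_1$-open, and
\[
(U\times I_m)\cap\R_I^n=\{(y,i)\in\R^n\times I_n:(y_1,\dots,y_n,i_{n+1},\dots,i_m)\in U\},
\]
which depends on the tail coordinates $i_{n+1},\dots,i_m$ and is therefore not of the form $V\times I_n$; so $\tau_1|_{\R_I^n}\supsetneq\mfO_n$. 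This breaks your compactness argument: you claim the chain ``product topology $\supseteq\mfO_n\supseteq\tau|_{\R_I^n}$'' and deduce $\tau$-compactness of $K\times I_n$ from product-compactness, but the last inclusion runs the wrong way. Likewise, for $n<m$ the $\tau_1$-open set $U\times I_n\in\mfO_n$ traces on $\R_I^m$ to $(U\times I^{m-n})\times I_m$, where $I^{m-n}=[-\tfrac12,\tfrac12]^{m-n}$ is closed; so a general $\tau$-open $G_k$ need not satisfy $G_k\cap\R_I^m\in\mfO_m$, and your ``$U_k$'' is not well-defined.

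The paper's own proof is terse at the same juncture (it asserts compactness of $P_N$ without comment and writes ``as $P_N$ is arbitrary, we can assume $P_N=\bigcup_k P_{N_k}$'' with no further justification), but it does not appeal to the inductive-limit characterization. To repair your argument you would need either to verify directly that $K\times I_n$ is $\tau_1$-compact under the paper's definition (e.g.\ via Alexander's subbase theorem against covers by sets in $\mfO$), or to bypass compactness altogether and work with the measure-theoretic approximation on a common level $\R_I^m$.
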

\begin{proof}
The proof of (1) is standard.  To prove (2), observe that 
\[
\left\{ {P_N :\,\;P_N  \subset G_1 } \right\} \subset \left\{ { P'_N :\, P'_N  \subset G_2 } \right\},
\]
so that $\lambda _\infty({G_1 }) \le \lambda _\infty({G_2 })$.  To prove (3),
let $P_N \subset \bigcup\nolimits_{k = 1}^{\infty} {G_k }$.  Since $P_N$ is compact, there is a finite number of the  ${G_k }$  which cover $P_N$, so that $P_N \subset \bigcup\nolimits_{k = 1}^{L} {G_k }$.  Now, for each $G_k$, there is a $P_{N_k} \subset G_k$.  Furthermore, as $P_N$ is arbitrary, we can assume that $P_N ={{P}'}_{N}=\mathop \bigcup\nolimits_{k = 1}^{L}P_{N_k}$.  Since there is an $n$ such that  all $P_{N_k} \in {\mathfrak{B}}(\R_I^{n})$, we may also assume that $\lambda _\infty  (P_{N_l}  \cap P_{N_m} ) = 0,\;l \ne m$.  We now have that
\[
\lambda _\infty  (P_N ) = \sum\limits_{k = 1}^L {\lambda _\infty  (P_{N_k } )}  \leqslant \sum\limits_{k = 1}^L {\lambda _\infty  (G_k )}  \leqslant \sum\limits_{k = 1}^\infty  {\lambda _\infty  (G_k )} .
\]
It follows that
\[
\lambda _\infty(\bigcup\nolimits_{k = 1}^{\infty} {G_k }) \le \sum\nolimits_{k = 1}^{\infty}\lambda _\infty({G_k }).
\] 
If the $G_k$ are disjoint, observe that if ${P} _N \subset{{P}'} _M$,
\[
\lambda _\infty  ({{P}'} _M ) \ge \lambda _\infty  ({P} _N) = \sum\limits_{k = 1}^L {\lambda _\infty  ({P} _{N_k } )}. 
\]
It follows that
\[
\lambda _\infty(\bigcup\nolimits_{k = 1}^{\infty} {G_k }) \ge \sum\nolimits_{k = 1}^{L}\lambda _\infty({G_k }).
\]
This is true for all $L$ so that this, combined with (3), gives our result. 
\end{proof}
If $F$ is an arbitrary compact set in ${\mathfrak{B}}(\R_I^{\infty})$, we define
\beqn
\lambda _\infty  (F) =  \inf \left\{ {\lambda _\infty  (G ):\,F\subset G,\;  G \; \rm{open} } \right\}.
\eeqn
\begin{rem}
At this point  we see the power of $\mathfrak{B}(\mathbb{R}_I^{\infty})$.  Unlike $\mathfrak{B}(\mathbb{R}^{\infty})$, equation (2.2) is well-defined for $\mathfrak{B}(\mathbb{R}_I^{\infty})$ because it has a sufficient number of open sets of finite measure.   
\end{rem}
\begin{thm} Equation (2.2) is consistent with Definition 2.6 and the results of Theorem 2.11.
\end{thm}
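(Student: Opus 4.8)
The plan is to check the two assertions packaged in the statement: first, that equation (2.2) reproduces Definition 2.6 on those compact sets already covered by it, namely the $P_N\in\Delta$; and second, that the value (2.2) attaches to a compact set is compatible with the monotonicity and additivity recorded for open sets in Theorem 2.11.

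\textbf{Agreement with Definition 2.6.} Fix $P_N\in\Delta$. It lies in some $\mathfrak{B}(\R_I^n)$, and by construction the compact members of $\mathfrak{B}(\R_I^n)$ are exactly the sets $Q\times I_n$ with $Q$ compact in $\R^n$, with $\lambda_\infty(Q\times I_n)=\lambda_n(Q)$ by Theorem 2.9. After raising all the $K_{n_i}$ to a common $\R_I^n$ (each extra factor is a copy of $I$, contributing $\lambda(I)=1$) and using that the $K_{n_i}$ are pairwise $\lambda_\infty$-null on overlaps, one gets $\lambda_n(Q)=\lambda_\infty(P_N)$ for the corresponding $Q$. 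One inequality is then free: for every open $G\supset P_N$ the set $P_N$ is itself an admissible inner approximant in Definition 2.10, so $\lambda_\infty(G)\ge\lambda_\infty(P_N)$, whence $\inf_{G\supset P_N}\lambda_\infty(G)\ge\lambda_\infty(P_N)$. For the reverse, given $\varepsilon>0$ I would use outer regularity of $\lambda_n$ to pick $U\supset Q$ open in $\R^n$ with $\lambda_n(U)\le\lambda_n(Q)+\varepsilon<\infty$, set $G=U\times I_n\in\mfO_n$, and then show $\lambda_\infty(G)\le\lambda_n(U)$: any $P_M\in\Delta$ with $P_M\subset U\times I_n$ can be re-indexed as a finite, $\lambda_m$-essentially disjoint union $\bigcup_i R_i\times I_m$ in a single $\R_I^m$ with $m\ge n$, and the containment forces $R_i\subset U\times I^{m-n}$, so $\lambda_\infty(P_M)=\lambda_m(\bigcup_i R_i)\le\lambda_m(U\times I^{m-n})=\lambda_n(U)$ because $\lambda(I)=1$. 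Taking the supremum over such $P_M$ in Definition 2.10 gives $\lambda_\infty(G)\le\lambda_n(U)\le\lambda_\infty(P_N)+\varepsilon$, and $\varepsilon\to0$ closes the loop. (The same estimate, now with inner regularity of $\lambda_n$ supplying inner approximants $C\times I_n$ with $C\subset U$ compact, also yields the auxiliary identity $\lambda_\infty(U\times I_n)=\lambda_n(U)$.)

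\textbf{Compatibility with Theorem 2.11.} The order relations mixing the two definitions are formal: if $F$ is compact and $F\subset G$ with $G$ open, then $G$ is admissible in (2.2), so $\lambda_\infty(F)\le\lambda_\infty(G)$; if instead $G\subset F$ with $G$ open and $F$ compact, then $G\subset F\subset G'$ for every open $G'\supset F$, so $\lambda_\infty(G)\le\lambda_\infty(G')$ by Theorem 2.11(2), and taking the infimum over $G'$ gives $\lambda_\infty(G)\le\lambda_\infty(F)$; two nested compacts give monotonicity straight from (2.2). To see that (2.2) assigns nothing pathological to a general compact $F$, I would use the clopen decomposition $\R_I^\infty=\mfX_1\sqcup\mfX_2$ of Definition 2.4 to split $F$ into a compact piece in $\mfX_1$ and a finite piece in $\mfX_2$; since each $\R_I^n$ is open in $\mfX_1$ (take $U=\R^n$ in $\mfO_n$) and the $\R_I^n$ increase to $\mfX_1$, the first piece sits inside some $\R_I^n$, and the evaluation of $\lambda_\infty(F)$ via (2.2) collapses—again by outer and inner regularity of $\lambda_n$ exactly as above, the $\mfX_2$-piece contributing $0$—to $\lambda_n$ of a compact subset of $\R^n$, in agreement with Theorem 2.9.

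The main obstacle is the single estimate $\lambda_\infty(U\times I_n)\le\lambda_n(U)$: the inner approximants competing in Definition 2.10 need not be supported on the first $n$ coordinates, so one must re-index an arbitrary $P_M\in\Delta$ into a common $\R_I^m$, observe that lying inside $U\times I_n$ pins its coordinates $n+1,\dots,m$ into $I$, and invoke $\prod_{i=n+1}^m\lambda(I)=1$ to see those extra coordinates neither inflate nor deflate the measure. Everything else is bookkeeping and appeals to the regularity of finite-dimensional Lebesgue measure.
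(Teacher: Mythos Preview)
Your argument is correct. The paper itself states Theorem 2.13 without proof, treating the consistency as a routine verification in the spirit of the classical construction on $\R^n$ (the subsequent results are likewise referred to Jones \cite{J}). What you have written is exactly the kind of check the authors leave to the reader: you pin down the one nontrivial inequality $\lambda_\infty(U\times I_n)\le\lambda_n(U)$ by lifting an arbitrary $P_M\in\Delta$ to a common level $\R_I^m$ and using $\lambda(I)=1$ on the surplus coordinates, and the remaining steps are bookkeeping with outer/inner regularity of $\lambda_n$. Your handling of a general compact $F$ via the clopen splitting $\mfX_1\sqcup\mfX_2$ is also the natural move in this topology and shows that (2.2) returns finite-dimensional Lebesgue values, with the $\mfX_2$ piece contributing zero since no $P_N\in\Delta$ can sit inside a finite set.
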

\begin{Def} Let $A$ be an arbitrary set in $\mathbb{R}_I^{\infty}$.  
\begin{enumerate}
\item The outer measure (on $\mathbb{R}_I^{\infty}$) is defined by:
\[  
\lambda _\infty^{*} (A) =  \inf \left\{ {\lambda _\infty  (G ):\,A\subset G,\;  G \; \rm{open} } \right\}.
\]
We let ${\mathfrak{L}}_0$ be the class of all $A$ with $\lambda _\infty^{*} (A) < \infty$.
\item If $A \in {\mathfrak{L}}_0$, we define the  inner measure of $A$ by
\[  
\lambda _{\infty,(*)} (A) =  \sup \left\{ {\lambda _\infty  (F ):\,F\subset A,\;  F \; \rm{compact} } \right\}.
\]
\item We say that $A$ is a bounded measurable set if $\lambda _{\infty}^* (A)=\lambda _{\infty,(*)} (A)$, and define the measure of $A,\; \lambda _{\infty}(A)$, by $\lambda _{\infty}(A)=\lambda _{\infty}^* A)$. 
\end{enumerate}  
\end{Def} 
\begin{thm} Let $A,\, B$ and $\{A_k\}$ be arbitrary sets in $\mathbb{R}_I^{\infty}$ with finite outer measure.
\begin{enumerate}
\item $\lambda _{\infty,(*)}(A) \le \lambda _{\infty}^*(A)$.
\item If $A \subset B$ then $\lambda _{\infty}^*(A) \le \lambda _{\infty}^*(B)$ and $\lambda_{\infty,(*)}(A) \le \lambda _{\infty,(*)}(B)$.
\item  
$
\lambda _{\infty}^*(\bigcup\nolimits_{k = 1}^{\infty} {A_k }) \le \sum\nolimits_{k = 1}^{\infty}\lambda _{\infty}^*({A_k }).
$
\item  If the $\{A_k\}$ are disjoint,  
$
\lambda_{\infty,(*)}(\bigcup\nolimits_{k = 1}^{\infty} {A_k }) \ge \sum\nolimits_{k = 1}^{\infty}\lambda_{\infty,(*)}({A_k }).
$
\end{enumerate}
\end{thm}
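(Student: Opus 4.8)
The plan is to derive all four assertions from the regularity-type properties of $\lambda_\infty$ on open sets recorded in Theorem 2.11, together with the observation that, for a compact set $F$, the quantity $\lambda_\infty(F)$ defined by (2.2) is literally $\inf\{\lambda_\infty(G):F\subset G,\ G\text{ open}\}$, i.e. it coincides with $\lambda_\infty^*(F)$ in the sense of Definition 2.14(1). For (1), fix an arbitrary compact $F\subset A$ and an arbitrary open $G\supset A$; since $F\subset A\subset G$, formula (2.2) gives $\lambda_\infty(F)\le\lambda_\infty(G)$, and taking the supremum over all such $F$ and then the infimum over all such $G$ yields $\lambda_{\infty,(*)}(A)\le\lambda_\infty^*(A)$. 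Statement (2) is pure monotonicity of an infimum (resp.\ supremum) taken over a family of admissible sets that only grows as the underlying set grows: if $A\subset B$, then every open $G\supseteq B$ also contains $A$ and every compact $F\subseteq A$ is also a compact subset of $B$.

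For (3) I would use the usual $\varepsilon 2^{-k}$ device. Given $\varepsilon>0$, choose for each $k$ an open $G_k\supset A_k$ with $\lambda_\infty(G_k)\le\lambda_\infty^*(A_k)+\varepsilon 2^{-k}$, which is possible since $\lambda_\infty^*(A_k)<\infty$. Then $\bigcup_k G_k$ is an open set containing $\bigcup_k A_k$, so monotonicity of $\lambda_\infty^*$ and the countable subadditivity of $\lambda_\infty$ on open sets (Theorem 2.11(3)) give $\lambda_\infty^*(\bigcup_k A_k)\le\lambda_\infty(\bigcup_k G_k)\le\sum_k\lambda_\infty(G_k)\le\sum_k\lambda_\infty^*(A_k)+\varepsilon$, and letting $\varepsilon\downarrow 0$ finishes it.

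Statement (4) is where the genuine content lies, and I expect the main obstacle to be the topological separation step. Fix $L\in\mathbb{N}$ and $\varepsilon>0$, and choose compact sets $F_k\subset A_k$, $k=1,\dots,L$, with $\lambda_\infty(F_k)\ge\lambda_{\infty,(*)}(A_k)-\varepsilon 2^{-k}$; this is legitimate because $\lambda_{\infty,(*)}(A_k)\le\lambda_\infty^*(A_k)<\infty$ by (1), and $\lambda_\infty(F_k)=\lambda_\infty^*(F_k)\le\lambda_\infty^*(A_k)<\infty$ by (2). Since the $A_k$ are pairwise disjoint, so are the compacta $F_k$, and the key point is that $(\mathbb{R}_I^\infty,\tau)$ is Hausdorff, so finitely many pairwise disjoint compacta can be engulfed in pairwise disjoint open sets $V_1,\dots,V_L$ with $F_k\subset V_k$. (Hausdorffness of the summand $\mathfrak{X}_1$ follows because two distinct points already differ in some coordinate $m$ and can then be separated inside the Hausdorff space $\mathbb{R}^n$ for any $n\ge m$ by sets of $\mathfrak{O}_n$; the summand $\mathfrak{X}_2$ is clopen and discrete, so the sum is Hausdorff.)

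Now, for any open $G\supset\bigcup_{k=1}^L F_k$, the traces $G\cap V_k$ are pairwise disjoint open sets with $F_k\subset G\cap V_k$, so $\lambda_\infty(G\cap V_k)\ge\lambda_\infty(F_k)$ by (2.2), and Theorem 2.11(4) together with monotonicity gives $\lambda_\infty(G)\ge\lambda_\infty(\bigcup_{k=1}^L(G\cap V_k))=\sum_{k=1}^L\lambda_\infty(G\cap V_k)\ge\sum_{k=1}^L\lambda_\infty(F_k)$. Taking the infimum over such $G$ shows $\lambda_\infty(\bigcup_{k=1}^L F_k)\ge\sum_{k=1}^L\lambda_\infty(F_k)$, and since $\bigcup_{k=1}^L F_k$ is a compact subset of $\bigcup_{k=1}^\infty A_k$ this is a lower bound for $\lambda_{\infty,(*)}(\bigcup_k A_k)$ (reading the inner measure as the supremum of Definition 2.14(2), whether or not $\bigcup_k A_k$ lies in $\mathfrak{L}_0$); hence $\lambda_{\infty,(*)}(\bigcup_k A_k)\ge\sum_{k=1}^L\lambda_{\infty,(*)}(A_k)-\varepsilon$. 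Letting $\varepsilon\downarrow 0$ and then $L\to\infty$ completes the proof. The two things to be careful about are the Hausdorff separation of the $F_k$, which is exactly what licenses the use of Theorem 2.11(4) on the disjoint traces $G\cap V_k$, and the finiteness bookkeeping for $\lambda_\infty(F_k)$; everything else is routine.
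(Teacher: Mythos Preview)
Your proof is correct and follows essentially the same architecture as the paper's: parts (1) and (2) are dismissed as straightforward, part (3) is exactly the $\varepsilon 2^{-k}$ argument the paper gives, and part (4) proceeds by choosing finitely many compact $F_k\subset A_k$ and bounding $\lambda_{\infty,(*)}(\bigcup A_k)$ from below by $\sum\lambda_\infty(F_k)$.

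The one substantive difference is that in (4) the paper simply writes $\lambda_\infty\bigl(\bigcup_{k=1}^N F_k\bigr)=\sum_{k=1}^N\lambda_\infty(F_k)$ without further comment, implicitly leaning on Theorem~2.13 (which is stated but not proved) for the additivity of $\lambda_\infty$ on disjoint compacta. You instead supply this step yourself via Hausdorff separation of the disjoint $F_k$ and Theorem~2.11(4) on the disjoint open traces $G\cap V_k$. That is a genuine improvement in rigor: your argument is self-contained from the results actually established earlier, whereas the paper's version requires the reader to take Theorem~2.13 on faith. The cost is only a few extra lines, and the Hausdorff verification you sketch is correct.
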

\begin{proof}  The proofs of (1) and (2) are straightforward.  To prove (3), let $\e>0$ be given.  Then, for each $k$, there exists an open set $G_k$ such that $A_k \subset G_k$ and $\lambda _{\infty}({G_k }) < \lambda _{\infty}^*({A_k }) +\e2^{-k}$.  Since $(\bigcup\nolimits_{k = 1}^{\infty} {A_k }) \subset (\bigcup\nolimits_{k = 1}^{\infty} {G_k })$, we have 
\[
\begin{gathered}
  \lambda _\infty ^* \left( {\bigcup\nolimits_{k = 1}^\infty  {A_k } } \right) \leqslant \lambda _\infty ^{} \left( {\bigcup\nolimits_{k = 1}^\infty  {G_k } } \right) \leqslant \sum\nolimits_{k = 1}^\infty  {\lambda _\infty ^{} (G_k )}  \hfill \\
  \quad \quad \quad \quad \quad  < \sum\nolimits_{k = 1}^\infty  {[\lambda _\infty ^* (A_k ) + \varepsilon } 2^{ - k} ] = \sum\nolimits_{k = 1}^\infty  {\lambda _\infty ^* (A_k ) + \varepsilon }. \hfill \\ 
\end{gathered} 
\]
Since $\e$ is arbitrary, we are done.

To prove (4), let $F_1,\,F_2, \dots,\,F_N$ be compact subsets of $A_1,\,A_2, \dots,\,A_N$, respectively.  Since the ${A_k}$ are disjoint, 
\[
\lambda _{\infty,(*)} \left( {\bigcup\nolimits_{k = 1}^\infty  {A_k } } \right) \geqslant \lambda _\infty ^{} \left( {\bigcup\nolimits_{k = 1}^N  {F_k } } \right) = \sum\nolimits_{k = 1}^N  {\lambda _\infty ^{} (F_k )}.   
\]
Thus,
\[
\lambda _{\infty,(*)} \left( {\bigcup\nolimits_{k = 1}^\infty  {A_k } } \right) \ge \sum\nolimits_{k = 1}^N  \lambda _{\infty,(*)} (A_k ).
\]
Since $N$ is arbitrary, we are done.
\end{proof}
The next two important theorems follow from the last one.
\begin{thm}(Regularity)  If $A$ has finite measure, then for every $\e >0$ there exist a compact set $F$ and an open set $G$ such that $F \subset A \subset G$, with $\lambda _\infty (G \setminus F ) < \e$.
\end{thm}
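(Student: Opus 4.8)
The plan is to sandwich $A$ between a compact set and an open set using the definitions of the inner and outer measures, and then obtain the estimate from finite additivity of $\la_\iy$ on disjoint bounded measurable sets. Write $m=\la_\iy(A)$, which by hypothesis equals $\la_\iy^*(A)=\la_{\iy,(*)}(A)<\iy$. By the definition of $\la_\iy^*$ choose an open set $G\supseteq A$ with $\la_\iy(G)<m+\tf{\e}{2}$, and by the definition of $\la_{\iy,(*)}$ choose a compact set $F\ci A$ with $\la_\iy(F)>m-\tf{\e}{2}$. Since $F$ is compact, formula~(2.2) \emph{is} the defining expression for $\la_\iy^*(F)$, so $\la_\iy(F)=\la_\iy^*(F)\le\la_\iy^*(A)=m<\iy$ by monotonicity; hence $\la_\iy(G)-\la_\iy(F)$ is a well-defined real number. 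Now $F\ci A\ci G$, and it remains only to show $\la_\iy(G\setminus F)<\e$.

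Before doing so I would record two elementary consequences of the earlier results. (i) Every compact set, and every open set of finite measure, is bounded measurable in the sense of Definition~2.14(3). For compact $F$ this amounts to $\la_{\iy,(*)}(F)=\la_\iy^*(F)$, which holds because taking $F$ itself in the supremum defining $\la_{\iy,(*)}(F)$ gives ``$\ge$'', while ``$\le$'' is Theorem~2.15(1). For an open $G$ of finite measure, $\la_\iy^*(G)=\la_\iy(G)$ by monotonicity of $\la_\iy$ on open sets (Theorem~2.11(2)), and the definition of $\la_\iy$ on open sets (Definition~2.6) displays $\la_\iy(G)$ as a supremum of $\la_\iy(P_N)$ over compact $P_N\in\De$ with $P_N\ci G$, so $\la_{\iy,(*)}(G)\ge\la_\iy(G)$, with ``$\le$'' again Theorem~2.15(1). (ii) If $A'$ and $B'$ are disjoint bounded measurable sets, then $A'\cup B'$ is bounded measurable and $\la_\iy(A'\cup B')=\la_\iy(A')+\la_\iy(B')$; this is the squeeze $\la_\iy(A')+\la_\iy(B')=\la_{\iy,(*)}(A')+\la_{\iy,(*)}(B')\le\la_{\iy,(*)}(A'\cup B')\le\la_\iy^*(A'\cup B')\le\la_\iy^*(A')+\la_\iy^*(B')=\la_\iy(A')+\la_\iy(B')$, whose three middle steps are, respectively, superadditivity of the inner measure on disjoint sets, Theorem~2.15(1), and subadditivity of the outer measure, i.e. parts~(4),~(1),~(3) of Theorem~2.15.

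The conclusion is then immediate. Since $\R_I^\iy$ is Hausdorff (two points that differ in some coordinate are separated by open cylinders), $F$ is closed, so $G\setminus F$ is open; it has finite measure because $G\setminus F\ci G$, hence by~(i) it is bounded measurable, and so is $F$. Applying~(ii) to the disjoint decomposition $G=F\cup(G\setminus F)$ gives $\la_\iy(G)=\la_\iy(F)+\la_\iy(G\setminus F)$, and therefore $\la_\iy(G\setminus F)=\la_\iy(G)-\la_\iy(F)<(m+\tf{\e}{2})-(m-\tf{\e}{2})=\e$. I expect the only real friction to lie in part~(i): one must be confident that an open set is genuinely inner-approximated by the compact $\De$-sets with the correct value — which is precisely the content of Definition~2.6 together with the consistency theorem — and that the proof of Theorem~2.15 does not itself invoke the regularity being proved here. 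Granting that, the argument needs no separation machinery beyond Hausdorffness and no reduction to finite-dimensional slices; it is pure measure-theoretic bookkeeping.
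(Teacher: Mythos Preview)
Your proof is correct and follows essentially the same approach as the paper: choose $G$ and $F$ from the definitions of outer and inner measure, then use the decomposition $G=F\cup(G\setminus F)$ to obtain $\la_\iy(G\setminus F)=\la_\iy(G)-\la_\iy(F)<\e$. The paper's proof simply asserts the additivity identity $\la_\iy(G)=\la_\iy(F)+\la_\iy(G\setminus F)$ without comment, whereas you have taken the extra care to justify it via your observations (i) and (ii), which is a welcome improvement in rigor at this stage of the development.
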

\begin{proof} Let $\e>0$ be given. Since $A$ has finite measure, it follows from our definitions of $\lambda _{\infty,(*)}$ and $\lambda _{\infty}^{*}$ that there is a compact set $F \subset A$ and an open set $G \supset A$ such that 
\[
\lambda_{\iy} (G) < \lambda_{\iy}^* (A) + \tfrac{ \in }
{2}\quad {\text{and}}\quad \lambda_{\iy} (F) > \lambda _{\iy, (*)} (A) - \tfrac{ \in }
{2}.
\]
Since $\lambda_{\iy} (G) = \lambda_{\iy} (F) + \lambda_{\iy} (G \setminus F)$, we have:
\[
\lambda_{\iy} (G \setminus F) = \lambda_{\iy} (G) - \lambda_{\iy} (F) < (\lambda_{\iy} (A) + \tfrac{\varepsilon }
{2}) - (\lambda_{\iy} (A) - \tfrac{\varepsilon }
{2}) = \varepsilon. 
\]
\end{proof}
\begin{thm}(Countable Additivity)  If the family $\{A_k\}$ consists of disjoint sets with bounded measure and $A={\bigcup\nolimits_{k = 1}^{\infty} {A_k } }$, with $\lambda_{\iy} ^* (A) <\iy$. then $\lambda _\infty (A ) = \sum\nolimits_{k = 1}^{\infty} \lambda _{\infty} (A_k )$.
\end{thm}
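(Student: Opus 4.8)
The plan is to prove the statement by a squeezing argument: I will sandwich both $\lambda_\infty^*(A)$ and $\lambda_{\infty,(*)}(A)$ between the single quantity $S:=\sum_{k=1}^\infty \lambda_\infty(A_k)$. Since $A$ has finite outer measure, the inner measure $\lambda_{\infty,(*)}(A)$ is defined (Definition 2.14), and once we know $\lambda_{\infty,(*)}(A)=\lambda_\infty^*(A)$ we will simultaneously have that $A$ is a bounded measurable set and that $\lambda_\infty(A)=S$. It is worth stressing that the hypotheses give only that each individual $A_k$ is bounded measurable — so $\lambda_\infty^*(A_k)=\lambda_{\infty,(*)}(A_k)=\lambda_\infty(A_k)$ — whereas measurability of $A$ is part of the conclusion, so the argument must not use it prematurely.

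First I would record the upper bound. Because $A=\bigcup_{k=1}^\infty A_k$, the countable subadditivity of the outer measure (Theorem 2.15, part (3)) gives
\[
\lambda_\infty^*(A)\ \le\ \sum_{k=1}^\infty \lambda_\infty^*(A_k)\ =\ \sum_{k=1}^\infty \lambda_\infty(A_k)\ =\ S .
\]
For the lower bound I use that the $A_k$ are pairwise disjoint: by the superadditivity of the inner measure over disjoint families (Theorem 2.15, part (4)),
\[
\lambda_{\infty,(*)}(A)\ \ge\ \sum_{k=1}^\infty \lambda_{\infty,(*)}(A_k)\ =\ \sum_{k=1}^\infty \lambda_\infty(A_k)\ =\ S .
\]
Combining these with the elementary inequality $\lambda_{\infty,(*)}(A)\le\lambda_\infty^*(A)$ (Theorem 2.15, part (1)) yields
\[
S\ \le\ \lambda_{\infty,(*)}(A)\ \le\ \lambda_\infty^*(A)\ \le\ S ,
\]
and since $\lambda_\infty^*(A)<\infty$ all of these quantities are finite and equal. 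In particular $\lambda_{\infty,(*)}(A)=\lambda_\infty^*(A)$, so $A$ is a bounded measurable set by Definition 2.14, and $\lambda_\infty(A)=\lambda_\infty^*(A)=S=\sum_{k=1}^\infty\lambda_\infty(A_k)$, which is the claim.

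Thus there is no serious obstacle in this theorem itself; the real work sits upstream, in parts (3) and (4) of Theorem 2.15 and, behind those, in the finite additivity of $\lambda_\infty$ on disjoint open sets (Theorem 2.11, part (4)), which depended on the compactness of the approximating unions $P_N$. The only delicate point here is the bookkeeping one already mentioned — deriving measurability of $A$ from the sandwich rather than assuming it. If one wished to avoid invoking Theorem 2.15(4), an alternative is to apply regularity (the Regularity theorem above) to choose, for $\varepsilon>0$, compact sets $F_k\subset A_k$ with $\lambda_\infty(F_k)>\lambda_\infty(A_k)-\varepsilon 2^{-k}$, and then use the disjointness of the $F_k$ together with $\bigcup_{k=1}^N F_k\subset A$ to bound $\lambda_{\infty,(*)}(A)$ below by $\sum_{k=1}^N\lambda_\infty(F_k)$; letting $N\to\infty$ and then $\varepsilon\to0$ gives $\lambda_{\infty,(*)}(A)\ge S$. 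Either way the conclusion follows, but the sandwich via Theorem 2.15 is the cleaner route, and I would present that.
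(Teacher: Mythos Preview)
Your proof is correct and follows essentially the same approach as the paper: the paper also sandwiches via $\lambda_\infty^*(A)\le\sum\lambda_\infty^*(A_k)=\sum\lambda_{\infty,(*)}(A_k)\le\lambda_{\infty,(*)}(A)\le\lambda_\infty^*(A)$, invoking exactly parts (1), (3), and (4) of Theorem 2.15. Your additional remarks about deriving the measurability of $A$ from the sandwich and the alternative route through the Regularity theorem are sound but not needed for the comparison.
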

\begin{proof} Since $\lambda_{\iy} ^* (A)< \iy$, we have:
\[
  \lambda _\infty ^* (A) \leqslant \sum\nolimits_{k = 1}^\infty  {\lambda _\infty ^* (A_k )}    = \sum\nolimits_{k = 1}^\infty  {\lambda _{\infty ,(*)} (A_k )}  \leqslant \lambda _{\infty ,(*)} (A) \leqslant \lambda _\infty ^* (A). 
\]
It follows that   $\lambda _\infty (A)=\lambda _\infty ^* (A)= {\lambda _{\infty ,(*)} (A )}$, so that 
\[
\lambda _\infty  (A) = \lambda _\infty  \left( {\bigcup\nolimits_{k = 1}^\infty  {A_k } } \right) = \sum\nolimits_{k = 1}^\infty  {\lambda _\infty  (A_k )} .
\]
\end{proof}
\begin{Def} Let $A$ be an arbitrary set in $\mathbb{R}_I^{\infty}$.  We say that  $A$ is measurable if $A \cap M \in {\mathfrak{L}}_0$ for all  $M \in {\mathfrak{L}}_0$.  In this case, we define $\lambda _\infty(A)$ by:  
\[  
\lambda _\infty(A) =  \sup \left\{ {\lambda _\infty (A \cap M ):\,M \subset {\mathfrak{L}}_0 } \right\}.
\]
We let ${\mathfrak{L}}_I^\iy$ be the class of all measurable sets $A$. 
\end{Def}
Proofs of the following results are standard (see Jones \cite{J}, pages 48-52).
\begin{thm} Let $A$ and $\{A_k\}$ be arbitrary sets in ${\mathfrak{L}}_I^\iy$.
\begin{enumerate} 
\item If $\lambda _{\infty}^{*}(A) < \infty$, then $A \in {\mathfrak{L}}_0$ if and only if $A \in {\mathfrak{L}}_I^\iy$.  In this case, $\lambda_\infty(A)=\lambda _{\infty}^{*}(A)$.
\item ${\mathfrak{L}}_I^\iy$ is closed under countable unions,  countable intersections, differences and complements.
\item
\[
\lambda _{\infty}(\bigcup\nolimits_{k = 1}^{\infty} {A_k }) \le \sum\nolimits_{k = 1}^{\infty}\lambda _{\infty}({A_k }).
\]
\item  If $\{A_k\}$ are disjoint,  
\[
\lambda_{\infty}(\bigcup\nolimits_{k = 1}^{\infty} {A_k })= \sum\nolimits_{k = 1}^{\infty}\lambda_{\infty}({A_k }).
\]
\item If $A_k \subset A_{k+1}$ for all $k$,  then
\[
\lambda_{\infty}(\bigcup\nolimits_{k = 1}^{\infty} {A_k }) = \mathop {\lim }\limits_{k \to \infty } \lambda_{\infty}({A_k }).
\]
\item  If $A_{k+1} \subset A_k$ for all $k$ and $\lambda_{\infty}({A_1 }) < \iy$,  then
\[
\lambda_{\infty}(\bigcap\nolimits_{k = 1}^{\infty} {A_k }) = \mathop {\lim }\limits_{k \to \infty } \lambda_{\infty}({A_k }).
\]
\end{enumerate}
\end{thm}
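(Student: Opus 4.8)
The plan is to run the standard bootstrap from a finitely additive ``bounded'' measure to a full $\s$-finite measure, following Jones \cite{J}, pp.\ 48--52. Throughout, ${\mfL}_0$ is read as the class of \emph{bounded measurable} sets, i.e.\ those $A$ of Definition 2.14 with $\la_\iy^*(A) = \la_{\iy,(*)}(A) < \iy$, and ${\mfL}_I^\iy$ as the sets that are locally bounded measurable in the sense of Definition 2.18. The workhorse is the regularity statement of Theorem 2.16: $A \in {\mfL}_0$ if and only if, for every $\e > 0$, there are a compact $F$ and an open $G$ of finite measure with $F \ci A \ci G$ and $\la_\iy(G \setminus F) < \e$.

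The step carrying the real content is the preliminary one: to show that every compact set and every open set of finite measure already lies in ${\mfL}_0$ (with $\la_\iy$ there agreeing with equation (2.2) and Definition 2.10, respectively), and then that ${\mfL}_0$ is an algebra on which $\la_\iy$ is finitely additive. For the algebra property, given $A, B \in {\mfL}_0$ and $\e > 0$, pick sandwiches $F_A \ci A \ci G_A$ and $F_B \ci B \ci G_B$ with the two gaps of $\la_\iy$-measure $< \e/2$. Then $F_A \cup F_B \ci A \cup B \ci G_A \cup G_B$, with $F_A \cup F_B$ compact, $G_A \cup G_B$ open of finite measure, and $(G_A \cup G_B)\setminus(F_A \cup F_B) \ci (G_A \setminus F_A)\cup(G_B \setminus F_B)$, so Theorem 2.15(3) bounds its outer measure by $\e$; hence $A \cup B \in {\mfL}_0$. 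Likewise $F_A \setminus G_B \ci A \setminus B \ci G_A \setminus F_B$ shows $A \setminus B \in {\mfL}_0$ ($F_A \setminus G_B$ is a closed, hence compact, subset of $F_A$, $G_A \setminus F_B$ is open of finite measure, and the gap again lies inside $(G_A \setminus F_A)\cup(G_B \setminus F_B)$), and $A \cap B = A \setminus (A \setminus B)$ gives intersections. Finite additivity of $\la_\iy$ on disjoint members of ${\mfL}_0$ is the finite case of Theorem 2.17, and monotonicity is Theorem 2.15(2).

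Parts (1) and (2) then follow by localization. If $A \in {\mfL}_0$, then $A \cap M \in {\mfL}_0$ for every $M \in {\mfL}_0$ by the algebra property, so $A \in {\mfL}_I^\iy$; conversely, if $A \in {\mfL}_I^\iy$ and $\la_\iy^*(A) < \iy$, choosing an open $G \supset A$ of finite measure (which lies in ${\mfL}_0$ by the preliminary step) gives $A = A \cap G \in {\mfL}_0$, and then $\la_\iy(A) = \la_\iy^*(A)$ because $M = G$ realizes the supremum in Definition 2.18. For (2), fix $M \in {\mfL}_0$: complements follow from $A^c \cap M = M \setminus (A \cap M) \in {\mfL}_0$; for a countable union, the sets $B_N = \bu_{k \le N}(A_k \cap M)$ increase to $B := (\bu_k A_k)\cap M$, and decomposing $B$ into the disjoint pieces $B_k \setminus B_{k-1} \in {\mfL}_0$ lets Theorem 2.17 conclude $B \in {\mfL}_0$, so $\bu_k A_k \in {\mfL}_I^\iy$; countable intersections come from de Morgan, and differences from $A \setminus B = A \cap B^c$ together with closure of ${\mfL}_0$ under finite intersections applied to $A \cap M$ and $B$.

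Parts (3)--(6) are now routine. For (3), fix $M \in {\mfL}_0$ and use subadditivity inside $M$ to get $\la_\iy((\bu_k A_k)\cap M) \le \sum_k \la_\iy(A_k \cap M) \le \sum_k \la_\iy(A_k)$, then take the supremum over $M$. In (4) only ``$\ge$'' needs work: given $\e > 0$ and $N \in \N$, pick $M_1, \dots, M_N \in {\mfL}_0$ with $\la_\iy(A_k \cap M_k) > \la_\iy(A_k) - \e 2^{-k}$ (vacuous when $\la_\iy(A_k) = \iy$), put $M = \bu_{k \le N} M_k$, and use disjointness and finite additivity to get $\la_\iy((\bu_k A_k)\cap M) \ge \sum_{k \le N}\la_\iy(A_k \cap M) > \sum_{k \le N}\la_\iy(A_k) - \e$; let $N \to \iy$ and $\e \to 0$. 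For (5), write $A = \bu_k A_k$ as the disjoint union of $A_k \setminus A_{k-1}$ (with $A_0 = \es$) and combine (4) with finite additivity to obtain $\la_\iy(A) = \lim_N \la_\iy(A_N)$; for (6), apply this to $A_1 \setminus A_k \uparrow A_1 \setminus \bi_k A_k$ and subtract, which is legitimate because $\la_\iy(A_1) < \iy$ forces every term to be finite. The main obstacle is thus concentrated in the preliminary step of the second paragraph --- the algebra and finite-additivity properties of ${\mfL}_0$, and the fact that the earlier ad hoc definitions of $\la_\iy$ on open and on compact sets are consistent with it; once these and Theorem 2.17 are in hand, the localization argument (de Morgan, disjointification, exhaustion by finite-measure open sets) is mechanical.
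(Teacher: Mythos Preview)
Your proof is correct and follows precisely the route the paper indicates: the paper gives no argument of its own here, simply citing Jones \cite{J}, pp.\ 48--52, and your bootstrap (sandwich lemma $\Rightarrow$ ${\mfL}_0$ is an algebra $\Rightarrow$ localize to get ${\mfL}_I^\iy$) is exactly Jones's development transplanted to this setting. One point worth flagging: the paper's Definition 2.14 literally sets ${\mfL}_0 = \{A : \la_\iy^*(A) < \iy\}$, under which Definition 2.18 would be vacuous (every set would be measurable by monotonicity of outer measure); your reading of ${\mfL}_0$ as the \emph{bounded measurable} sets is the only one that makes the theory coherent and matches Jones, so you were right to adopt it explicitly.
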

We end this section with an important result that relates Borel sets to ${\mathfrak L}_I^\iy$-measurable sets (Lebesgue). 
\begin{thm} Let $A$ be a  ${\mathfrak L}_I^\iy$-measurable set.  Then there exists a Borel set $F$ and a set $N$ with $\lambda_{\infty}({N })=0$ such that $A=F \cup N$.
\end{thm}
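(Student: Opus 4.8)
\smallskip
\noindent\textbf{Proof proposal.}
The plan is to reduce the statement to two facts. First, the ``non-cylindrical'' remainder $\mfX_2 := \R^\iy \setminus \bu_{n=1}^\iy \R_I^n$ is $\la_\iy$-null (indeed every subset of it is null and measurable). Second, on the genuinely $\s$-finite part $\mfX_1 := \bu_{n=1}^\iy \R_I^n$ the classical inner approximation by $F_\s$ sets goes through. Granting these, for a measurable $A$ one builds a Borel set $F \ci A$ with $\la_\iy(A \setminus F) = 0$, sets $N := A \setminus F$, and obtains $A = F \cup N$ with $\la_\iy(N)=0$.

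I would first establish that $\la_\iy^*(S) = 0$ for every $S \ci \mfX_2$. In the $\tau$-topology the metric $d_2$ on $\mfX_2$ is discrete, so every subset of $\mfX_2$ is $\tau$-open; hence $\la_\iy^*(S) \le \la_\iy(S)$, where $\la_\iy(S)$ is computed by the open-set definition as a supremum of $\la_\iy(P_N)$ over $P_N \in \De$ with $P_N \ci S$. But every member of $\De$ is a finite union of compact sets lying in some $\R_I^n \ci \mfX_1$, hence is disjoint from $\mfX_2$; so $P_N \ci S \ci \mfX_2$ forces $P_N = \es$, and the supremum is $0$. Thus $\mfX_2$ is a Borel null set and every subset of it is $\mfL_I^\iy$-measurable with measure $0$. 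I expect this to be the only step that is not completely routine, precisely because on $\mfX_2$ the $\s$-algebra is the full powerset and the set is not $\s$-finite in any visible way; the point is simply that the open sets that feed $\la_\iy$ never reach into $\mfX_2$.

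Next I would exploit that $\mfX_1$ is $\s$-finite: since $\R_I^n = \bu_{m=1}^\iy\big([-m,m]^n \times I_n\big)$ and $\la_\iy\big([-m,m]^n \times I_n\big) = (2m)^n < \iy$ (because $\la_\iy$ restricted to $\R_I^n$ is $n$-dimensional Lebesgue measure), there is a countable family $\{M_j\}_{j\in\N}$ of sets of finite $\la_\iy$-measure with $\mfX_1 = \bu_j M_j$. Fix a measurable $A$. For each $j$ we have $M_j \in \mfL_0$, so $A \cap M_j \in \mfL_0$ by the definition of measurability and in particular has finite measure; the Regularity theorem then gives, for every $k$, a compact set $F_{j,k} \ci A \cap M_j$ with $\la_\iy\big((A\cap M_j)\setminus F_{j,k}\big) < 1/k$. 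Compact subsets of $\R_I^\iy$ are closed (the $\tau$-topology is Hausdorff, being the topological sum of the Hausdorff space $\mfX_1$ and a discrete space) and hence Borel, so $F_j := \bu_{k=1}^\iy F_{j,k}$ is an $F_\s$ Borel set with $F_j \ci A \cap M_j$ and, by monotonicity of $\la_\iy^*$, $\la_\iy\big((A\cap M_j)\setminus F_j\big) = 0$.

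Finally I would put $F := \bu_{j=1}^\iy F_j$, a Borel subset of $A$, and $N := A \setminus F$, so that trivially $A = F \cup N$. Writing $A = \big(\bu_j (A \cap M_j)\big) \cup (A \cap \mfX_2)$, one gets $N \ci \bu_{j=1}^\iy\big((A\cap M_j)\setminus F_j\big) \cup (A \cap \mfX_2)$, a countable union of $\la_\iy$-null sets together with a subset of the null set $\mfX_2$; hence $\la_\iy^*(N) = 0$ by countable subadditivity of $\la_\iy^*$, and $N$ is $\mfL_I^\iy$-measurable (as $A$ is measurable, $F$ is Borel, and $\mfL_I^\iy$ is closed under differences) with $\la_\iy(N) = 0$. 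This exhibits $A = F \cup N$ with $F$ Borel and $\la_\iy(N) = 0$, completing the argument.
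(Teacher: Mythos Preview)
The paper does not supply a proof of this theorem; it is stated immediately after the remark that ``proofs of the following results are standard (see Jones \cite{J}, pages 48--52),'' so the intended argument is the classical one: on a $\sigma$-finite regular measure space every measurable set differs from an inner $F_\sigma$ approximant by a null set. Your proof is correct and is exactly this standard argument, carried out with the care the present setting demands. The one point that goes beyond Jones is your treatment of $\mfX_2 = \R^\infty \setminus \bigcup_n \R_I^n$: you correctly observe that every subset of $\mfX_2$ is $\tau$-open yet contains no $P_N \in \Delta$, hence has $\lambda_\infty$-measure (and outer measure) zero. This is precisely the step that allows the $\R^n$ argument to transfer, and it is the only place where the specific structure of $\mathfrak{B}(\R_I^\infty)$ enters; the rest is routine use of Theorem~2.16 (regularity) and the $\sigma$-finiteness of $\mfX_1$ that you exhibit via the exhaustion $\R_I^n = \bigcup_m \big([-m,m]^n \times I_n\big)$.
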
 
Thus, we see that $\lambda _{\infty} (\cdot)$ is a regular countably additive $\s$-finite Borel measure on $\mathbb{R}_I^{\infty}=\mathbb{R}^{\infty}$ (as sets).  More important is the fact that the development is no more difficult than the corresponding theory for Lebesgue measure on $\mathbb{R}^{n}$. 

Throughout the remainder of the paper we will also use $\mathfrak{B}[\mathbb{R}_I^{\infty}]$ for its completion ${\mathfrak L}_I^\iy$ when convenient.  This should cause no confusion since the given context will always be clear.  
\subsection{{Separable Banach Spaces}} 
In order to see what other advantages our construction of $(\mathbb{R}_I^{\infty}, \mathfrak{B}[\mathbb{R}_I^{\infty}], \lambda _{\infty} (\cdot))$ offers, in this section we study separable Banach spaces.   Let $\mcB$ be any separable Banach space. 

Recall that (see Diestel \cite{DI}, page 32):
\begin{Def}
A sequence $(u_n)$ is called a Schauder basis for $\mathcal{B}$ if $\lt\|u_n\rt\|_{\mcB}=1$ and, for each $f \in \mathcal{B}$, there is a unique sequence $(a_n)$ of scalars such that 
\[
f={\rm{lim}}_{n\rightarrow \infty}\sum\nolimits_{k = 1}^n {a_k u_k }. 
\] 
\end{Def}
\begin{Def}
A sequence $(v_n)$ is called an absolutely convergent Schauder basis for $\mathcal{B}$ if 
$\sum _{n = 1}^\infty  \left\| {v_n } \right\|_{\mcB}< \iy$ and, for each $f \in \mathcal{B}$, there is a unique sequence $(b_n)$ of scalars such that 
\[
f={\rm{lim}}_{n\rightarrow \infty}\sum\nolimits_{k = 1}^n {b_k v_k }. 
\] 
\end{Def}
\begin{lem}  Let $(u_n)$ be a Schauder basis for $\mathcal{B}$, then there exists an absolutely convergent Schauder basis for $\mathcal{B}$.
\end{lem}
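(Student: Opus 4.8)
The plan is to obtain the required basis by a trivial rescaling of the given one. Fix a sequence of strictly positive scalars $(c_n)$ with $\sum_{n=1}^{\infty} c_n < \infty$ — for definiteness take $c_n = 2^{-n}$ — and set $v_n = c_n u_n$. Since $\left\|u_n\right\|_{\mcB} = 1$, we have $\sum_{n=1}^{\infty}\left\|v_n\right\|_{\mcB} = \sum_{n=1}^{\infty} c_n < \infty$, so $(v_n)$ already satisfies the summability condition in the definition of an absolutely convergent Schauder basis. It remains only to check that $(v_n)$ is a basis in the required sense.

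For existence of expansions: given $f \in \mcB$, let $(a_n)$ be the unique scalar sequence furnished by the Schauder basis $(u_n)$, so that $f = \lim_{n\to\infty}\sum_{k=1}^{n} a_k u_k$, and put $b_k = a_k/c_k$. Then for every $n$ the partial sums coincide, $\sum_{k=1}^{n} b_k v_k = \sum_{k=1}^{n} a_k u_k$, whence $\lim_{n\to\infty}\sum_{k=1}^{n} b_k v_k = f$. So every $f$ admits an expansion in the $v_n$.

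For uniqueness: suppose $(b_k)$ and $(b_k')$ both represent the same $f$ relative to $(v_n)$. Subtracting the partial sums gives $\sum_{k=1}^{n}(b_k - b_k') v_k = \sum_{k=1}^{n}(b_k - b_k') c_k u_k \to 0$ as $n \to \infty$. But $0$ has the $u_n$-expansion with all coefficients zero, so the uniqueness clause in the definition of the Schauder basis $(u_n)$ forces $(b_k - b_k') c_k = 0$ for every $k$; since $c_k \neq 0$ this yields $b_k = b_k'$. Hence $(v_n)$ is an absolutely convergent Schauder basis for $\mcB$.

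I do not expect a genuine obstacle here. The only point worth keeping in mind is that the definition of an absolutely convergent Schauder basis constrains the norms $\left\|v_n\right\|_{\mcB}$ to be summable but does not require them to be normalized; this is precisely what allows us to multiply a normalized Schauder basis by a summable sequence of nonzero weights. Both the convergence and the uniqueness of the expansion transfer verbatim from $(u_n)$ to $(v_n)$ exactly because the weights $c_k$ are nonzero.
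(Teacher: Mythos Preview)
Your proof is correct and follows essentially the same approach as the paper: rescale $u_n$ by $c_n = 2^{-n}$, check summability of the norms, and transfer the expansion via $b_k = a_k/c_k$. Your argument is in fact a bit more careful than the paper's, since you explicitly verify uniqueness of the $(v_n)$-expansion, which the paper leaves implicit.
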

\begin{proof}  Let $(v_n)=(\tf{u_n}{2^n})$.  Then
\[
\sum\limits_{n = 1}^\infty  {\left\| {v_n } \right\|_{\mcB}}  = \sum\limits_{n = 1}^\infty  {\frac{{\left\| {u_n } \right\|_{\mcB}}}
{{2^n }}}  = \sum\limits_{n = 1}^\infty  {\frac{1}
{{2^n }}}  = 1 < \infty .
\]
To see that $(v_n)$ is a Schauder basis for $\mathcal{B}$, let $f \in \mcB$.  By definition, there is a unique sequence $(a_n)$ of scalars such that 
\[
f={\rm{lim}}_{n\rightarrow \infty}\sum\nolimits_{k = 1}^n {a_k u_k }. 
\] 
If we take the sequence $(b_n)=(2^n a_n)$, then
\[
{\rm{lim}}_{n\rightarrow \infty}\sum\nolimits_{k = 1}^n {b_k v_k }={\rm{lim}}_{n\rightarrow \infty}\sum\nolimits_{k = 1}^n {a_k u_k }=f. 
\] 
\end{proof}
It is known that most of the natural separable Banach spaces, and all that have any use for applications in analysis, have a Schauder basis.  In particular, it is easy to see from  the definition of a Schauder basis that, for any sequence $(a_n) \in \R_I^{\iy}$ representing a function $f \in \mathcal{B}$, we have $\lim _{n \to \infty } a_n  = 0$.  It follows that every separable Banach space (with a Schauder basis) is isomorphic to a subspace of $\mathbb{R}_I^{\infty}$.

Let ${\mcB}_I$ be the set of all sequences $(a_n)$ for which ${\rm{lim}}_{n\rightarrow \infty}\sum\nolimits_{k = 1}^n {a_k u_k }$ exists in $\mathcal{B}$.  Define
\[
{\left\| {(a_n )} \right\|}_{{\mcB}_I} = \sup _n \left\| {\sum\nolimits_{k = 1}^n {a_k u_k } } \right\|_\mathcal{B}. 
\]
\begin{lem} An  operator
$$
T: ( {\mcB}, ||\cdot ||_{{\mcB}} )  \to ({\mcB}_I , ||\cdot ||_{{\mcB}_I}) ,
$$  
 defined  by  $T(f)=(a_k)$ for  $f=\lim_{n \to \infty}\sum_{k=1}^n  a_k  u_k   \in  {\mcB}$,
is an isomorphism from  ${\mcB}$ onto ${\mcB}_I$.
\end{lem}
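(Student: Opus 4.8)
The plan is to check, in turn, that $T$ is well defined, linear, bijective, and bicontinuous, with essentially all the content sitting in the last point. \emph{Well-definedness, linearity, bijectivity.} By the definition of a Schauder basis, every $f\in\mcB$ has a \emph{unique} coefficient sequence $(a_k)$ with $f=\lim_{n\ra\iy}\sum_{k=1}^n a_k u_k$; since these partial sums converge in $\mcB$, the sequence $(a_k)$ lies in $\mcB_I$, so $T$ is well defined, and uniqueness of $(a_k)$ makes $T$ linear. For surjectivity, given $(a_k)\in\mcB_I$ the element $f:=\lim_{n\ra\iy}\sum_{k=1}^n a_k u_k$ exists in $\mcB$ by the very definition of $\mcB_I$, and uniqueness of the Schauder expansion of $f$ forces $T(f)=(a_k)$. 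Injectivity is immediate: $T(f)=0$ gives $f=\lim_n\sum_{k=1}^n 0\cdot u_k=0$.

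\emph{Continuity of $T^{-1}$.} Write $S_n f=\sum_{k=1}^n a_k u_k$ for the $n$th partial-sum operator. Continuity of the norm on $\mcB$ gives $\|f\|_{\mcB}=\lim_n\|S_n f\|_{\mcB}\le\sup_n\|S_n f\|_{\mcB}=\|T(f)\|_{\mcB_I}$, so $T^{-1}$ is bounded with norm at most $1$. \emph{Continuity of $T$.} This is the crux: the assertion $\|T(f)\|_{\mcB_I}=\sup_n\|S_n f\|_{\mcB}\le K\|f\|_{\mcB}$ is exactly the classical statement that the partial-sum projections of a Schauder basis are uniformly bounded (with $K$ the basis constant). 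I would derive it by first showing that $(\mcB_I,\|\cdot\|_{\mcB_I})$ is a Banach space. If $(x^{(m)})$ is $\|\cdot\|_{\mcB_I}$-Cauchy, then since $\|u_k\|_{\mcB}=1$ one has $|a_k^{(m)}-a_k^{(l)}|=\|(S_k-S_{k-1})(x^{(m)}-x^{(l)})\|_{\mcB}\le 2\|x^{(m)}-x^{(l)}\|_{\mcB_I}$, so each coordinate $a_k^{(m)}$ converges to some $a_k$; using the Cauchy estimates again one checks that the partial sums $\sum_{k=1}^n a_k u_k$ are Cauchy in $\mcB$ (hence $x:=(a_k)\in\mcB_I$) and that $\|x^{(m)}-x\|_{\mcB_I}\ra 0$. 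Once $\mcB_I$ is complete, $T^{-1}$ is a bounded linear bijection between Banach spaces, so by the bounded inverse (open mapping) theorem $T=(T^{-1})^{-1}$ is bounded. Combining the two steps, $\|f\|_{\mcB}\le\|T(f)\|_{\mcB_I}\le K\|f\|_{\mcB}$, so $T$ is an isomorphism.

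\emph{Main obstacle.} The only non-routine ingredient is the completeness of $(\mcB_I,\|\cdot\|_{\mcB_I})$ — equivalently, the uniform boundedness of the partial-sum projections. The delicate point there is the diagonal-sequence bookkeeping needed to see that the candidate limit sequence really lies in $\mcB_I$, i.e. that $\sum_{k=1}^n a_k u_k$ converges in $\mcB$; the rest is formal. Since this fact is entirely standard and the requisite background is already cited via Diestel \cite{DI}, one may instead simply quote it and reduce the proof to the first two steps.
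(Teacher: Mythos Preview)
Your proof is correct and follows the standard argument for this classical fact about Schauder bases. The paper itself states the lemma without proof, relying implicitly on the reference to Diestel \cite{DI} given just before Definition 2.21; what you have written is precisely the textbook argument found there (well-definedness and bijectivity from the uniqueness of Schauder expansions, $\|T^{-1}\|\le 1$ trivially, and boundedness of $T$ via completeness of $(\mcB_I,\|\cdot\|_{\mcB_I})$ together with the open mapping theorem). Your identification of the completeness of $\mcB_I$ as the one nontrivial step, and your remark that it is equivalent to the uniform boundedness of the partial-sum projections (the basis constant), are both on point.
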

Let ${\mcB}$ be a separable Banach space with a Schauder basis and let $\mcB_I= T[\mcB]$.  If $\mathfrak{B}({\mcB}_I)=\mcB_I \cap \mathfrak{B}[\R_I^\iy]$, we define the $\s-$algebra generated on $\mcB$, and associated with $\mathfrak{B}({\mcB}_I)$ by: 
\[
\mathfrak{B}_I [\mcB] = \left\{ {T^{ - 1} (A)\left| {\; A \in \mathfrak{B}[\mcB_I ]} \right.} \right\} = :T^{ - 1} \left\{ {\mathfrak{B}\left[ {\mcB_I } \right]} \right\}.
\]
Note that, just as $\mfB[\R^\iy] \subset \mfB[\R_I^\iy]$, we also have $\mathfrak{B} [\mcB] \subset \mathfrak{B}_I [\mcB]$ (with the containment proper).  
\begin{thm}Let $A \in \mathfrak{B}_I({\mcB})$ and  set ${\hat{\la}}_{{\mcB}}(A)= \la_{\infty}[T(A)]$.  Let $\la_{{\mcB}}$ be the completion of ${\hat{\la}}_{{\mcB}}$,  then ${\la}_{{\mcB}}$  is a non-zero $\sigma$-finite Borel measure on $\mcB$. 
\end{thm}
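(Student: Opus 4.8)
The plan is to transport the structure from $\R_I^\iy$ to $\mcB$ through the isomorphism $T$ of Lemma~2.24 and then invoke the measure-theoretic facts already established for $(\R_I^\iy, \mfB[\R_I^\iy], \la_\iy)$. First I would check that $\hat\la_\mcB$ is well-defined: by Lemma~2.24, $T$ is a linear bijection from $\mcB$ onto $\mcB_I$, and since we set $\mfB_I[\mcB] = T^{-1}\{\mfB[\mcB_I]\}$ with $\mfB[\mcB_I] = \mcB_I \cap \mfB[\R_I^\iy]$, every $A \in \mfB_I[\mcB]$ has $T(A) \in \mfB[\mcB_I] \subset \mfB[\R_I^\iy]$, so $\la_\iy[T(A)]$ is meaningful (here $\la_\iy$ is understood on the trace $\s$-algebra $\mcB_I \cap \mfB[\R_I^\iy]$, which is the restriction of the measure space $(\R_I^\iy,\mfB[\R_I^\iy],\la_\iy)$ to the subset $\mcB_I$).

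Next I would verify the measure axioms for $\hat\la_\mcB$ on $\mfB_I[\mcB]$. Non-negativity is immediate since $\la_\iy \geq 0$. For countable additivity, let $\{A_k\} \subset \mfB_I[\mcB]$ be pairwise disjoint; since $T$ is a bijection, $\{T(A_k)\}$ are pairwise disjoint sets in $\mfB[\mcB_I]$, and $T(\bu_k A_k) = \bu_k T(A_k)$, so countable additivity of $\hat\la_\mcB$ follows directly from countable additivity of $\la_\iy$ (Theorem~2.16, or Theorem~2.20(4) for the completed measure). Thus $\hat\la_\mcB$ is a Borel measure on $(\mcB, \mfB_I[\mcB])$; restricting to the smaller $\s$-algebra $\mfB[\mcB] \subset \mfB_I[\mcB]$ gives in particular a Borel measure in the usual sense, and taking the completion $\la_\mcB$ is a standard operation that preserves countable additivity. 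That $\la_\mcB$ is $\s$-finite: the sets $\mcB_I \cap \R_I^n$ exhaust $\mcB_I$ as $n \to \iy$ (every sequence $(a_k)$ representing an $f \in \mcB$ lies in some $\R_I^n$ up to the tail constraint — more carefully, one uses that $\la_\iy$ is $\s$-finite on $\R_I^\iy$ as established at the end of Section~2.2, and pulls back a countable exhausting family of finite-measure sets through $T^{-1}$, intersecting with $\mcB_I$). Finally, non-triviality: one must exhibit a set $A \in \mfB_I[\mcB]$ with $0 < \hat\la_\mcB(A) < \iy$; the natural candidate is $A = T^{-1}(C)$ where $C$ is a cube-like set $\times_{k=1}^n [-r_k/2, r_k/2] \times I_n$ inside $\mcB_I$, for which $\la_\iy(C) = \prod_{k=1}^n r_k \in (0,\iy)$ by Theorem~2.8.

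The main obstacle I anticipate is the non-triviality/$\s$-finiteness step, specifically confirming that $\mcB_I$ — the image of $\mcB$ under $T$ — genuinely contains enough of $\R_I^\iy$ for the restricted measure to be non-zero and $\s$-finite. The subtlety is that $\mcB_I$ is a proper, possibly "thin" subset of $\R_I^\iy$ (its complement could in principle carry all the mass of some finite-measure open sets), so one needs to know that the basic finite-measure building blocks $\times_{k=1}^n A_k \times I_n$ with $A_k \subset I$ actually lie in $\mcB_I$. This holds because for a Schauder basis $(u_n)$ with $\|u_n\|_\mcB = 1$, any sequence $(a_k)$ that is eventually zero gives a finite sum $\sum_{k=1}^n a_k u_k \in \mcB$, hence lies in $\mcB_I$; and the cube $\times_{k=1}^n A_k \times I_n$, while not consisting only of eventually-zero sequences, is covered by the analysis once one notes (using the earlier remark that $\lim_n a_n = 0$ for basis coefficient sequences, and standard Schauder-basis convergence) that it has non-empty interior-in-$\mcB_I$ intersection with $\mcB_I$ of positive $\la_\iy$-measure. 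Making this last point precise — i.e., that the trace of the measure on $\mcB_I$ is not the zero measure — is where I would spend the most care; everything else is a routine transfer of the completed, $\s$-finite, countably additive structure on $\R_I^\iy$ through the bijection $T$.
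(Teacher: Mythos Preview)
Your transport-of-structure argument for well-definedness, countable additivity, and $\sigma$-finiteness is essentially correct and matches the implicit routine steps in the paper. The genuine gap is exactly where you flagged it: non-triviality.

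Your proposed resolution does not go through. With a \emph{normalized} Schauder basis $(u_n)$ (i.e.\ $\|u_n\|_{\mcB}=1$, as in Definition~2.21), the set $\times_{k=1}^{n}A_k \times I_n$ need not lie inside $\mcB_I = T(\mcB)$ at all: a sequence like $(a_k)$ with $a_k = \tfrac12$ for all $k$ is in $I_0$, but $\sum_k \tfrac12\,u_k$ typically diverges (take $\mcB=\ell_1$ with the standard basis). So the step ``$T(T^{-1}(C))=C$'' fails unless $C\subset \mcB_I$, and your fallback --- that the intersection $C\cap\mcB_I$ still has positive $\la_\iy$-measure --- is precisely what is at stake and cannot be asserted by ``standard Schauder-basis convergence.'' Knowing only that basis coefficients tend to zero gives no lower bound on how much of a product cube survives inside $\mcB_I$.

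The paper closes this gap with Lemma~2.23: pass to an \emph{absolutely convergent} Schauder basis $(v_k)=(2^{-k}u_k)$, and take $T$ relative to $(v_k)$. Then for any $L>0$ and any $(a_k)\in[-L,L]^{\aleph_0}$ the series $\sum_k a_k v_k$ converges in $\mcB$ (because $\sum_k\|v_k\|_{\mcB}<\infty$), so the \emph{entire} cube $[-L,L]^{\aleph_0}$, and in particular $I_0$, is contained in $\mcB_I$. One then gets directly
\[
\la_{\mcB}\bigl(T^{-1}(I_0)\bigr)=\la_\iy\bigl(T(T^{-1}(I_0))\bigr)=\la_\iy(I_0)=1,
\]
which is the missing ingredient. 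Once you invoke Lemma~2.23 for this purpose, the rest of your outline is fine.
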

\begin{proof}  Let $\{ v_k\}$ be an absolutely convergent Schauder basis. We first prove that, for any $L>0$ and any sequence $(a_k) \in [-L,L]^{\aleph _o}$, the function $f= \sum _{k = 1}^\infty  a_k v_k \in {\mcB}$.  We then prove that  $\la_{{\mcB}}$ is nonzero.

Part 1

Let $L$ be given.  Since $(v_n)$ is an absolutely convergent Schauder basis, given $\e>0$ we can choose $N$ such that $\sum_{k=N}^{\infty}||v_k||< \frac{\epsilon}{L}$.  It follows that, for $N \le m \le n$, we have
\[
\left\| {\sum _{k = m}^n a_k v_k } \right\| \leqslant \sum _{k = m}^n \left\| {v_k } \right\| < \varepsilon. 
\]
Thus, the sequence $\{f_n\}$, defined by $f_n  = \sum _{k = 1}^n a_k v_k $, is a Cauchy sequence in ${{\mcB}}$.  Since ${{\mcB}}$ is a Banach space, the sequence converges.

Part 2

To prove that $\la_{{\mcB}}$ is nonzero, it suffices to show that $\la_{{\mcB}}\lt[T^{-1}\lt(I_0\rt)\rt] \ne 0$, where $(I_0)=[-\tf{1}{2},\tf{1}{2}]^{\aleph _o}$.  First, we note that $T$ is an injective linear map into $\R_I^{\iy}$, so that $B=T^{-1}(I_0) \in {\mathfrak B}_I({{\mcB}})$.  Thus,
\[
\lambda _{{\mcB}} (B) = \lambda _\infty  \left[ {T \left( { T^{ - 1} (I_0)} \right)} \right] = \lambda _\infty  (I_0) = 1.
\]
\end{proof}
\subsection{Translations}
In the theorem below, we will provide a new proof that $\ell_1$ is the largest (dense) group of admissible translations for $\R_I^\iy$, so necessarily $\ell_1$ is the largest group of admissible translations for every separable Banach space $\mcB$.

Recall that $h(x)=\otimes_{k=1}^{\iy}h_k(x_k)$, where $h_k(x_k)=1$, for $x_k \in [-\tf{1}{2}, \tf{1}{2}]$.  It follows from $d\nu =hd\la_\iy$, that $\nu$ is absolutely continuous with respect to  $\la_\iy$.  Thus,  $\nu$ is equivalent to $\la_{\iy}$.  Let $\mfT_{\la_{\iy}}$ be the set of admissible translations for $\R_I^\iy$ (i.e., $\la_\iy[A-x]= \la_\iy[A]$ for all $A \in  \mathfrak{B}[\R_I^\iy]$ and $x \in \mfT_{\la_{\iy}}$).
\begin{thm} If $A \in  \mathfrak{B}[\R_I^\iy]$ then $\la_\iy[A-x]= \la_\iy[A]$ if and only if $ \mathfrak{T}_{\la_{\iy}}=\ell_1$. 
\end{thm}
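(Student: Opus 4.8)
I read the assertion as $\mathfrak T_{\la_{\iy}}=\ell_1$, i.e. that $\la_\iy(A-x)=\la_\iy(A)$ for all $A\in\mfB[\R_I^\iy]$ exactly when $x\in\ell_1$; this requires two inclusions.

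\textbf{That every $x\in\ell_1$ is admissible.} The plan is a $\pi$--$\la$ argument based on the class of measurable boxes $B=\mathop\times\limits_{i=1}^{\iy}B_i$ (with $B_i\in\mfB(\R)$ and $\prod_i\la(B_i)$ quasi-convergent), on which $\la_\iy(B)=\prod_i\la(B_i)$ by the construction of $\la_\iy$ on the $\R_I^n$. For such a $B$ and $x\in\ell_1$ we have $B-x=\mathop\times\limits_i(B_i-x_i)$ with $\la(B_i-x_i)=\la(B_i)$ by one-dimensional translation invariance, so it remains only to see that $\la_\iy(B-x)$ really equals $\prod_i\la(B_i)$. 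Because $\sum_i|x_i|<\iy$, the tail products $\prod_{i>n}\la\big((B_i-x_i)\cap I\big)$ converge, as $n\to\iy$, to the corresponding tail factor of $\prod_i\la(B_i)$; hence $(B-x)\cap\R_I^n$ has $\la_\iy$-measure $\prod_{i\le n}\la(B_i-x_i)\cdot\prod_{i>n}\la\big((B_i-x_i)\cap I\big)$, and, letting $n\to\iy$ via continuity from below, $\la_\iy\big((B-x)\cap\bigcup_n\R_I^n\big)=\prod_i\la(B_i)$, the remaining part of $B-x$ lying in $L:=\R^\iy\setminus\bigcup_n\R_I^n$. Since the finite-measure boxes generate $\mfB'(\R_I^\iy)$ and $\la_\iy$ is $\s$-finite (exhaust by $[-k,k]^n\times I_n$), $\pi$--$\la$ gives $\la_\iy(\cdot-x)=\la_\iy(\cdot)$ on $\mfB'(\R_I^\iy)$; the only further ingredient is that $L$ and $L-x$ are $\la_\iy$-null, after which the equality descends to the completion $\mfB[\R_I^\iy]$.

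\textbf{That no $x\notin\ell_1$ is admissible.} Here one set suffices: take $A=I_0$. For any $x$, $(I_0-x)\cap\R_I^n=\mathop\times\limits_{i\le n}(I-x_i)\times\mathop\times\limits_{i>n}\big((I-x_i)\cap I\big)$, whose $\la_\iy$-measure is $\prod_{i>n}\la\big((I-x_i)\cap I\big)=\prod_{i>n}\max\{1-|x_i|,0\}$ (the factors with $i\le n$ are intervals of length $1$). If $x\notin\ell_1$ this tail product is $0$ for every $n$ (a vanishing factor if $|x_i|\ge1$ infinitely often, and otherwise because $\sum_{i>n}|x_i|=\iy$), so $\la_\iy\big((I_0-x)\cap\bigcup_n\R_I^n\big)=0$; as $I_0-x$ meets $L$ only in a null set, $\la_\iy(I_0-x)=0\ne1=\la_\iy(I_0)$. (This is exactly Kakutani's dichotomy for $\bigotimes_i\mathrm{Unif}(I)$, with $1-|x_i|$ the Hellinger affinity of $\mathrm{Unif}(I)$ and $\mathrm{Unif}(I+x_i)$, written out by hand; the companion statement that the tail product tends to $1$ when $x\in\ell_1$ is the quasi-invariance used above.)

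\textbf{Main obstacle.} The conceptual input — finite-dimensional translation invariance plus the convergence of $\prod(1-|x_i|)$ — is routine; the real work is the measure-theoretic bookkeeping forced by the nonstandard $\s$-algebra, since translation preserves neither $\bigcup_n\R_I^n$ nor $L$. The two fiddly points I expect to dwell on are (i) checking that a translate of a box is again in $\mfB[\R_I^\iy]$ with its mass concentrated in $\bigcup_n\R_I^n$, and (ii) proving $\la_\iy(L)=\la_\iy(L-x)=0$; for (ii) the natural route is a Borel--Cantelli estimate $\la_\iy^{*}\big((L-x)\cap([-k,k]^n\times I_n)\big)\le(2k)^n\sum_{i>n}|x_i|$, obtained from the factorization of $\la_\iy$ on $\{y:y_i\in I\ \text{for}\ i>n\}$ as $\la_n\otimes\bigotimes_{i>n}\mathrm{Unif}(I)$, and then letting $n\to\iy$. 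One can sidestep all of this by verifying that $\la_\iy$ is regular, $\s$-finite, normalized on $I_0$ and (by the first part) $\ell_1$-invariant, and invoking the uniqueness Theorem~0.5 to identify $\la_\iy$ with the Kirtadze--Pantsulaia measure, whose admissible-translation group is $\ell_1$; but the point of this section is to give the self-contained computation.
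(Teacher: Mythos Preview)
Your proposal is correct. For the direction $\mathfrak T_{\la_\iy}\subset\ell_1$ you do exactly what the paper does: test on $A=I_0$, compute the tail product $\prod_{k>n}(1-|x_k|)_+$, and read off $x\in\ell_1$ from $\lim_n\prod_{k>n}(1-|x_k|)_+=1$. For the direction $\ell_1\subset\mathfrak T_{\la_\iy}$ your route is genuinely different. The paper passes to the probability measure $\nu$ with $d\nu=h\,d\la_\iy$ (i.e.\ $\nu=\bigotimes_k\mathrm{Unif}(I)$), cites Yamasaki to identify $\mathfrak T_\nu=\mathfrak T_{\la_\iy}$, and then applies Kakutani's equivalence criterion: the Hellinger affinity $\int\sqrt{h_k(y)h_k(y-x_k)}\,d\la=(1-|x_k|)_+$ gives a positive product precisely when $x\in\ell_1$. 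You instead unpack this by hand, feeding the same factor $(1-|x_k|)_+$ into a box-by-box computation of $\la_\iy(B-x)$ and then a $\pi$--$\la$ extension. The trade-off is clear: the paper's argument is short because the Kakutani/Yamasaki machinery absorbs all the measurability questions about the nonstandard $\s$-algebra, whereas your version is self-contained but must confront directly that translation does not preserve $\bigcup_n\R_I^n$ or $L$, forcing you to verify $\la_\iy(L)=\la_\iy(L-x)=0$ and that translates of boxes remain measurable with the expected mass. You correctly identify this as the main obstacle, and your Borel--Cantelli estimate for $\la_\iy^*\big((L-x)\cap([-k,k]^n\times I_n)\big)$ is the right way to close it; note also that $\la_\iy(L)=0$ follows immediately from the construction, since no compact $P_N\in\De$ meets $L$. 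Your final remark about short-circuiting via Theorem~0.5 is apt but, as you say, against the spirit of the section.
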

\begin{proof}
Suppose that $x \in \ell_1$.  Since $\nu  \sim \lambda _\infty$, we have that $ \mathfrak{T}_{\nu}=\mathfrak{T}_{\la_{\iy}}$ (see Yamasaki \cite{YA1}).  Thus, it suffices to prove that $\nu[A-x]= \nu[A]$.  By Kakutani's Theorem (\cite{KA}, see also \cite{HHK} pg. 116), $\nu[A-x] \sim \nu[A]$ if and only if 
\beqn
\prod\limits_{k = 1}^\infty  {\int_{ - \infty }^\infty  {\sqrt {h_k \left( {y_k } \right)h_k \left( {y_k  - x_k } \right)} d\lambda (y_k )} }  >0.
\eeqn
Now, 
\[
\int_{ - \infty }^\infty  {\sqrt {h_k \left( {y_k } \right)h_k \left( {y_k  - x_k } \right)} d\lambda (y_k ) = \int_{[ - \tfrac{1}
{2},\tfrac{1}
{2}] \cap [ - \tfrac{1}
{2} + x_k ,\tfrac{1}
{2} + x_k ]} {d\lambda (y_k )} }  = (1 - \left| {x_k } \right|)_{+},
\]
where $r_{+}=max(0,r)$.  Since $x \in \ell_1, \; \prod\nolimits_{k = n}^\infty  {(1 - \left| {x_k } \right|)_+}  >0$ for $n$ large enough.  Thus, equation (2.3)  will be satisfied for every $x \in \ell_1$,  so that $\ell_1 \subset \mathfrak{T}_{\nu}$. 

Now, suppose that $x \in \mathfrak{T}_{\la_{\iy}}$, so that $\la_\iy[A-x]= \la_\iy[A]$ for all $A \in  \mathfrak{B}[\R_I^\iy]$.  Thus, for $A \in \mathfrak{B}[\R_I^n]$, we have 
\[
\begin{gathered}
  \lambda _\infty  \left[ {A - x} \right] = \lambda _n \left[ {A_n  - x_n } \right] \cdot \prod\limits_{k = n + 1}^\infty  {\lambda \left\{ {\left[ { - \tfrac{1}
{2},\tfrac{1}
{2}} \right] \cap \left[ { - \tfrac{1}
{2} - x_k ,\tfrac{1}
{2} - x_k } \right]} \right\}}  \hfill \\
  \quad \quad \quad \quad  = \lambda _n \left[ {A_n } \right] \cdot \prod\limits_{k = n + 1}^\infty  {\lambda \left\{ {\left[ { - \tfrac{1}
{2},\tfrac{1}
{2}} \right] \cap \left[ { - \tfrac{1}
{2} - x_k ,\tfrac{1}
{2} - x_k } \right]} \right\}}  = \lambda _n \left[ {A_n } \right] \cdot \prod\limits_{k = n + 1}^\infty  {\left( {1 - \left| {x_k } \right|} \right)_ +  } . \hfill \\ 
\end{gathered} 
\]
If $A_n=I_n=\times_{k=1}^{n}[-\tf{1}{2}, \tf{1}{2}]$, we have $
1 = \mathop {\lim }\limits_{n \to \infty } \prod\limits_{k = n + 1}^\infty  {\left( {1 - \left| {x_k } \right|} \right)_ +  } 
$.   It follows that $\sum\nolimits_{k = 1}^\infty  {\left| {x_k } \right| < \infty } 
$, so that  $x \in \ell_1$.
\end{proof}
In closing,  we note that, since $\la_\iy$ is complete and regular, it is metrically transitivity with respect to $\R_0^\iy$.  It follows from Theorem 0.5 that  $\la_\iy$ is unique (this comment also applies to $\la_{\mcB}$).   
\subsection{Gaussian measure}
If we replace Lebesgue measure by the infinite product Gaussian measure, $\mu_{\infty}$, on $\mathbb{R}^{\infty}$, we get countable additivity but lose rotational invariance. Furthermore, the $\mu_{\infty}$ measure of $l_2$ is zero.  On the other hand, another approach is to use the standard projection method onto finite dimensional subspaces to construct a  probability measure directly on $l_2$. In this case, we recover rotational invariance but not translation invariance (and lose countable additivity).  The resolution of this problem led to the development of the Wiener measure \cite{WSRM} and this is where we are today. A nice discussion of this and related issues can be found in Dunford and Schwartz \cite{DS} (see pg. 402). 

We now turn to take a look at infinite product Gaussian measure from our new perspective. The canonical Gaussian measure on $\R$ is defined by:
\[
d\mu (x) = \frac{1}
{{\sqrt {2\pi } }}\exp \left\{ { - \frac{{\left| x \right|^2}}
{2}} \right\}d\lambda (x).
 \]
Recall that $\mu _\infty   =  \otimes _{k = 1}^\infty \mu$ is countably additive on $\mathbb{R}^{\infty}$, but its measure of  $\ell_2$ is zero.
If we introduce a scaled version of Gaussian measure on $\mathbb{R}_{I}^{\infty}$, we can resolve this difficulty.    We seek a family of variances $\{ \sigma_k^2 \}$  such that 
$$\mu_{{\mcB}}({\mcB})=\mathop  \otimes \limits_{k = 1}^\infty \mu_k(T[{\mcB}]=1,$$
where  $\mu_k$ is a linear Gaussian measure on $R$ with parameters $(0, \sigma_k)$  for $k \in N$ and  $\mu_{{\mcB}}$  is  defined by: 
\[
\mu_{{\mcB}}(B)=\mathop  \otimes \limits_{k = 1}^\infty  \mu _k (T[B]),
\]
for   any     Borel  subset  $B$ of  ${\mcB}$.
\begin{lem}Let $\left\{ \sigma _k^2 \right\}$ be a family of variances such that
\[
\sum\limits_{k = 1}^\infty{{\sigma _k^2 }}< \iy,
\]
then $\mu _{{\mcB} } \lt({T^{-1}([-L, L]^{\aleph_o })} \rt)>0$ for every positive number $L $.
\end{lem}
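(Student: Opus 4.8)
The plan is to unfold the definition of $\mu_{\mcB}$, reduce the claim to the positivity of an infinite product of one-dimensional Gaussian masses, and then settle that positivity with a crude Gaussian tail bound summed against $\sum_k\sigma_k^2$. Since $T$ is injective with range $\mcB_I=T[\mcB]$, the definition of $\mu_{\mcB}$ gives
\[
\mu_{\mcB}\lt(T^{-1}([-L,L]^{\aleph_o})\rt)=\mathop{\otimes}\limits_{k=1}^{\infty}\mu_k\lt([-L,L]^{\aleph_o}\cap\mcB_I\rt),
\]
and, $[-L,L]^{\aleph_o}$ being a countable intersection of cylinders of finite $\otimes_k\mu_k$-mass, this equals $\prod_{k=1}^{\infty}\mu_k([-L,L])$ once one knows that $\mcB_I$ is $\otimes_k\mu_k$-conull --- equivalently, that $\mu_{\mcB}(\mcB)=1$, the property the variances are being chosen to secure, and where $\sum_k\sigma_k^2<\infty$ is again used (it puts the random coefficient sequence into $\ell_2$, and with $\|u_k\|_{\mcB}=1$ and independence the Gaussian series $\sum_k X_k u_k$ then converges in $\mcB$ almost surely). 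Since every factor $\mu_k([-L,L])$ lies in $(0,1]$, Definition 1.1 and equation (1.1) reduce the lemma to
\[
\sum_{k=1}^{\infty}\lt(1-\mu_k([-L,L])\rt)<\infty .
\]

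For the terms of this series I would write $\mu_k$ as the pushforward of the standard Gaussian under $x\mapsto\sigma_k x$; the substitution $u=x/\sigma_k$ together with the elementary tail bound $\tfrac{2}{\sqrt{2\pi}}\int_t^{\infty}e^{-u^2/2}\,du\le e^{-t^2/2}$ for $t\ge 0$ (the difference $e^{-t^2/2}-\tfrac{2}{\sqrt{2\pi}}\int_t^{\infty}e^{-u^2/2}\,du$ vanishes at $0$ and at $\infty$, is unimodal in between, hence nonnegative) gives
\[
1-\mu_k([-L,L])=\frac{2}{\sqrt{2\pi}}\int_{L/\sigma_k}^{\infty}e^{-u^2/2}\,du\ \le\ \exp\lt(-\frac{L^2}{2\sigma_k^2}\rt).
\]
Because $e^{x}>x$ for every $x>0$, the right-hand side is at most $2\sigma_k^2/L^2$, so
\[
\sum_{k=1}^{\infty}\lt(1-\mu_k([-L,L])\rt)\ \le\ \frac{2}{L^2}\sum_{k=1}^{\infty}\sigma_k^2\ <\ \infty ,
\]
which is the desired estimate; hence $\prod_{k=1}^{\infty}\mu_k([-L,L])>0$ and the lemma follows.

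I do not expect a serious obstacle in the one-dimensional estimates: they are routine, and the bound $\exp(-L^2/(2\sigma_k^2))\le 2\sigma_k^2/L^2$ makes the summability immediate from the hypothesis. The one point needing genuine care is the bookkeeping in the first paragraph --- that $T^{-1}([-L,L]^{\aleph_o})$ is a legitimate member of $\mathfrak{B}_I[\mcB]$ and that intersecting the box with $\mcB_I$ loses no mass, i.e.\ the conullity of $T[\mcB]$. If that fact is imported from a companion statement $\mu_{\mcB}(\mcB)=1$, the present lemma is immediate after the product estimate; if instead this lemma feeds into that statement, one should first establish the conullity (from the $\ell_2$/almost-sure-convergence remark above, e.g.\ via the It\^{o}--Nisio theorem) and only then invoke the product bound.
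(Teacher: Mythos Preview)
Your core argument is correct and follows the same route as the paper: reduce to showing $\prod_{k}\mu_k([-L,L])>0$ by bounding $\sum_k\bigl(1-\mu_k([-L,L])\bigr)$ in terms of $\sum_k\sigma_k^2$. The paper, however, gets there more directly via Chebyshev's inequality, $1-\mu_k([-L,L])=P[|X_k|>L]\le\sigma_k^2/L^2$, rather than passing through the Gaussian tail bound $e^{-L^2/(2\sigma_k^2)}$ and then the extra step $e^{-t}\le 1/t$. Your detour works, but Chebyshev is the one-line version.

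On the bookkeeping point you flag: your worry about circularity is understandable but the resolution is simpler than you suggest. In the paper's setup one works with an \emph{absolutely convergent} Schauder basis $(v_k)$ (obtained from any Schauder basis by rescaling), and for such a basis every bounded sequence $(a_k)\in[-L,L]^{\aleph_0}$ yields a convergent series $\sum_k a_k v_k\in\mcB$; hence $[-L,L]^{\aleph_0}\subset\mcB_I$ outright, not merely up to a null set. Thus $T[T^{-1}([-L,L]^{\aleph_0})]=[-L,L]^{\aleph_0}$ on the nose, and no appeal to conullity of $\mcB_I$ or to It\^o--Nisio is needed. This also removes the apparent circularity with the subsequent result $\mu_{\mcB}(\mcB)=1$: the present lemma does not depend on that theorem.
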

\begin{proof} Let $\{X_k\}$ be the family of independent Gaussian random variables defined on some common probability space, $\lt(\Om, \ \mathfrak{B}, \ P[\cdot]   \rt)$, with law $\mu_k$. 
If $X=(X_1,X_2, \dots)$, then
\[
\begin{gathered}
  P\left[ {\left\{ {\omega  \in \Omega \left| {} \right.X(\omega ) \in [ - L,L]^{\aleph _o } } \right\}} \right] = P\left[ {\bigcap\nolimits_{k = 1}^\infty  {\left\{ {\omega  \in \Omega \left| {} \right.X_k (\omega ) \in [ - L,L]} \right\}} } \right] \hfill \\
   = \prod\nolimits_{k = 1}^\infty  {P\left[ {\left\{ {\omega  \in \Omega \left| {} \right.\left| {X_k (\omega )} \right| \leqslant L]} \right\}} \right]}  \geqslant \prod\nolimits_{k = 1}^\infty  {\left( {1 - \frac{{\sigma _k^2 }}
{{L^2 }}} \right)} ,\quad {\text{by Chebyshev's inequality}}. \hfill \\ 
\end{gathered} 
\]
Clearly the product is positive.  We are done since $B=T^{-1}([-L, L]^{\aleph_o }) \in {\mathfrak{B}}(\mcB)$ and
\[
\mu _{{\mcB} } \lt( B \rt) = \left( {\otimes _{k = 1}^\infty  \mu _k } \right)\left( {T[T^{-1}([-L, L]^{\aleph_o })]} \right) = P\left[ {\left\{ {\omega  \in \Omega \left| {} \right.X(\omega ) \in [ - L,L]^{\aleph _o } } \right\}} \right].
\]
\end{proof}
\begin{thm} If the family of variances $\left\{ \sigma _k^2 \right\}$ satisfies the stronger condition
\beqn
\sum\limits_{k = 1}^\infty  {\frac{{\sigma _k^2 }}
{{\lt|x_k\rt| }}}  < \infty 
\eeqn
for some sequence $(x_k )\in  \ell_1$,
then $\mu _{{\mcB}}([{\mcB}])=1$.
\end{thm}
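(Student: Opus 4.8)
The plan is to realise the product Gaussian measure $\otimes_{k=1}^{\iy}\mu_k$ as the joint law of a sequence of independent Gaussian random variables and to show that almost every such sequence lands in $\mcB_I=T[\mcB]$. Since $\mu_{\mcB}(\mcB)=\otimes_{k=1}^{\iy}\mu_k(T[\mcB])$ and the right side is a genuine probability measure (a countable product of probability measures) evaluated at a subset of $\R^{\iy}$, it will suffice to show that $\otimes_{k=1}^{\iy}\mu_k\big(\R^{\iy}\setminus\mcB_I\big)=0$, equivalently $\otimes_{k=1}^{\iy}\mu_k(\mcB_I)\ge 1$.

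First I would extract from the hypothesis the single summability statement $\sum_{k=1}^{\iy}\sigma_k<\iy$. For each $k$ with $x_k\neq 0$ the arithmetic--geometric mean inequality gives $\sigma_k=\sqrt{(\sigma_k^2/|x_k|)\,|x_k|}\le\tf12\big(\sigma_k^2/|x_k|+|x_k|\big)$, while any $k$ with $x_k=0$ must have $\sigma_k=0$ for the hypothesised sum to be finite; summing over $k$ and using $(x_k)\in\ell_1$ yields $\sum_k\sigma_k\le\tf12\big(\sum_k\sigma_k^2/|x_k|+\sum_k|x_k|\big)<\iy$. Next I set up independent Gaussian random variables $\{X_k\}$ on a probability space $(\Om,\mathfrak{B},P)$, with $X_k$ distributed according to $\mu_k$, exactly as in the preceding lemma, and put $X=(X_1,X_2,\dots)$; then $X$ has law $\otimes_{k=1}^{\iy}\mu_k$ on $\R^{\iy}$. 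Since $E|X_k|=\sigma_k\sqrt{2/\pi}$, Tonelli's theorem gives $E\big[\sum_{k=1}^{\iy}|X_k|\big]=\sqrt{2/\pi}\,\sum_{k=1}^{\iy}\sigma_k<\iy$, so $\sum_{k=1}^{\iy}|X_k|<\iy$ holds $P$-almost surely.

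On the almost-sure event $\{\sum_k|X_k|<\iy\}$ the partial sums $S_n=\sum_{k=1}^{n}X_k u_k$ form a Cauchy sequence in $\mcB$: since $\|u_k\|_{\mcB}=1$ we have $\|S_n-S_m\|_{\mcB}\le\sum_{k=m+1}^{n}|X_k|\ra 0$ as $m,n\ra\iy$, and $\mcB$ is complete, so $\sum_k X_k u_k$ converges in $\mcB$, i.e. $X\in\mcB_I$. As $\mcB_I$ is precisely the set of sequences $(a_k)$ for which $\sum_{k=1}^{n}a_k u_k$ is Cauchy in $\mcB$, it is a Borel subset of $\R^{\iy}$ and hence $\otimes_{k=1}^{\iy}\mu_k$-measurable; therefore $\otimes_{k=1}^{\iy}\mu_k(\mcB_I)\ge P[X\in\mcB_I]=1$. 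Since $T$ maps $\mcB$ isomorphically onto $\mcB_I$, this gives $\mu_{\mcB}(\mcB)=\otimes_{k=1}^{\iy}\mu_k(T[\mcB])=\otimes_{k=1}^{\iy}\mu_k(\mcB_I)=1$.

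The heart of the argument is the reduction in the second paragraph together with the observation that $\sum_k\sigma_k<\iy$ is exactly what is needed to make $\sum_k E\|X_k u_k\|_{\mcB}$ converge; this last point relies crucially on the normalisation $\|u_k\|_{\mcB}=1$ built into the definition of a Schauder basis, so that absolute summability of the coefficients already forces convergence of the series in $\mcB$. Everything else --- Tonelli, completeness of $\mcB$, Borel-measurability of $\mcB_I$ --- is routine. It is worth emphasising how this sharpens the previous lemma: the weaker condition $\sum_k\sigma_k^2<\iy$ only gave positivity of $\mu_{\mcB}$ on the cylinders $T^{-1}([-L,L]^{\aleph_o})$, whereas the present, stronger hypothesis promotes $\mu_{\mcB}$ to an honest probability measure carried by $\mcB$ itself.
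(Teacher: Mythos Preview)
Your proof is correct and takes a genuinely different route from the paper's. The paper argues via Chebyshev's inequality: it shows that the box $\times_{k=1}^{\infty}[-|x_k|^{1/2},|x_k|^{1/2}]$ has positive $\otimes_k\mu_k$-measure, then replaces the first $n$ factors by $\mathbb{R}$ to obtain an increasing family $A_n=\mathbb{R}^n\times\bigl(\times_{k>n}[-|x_k|^{1/2},|x_k|^{1/2}]\bigr)\subseteq\mcB_I$ whose measures tend to $\lim_{n\to\infty}\prod_{k>n}\bigl(1-\sigma_k^2/|x_k|\bigr)=1$. Your argument instead extracts from the hypothesis, via the arithmetic--geometric mean inequality, the single summability statement $\sum_k\sigma_k<\infty$, and then uses a first-moment (Tonelli) bound $E\bigl[\sum_k|X_k|\bigr]=\sqrt{2/\pi}\sum_k\sigma_k<\infty$ to conclude $X\in\ell_1\subset\mcB_I$ almost surely.

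What each buys: the paper's route stays close to the preceding lemma and makes the role of the specific quantity $\sigma_k^2/|x_k|$ visible throughout, at the cost of manipulating infinite products and needing the containment $A_n\subseteq\mcB_I$. Your route is shorter and more transparent: it isolates the real content of the hypothesis, namely $\sum_k\sigma_k<\infty$, and then the conclusion is immediate from the fact that an $\ell_1$ sequence of coefficients always yields a convergent Schauder expansion. It also makes clear that any hypothesis implying $\sum_k\sigma_k<\infty$ would suffice, which is a useful sharpening.
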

\begin{proof}  By definition, if $f \in \mcB$ and $(u_n)$ is a Schauder basis for $\mcB$, then there is a sequence of scalars $(a_k)$ such that $f=\lim_{n \to \iy}\sum_{k=1}^n{a_k u_k}$.  Since $T(f)=(a_k)$,
\[
\left| {\left\| {(a_n )} \right\|} \right|_{{\mcB}_I}  = \mathop {\sup }\limits_n \left\| {\sum\limits_{k = 1}^n {a_k u_k } } \right\|_{\mcB} \leqslant \left[ {\sum\limits_{k = 1}^\infty  {\left| {a_k } \right| } } \right] ,
\]
so that, if $(a_n ) \in \ell_1$, then $(a_n ) \in T(\mcB) ={\mcB}_I$.

Suppose that there is a sequence $(x_k) \in \ell_1 $ such that such that the inequality (2.3) is satisfied.   As in Lemma
2.24, by Chebyshev's inequality and inequality (2.3) we have
\[
\mu _{\mcB} \left\{ {T^{ - 1} \left( {\mathop  \times \limits_{k = 1}^\infty  \left[ { - \left| {x_k } \right|^{1/2} ,\,\left| {x_k } \right|^{1/2} } \right]} \right)} \right\} > 0.
\]
If  $A_n=R^n \times (\times_{k=n+1}^{\infty}[-\lt|x_k\rt|^{1/2},\lt|x_k\rt|^{1/2}])$,  then $A_n \subset A_{n+1}$  and $A_n \subseteq \mcB_I$ for all natural $n$.  Thus, we have
\[
\begin{gathered}
\mu_{\mcB}[T^{-1}({\mcB}_I)] \ge \lim_{n \to \infty} \mu_{\mcB}[T^{-1}(A_n)]  \hfill \\ 
=\lim_{n \to \infty}\prod_{k=n+1}^{\infty}
\mu_k([-\lt|x_k\rt|^{1/2},\lt|x_k\rt|^{1/2}]) \ge \lim_{n \to \infty}\prod\limits_{k=n + 1}^\infty  {\left( {1 - \frac{{\sigma _k^2 }}
{{\lt|x_k\rt| }}} \right)}
=1. \hfill \\ 
\end{gathered}
\]
\end{proof} 
\begin{Def} We call $\mu _{{\mcB} }$ a scaled version of Gaussian measure for  $\mathcal{B}$.
\end{Def}  
\begin{thm}  The measure $\mu _{{\mcB}}$ is a countably additive version of Gaussian measure on $\mathcal{B}$. 
\end{thm}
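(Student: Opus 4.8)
The strategy is to realise $\mu_{\mcB}$ as the pull-back, through the coordinate isomorphism $T$ of Lemma~2.24, of the infinite product measure $\g_\iy:=\otimes_{k=1}^{\iy}\mu_k$ (the $k$-th factor being the centred Gaussian $\mu_k$ of variance $\sigma_k^2$), and then to extract countable additivity and Gaussianity from that product; the overall structure parallels the proof of Theorem~2.25, with the one-dimensional Gaussians $\mu_k$ in the place of $\la$. As a countable product of Borel probability measures on $\R$, $\g_\iy$ is a countably additive Borel probability measure on $(\R^\iy,\mfB[\R^\iy])$ by the classical construction of infinite products, and the first step is to extend it to the larger $\sigma$-algebra $\mfB[\R_I^\iy]$. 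Since $\mu_k$ is $N(0,\sigma_k^2)$, Chebyshev's inequality gives $\mu_k(\R\setminus I)\le 4\sigma_k^2$; the hypothesis of Theorem~2.28 forces $\sum_k\sigma_k^2<\iy$, hence $\sum_k\mu_k(\R\setminus I)<\iy$, and by the Borel--Cantelli lemma $\g_\iy\big(\R^\iy\setminus\bigcup_n\R_I^n\big)=0$, i.e.\ $\g_\iy$-almost every sequence lies eventually in $I$. Because $\mfB[\R_I^\iy]$ is generated by $\mfB'[\R_I^\iy]\cup\mathcal{P}\big(\R^\iy\setminus\bigcup_n\R_I^n\big)$ with $\mfB'[\R_I^\iy]\ci\mfB[\R^\iy]$, every $F\in\mfB[\R_I^\iy]$ has $F\cap\bigcup_n\R_I^n$ Tychonoff--Borel, and the prescription $F\mapsto\g_\iy\big(F\cap\bigcup_n\R_I^n\big)$ extends $\g_\iy$ uniquely to a countably additive Borel probability measure on $\mfB[\R_I^\iy]$.

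Next I would pull $\g_\iy$ back along $T$. By Lemma~2.24, $T$ is a linear isomorphism of $\mcB$ onto $\mcB_I$, and by construction $\mfB_I[\mcB]=T^{-1}\{\mfB[\mcB_I]\}$ with $\mfB[\mcB_I]=\mcB_I\cap\mfB[\R_I^\iy]$; hence $T$ is a Borel isomorphism onto its range, so the defining formula $\mu_{\mcB}(B)=\g_\iy(T[B])$ gives a well-defined set function on $\mfB_I[\mcB]$. Because $T$ is a bijection it sends countable disjoint unions to countable disjoint unions, so $\mu_{\mcB}$ inherits countable additivity verbatim from $\g_\iy$ (and this passes routinely to the completion of $\mu_{\mcB}$). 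Finally $\mu_{\mcB}(\mcB)=\g_\iy(T[\mcB])=\g_\iy(\mcB_I)=1$ by Theorem~2.28, so $\mu_{\mcB}$ is a Borel probability measure on $\mcB$.

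It remains to check that $\mu_{\mcB}$ really is Gaussian. For each fixed $n$ the image of $\mu_{\mcB}$ under the projection $f=\sum_k a_k u_k\mapsto(a_1,\dots,a_n)$ is, by construction, $\mu_1\otimes\cdots\otimes\mu_n$, a non-degenerate centred Gaussian on $\R^n$; thus every finite-dimensional marginal of $\mu_{\mcB}$ relative to the basis $(u_n)$ is Gaussian. More intrinsically, for any $\phi\in\mcB^*$ the partial sums $\phi\big(\sum_{k=1}^n a_k u_k\big)=\sum_{k=1}^n a_k\phi(u_k)$ are centred Gaussian with variances $\sum_{k=1}^n\sigma_k^2\phi(u_k)^2\le\|\phi\|^2\sum_k\sigma_k^2<\iy$ (recall $\|u_k\|_{\mcB}=1$), and they converge in $\mcB$---hence $\mu_{\mcB}$-a.s.\ and in $L^2(\mu_{\mcB})$---to $\phi(f)$; a limit of centred Gaussians with bounded variances is centred Gaussian, so $\phi_{*}\mu_{\mcB}$ is Gaussian on $\R$, which is precisely the defining property of a Gaussian measure on a Banach space.

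I expect the only genuinely delicate step to be the $\sigma$-algebra bookkeeping of the first paragraph --- verifying that $\g_\iy$ extends unambiguously across the strict enlargement $\mfB[\R^\iy]\subsetneq\mfB[\R_I^\iy]$, which is exactly what the Borel--Cantelli computation settles. Everything else is either formal transport through the bijection $T$ or a citation of Theorem~2.28, and the last paragraph's interchange (a.s.\ convergence of $\phi\big(\sum_{k=1}^n a_k u_k\big)$ to $\phi(f)$) is immediate from convergence in $\mcB$ together with continuity of $\phi$.
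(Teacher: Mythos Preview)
The paper gives no explicit proof of this theorem, treating it as an immediate consequence of the definition of $\mu_{\mcB}$ as the $T$-pullback of the countably additive product Gaussian $\otimes_k\mu_k$ together with Theorem~2.28. Your argument is correct and follows exactly this implied route, supplying the details the paper omits --- notably the Borel--Cantelli step that extends $\otimes_k\mu_k$ across the enlargement $\mfB[\R^\iy]\subsetneq\mfB[\R_I^\iy]$ and the verification that every $\phi\in\mcB^*$ has Gaussian pushforward under $\mu_{\mcB}$.
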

In particular, observe that we obtain a countably additive version of  Gaussian measure for both $\ell_2 $ and ${\mathbb{C}}_0[0,1]$ (the continuous functions $x(t)$ on $[0,1]$ with  $x(0)=0$).
\subsection{Rotational  Invariance}
In this section we study rotational invariance on subspaces of $\lt( \R_I^{\iy}, \ {\mathfrak{B}}_{I}[\R^{\iy}] \ \la _{\iy} \rt)$.   First, we need a little more information about Gaussian measures on vector spaces. (See Yamasaki \cite{YA}, pg. 151, for a proof of the next Theorem).  

Let $\mcF$ be a a real vector space,  let ${\mcF}^a$ be its algebraic dual space, and let ${\mathfrak{B}}_{\mcF}$ be the smallest $\s$-algebra such that ${\mfL}(x)$ is measurable for each functional ${\mfL} \in {\mcF}^a$ and all $x \in \mcF$.
\begin{thm} If $\mu$ is a measure on $({\mcF}^a, \ {\mathfrak{B}}_{\mcF})$, then the following are equivalent.
\begin{enumerate}
\item The Fourier transform of $\mu, \; {\hat{\mu}}$, is of the form:
\[
{\hat{\mu}}(x) =\exp \left\{ { - \tfrac{1}
{2}\left\langle {x,x} \right\rangle } \right\},
\]
for some inner product on $\mcF$.
\item For every $x \in \mcF$, the distribution of ${\mfL}(x)$ is a one-dimensional Gaussian measure.
\end{enumerate}
\end{thm}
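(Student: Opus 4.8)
The plan is to reduce everything to the classical one-dimensional theory of characteristic functions. The device is that for each $x\in\mcF$ the evaluation map $\xi_x\colon\mcF^a\to\R$, $\mfL\mapsto\mfL(x)$, is — by the very definition of ${\mathfrak B}_{\mcF}$ — a real random variable on $(\mcF^a,{\mathfrak B}_{\mcF},\mu)$, whose distribution is exactly ``the distribution of $\mfL(x)$'' appearing in (2), and that the Fourier transform is nothing but $\hat\mu(x)=\int_{\mcF^a}e^{i\mfL(x)}\,d\mu(\mfL)$, i.e.\ the value at $t=1$ of the characteristic function $t\mapsto\int_{\mcF^a}e^{it\mfL(x)}\,d\mu$ of $\xi_x$. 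Note also $\hat\mu(0)=\mu(\mcF^a)$, so each of (1), (2) already forces $\mu$ to be a probability measure; I would use this silently.

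For (1) $\Rightarrow$ (2) I would fix $x\in\mcF$ and, for $t\in\R$, compute $\int_{\mcF^a}e^{it\mfL(x)}\,d\mu=\int_{\mcF^a}e^{i\mfL(tx)}\,d\mu=\hat\mu(tx)=\exp\{-\tf12 t^{2}\langle x,x\rangle\}$, which is precisely the characteristic function of the centered Gaussian law with variance $\langle x,x\rangle$ (degenerating to a point mass at $0$, e.g.\ when $x=0$). By the uniqueness theorem for characteristic functions, $\xi_x$ has that law, which is the assertion of (2). This direction is essentially immediate.

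For (2) $\Rightarrow$ (1), the substantive direction, I would proceed as follows. Since each $\xi_x$ is a (centered) Gaussian variable it has finite moments of every order, in particular $\xi_x\in L^2(\mu)$, so I may set $B(x,y)=\int_{\mcF^a}\xi_x\xi_y\,d\mu=\mathrm{Cov}(\xi_x,\xi_y)$. The structural point is that $\xi$ is linear in its label: because elements of $\mcF^a$ are themselves linear functionals, $\xi_{ax+by}=a\xi_x+b\xi_y$ pointwise on $\mcF^a$; hence $B$ is a symmetric bilinear form on $\mcF$, and $B(x,x)=\mathrm{Var}(\xi_x)\ge0$. Thus $B$ is a nonnegative symmetric bilinear form — an inner product in the honest sense after quotienting out $\{x:B(x,x)=0\}$ if nondegeneracy is required, the conclusion being unaffected. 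Finally, since $\xi_x$ is centered Gaussian with variance $B(x,x)$, its characteristic function at $t=1$ equals $\exp\{-\tf12 B(x,x)\}$; as this is by construction $\hat\mu(x)$, we obtain $\hat\mu(x)=\exp\{-\tf12\langle x,x\rangle\}$ with $\langle\cdot,\cdot\rangle:=B$, i.e.\ (1).

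The only step that is more than bookkeeping is the passage, in (2) $\Rightarrow$ (1), from ``$\xi_x$ is Gaussian for each individual $x$'' to ``$B$ is a finite bilinear form on all of $\mcF$''; finiteness is automatic from the second moment of a Gaussian and bilinearity from linearity of evaluation, so — crucially — no \emph{joint} Gaussianity of the family $\{\xi_x\}_{x\in\mcF}$ needs to be established, which is what keeps the argument short. The two remaining subtleties are conventional and I would flag them explicitly: ``one-dimensional Gaussian'' in (2) must be read as \emph{centered} Gaussian, since a translate of $\mu$ would otherwise satisfy (2) while changing $\hat\mu$ into $\exp\{i\,a(x)-\tf12\langle x,x\rangle\}$ for a nonzero linear $a$, violating the form in (1); and the degenerate directions $B(x,x)=0$ are harmless and should simply be recorded.
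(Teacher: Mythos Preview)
Your argument is correct and is the standard proof of this characterization. Note, however, that the paper does not supply its own proof of this theorem: it simply states the result and refers the reader to Yamasaki \cite{YA}, p.~151. So there is no ``paper's proof'' to compare against beyond that citation; what you have written is almost certainly what one finds there, since the reduction to one-dimensional characteristic functions via $\hat\mu(tx)=\mathbb{E}[e^{it\xi_x}]$ and the recovery of the bilinear form as $B(x,y)=\mathbb{E}[\xi_x\xi_y]$ is the canonical route.

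Your two flagged caveats are exactly right and worth keeping. First, the equivalence as stated forces the one-dimensional Gaussians in (2) to be centered; otherwise a nonzero linear drift appears in $\hat\mu$ and (1) fails in the form given. Second, the form $B$ you construct is a priori only positive \emph{semi}-definite, so ``inner product'' in (1) must be read in that generalized sense (or one quotients by the null space). Neither point affects the logic of your proof, and your observation that no joint Gaussianity is needed --- bilinearity of $B$ coming for free from the linearity of evaluation $\mfL\mapsto\mfL(x)$ --- is the reason the argument stays short.
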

In this general setting, a measure $\mu$ is said to be Gaussian on $({\mcF}^a, \ {\mathfrak{B}}_{\mcF})$ if it satisfies either of the above conditions.
\begin{ex} Let $\mcF =\R_0^{\iy}$, the set of sequences that are zero except for a finite number of terms and let $\left\langle {\cdot, \ \cdot} \right\rangle$ be the inner product on $\R_0^{\iy}$.  It is easy to show that the corresponding  measure on ${\mcF}^a=\R^{\iy}$ (satisfying either (1) or (2) above) is the infinite product Gaussian measure. 
\end{ex}
To understand the importance of this example, let $(a_n)$ be any sequence of positive numbers and let 
\beqn
\mcH_a  = \left\{ {\left. {{\mathbf{x}} \in \mathbb{R}^\infty  } \right| \; \sum\limits_{n = 1}^\infty  {a_n^2 x_n^2  < \infty } } \right\}.
\eeqn
The proof of the following is due to Yamasaki (\cite{YA}, pg. 153).
\begin{lem} If $a \in \ell_2$,  $\mu[{\mcH_a }]=1$, and if $a \notin \ell_2$,  $\mu[{\mcH_a }]=0$.
\end{lem}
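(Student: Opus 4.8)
The plan is to use the fact that, by Example 2.27, the measure $\mu$ in question is the infinite product Gaussian measure $\mu_\iy = \otimes_{k=1}^\iy \mu$ on $\R^\iy$, so that the coordinate functions ${\bf x} \mapsto x_n$ form a sequence of independent standard Gaussian random variables. The set $\mcH_a$ is the set on which the increasing limit of the continuous partial sums $\sum_{n=1}^N a_n^2 x_n^2$ is finite, hence it is a Borel set; moreover, whether $\sum_n a_n^2 x_n^2$ converges is unaffected by changing finitely many coordinates, so $\mcH_a$ is a tail event and Kolmogorov's zero--one law gives $\mu[\mcH_a] \in \{0,1\}$. It then remains to decide the value in each of the two cases.

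When $a \in \ell_2$, I would argue directly by the monotone convergence theorem:
\[
\int_{\R^\iy} \Big( \sum_{n=1}^\iy a_n^2 x_n^2 \Big) d\mu = \sum_{n=1}^\iy a_n^2 \int_{\R} x^2 \, d\mu(x) = \sum_{n=1}^\iy a_n^2 < \iy,
\]
so the integrand is finite for $\mu$-almost every ${\bf x}$; that is, $\mu[\mcH_a] = 1$.

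When $a \notin \ell_2$, I would bring in the Laplace transform. Set $Z_n := \exp(-a_n^2 x_n^2)$, so that the $Z_n$ are independent, take values in $(0,1]$, and an elementary Gaussian integral gives $\int_\R Z_n \, d\mu = (1 + 2a_n^2)^{-1/2}$. The partial products $\prod_{n=1}^N Z_n = \exp(-\sum_{n=1}^N a_n^2 x_n^2)$ decrease pointwise to $Z := \exp(-\sum_{n=1}^\iy a_n^2 x_n^2)$, with $Z > 0$ exactly on $\mcH_a$. By dominated convergence and independence,
\[
\int_{\R^\iy} Z \, d\mu = \lim_{N\to\iy} \prod_{n=1}^N \int_\R Z_n \, d\mu = \prod_{n=1}^\iy \frac{1}{\sqrt{1 + 2a_n^2}}.
\]
Since $\sum_n a_n^2 = \iy$, the standard comparison $\prod_n (1 + c_n) < \iy$ if and only if $\sum_n c_n < \iy$ (for $c_n \ge 0$) forces $\prod_n (1 + 2a_n^2) = \iy$, hence $\int_{\R^\iy} Z \, d\mu = 0$. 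As $Z \ge 0$, this means $Z = 0$ $\mu$-a.e., i.e. $\sum_n a_n^2 x_n^2 = \iy$ $\mu$-a.e., so $\mu[\mcH_a] = 0$. Equivalently, the two cases unify: $\mu[\mcH_a] = \mu[\{Z>0\}]$, which by the computation above is positive precisely when $a \in \ell_2$, and then the zero--one law upgrades ``positive'' to ``full''.

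The argument is elementary, so I do not expect a genuine obstacle. The only points needing a little care are: the observation that $\mcH_a$ is a Borel tail set, which is what makes the zero--one dichotomy legitimate (though one can sidestep it, as the two displayed computations already pin down the values $1$ and $0$ on their own); the interchange of the infinite product with the integral, justified by dominated convergence since $0 \le \prod_{n\le N} Z_n \le 1$; and the classical infinite-product/infinite-sum comparison. The one place to watch the bookkeeping is the $a \notin \ell_2$ case, making sure the vanishing of $\int Z\,d\mu$ is correctly translated into $\mu[\mcH_a] = 0$.
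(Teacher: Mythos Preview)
Your argument is correct. Both halves are clean: the monotone convergence computation pins down the $a\in\ell_2$ case directly, and the Laplace-transform calculation $\int_{\R} e^{-a_n^2 x^2}\,d\mu(x)=(1+2a_n^2)^{-1/2}$ together with the divergence of $\prod_n(1+2a_n^2)$ forces $Z=0$ a.e.\ when $a\notin\ell_2$. As you observe, the zero--one law is optional scaffolding, since each case is already settled outright.

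As for comparison with the paper: the paper does not supply its own proof of this lemma. It simply attributes the result to Yamasaki and cites \cite{YA}, p.~153. Your write-up therefore goes beyond what the paper provides. The approach you take---reducing to independent one-dimensional Gaussian integrals and invoking the infinite-product/infinite-sum comparison---is the standard route and is in the spirit of what one finds in Yamasaki's treatment, so there is no tension with the citation.
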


Now, let us note that the standard one-dimensional Gaussian density, which is normally written as $f_X(x)=[\sqrt{2\pi}]^{-1}exp\{-\tf{1}{2}\lt|x\rt|^2\}$, may also be written as $f_X(x)=exp\{-\pi \lt|x\rt|^2\}$ with no factors of $\sqrt{2\pi}$ if we scale $x \to \tf{x}{\sqrt{2\pi}}$.  With this convention, we can write the infinite dimensional version for $L^2[\mcH, \la_{\mcH}]$ as the derivative of the  Gaussian distribution $\mu _{{\mcH}}$ with respect to the Lebesgue measure on $\mcH$:
\beqn
f({\mathbf{x}}) = \exp \{  - \pi \left| {\mathbf{x}} \right|_\mcH ^2 \}  = \frac{{d\mu_{{\mcH}} ({\mathbf{x}})}}
{{d\lambda _{\mcH} ({\mathbf{x}})}}.
\eeqn 
This shows that, with the appropriate definition of Lebesgue measure, there is a corresponding density for a Gaussian distribution on Hilbert space.
\begin{rem} In the general case (see DePrato \cite{DP}), when $\bQ$ is a (positive definite) trace-class operator  and ${\bf x}$ is a Gaussian random variable with mean $\bf m$ and covariance $\bQ$, we can write equation (2.6) as:
\beqa
f({\mathbf{x}}) = \left[ {\det \mathbb{Q}} \right]^{ - 1/2} \exp \left\{ { - {\pi} \left\langle {\mathbb{Q}^{ - 1} ({\mathbf{x}} - {\mathbf{m}}),({\mathbf{x}} - {\mathbf{m}})} \right\rangle _\mcH } \right\}\frac{{d\mu _\mcH ({\mathbf{x}})}}
{{d\lambda _\mcH  ({\mathbf{x}})}}.
\eeqa 
\end{rem}
\begin{Def} A rotation on $\mcH$ is a bijective isometry $U: \mcH \to \mcH$.
\end{Def}
It is well-known that $\mu _{\mcH}$ is invariant under rotations over $\lt( \mcH, \ {\mathfrak{B}}_{\mcH} \rt)$ (see Yamasaki \cite{YA}, pg. 163).  
\begin{thm} The  measure, $\la _{\mcH}$, is invariant under rotations  and $\mathfrak{R}=:(T^{-1}(\ell_2))$ is  dense in ${\mcH}$ and the maximal rotation invariance subspace for $\la _{\mcH}$.
\end{thm}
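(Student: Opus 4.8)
The plan is to establish three separate claims: (i) $\la_{\mcH}$ is rotation invariant, (ii) $\mathfrak{R} = T^{-1}(\ell_2)$ is dense in $\mcH$, and (iii) $\mathfrak{R}$ is maximal among rotation-invariant subspaces. For (i), the cleanest route is to transfer the known rotational invariance of the Gaussian measure $\mu_{\mcH}$ (quoted just before the statement, from Yamasaki) to $\la_{\mcH}$ via the density formula $f(\mathbf{x}) = \exp\{-\pi |\mathbf{x}|_{\mcH}^2\} = \tfrac{d\mu_{\mcH}}{d\la_{\mcH}}$ from equation (2.6). Concretely, if $U$ is a rotation (a bijective isometry of $\mcH$), then since $|U\mathbf{x}|_{\mcH} = |\mathbf{x}|_{\mcH}$ the density $f$ is $U$-invariant; combining $d\mu_{\mcH} = f\, d\la_{\mcH}$ with the $U$-invariance of $\mu_{\mcH}$ forces $d\la_{\mcH}$ to be $U$-invariant as well (formally: for a test set $A$, $\mu_{\mcH}(UA) = \int_{UA} f\,d\la_{\mcH}$ and $\mu_{\mcH}(A) = \int_A f\,d\la_{\mcH} = \int_{UA} f\,d(\la_{\mcH}\circ U^{-1})$, and $U$-invariance of $f$ together with equality of the left sides gives $\la_{\mcH}\circ U^{-1} = \la_{\mcH}$ on a generating family, hence everywhere). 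One must check that the relevant $\s$-algebra $\mathfrak{B}_{\mcH}$ is preserved by $U$, which is immediate since $U$ is a homeomorphism.

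For (ii), density of $\mathfrak{R}$: recall $T$ is the isomorphism of Lemma 2.19 and $T(\mcH)$ is the sequence space with the $\sup$-of-partial-sums norm. The subspace $\ell_2 \subset T(\mcH)$ (finitely supported sequences lie in $\ell_2$ and are dense in $T(\mcH)$ in the Banach-space norm, since the partial sums of any $f \in \mcH$ converge), so $T^{-1}(\ell_2) \supset T^{-1}(\R_0^{\iy})$ is dense in $\mcH$; pulling back by the isomorphism $T^{-1}$ preserves density. Alternatively one invokes that $\R_0^\iy$ is dense in $\ell_2$ and $\ell_2$ carries the Hilbert structure making the $U$'s well-defined.

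For (iii), maximality: I would argue by the measure-zero dichotomy of Lemma 2.31, which says $\mu[\mcH_a] = 1$ iff $a \in \ell_2$ and $= 0$ otherwise, combined with Lemma 2.24's criterion $\sum \s_k^2 < \iy$. Suppose $\mcH'$ is a rotation-invariant subspace with $\mathfrak{R} \subsetneq \mcH'$; pick $\mathbf{x} \in \mcH' \setminus \mathfrak{R}$, so that $T(\mathbf{x}) \notin \ell_2$. Using rotations one generates from $\mathbf{x}$ a "sphere" of vectors of the same $\mcH$-norm but whose coordinate sequences fail the $\ell_2$-summability needed for $\la_{\mcH}$ (equivalently $\mu_{\mcH}$) to assign positive measure to the relevant box $\times[-|x_k|^{1/2}, |x_k|^{1/2}]$ — this is exactly the Yamasaki dichotomy pushed through the density. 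One concludes that rotation-invariance of a measure on $\mcH'$ would be incompatible with $\sigma$-finiteness / nontriviality unless $\mcH' \subseteq \mathfrak{R}$, contradicting the strict inclusion; hence $\mathfrak{R}$ is maximal.

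The main obstacle I anticipate is step (iii): making "rotations move an $\ell_2$-failing vector around enough to violate the measure dichotomy" precise. The subtlety is that $\mathfrak{R} = T^{-1}(\ell_2)$ carries the genuine Hilbert inner product, but $\la_{\mcH}$ lives on the larger space $\mcH$, and one has to be careful that the rotations $U$ — defined as isometries of $\mcH$ — actually act on the coordinates in a way that degrades $\ell_2$-membership; in effect this is the statement that for $a \notin \ell_2$, some coordinate permutation/rotation of $\mathcal{H}_a$ has measure zero, which is where Lemma 2.31 does the real work. I would spend most of the write-up carefully identifying the right auxiliary set (a product box as in Theorem 2.27) and verifying the Chebyshev-type estimate fails for it precisely when $a \notin \ell_2$.
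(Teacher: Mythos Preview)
Your proposal is correct and follows essentially the same route as the paper: rotation invariance is transferred from $\mu_{\mcH}$ to $\la_{\mcH}$ via the density formula (2.6) together with $|U\mathbf{x}|_{\mcH}=|\mathbf{x}|_{\mcH}$; density comes from $\R_0^\iy \subset \mathfrak{R} \subset \mcH$; and maximality is attributed to the Yamasaki dichotomy lemma. The paper's own treatment of maximality is in fact terser than yours --- a one-line citation of that lemma with no further argument --- so the additional work you anticipate needing in step (iii) is not supplied by the paper either.
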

\begin{proof} Let any measurable set $A \in {\mathfrak{B}}_{\mcH}$.  If $U$ is any rotation on $\mcH$, then $\mu _{\mcH}(UA)=\mu _{\mcH}(A)$ and $\left| {U\mathbf{x}} \right|_\mcH ^2=\left| {\mathbf{x}} \right|_\mcH ^2$.  It follows from equation (2.6) that $\la _{\mcH}(UA)=\la _{\mcH}({A})$.

It follows from $\R_0^\iy \subset \mfR \subset \mcH$, that $\mfR$ is dense, and from Lemma 2.33 that $\mfR$ is maximal.
\end{proof}
\subsection*{{Discussion}}
In this section, we have shown that what appears to be a minor change in the way we represent $\mathbb{R}^{\infty}$ makes it possible to define an analogue of both Lebesgue and Gaussian measure  (countably additive) on every (classical) separable Banach space with a Schauder basis.  Furthermore, our version of Gaussian measure is  rotationally invariant, a property not shared by Wiener measure.  (What is more important, we have obtained our core results using basic methods  of Lebesgue measure theory from $\R^n$.)
\section{\bf Operators}
This section provides the  background to understand the relationship between operators defined on ${\mathcal{H}}_ \otimes ^2$ (which is nonseparable), and their restriction to ${\mathcal{H}}_ \otimes ^2(h)$. We also obtain general conditions that allow us to define infinite sums and products of linear operators on  ${\mathcal{H}}_ \otimes ^2(h)$ for a given $h$.
\subsection{Bounded Operators on ${\mathcal{H}}_ \otimes^2$} 
	In this section we review the class of bounded operators on ${\mathcal{H}}_ \otimes ^2$ and their relationship to those on each ${\mathcal{H}}_i$.  Many of the results are originally due to von Neumann \cite{VN2}.  However, the proofs are new or simplified versions (some from the literature). 
	  		
	Let $L[{\mathcal{H}}_ \otimes ^2  ]$ be the set of bounded operators on ${\mathcal{H}}_ \otimes ^2$.  For each fixed $i _0  \in \N$ and $A_{i _0 }  \in L({\mathcal{H}}_{i _0 } )$, define ${\mcA}_{i _0}  \in L({\mathcal{H}}_ \otimes ^2  )$ by: 
\[
{\mcA}_{i _0}(\sum _{k = 1}^N  \otimes _{i  \in \N} g _i ^k ) = \sum _{k = 1}^N A_{i _0 } g _{i _0 }^k   \otimes ( \otimes _{i  \ne i _0 } g _i ^k )
\]
for $\sum _{k = 1}^N  \otimes _{i  \in \N} g _i ^k$ in ${\mathcal{H}}_ \otimes ^2$ and $N$ finite but arbitrary.  Extending to all of  ${\mathcal{H}}_ \otimes ^2$ produces an isometric isomorphism of  $L[{\mathcal{H}}_{i _0 } ]$ into $L[{\mathcal{H}}_ \otimes ^2  ]$, which we denote by $L[{\mathcal{H}}(i _0 )]$, so that the relationship $L[{\mathcal{H}}_i ] \leftrightarrow L[{\mathcal{H}}(i )]$ is an isometric isomorphism of algebras.  Let $L^\#  [{\mathcal{H}}_ \otimes ^2  ]$ be the uniform closure of the algebra generated by $\{ L[{\mathcal{H}}(i )],\;i  \in \N \}$.  It is clear that $L^\#  [{\mathcal{H}}_ \otimes ^2  ] \subset L[{\mathcal{H}}_ \otimes ^2  ]$.  von Neumann has shown that the inclusion becomes equality if and only if $\N$ is replaced by a finite set.  On the other hand, $L^\#  [{\mathcal{H}}_ \otimes ^2  ]$ clearly consists of all operators on ${\mathcal{H}}_ \otimes ^2$ that are generated directly from the family $\{ L[{\mathcal{H}}(i )],\;i  \in \N \}$ by algebraic and topological processes.   

Let ${\mathbf{P}}_g ^s$ denote the projection from ${\mathcal{H}}_ \otimes ^2$ onto ${\mathcal{H}}_ \otimes ^2  (g )^s$, and let ${\mathbf{P}}_g ^w$ denote the projection from ${\mathcal{H}}_ \otimes ^2$ onto ${\mathcal{H}}_ \otimes ^2 (g )^w$. 
\begin{thm}
 If ${\mathbf{T}} \in L^\#  ({\mathcal{H}}_ \otimes ^2  )$, then ${\mathbf{P}}_g ^s {\mathbf{T}} = {\mathbf{TP}}_g ^s$ and ${\mathbf{P}}_g ^w {\mathbf{T}} = {\mathbf{TP}}_g ^w$.
\end{thm}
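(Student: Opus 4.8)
The plan is to reduce the claim to the one-factor generators $\mcA_{i_0}$, verify the required invariance there, and then propagate it through the generated algebra and its uniform closure. \textbf{Reduction to generators:} for any fixed orthogonal projection $\mathbf{P}$ on $\mcH_\otimes^2$, the set $\{\mathbf{S}\in L[\mcH_\otimes^2]:\mathbf{S}\mathbf{P}=\mathbf{P}\mathbf{S}\}$ is a subalgebra of $L[\mcH_\otimes^2]$ that is closed in the uniform norm: if $\mathbf{S}_n\to\mathbf{S}$ uniformly and each $\mathbf{S}_n$ commutes with $\mathbf{P}$, then $\mathbf{S}\mathbf{P}=\lim_n\mathbf{S}_n\mathbf{P}=\lim_n\mathbf{P}\mathbf{S}_n=\mathbf{P}\mathbf{S}$. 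Since $L^{\#}[\mcH_\otimes^2]$ is, by definition, the uniform closure of the algebra generated by $\bigcup_{i\in\N}L[\mcH(i)]$, it suffices to prove $\mathbf{P}_g^s\mcA_{i_0}=\mcA_{i_0}\mathbf{P}_g^s$ and $\mathbf{P}_g^w\mcA_{i_0}=\mcA_{i_0}\mathbf{P}_g^w$ for every $i_0\in\N$ and every generator $\mcA_{i_0}\in L[\mcH(i_0)]$.

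\textbf{Invariance of the strong classes.} The core step is to show that each $\mcA_{i_0}$ maps every strong partial tensor product space $\mcH_\otimes^2(h)^s$ into itself. On an elementary tensor $k=\otimes_{i\in\N}k_i$ with $k\equiv^s h$, the defining formula gives $\mcA_{i_0}k=(A_{i_0}k_{i_0})\otimes(\otimes_{i\ne i_0}k_i)$, which is either $0$ or a genuine elementary tensor agreeing with $k$ in every slot except possibly $i_0$; by Theorem 1.6(1) (changing finitely many factors preserves strong equivalence) it is then strongly equivalent to $k$, hence to $h$, so it lies in $\mcH_\otimes^2(h)^s$. Extending over finite linear combinations of elementary tensors and using that $\mcA_{i_0}$ is bounded while $\mcH_\otimes^2(h)^s$ is closed, we obtain $\mcA_{i_0}(\mcH_\otimes^2(h)^s)\subseteq\mcH_\otimes^2(h)^s$.

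\textbf{Assembling the commutation relations.} By Theorem 1.4 the spaces $\mcH_\otimes^2(h)^s$, as $h$ ranges over the distinct strong equivalence classes, are mutually orthogonal and decompose $\mcH_\otimes^2$; hence $\mcH_\otimes^2(g)^s$ and its orthogonal complement, each being an orthogonal sum of strong classes, are invariant under $\mcA_{i_0}$ by the previous step. Likewise, by Theorem 1.6(4), $\mcH_\otimes^2(g)^w=\bigoplus_{h\equiv^w g}\mcH_\otimes^2(h)^s$, and its orthogonal complement is the orthogonal sum of the strong classes lying outside the weak class of $g$, so both are $\mcA_{i_0}$-invariant as well. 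Finally, a bounded operator $\mathbf{S}$ that leaves a closed subspace $M$ and $M^\perp$ invariant commutes with the orthogonal projection $\mathbf{P}$ onto $M$: splitting $x=u+v$ with $u\in M$, $v\in M^\perp$, one has $\mathbf{S}u\in M$ and $\mathbf{S}v\in M^\perp$, whence $\mathbf{P}\mathbf{S}x=\mathbf{S}u=\mathbf{S}\mathbf{P}x$. Applying this with $M=\mcH_\otimes^2(g)^s$ and $M=\mcH_\otimes^2(g)^w$ gives $\mathbf{P}_g^s\mcA_{i_0}=\mcA_{i_0}\mathbf{P}_g^s$ and $\mathbf{P}_g^w\mcA_{i_0}=\mcA_{i_0}\mathbf{P}_g^w$, and the reduction step lifts this to all of $L^{\#}[\mcH_\otimes^2]$.

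\textbf{Anticipated obstacle.} The only genuinely substantive point is the middle step, and even that is mild: Theorem 1.6(1) tells us that perturbing finitely many factors never changes a strong equivalence class, so the single-factor modification performed by $\mcA_{i_0}$ cannot push a vector out of $\mcH_\otimes^2(h)^s$. The remaining ingredients — that the commutant of a projection is a norm-closed algebra, that invariance of $M$ and $M^\perp$ forces commutation with the projection onto $M$, and the bookkeeping with the orthogonal decomposition into equivalence classes — are all standard.
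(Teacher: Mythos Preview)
Your argument is correct and follows essentially the same route as the paper: both hinge on the fact that a one-factor operator changes only a single tensor slot, so by Theorem~1.6(1) it preserves every strong equivalence class, and this invariance is then pushed through the generated algebra and its uniform closure. Your organization is arguably tidier than the paper's --- you reduce to the generators $\mcA_{i_0}$ via the observation that the commutant of a projection is a norm-closed algebra, and you make explicit that invariance of \emph{every} strong class (hence of both $\mcH_\otimes^2(g)^s$ and its orthogonal complement) is what forces commutation with $\mathbf{P}_g^s$, a step the paper's proof leaves implicit when it jumps from ``$\mathbf{T}\in L[\mcH_\otimes^2(g)^s]$'' to ``$\mathbf{P}_g^s\mathbf{T}=\mathbf{T}\mathbf{P}_g^s$.''
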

\begin{proof} 
The weak case follows from the strong case, so we prove that ${\mathbf{P}}_g ^s {\mathbf{T}} = {\mathbf{TP}}_g ^s$.   Since vectors of the form $G  = \sum\nolimits_{k = 1}^L { \otimes _{i  \in \N} g _i ^k }$, with $g _i ^k  = g _i$ for all but a finite number of $i$, are dense in ${\mathcal{H}}_ \otimes ^2 (g )^s$; it suffices to show that ${\mathbf{T}}f  \in {\mathcal{H}}_ \otimes ^2  (g )^s$.  Now, ${\mathbf{T}} \in L^\#  ({\mathcal{H}}_ \otimes ^2  )$ implies that there exists a sequence of operators ${\mathbf{T}}_n$ such that $\left\| {{\mathbf{T}} - {\mathbf{T}}_n } \right\|_ \otimes    \to 0$ as $n \to \infty$, where each ${\mathbf{T}}_n$ is of the form: ${\mathbf{T}}_n  = \sum\nolimits_{k = 1}^{N_n } {a_k^n T_k^n }$, with $a_k^n$  a complex scalar, $N_n  < \infty$, and each $T_k^n  = \hat  \otimes _{i  \in \M_k } T_{ki }^n \hat  \otimes _{i  \in \N \backslash \M_k } I_i$ for some finite set of  $i$-values $\M_k$, where $I_i$ is the identity operator on ${\mcH}_i$.  Hence,
\[
{\mathbf{T}}_n f  = \sum\nolimits_{l = 1}^L {\sum\nolimits_{k = 1}^{N_n } {a_k^n  \otimes _{i  \in \M_k } T_{k i }^n g _i ^l  \otimes _{i  \in \N \backslash \M_k } g _i ^l } }. 
\]
Now, it is easy to see that, for each $l$, $ \otimes _{i  \in \M_k } T_{k i }^n g _i ^l  \otimes _{i  \in \N \backslash \M_k } g _i ^l  \equiv ^s  \otimes _{i  \in \N} g _i$.  It follows that ${\mathbf{T}}_n f  \in {\mathcal{H}}_ \otimes ^2  (g )^s$ for each $n$, so that ${\mathbf{T}}_n  \in L[{\mathcal{H}}_ \otimes ^2  (g )^s ]$.  Since $L[{\mathcal{H}}_ \otimes ^2  (g )^s ]$ is a norm closed algebra, ${\mathbf{T}} \in L[{\mathcal{H}}_ \otimes ^2  (g)^s ]$ and it follows that ${\mathbf{P}}_g ^s {\mathbf{T}} = {\mathbf{TP}}_g ^s$. 
\end{proof}
	Let $z_i   \in {\mathbf{C}}$, $\;\left| {z_i  } \right| = 1$, and define $U[{\mathbf{z}}]$ by: $U[{\mathbf{z}}] \otimes _{i  \in \N} g _i   =  \otimes _{i  \in \N} z_i  g _i$.
\begin{thm} The operator $U[{\mathbf{z}}]$ has a unique extension to a unitary operator on ${\mathcal{H}}_ \otimes ^2$, which we also denote by $U[{\mathbf{z}}]$, such that:
\begin{enumerate}
\item $U[{\mathbf{z}}]:\;\,{\mathcal{H}}_ \otimes ^2  (g )^w  \to {\mathcal{H}}_ \otimes ^2  (g )^w$, so that ${\mathbf{P}}_g ^w U[{\mathbf{z}}] = U[{\mathbf{z}}]{\mathbf{P}}_g ^w$.  
\item If $\prod _\nu  z_\nu$  is quasi-convergent but not convergent, then $U[{\mathbf{z}}]:\;\,{\mathcal{H}}_ \otimes ^2  (g )^s  \to {\mathcal{H}}_ \otimes ^2  (h )^s$, for some $h  \in \,{\mathcal{H}}_ \otimes ^2  (g )^w$ with $g  \bot h$.
\item $U[{\mathbf{z}}]:\;\,{\mathcal{H}}_ \otimes ^2  (g )^s  \to {\mathcal{H}}_ \otimes ^2  (g )^s$ if and only if $\prod _i  z_i$ converges  and $U[{\mathbf{z}}] = (\prod _i  z_i  ){\mathbf{I}}_ \otimes$, where ${\bf I}_\otimes$ is the identity operator on ${\mathcal{H}}_ \otimes ^2 $. This implies that ${\mathbf{P}}_g ^s U[{\mathbf{z}}] = U[{\mathbf{z}}]{\mathbf{P}}_g ^s$.
\end{enumerate}
\end{thm}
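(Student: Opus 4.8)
The plan is to build $U[\mathbf{z}]$ on elementary tensors, extend it by density, and then read off parts (1)--(3) from the way it permutes the equivalence classes of $\mcH_\otimes^2$. For elementary tensors $g=\otimes_{i\in\N}g_i$ and $h=\otimes_{i\in\N}h_i$ the computation $\langle U[\mathbf{z}]g,U[\mathbf{z}]h\rangle_\otimes=\prod_{i\in\N}\langle z_ig_i,z_ih_i\rangle_i=\prod_{i\in\N}|z_i|^2\langle g_i,h_i\rangle_i=\langle g,h\rangle_\otimes$ shows $U[\mathbf{z}]$ preserves inner products on the dense span of elementary tensors, so it extends uniquely to an isometry of $\mcH_\otimes^2$; writing $\bar{\mathbf{z}}=(\bar z_i)$ (again of unit modulus), $U[\bar{\mathbf{z}}]U[\mathbf{z}]=U[\mathbf{z}]U[\bar{\mathbf{z}}]=\mathbf{I}_\otimes$ on elementary tensors (since $\bar z_iz_i=1$), hence on all of $\mcH_\otimes^2$, giving unitarity with inverse $U[\bar{\mathbf{z}}]$. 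The engine for the rest is a pair of observations, valid for every legitimate elementary tensor $g=\otimes g_i$ (so that the norm factors satisfy $\sum_i|1-\|g_i\|_i^2|<\infty$): \emph{(a)} $U[\mathbf{z}]g\equiv^w g$ always, since $|\langle z_ig_i,g_i\rangle_i|=\|g_i\|_i^2$; and \emph{(b)} $U[\mathbf{z}]g\equiv^s g$ if and only if $\sum_i|1-z_i|<\infty$, equivalently, by equation (1.1), if and only if $\prod_iz_i$ is convergent -- because $U[\mathbf{z}]g\equiv^s g$ amounts to $\sum_i|1-z_i\,\|g_i\|_i^2|<\infty$, and this series differs from $\sum_i|1-z_i|$ by one dominated by $\sum_i|1-\|g_i\|_i^2|$.

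For part (1): given an elementary tensor $g'\equiv^w g$, observation \emph{(a)} applied to $g'$ and transitivity of $\equiv^w$ give $U[\mathbf{z}]g'\equiv^w g$, so $U[\mathbf{z}]g'\in\mcH_\otimes^2(g)^w$; since such tensors span $\mcH_\otimes^2(g)^w$ (Theorem 1.6(4)) and $U[\mathbf{z}]$ is bounded, $U[\mathbf{z}]\big(\mcH_\otimes^2(g)^w\big)\subseteq\mcH_\otimes^2(g)^w$, and since $U[\bar{\mathbf{z}}]$ does the same the inclusion is an equality. Carrying this out for every weak class shows $U[\mathbf{z}]$ maps the orthogonal complement of $\mcH_\otimes^2(g)^w$ onto itself as well, whence $\mathbf{P}_g^wU[\mathbf{z}]=U[\mathbf{z}]\mathbf{P}_g^w$.

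For part (3): if $\prod_iz_i$ converges to $z$ (necessarily $|z|=1$), I would expand $\|U[\mathbf{z}]g-zg\|_\otimes^2$ using $\langle U[\mathbf{z}]g,g\rangle_\otimes=\big(\prod_iz_i\big)\|g\|_\otimes^2=z\|g\|_\otimes^2$ to conclude $U[\mathbf{z}]g=zg$ on elementary tensors, hence $U[\mathbf{z}]=z\,\mathbf{I}_\otimes$, which fixes $\mcH_\otimes^2(g)^s$ and commutes with $\mathbf{P}_g^s$; conversely, if $U[\mathbf{z}]$ maps $\mcH_\otimes^2(g)^s$ into itself then the elementary tensor $U[\mathbf{z}]g$ lies in $\mcH_\otimes^2(g)^s$, and because distinct strong classes span mutually orthogonal subspaces (Theorem 1.6(3)--(4)) an elementary tensor of $\mcH_\otimes^2(g)^s$ must be $\equiv^s g$, so \emph{(b)} forces $\prod_iz_i$ to converge. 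For part (2): the hypothesis says $\prod_\nu z_\nu$ is quasi-convergent -- automatic since $|z_\nu|\equiv1$ -- but not convergent, so $\sum_i|1-z_i|=\infty$; set $h:=U[\mathbf{z}]g=\otimes_{i\in\N}z_ig_i$ (a legitimate elementary tensor, as $\|z_ig_i\|_i=\|g_i\|_i$). By \emph{(a)}, $h\in\mcH_\otimes^2(g)^w$; by \emph{(b)}, $h\not\equiv^s g$, so $g$ and $h$ lie in distinct strong classes and Theorem 1.6(3) gives $(g,h)_\otimes=0$, i.e., $g\bot h$. Finally, for each $g'\equiv^s g$ we have $\sum_i|1-\langle z_ig_i',z_ig_i\rangle_i|=\sum_i|1-\langle g_i',g_i\rangle_i|<\infty$, so $U[\mathbf{z}]g'\equiv^s h$; thus $U[\mathbf{z}]$ sends the dense span of such $g'$ into $\mcH_\otimes^2(h)^s$ and, by boundedness together with the inverse $U[\bar{\mathbf{z}}]$, maps $\mcH_\otimes^2(g)^s$ unitarily onto $\mcH_\otimes^2(h)^s$.

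I expect the one genuinely delicate point to be observation \emph{(b)}: the bridge between the analytic condition that $\prod_iz_i$ converge and the combinatorial condition $U[\mathbf{z}]g\equiv^s g$. It rests on equation (1.1) together with the fact that the norm factors of any legitimate elementary tensor satisfy $\sum_i|1-\|g_i\|_i^2|<\infty$, so that normalization never interferes with the estimate. The only other step requiring care -- that an elementary tensor lying in $\mcH_\otimes^2(g)^s$ is already strongly equivalent to $g$ -- is dispatched at once by the orthogonality of distinct equivalence classes recorded in Theorem 1.6.
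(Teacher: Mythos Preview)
Your proof is correct and follows essentially the same route as the paper's: build $U[\mathbf{z}]$ on elementary tensors, extend by density, and track how it permutes strong and weak equivalence classes. The paper's argument is terser---it asserts the identity $U[\mathbf{z}]=(\prod_i z_i)\mathbf{I}_\otimes$ in part (3) without the norm computation you supply, and it appeals to Theorem~1.6(3),(4) where you isolate observations \emph{(a)} and \emph{(b)} explicitly---but the underlying mechanism is the same.
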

\begin{proof} For (1), let $h  = \sum _{k = 1}^N  \otimes _{i  \in \N} h _i ^k$, where $\otimes _{i  \in \N} h _i ^k  \equiv ^w  \otimes _{i  \in \N} g _i, \;N$ is arbitrary and $1 \leqslant k \leqslant N$.  Then
\[
U^ *  [{\mathbf{z}}]U[{\mathbf{z}}]h  = \sum _{k = 1}^N  \otimes _{i  \in \N} z_i ^ *  z_i  h _i ^k  = h  = U[{\mathbf{z}}]U^ *  [{\mathbf{z}}]h. 
\]
Thus, we see that $U[{\mathbf{z}}]$ is a unitary operator, and since $h$ of the above form are dense, $U[{\mathbf{z}}]$ extends to a unitary operator on ${\mathcal{H}}_ \otimes ^2$.  By definition, $\sum _{k = 1}^N  \otimes _{i  \in \N} z_i  h _i ^k  \in {\mathcal{H}}_ \otimes ^2  (g )^w$ if $\sum _{k = 1}^N  \otimes _{i  \in \N} h _\nu ^k  \in {\mathcal{H}}_ \otimes ^2  (g )^w$, so that $U[{\mathbf{z}}]:\;\,{\mathcal{H}}_ \otimes ^2  (g )^w  \to {\mathcal{H}}_ \otimes ^2  (g )^w$ and ${\mathbf{P}}_g ^w U[{\mathbf{z}}] = U[{\mathbf{z}}]{\mathbf{P}}_g ^w$.  
To prove (2), use Theorem 1.6 (3) and (4) to note that $\prod _i  z_i   = 0$ and $\otimes _{i  \in \N} h _i ^k  \equiv ^s  \otimes _{i  \in \N} g _i$ imply that $\otimes _{i  \in \N} z_i  h _i ^k  \in {\mathcal{H}}_ \otimes ^2  (f )^s$ with ${\mathcal{H}}_ \otimes ^2  (f )^s  \bot {\mathcal{H}}_ \otimes ^2  (g )^s$.
To prove (3), note that, if $0 < \left| {\prod _i  z_i  } \right| < \infty$, then $U[{\mathbf{z}}] = [(\prod _i  z_i  ){\mathbf{I}}_ \otimes  ]$, so that $U[{\mathbf{z}}]:\;\,{\mathcal{H}}_ \otimes ^2  (g )^s  \to {\mathcal{H}}_ \otimes ^2  (g )^s$.  Now suppose that $U[{\mathbf{z}}]:\;\,{\mathcal{H}}_ \otimes ^2  (g )^s  \to {\mathcal{H}}_ \otimes ^2  (g )^s$, then $\otimes _{i  \in \N} z_i  h _i ^k  \equiv ^s  \otimes _{i  \in \N} g _i$ and so $\prod _i  z_i$ must converge.  Therefore, $U[{\mathbf{z}}]h  = [(\prod _i  z_i  ){\mathbf{I}}_ \otimes  ]h$ and ${\mathbf{P}}_g ^s U[{\mathbf{z}}] = U[{\mathbf{z}}]{\mathbf{P}}_g ^s$. 

It is easy to see that, for each fixed ${i  \in \N}$, ${\mcA}(i ) \in L[{\mathcal{H}}(i )]$ commutes with any ${\mathbf{P}}_g ^s ,\;{\mathbf{P}}_g ^w$ or $U[{\mathbf{z}}]$, where $g$ and ${\mathbf{z}}$ are arbitrary.
\end{proof}
\begin{thm} Every ${\mathbf{T}} \in L^\#  [{\mathcal{H}}_ \otimes ^2  ]$ commutes with all ${\mathbf{P}}_g ^s ,\;{\mathbf{P}}_g ^w$ and $U[{\mathbf{z}}]$, where $g$ and ${\mathbf{z}}$ are arbitrary.
\end{thm}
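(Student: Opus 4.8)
The plan is to deduce this from the two preceding theorems together with the elementary fact that the \emph{commutant} of a fixed bounded operator is a norm-closed subalgebra of $L[{\mathcal{H}}_\otimes^2]$.

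For the projections $\mathbf{P}_g^s$ and $\mathbf{P}_g^w$ there is nothing new to do: the asserted relations $\mathbf{P}_g^s\mathbf{T} = \mathbf{T}\mathbf{P}_g^s$ and $\mathbf{P}_g^w\mathbf{T} = \mathbf{T}\mathbf{P}_g^w$ for $\mathbf{T} \in L^\#[{\mathcal{H}}_\otimes^2]$ are precisely the content of Theorem 3.1, which I would simply cite.

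So the work is to show that every $\mathbf{T} \in L^\#[{\mathcal{H}}_\otimes^2]$ commutes with $U[\mathbf{z}]$, for arbitrary $\mathbf{z}$ with $|z_i|=1$. Fix such a $\mathbf{z}$ and set
\[
\mathcal{C} = \{ S \in L[{\mathcal{H}}_\otimes^2] : S\,U[\mathbf{z}] = U[\mathbf{z}]\,S \}.
\]
First I would check that $\mathcal{C}$ is a subalgebra: it is visibly closed under linear combinations, and if $S,S' \in \mathcal{C}$ then $(SS')U[\mathbf{z}] = S(S'U[\mathbf{z}]) = S(U[\mathbf{z}]S') = (SU[\mathbf{z}])S' = U[\mathbf{z}](SS')$, so $SS' \in \mathcal{C}$. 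Next, $\mathcal{C}$ is norm-closed: if $S_n \to S$ uniformly with $S_n \in \mathcal{C}$, then, using that $U[\mathbf{z}]$ is bounded, $SU[\mathbf{z}] = \lim_n S_nU[\mathbf{z}] = \lim_n U[\mathbf{z}]S_n = U[\mathbf{z}]S$. By the closing remark in the proof of Theorem 3.2, every $\mathcal{A}(i) \in L[{\mathcal{H}}(i)]$ lies in $\mathcal{C}$, so $\mathcal{C}$ contains the whole family $\{L[{\mathcal{H}}(i)] : i \in \N\}$. Since $L^\#[{\mathcal{H}}_\otimes^2]$ is, by definition, the uniform closure of the algebra generated by that family, that is, the smallest norm-closed subalgebra of $L[{\mathcal{H}}_\otimes^2]$ containing it, we get $L^\#[{\mathcal{H}}_\otimes^2] \subseteq \mathcal{C}$. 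Hence every $\mathbf{T} \in L^\#[{\mathcal{H}}_\otimes^2]$ commutes with $U[\mathbf{z}]$, and as $\mathbf{z}$ was arbitrary the theorem follows.

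I do not anticipate a genuine obstacle: the only thing needing (minimal) care is the verification that $\mathcal{C}$ is a norm-closed algebra, which is immediate because $U[\mathbf{z}]$ is bounded (indeed unitary, by Theorem 3.2). As a variant, one could intersect the three commutants of $\mathbf{P}_g^s$, $\mathbf{P}_g^w$ and $U[\mathbf{z}]$; this intersection is again a norm-closed subalgebra containing all the $L[{\mathcal{H}}(i)]$ (by the same closing remark), so it contains $L^\#[{\mathcal{H}}_\otimes^2]$, yielding all three commutation relations simultaneously and in particular reproving the projection cases.
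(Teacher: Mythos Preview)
Your proposal is correct and is essentially the paper's argument: the paper lets $\mathfrak{L}$ be the set of all $\mathbf{P}_g^s$, $\mathbf{P}_g^w$, $U[\mathbf{z}]$, observes (from the remark closing the proof of Theorem~3.2) that each $\mathcal{A}_i \in L[\mathcal{H}(i)]$ lies in the commutant $\mathfrak{L}'$, and then uses that $\mathfrak{L}'$ is a norm-closed algebra to conclude $L^\#[\mathcal{H}_\otimes^2] \subseteq \mathfrak{L}'$. This is exactly the ``variant'' you describe at the end; your main version merely splits off the projection cases by citing Theorem~3.1 first, which is a cosmetic difference.
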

\begin{proof} Let $\mathfrak{L}$ be the set of all ${\mathbf{P}}_g ^s ,\;{\mathbf{P}}_g ^w$ or $U[{\mathbf{z}}]$, with $g$ and ${\mathbf{z}}$ arbitrary.  From the above observation, we see that all ${\mcA}_i  \in L[{\mathcal{H}}(i )], \;{i  \in \N}$, commute with $\mathfrak{L}$ and hence belong to its commutator $\mathfrak{L}'$.   Since $\mathfrak{L}'$ is a closed algebra, this implies that $L^\#  [{\mathcal{H}}_ \otimes ^2  ] \subseteq \mathfrak{L}'$ so that all ${\mathbf{T}} \in L^\#  [{\mathcal{H}}_ \otimes ^2  ]$ commute with $\mathfrak{L}$.
\end{proof}
\subsection{Unbounded Operators on ${\mathcal{H}}_\otimes^2$} 
In this section, we consider a restricted class of unbounded operators and the notion of a strong convergence vector introduced by Reed \cite{RE}.

For each $i \in \N$, let $A_i$ be a closed densely defined linear operator on ${\mathcal{H}}_i$, with domain $D(A_i )$, and let ${\mcA}_i$ be its extension to ${\mathcal{H}}_ \otimes ^2$, with domain $D({\mcA}_i ) \supset \tilde D({\mcA}_i ) = D(A_i ) \otimes ( \otimes _{k \ne i} {\mathcal{H}}_k )$.  The next theorem follows directly from the definition of the tensor product of semigroups.
\begin{thm} Let $A_i ,\;1 \leqslant i \leqslant n$, be generators of a family of $C_0$-semigroups $S_i (t)$ on ${\mathcal{H}}_i$ with $\left\| {S_i (t)} \right\|_{{\mathcal{H}}_i }  \leqslant M_i e^{\omega _i t}$.  Then ${\mathbf{S}}_n (t) = \hat  \otimes _{i = 1,n}  {\kern 1pt} S_i (t)$, defined on $\hat  \otimes _{i = 1,n}  {\kern 1pt} {\mathcal{H}}_i$, has a unique extension (also denoted by ${\mathbf{S}}_n (t)$) to all of ${\mathcal{H}}_ \otimes ^2$, such that, for all vectors $\sum _{k = 1}^K  \otimes _{i \in \N} g _i^k$ with $g _l^k  \in D(A_l ),\;1 \leqslant l \leqslant n$, the infinitesimal generator for ${\mathbf{S}}_n (t)$ satisfies:
\[
{\mcA}^n \left[ {\sum _{k = 1}^K  \otimes _{i \in \N} g _i^k } \right] = \sum _{l = 1}^n \sum _{k = 1}^K A_l g _l^k ( \otimes _{i \in \N}^{i \ne l} g _i^k ).
\]		
\end{thm}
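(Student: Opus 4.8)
The plan is to construct $\mathbf{S}_n(t)$ first on the finite tensor product $\hat\otimes_{i=1}^{n}\mcH_i$, where it is the classical tensor-product semigroup (its generator on an elementary tensor $g_1\otimes\cdots\otimes g_n$ with $g_l\in D(A_l)$ being $\sum_{l=1}^{n}g_1\otimes\cdots\otimes A_lg_l\otimes\cdots\otimes g_n$), and then to extend it to all of $\mcH_\otimes^2$ by letting it act as the identity on every factor with index $i>n$; in symbols $\mathbf{S}_n(t)=(\hat\otimes_{i=1}^{n}S_i(t))\,\hat\otimes\,(\hat\otimes_{i>n}I_i)$. Since each $S_i(t)$ is bounded, it can be realized inside $L[\mcH(i)]$, and the finite product over $i=1,\dots,n$ of these realizations lies in the algebra generated by $\{L[\mcH(i)]:i\in\N\}$; hence $\mathbf{S}_n(t)\in L^\#[\mcH_\otimes^2]\subset L[\mcH_\otimes^2]$, and so $\mathbf{S}_n(t)$ is a genuine bounded operator on $\mcH_\otimes^2$ with $\|\mathbf{S}_n(t)\|_\otimes\le\prod_{i=1}^{n}\|S_i(t)\|_{\mcH_i}\le\prod_{i=1}^{n}M_ie^{\omega_it}$. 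Because the linear span of the elementary product vectors $\otimes_{i\in\N}g_i$ is dense in $\mcH_\otimes^2$ and this bounded operator is forced on that span by $\otimes_{i\in\N}g_i\mapsto(\hat\otimes_{i=1}^{n}S_i(t)g_i)\otimes(\otimes_{i>n}g_i)$, the extension is unique.

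Next I would verify that $\{\mathbf{S}_n(t)\}_{t\ge0}$ is a $C_0$-semigroup on $\mcH_\otimes^2$. The identities $\mathbf{S}_n(0)=\mathbf{I}_\otimes$ and $\mathbf{S}_n(t+s)=\mathbf{S}_n(t)\mathbf{S}_n(s)$ follow factor by factor from the corresponding properties of the $S_i$, and uniform boundedness on $0\le t\le1$ is the bound just displayed. For strong continuity it then suffices, by density and uniform boundedness, to check that $\mathbf{S}_n(t)g\to g$ as $t\to0^+$ for an elementary product vector $g=\otimes_{i\in\N}g_i$, since only the first $n$ slots move. Here I would use von Neumann's inner-product identity $\langle\otimes_{i\in\N}a_i,\otimes_{i\in\N}b_i\rangle_\otimes=\prod_{i\in\N}\langle a_i,b_i\rangle_i$ for product vectors (see \cite{VN2}): if two elementary tensors agree in every slot $i$ outside a finite set $F$, then, factoring out the finite positive tail constant $c=\prod_{i\notin F}\|g_i\|^2$, the square of their $\mcH_\otimes^2$-distance equals $c$ times the square of the distance of the corresponding vectors in the finite tensor product $\hat\otimes_{i\in F}\mcH_i$. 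This reduces strong continuity to the standard joint continuity of the multilinear map $(f_1,\dots,f_N)\mapsto f_1\otimes\cdots\otimes f_N$ on a finite product of Hilbert spaces, which applies since $S_i(t)g_i\to g_i$ for each $i$.

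Finally I would identify the generator $\mcA^n$ of $\mathbf{S}_n(t)$. By linearity it suffices to treat one vector $g=\otimes_{i\in\N}g_i$ with $g_l\in D(A_l)$ for $1\le l\le n$. Let $g^{(l)}$ be obtained from $g$ by replacing $g_i$ with $S_i(t)g_i$ in every slot $i\le l$, so that $g^{(0)}=g$, $g^{(n)}=\mathbf{S}_n(t)g$ and $\mathbf{S}_n(t)g-g=\sum_{l=1}^{n}(g^{(l)}-g^{(l-1)})$, the $l$-th term being the elementary tensor carrying $S_i(t)g_i$ in slots $i<l$, the vector $S_l(t)g_l-g_l$ in slot $l$, and $g_i$ in slots $i>l$. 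Dividing by $t$ and letting $t\to0^+$: the slot-$l$ factor $\tfrac1t(S_l(t)g_l-g_l)\to A_lg_l$ because $g_l\in D(A_l)$; the slot-$i$ factors with $i<l$ satisfy $S_i(t)g_i\to g_i$; the slots $i>l$ are frozen; and every moving factor stays norm-bounded for $0\le t\le1$ (using $\|S_i(t)g_i\|\le M_ie^{\omega_i}\|g_i\|$ together with convergence of the difference quotient). Applying once more the finite-slot reduction of the previous paragraph, $\tfrac1t(g^{(l)}-g^{(l-1)})\to A_lg_l\,(\otimes_{i\ne l}g_i)$ in $\mcH_\otimes^2$, so summing over $l$ gives $\lim_{t\to0^+}\tfrac1t(\mathbf{S}_n(t)g-g)=\sum_{l=1}^{n}A_lg_l\,(\otimes_{i\ne l}g_i)$. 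Hence $g\in D(\mcA^n)$ with $\mcA^ng$ equal to that sum, and extending by linearity over the finite sums $\sum_{k=1}^{K}\otimes_{i\in\N}g_i^k$ gives the stated formula.

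The single point that needs care, and the one I would be most deliberate about, is this reduction to finite tensor products: one must make sure that the infinitely many frozen tail factors contribute only a fixed finite multiplicative constant to every norm estimate, so that each limit above genuinely collapses to continuity of the multilinear tensor map on a finite product of Hilbert spaces. Given von Neumann's description of $\mcH_\otimes^2$ recalled in Section~1, this is bookkeeping rather than a real obstruction, and no idea beyond it is needed.
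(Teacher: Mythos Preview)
Your proof is correct and is essentially a fully worked-out version of the paper's argument. The paper does not give a proof at all: it simply remarks that ``the next theorem follows directly from the definition of the tensor product of semigroups,'' and moves on. Your extension via $\mathbf{S}_n(t)=(\hat\otimes_{i=1}^{n}S_i(t))\hat\otimes(\hat\otimes_{i>n}I_i)$, the verification of the $C_0$-semigroup axioms by reduction to finitely many active slots, and the telescoping computation of the generator on elementary tensors are exactly the details one must supply to justify that sentence, so there is no genuine difference in approach---you have simply done the bookkeeping the paper omits.
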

\begin{Def} Let $\{ A_i \}$, ${i \in \N}$, be a family of closed densely defined linear operators on ${\mathcal{H}}_i$ and let $g _i  \in D(A_i )$ (respectively, $f _i  \in D(A_i )$), with $\left\| {g _i } \right\|_{\mathcal{H}}  = 1$ (respectively, $\left\| {f _i } \right\|_{\mathcal{H}}  = 1$), for all ${i \in \N}$. 
\begin{enumerate}
\item We say that $g  =  \otimes _{i \in \N} g _i$ is a strong convergence sum (scs)-vector for the family $\{ {\mcA}_i \}$ if  $\mathop {s{\text{ - }}\lim }\limits_{n \to \infty } \sum\nolimits_{k = 1}^n {{\mcA}_k g }   = \sum _{k = 1}^{\infty} A_k g _k ( \otimes _{i \in \N}^{i \ne k} g _i )$ exists.  
\item We say that $f  =  \otimes _{i \in \N} f _i$ is a strong convergence product (scp)-vector for the family $\{ {\mcA}_i \}$ if $\mathop {s{\text{ - }}\lim }\limits_{n \to \infty } \prod\nolimits_{k = 1}^n {{\mcA}_k f }  =   \otimes _{i \in \N} A_i f _i $ exists. 
\end{enumerate}
\end{Def}
Let ${\mathbf{D}}_g$  be the linear span of $\{ \chi  =  \otimes _{i \in \N} \chi _i ,\;\chi _i  \in D(A_i )\}$, with $\;\chi _i  = g _i $ (and let ${\mathbf{D}}_f$  be the linear span of $\{ \eta  =  \otimes _{i \in \N} \eta _i ,\;\eta _i  \in D(A_i )\}$, with $\;\eta _i  = f _i $) for all $i > L$, where $L$ is arbitrary but finite.  Clearly, ${\mathbf{D}}_g$  is dense in ${\mathcal{H}}_ \otimes ^2  (g )^s$ (${\mathbf{D}}_\eta$  is dense in ${\mathcal{H}}_ \otimes ^2  (f )^s$).  If there is a possible chance for confusion, we let ${\mcA}_s$, respectively ${\mcA}_p$,  denote the closure of  $\sum\nolimits_{k = 1}^\infty  {{\mcA}_k }$ on ${\mathcal{H}}_ \otimes ^2  (g )^s$ (respectively $\prod\nolimits_{k = 1}^\infty  {{\mcA}_k }$ on ${\mathcal{H}}_ \otimes ^2  (f )^s$).  It follows that ${\mathcal{H}}_ \otimes ^2  (g )^s$ (respectively ${\mathcal{H}}_ \otimes ^2  (f )^s$) are natural spaces for the study of infinite sums or products of unbounded operators.  (The notion of a strong convergence sum vector first appeared in Reed [RE].)
\begin{Def} We call ${\mathcal{H}}_ \otimes ^2  (g )^s$ an ${\mathcal{RS}}$-space (respectively, ${\mathcal{H}}_ \otimes ^2  (f )^s$ an ${\mathcal{RP}}$-space ) for the family $\{ {\mcA}_i \}$.
\end{Def}
Let  $\{ U_k (t) \} $ be a set of unitary groups on $\{ {\mathcal{H}}_k \} $.  It is easy to see  that $U(t) = \hat  \otimes _{k = 1}^\infty  U_k (t)$  is a unitary group on ${\mathcal{H}}_ \otimes ^2$.  However, we know from Theorem 3.2 (2), that it need not be reduced on any partial tensor product subspace.  The following results are due to  Streit [ST] and Reed [RE], as indicated.   
\begin{thm} (Streit) Suppose $\{ {\mcA}_k \} $ is a set of selfadjoint linear operators on the space ${\mathcal{H}}_ \otimes ^2 (g )^s $, with corresponding unitary groups $\{ U_k (t)\}$.  If $U(t) = \hat  \otimes _{k = 1}^\infty  U_k (t)$, then $
{\mathbf{P}}_g ^s U(t) = U(t){\mathbf{P}}_g ^s$  (i.e., $U(t)$ is reduced on ${\mathcal{H}}_ \otimes ^2 (g )^s $) and $U(t)$ is a strongly continuous unitary group on ${\mathcal{H}}_ \otimes ^2 (g )^s $ if and only if, for each $c>0$, the following three conditions are satisfied:
\begin{enumerate}
\item 
$\sum\nolimits_{k = 1}^\infty  {\left| {\left\langle {{\mcA}_k E_k [ - c,c]g _k ,g _k } \right\rangle } \right|}  < \infty$,
\item 
$\sum\nolimits_{k = 1}^\infty  {\left| {\left\langle {{\mcA}_k^2 E_k [ - c,c]g _k ,g _k } \right\rangle } \right|}$,
\item 
$\sum\nolimits_{k = 1}^\infty  {\left| {\left\langle {(I_k  - E_k [ - c,c]g _k ,g _k } \right\rangle } \right|}  < \infty$,
\end{enumerate}
where $E_k [ - c,c]$ are the spectral projectors of  ${\mcA}_k $ and, in this case, $
U(t) = s- \lim _{n \to \infty } \hat  \otimes _{k = 1}^n U_k (t)$.
\end{thm}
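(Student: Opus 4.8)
The plan is to translate the statement into classical harmonic analysis. By Stone's theorem write $U_k(t)=e^{it{\mcA}_k}$ with spectral resolution $E_k$ and set $\mu_k(\cdot)=\langle E_k(\cdot)g_k,g_k\rangle$, a Borel probability measure on $\R$ (as $\|g_k\|=1$), so that $\langle U_k(t)g_k,g_k\rangle=\int_{\R}e^{it\la}\,d\mu_k(\la)=:\widehat\mu_k(t)$. Since the strong equivalence classes $\{{\mcH}_\otimes^2(f)^s\}$ are mutually orthogonal and exhaust ${\mcH}_\otimes^2$ (Theorems 1.4 and 1.6), the unit vector $\otimes_kU_k(t)g_k$ lies in ${\mcH}_\otimes^2(g)^s$ exactly when $\otimes_kU_k(t)g_k\equiv^s g$, i.e.\ (Definition 1.2) when $\sum_k|1-\widehat\mu_k(t)|<\iy$; and, vectors differing from $g$ in finitely many coordinates being dense in ${\mcH}_\otimes^2(g)^s$ with $U(t),U(-t)=U(t)^{*}$ bounded, $U(t)$ reduces ${\mcH}_\otimes^2(g)^s$ iff this holds for every $t$. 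Granting reducibility and using that each $U_k$ is a $C_0$-group (${\mcA}_k$ self-adjoint), $\langle U(t)h,h\rangle=\big(\prod_{k\le L}\langle U_k(t)h_k,h_k\rangle\big)\big(\prod_{k>L}\widehat\mu_k(t)\big)$ on that dense set, so $U(t)$ is strongly continuous there iff $\prod_k\widehat\mu_k(t)\to1$ as $t\to0$ (continuity at other $t_0$ follows from the group law, and $U(t)=s\text{-}\lim_n\hat\otimes_{k=1}^nU_k(t)$ from the construction). So the theorem becomes: $\sum_k|1-\widehat\mu_k(t)|<\iy$ for every $t$ together with $\prod_k\widehat\mu_k(t)\to1$ as $t\to0$ hold if and only if (1)--(3) hold for every $c>0$.

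For the ``if'' direction I would use $|1-e^{iu}+iu|\le\tf{u^2}{2}$. Splitting $\int_{\R}=\int_{[-c,c]}+\int_{\R\setminus[-c,c]}$ and recognising $\int_{[-c,c]}\la\,d\mu_k=\langle{\mcA}_kE_k[-c,c]g_k,g_k\rangle$, $\int_{[-c,c]}\la^2\,d\mu_k=\langle{\mcA}_k^2E_k[-c,c]g_k,g_k\rangle$, $\mu_k(\R\setminus[-c,c])=\langle(I_k-E_k[-c,c])g_k,g_k\rangle$, one gets
\[
|1-\widehat\mu_k(t)|\le|t|\,\big|\langle{\mcA}_kE_k[-c,c]g_k,g_k\rangle\big|+\tf{t^2}{2}\langle{\mcA}_k^2E_k[-c,c]g_k,g_k\rangle+2\langle(I_k-E_k[-c,c])g_k,g_k\rangle.
\]
Summing on $k$, conditions (1)--(3) give $\sum_k|1-\widehat\mu_k(t)|<\iy$ for each $t$; and since the tail sum in (3) decreases to $0$ as $c\to\iy$ (dominated convergence), choosing $c$ large and then $|t|$ small shows $\sum_k|1-\widehat\mu_k(t)|\to0$, whence $\prod_k\widehat\mu_k(t)\to1$. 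Thus both required statements hold.

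For the ``only if'' direction, the substantial one, I would proceed as in Streit's computation. From reducibility, $\sum_k\int_{\R}(1-\cos t\la)\,d\mu_k=\sum_k\mathrm{Re}\,(1-\widehat\mu_k(t))<\iy$ for each $t$; evaluating at $t=1/m$ and using $1-\cos\theta\ge\tf{2}{\pi^2}\theta^2$ for $|\theta|\le\pi$ gives $\sum_k\int_{[-m\pi,m\pi]}\la^2\,d\mu_k<\iy$, hence (2) for every $c$. From $\prod_k\widehat\mu_k(t)\to1$ one gets $\sum_k(1-|\widehat\mu_k(t)|^2)\to0$; writing $1-|\widehat\mu_k(t)|^2=\int_{\R}(1-\cos ts)\,d\nu_k(s)$ for the symmetrization $\nu_k$ of $\mu_k$, this sum is bounded on some $(0,\de_0)$, so for $\de\le\de_0$ Tonelli and the identity $\de^{-1}\int_0^{\de}(1-\cos ts)\,dt=1-\tf{\sin\de s}{\de s}\ge\kappa\min((\de s)^2,1)$ give $\sum_k\nu_k(\R\setminus[-2c,2c])<\iy$ and $\sum_k\int_{[-2c,2c]}s^2\,d\nu_k<\iy$; a symmetrization inequality relative to medians $m_k$ of $\mu_k$, together with $\sum_k\min(|m_k|,1)<\iy$ (a consequence of reducibility and (2)), then removes the centering to yield (3). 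Finally $|1-\widehat\mu_k(t)|\ge|\mathrm{Im}\,\widehat\mu_k(t)|=\big|\int_{\R}\sin t\la\,d\mu_k\big|$, and writing $\int_{\R}\sin t\la\,d\mu_k=t\int_{[-c,c]}\la\,d\mu_k+r_k(t)$ with $|r_k(t)|\le\tf{|t|^3c}{6}\int_{[-c,c]}\la^2\,d\mu_k+2\mu_k(\R\setminus[-c,c])$ makes $\sum_k|r_k(t)|<\iy$ by (2) and (3), so $\sum_k|t|\,\big|\int_{[-c,c]}\la\,d\mu_k\big|<\iy$, giving (1). The main obstacle I foresee is extracting the tail bound (3): since $1-\cos t\la$ vanishes on a dense set of $\la$'s, no finite collection of time values controls $\mu_k(\R\setminus[-c,c])$, so one must average over an interval of $t$'s, and it is exactly strong continuity --- not mere reducibility --- that makes the relevant sum locally bounded near $t=0$ and hence legitimizes the Tonelli interchange; the re-centering step beyond that is a repackaging of Kolmogorov's three-series theorem.
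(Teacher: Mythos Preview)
The paper does not prove this theorem; it merely states it and attributes it to Streit \cite{ST}, so there is no ``paper's own proof'' to compare against. Your proposal is therefore a reconstruction of Streit's argument rather than a comparison.

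That said, your outline is essentially the standard route and is correct in substance. The reformulation in terms of the spectral measures $\mu_k$ and their characteristic functions $\widehat\mu_k$ is exactly right: reducibility of $U(t)$ on ${\mcH}_\otimes^2(g)^s$ is equivalent to $\sum_k|1-\widehat\mu_k(t)|<\infty$ for every $t$, and strong continuity at $0$ to $\prod_k\widehat\mu_k(t)\to 1$. Your ``if'' direction is clean; the estimate you write down is precisely what one needs, and the tail argument (let $c\to\infty$ first, then $t\to 0$) is the right order of quantifiers.

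For the ``only if'' direction your strategy --- get (2) from $1-\cos\theta\gtrsim\theta^2$ on $|\theta|\le\pi$, average over a $t$-interval to extract Lévy-type bounds on the symmetrizations $\nu_k$, then de-symmetrize to obtain (3), and finally isolate (1) from the imaginary part --- is the correct one and mirrors the classical Kolmogorov three-series machinery. One small point: you assert $\sum_k\min(|m_k|,1)<\infty$ for the medians as ``a consequence of reducibility and (2)''. You do not actually need summability here, and I am not sure it follows as stated; what you need (and what does follow) is merely $m_k\to 0$. Indeed, reducibility gives $\widehat\mu_k(t)\to 1$ for each $t$, hence $\mu_k\Rightarrow\delta_0$ by L\'evy's continuity theorem, so $m_k\to 0$; then for all large $k$ one has $|m_k|<c$ and the symmetrization inequality $\mu_k(|x-m_k|>c)\le 2\nu_k(|s|>c)$ yields $\sum_k\mu_k(|x|>2c)<\infty$. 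Once (3) holds for one $c$ it holds for all $c$ via (2) and Chebyshev, as you implicitly use elsewhere. With that adjustment the argument closes.
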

\begin{cor} Conditions 1-3 are satisfied if and only if there exists a strong convergence vector $g  =  \otimes _{k = 1}^\infty  g _k$ for the family $\{ A_k \} $ such that $g _k  \in D(A_k )$ and 
\[
\sum\nolimits_{k = 1}^\infty  {\left| {\left\langle {{\mcA}_k g _k ,g _k } \right\rangle } \right|}  < \infty ,{\text{    }}\sum\nolimits_{k = 1}^\infty  {\left\| {{\mcA}_k g _k } \right\|} ^2  < \infty. 
\]  
\end{cor}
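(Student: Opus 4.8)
The plan is to show the equivalence between Streit's three spectral conditions (1)–(3) of Theorem 3.7 and the two moment conditions $\sum_k |\langle \mcA_k g_k, g_k\rangle| < \infty$ and $\sum_k \|\mcA_k g_k\|^2 < \infty$ for a scs-vector $g = \otimes_{k} g_k$. The key observation is that conditions (1)–(3) are stated in terms of the truncated operators $\mcA_k E_k[-c,c]$, while the moment conditions involve the full (unbounded) operators $\mcA_k$; so the bridge between the two is precisely the assertion that each $g_k$ lies in the domain $D(A_k)$, which is equivalent to $\int_{\R} \la^2 \, d\langle E_k(\la) g_k, g_k\rangle < \infty$ for every $k$. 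Given that, for a fixed $c$ the difference between the truncated and full quantities is controlled by the spectral mass outside $[-c,c]$, which is exactly what condition (3) measures.

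First I would recall that, since each $A_k$ is closed, densely defined and self-adjoint, the spectral theorem gives $\langle \mcA_k g_k, g_k\rangle = \int_{\R} \la \, d\mu_k(\la)$ and $\|\mcA_k g_k\|^2 = \int_{\R} \la^2 \, d\mu_k(\la)$, where $\mu_k(\cdot) = \langle E_k(\cdot) g_k, g_k\rangle$ is a probability measure on $\R$ (as $\|g_k\| = 1$). Condition (3) of Theorem 3.7 is then $\sum_k \mu_k(\R \setminus [-c,c]) < \infty$ for every $c > 0$. Second, assuming (1)–(3): from (3) with $c = 1$, the tails are summably small, so the full first moment $\int \la \, d\mu_k$ differs from the truncated one $\int_{[-c,c]} \la \, d\mu_k$ by at most $\int_{|\la|>c} |\la| \, d\mu_k$; splitting the tail dyadically and using (2) together with (3) to bound $\int_{|\la|>c}\la^2 d\mu_k$ and hence $\int_{|\la|>c}|\la| d\mu_k \le (\text{const})\, \mu_k(\R\setminus[-c,c]) + \tfrac1c \int_{|\la|>c}\la^2 d\mu_k$, one sums over $k$ to get $\sum_k |\langle \mcA_k g_k, g_k\rangle| < \infty$ and $\sum_k \|\mcA_k g_k\|^2 < \infty$; that $g_k \in D(A_k)$ for each $k$ is automatic once the single-$k$ second moment is finite, and $g = \otimes_k g_k$ is an scs-vector by Streit's conclusion $U(t) = s\text{-}\lim_n \hat\otimes_{k=1}^n U_k(t)$ in Theorem 3.7. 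Third, for the converse, assume $g$ is a scs-vector with $\sum_k |\langle \mcA_k g_k, g_k\rangle| < \infty$ and $\sum_k \|\mcA_k g_k\|^2 < \infty$. Then conditions (1) and (2) are immediate because truncation only decreases the relevant integrals: $|\langle \mcA_k E_k[-c,c] g_k, g_k\rangle| \le |\langle \mcA_k g_k, g_k\rangle| + \text{(tail)}$ and similarly for the squared term, and in fact the cleanest route is $\big|\int_{[-c,c]}\la\,d\mu_k\big| \le \big(\int \la^2 d\mu_k\big)^{1/2}$ only when that is small, so more simply one notes $\int_{[-c,c]}|\la|\,d\mu_k \le \int_{\R}|\la|\,d\mu_k \le \|\mcA_k g_k\|$ (again by Cauchy–Schwarz against the probability measure), giving (1), and $\int_{[-c,c]}\la^2\,d\mu_k \le \|\mcA_k g_k\|^2$ giving (2); finally, for (3), by Chebyshev $\mu_k(\R\setminus[-c,c]) \le \tfrac{1}{c^2}\int_{\R}\la^2\,d\mu_k = \tfrac{1}{c^2}\|\mcA_k g_k\|^2$, and summing over $k$ yields $\sum_k \mu_k(\R\setminus[-c,c]) \le \tfrac1{c^2}\sum_k \|\mcA_k g_k\|^2 < \infty$, which is condition (3).

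The main obstacle I anticipate is the forward direction, specifically passing from the truncated spectral conditions (1)–(3) to the statement that $g_k \in D(A_k)$ with finite \emph{full} second moment summed over $k$: a priori condition (2) only bounds $\int_{[-c,c]}\la^2 d\mu_k$ uniformly in $c$, and one must argue that letting $c \to \infty$ (monotone convergence) keeps the sum over $k$ finite — this requires combining (2) with (3) carefully, since the contribution $\int_{|\la|>c}\la^2\,d\mu_k$ is not directly controlled by (2) alone. The resolution is to exploit that (2) holds \emph{for every} $c$ with a bound that, by monotone convergence in $c$, forces $\sum_k \int_\R \la^2 d\mu_k < \infty$ outright — indeed $\sum_k \int_{[-c,c]}\la^2 d\mu_k$ is nondecreasing in $c$ and (2) asserts it is finite for each $c$; one then needs the uniform-in-$c$ finiteness, which Streit's theorem in fact supplies because conditions (1)–(3) together are what guarantee the strong limit $U(t) = s\text{-}\lim_n \hat\otimes_{k=1}^n U_k(t)$ exists, i.e. that $g$ is a genuine scs-vector, and existence of that limit is equivalent to $\sum_k \|\mcA_k g_k\|^2 < \infty$. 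So the logical structure is: (1)–(3) $\Rightarrow$ $g$ is an scs-vector (Streit) $\Rightarrow$ the series $\sum_k \mcA_k g$ converges strongly $\Rightarrow$ its Pythagorean norm $\sum_k \|\mcA_k g_k\|^2$ is finite and the first-moment series converges absolutely, and conversely the two moment conditions re-derive (1)–(3) by Cauchy–Schwarz and Chebyshev as above.
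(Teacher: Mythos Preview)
The paper offers no proof of this corollary; it is simply stated between Streit's and Reed's theorems with no argument supplied, so there is nothing to compare your approach against directly. Your overall plan via the spectral measures $\mu_k(\cdot)=\langle E_k(\cdot)g_k,g_k\rangle$ is the natural one, and the converse direction is essentially right: conditions (2) and (3) follow immediately from $\|A_k g_k\|^2 = \int \lambda^2\,d\mu_k$ by truncation and by Chebyshev. Your bound for condition~(1), however, slips: $\int_{[-c,c]}|\lambda|\,d\mu_k \le \|A_k g_k\|$ is fine for a single $k$, but $\sum_k \|A_k g_k\|$ need not be finite when only $\sum_k \|A_k g_k\|^2$ is. The fix you hinted at first is the correct one: $\bigl|\int_{[-c,c]}\lambda\,d\mu_k\bigr| \le |\langle A_k g_k,g_k\rangle| + c^{-1}\|A_k g_k\|^2$, and now both pieces are summable by hypothesis.

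The forward direction has a genuine gap. First, Streit's conclusion is a \emph{product} limit $U(t)=s\text{-}\lim_n \hat\otimes_{k\le n}U_k(t)$, not a \emph{sum} limit; it does not by itself say that the given $g$ is a scs-vector, and your ``Pythagorean'' step fails because the vectors $\mcA_k g$ are not orthogonal---one has $\langle \mcA_j g,\mcA_k g\rangle = \langle A_j g_j,g_j\rangle\,\overline{\langle A_k g_k,g_k\rangle}$ for $j\ne k$. Second, and more fundamentally, the conclusion $\sum_k\|A_k g_k\|^2<\infty$ can fail for the \emph{original} $g$: take self-adjoint $A_k$ whose spectral measure $\mu_k$ puts mass $1-k^{-3}$ at $0$ and mass $k^{-3}$ at $\lambda=k$. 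Then for each fixed $c$ the sums in (1)--(3) effectively truncate at $k\le c$ and are finite, yet $\|A_k g_k\|^2 = k^{-1}$ and $\sum_k k^{-1}=\infty$. Hence conditions (1)--(3) do \emph{not} force the second-moment sum to be finite for the same $g$, and no amount of ``combining (2) with (3) carefully'' will repair this. The corollary must therefore be read as asserting the existence of a \emph{possibly different} product vector $g'=\otimes_k g'_k$ in the same strong equivalence class (which is what Reed's Theorem~3.9, stated just after, actually delivers); your argument never addresses this change of vector, and that is the missing idea.
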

\begin{thm} (Reed) $U(t)$ is reduced on ${\mathcal{H}}_ \otimes ^2 (g )^s$ and $U(t)$  is a strongly continuous unitary group on ${\mathcal{H}}_ \otimes ^2 (g )^s$  if and only if  $g  =  \otimes_{k = 1}^\infty  g$ is a strong convergence vector for the family $\{A_k \}$ and $\sum\nolimits_{k = 1}^\infty  {\left| {\left\langle {{\mcA}_k g _k ,g _k } \right\rangle } \right|}  < \infty$.  If each ${A}_k$ is positive, the statement is true without the above condition.  In either case, ${\mcA}$, the closure of $\sum\nolimits_{k = 1}^\infty  {{\mcA}_k } $, is the generator of $U(t)$. 
\end{thm}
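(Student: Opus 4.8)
The plan is to deduce this from Streit's Theorem 3.7, using Corollary 3.8 to translate Streit's spectral hypotheses into the language of strong convergence sum vectors. The only genuinely new point is that the hypothesis package here --- that $g=\otimes_k g_k$ be an scs-vector for $\{A_k\}$ and $\sum_k|\langle\mcA_k g_k,g_k\rangle|<\infty$ --- coincides with that of Corollary 3.8, i.e.\ the extra condition $\sum_k\|\mcA_k g_k\|^2<\infty$ appearing there is automatic. To see this, put $a_k=\langle A_k g_k,g_k\rangle$ and $b_k=\|A_k g_k\|^2$, so $|a_k|^2\le b_k$ since $\|g_k\|_{\mcH}=1$. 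Because $\mcA_k g = A_k g_k\otimes(\otimes_{i\ne k}g_i)$ with all $g_i$ unit vectors, one has $\langle\mcA_k g,\mcA_l g\rangle_\otimes = a_k\overline{a_l}$ for $k\ne l$ and $\|\mcA_k g\|_\otimes^2=b_k$, whence for $m<n$
\[
\Bigl\|\sum_{k=m+1}^{n}\mcA_k g\Bigr\|_\otimes^2 = \sum_{k=m+1}^{n}\bigl(b_k-|a_k|^2\bigr)+\Bigl|\sum_{k=m+1}^{n}a_k\Bigr|^2 ,
\]
a sum of two nonnegative quantities. Hence $g$ is an scs-vector exactly when $\sum_k(b_k-|a_k|^2)<\infty$ and the scalar series $\sum_k a_k$ converges; if in addition $\sum_k|a_k|<\infty$ then $|a_k|\to 0$, so $|a_k|^2\le|a_k|$ eventually, $\sum_k|a_k|^2<\infty$, and therefore $\sum_k b_k=\sum_k\|\mcA_k g_k\|^2<\infty$. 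Thus the two hypothesis packages are the same.

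Given that identification, Corollary 3.8 makes the hypothesis equivalent to Streit's conditions (1)--(3) holding for every $c>0$, and Theorem 3.7 then asserts precisely that $U(t)=\hat\otimes_{k=1}^{\infty}U_k(t)$ is reduced on $\mathcal{H}_\otimes^2(g)^s$ and restricts there to a strongly continuous unitary group, with $U(t)=s\text{-}\lim_{n\to\infty}\hat\otimes_{k=1}^{n}U_k(t)$; reading both equivalences in the reverse direction gives the converse. For the positive case, if each $A_k$ is positive then $a_k=\langle A_k g_k,g_k\rangle\ge 0$, so the bare scs-vector requirement forces the nonnegative series $\sum_k a_k$ to converge, which is the same as $\sum_k|\langle\mcA_k g_k,g_k\rangle|<\infty$; hence that hypothesis is then redundant and may be dropped.

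It remains to identify the generator of $U(t)$ as $\mcA$, the closure of $\sum_{k=1}^{\infty}\mcA_k$ on $\mathbf{D}_g$. I would pass to the limit in the approximants $V_n(t):=\hat\otimes_{k=1}^{n}U_k(t)$, extended by the identity on the remaining factors. Each $V_n(t)$, being the identity on all but finitely many factors, is reduced on $\mathcal{H}_\otimes^2(g)^s$ by Theorem 1.6(1), and by Theorem 3.4 its restriction there is a strongly continuous unitary group whose (self-adjoint) generator acts on $\mathbf{D}_g$ as the finite sum $\sum_{k=1}^{n}\mcA_k$, for which $\mathbf{D}_g$ is a core. From the previous paragraph $V_n(t)\to U(t)$ strongly on $\mathcal{H}_\otimes^2(g)^s$ --- uniformly on compact $t$-intervals, which is automatic from unitarity via a $3\varepsilon$-estimate on the dense set $\mathbf{D}_g$ --- while for each $\chi\in\mathbf{D}_g$ the scs-vector hypothesis gives $\sum_{k=1}^{n}\mcA_k\chi\to\bigl(\sum_{k=1}^{\infty}\mcA_k\bigr)\chi$. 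The Trotter--Kato theorem then identifies the generator of $U(t)$ with the closure of $\sum_k\mcA_k$ restricted to $\mathbf{D}_g$, i.e.\ with $\mcA$.

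The step I expect to need the most care is this last one: one must know that $\mathbf{D}_g$ is genuinely a core for the limit operator $\mcA$ --- equivalently, that $\sum_k\mcA_k$ restricted to $\mathbf{D}_g$ is essentially self-adjoint --- so that the Trotter--Kato hypotheses are met and the limit produces a bona fide self-adjoint generator rather than merely a symmetric restriction. This can be handled either by combining the self-adjointness of the approximating generators with strong resolvent convergence, or by exhibiting inside $\mathbf{D}_g$ a sufficiently rich family of common strong-convergence (or analytic) vectors constructed directly from the scs-vector data. Everything else is bookkeeping once Theorem 3.7 and Corollary 3.8 are in hand.
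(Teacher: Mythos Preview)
The paper does not prove this theorem; it is quoted as a result of Reed with a reference to \cite{RE}, in the same block as Streit's Theorem 3.7 and Corollary 3.8, none of which carry proofs here. So there is no in-paper argument to compare against.

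That said, your route is sound. The identity
\[
\Bigl\|\sum_{k=m+1}^{n}\mcA_k g\Bigr\|_\otimes^2=\sum_{k=m+1}^{n}\bigl(b_k-|a_k|^2\bigr)+\Bigl|\sum_{k=m+1}^{n}a_k\Bigr|^2
\]
is correct and is exactly the observation that collapses the three conditions of Corollary 3.8 to Reed's two: once $g$ is an scs-vector and $\sum_k|a_k|<\infty$, summability of $b_k=\|A_kg_k\|^2$ is automatic. The positive case is handled correctly as well, since then $a_k\ge0$ and the convergence of $\sum_k a_k$ forced by the scs-condition is already absolute. With that in hand, Theorem 3.7 together with Corollary 3.8 gives the stated equivalence.

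On the generator, your Trotter--Kato outline is reasonable and you are right to flag the core question for $\sum_k\mcA_k$ on $\mathbf{D}_g$ as the one genuinely nontrivial step; this essential self-adjointness is in fact the main content of Reed's paper \cite{RE}, so it should not be described as bookkeeping. A minor simplification: for the strong convergence $V_n(t)\to U(t)$ uniformly on bounded $t$-intervals you can bypass the $3\varepsilon$ argument and use directly the telescoping estimate from the paper's proof of Theorem~3.10 (unitary groups are contractions, so that proof applies verbatim and yields $\|[V_n(t)-V_m(t)]g\|\le t\,\|[\mcA^n-\mcA^m]g\|$).
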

The next result strengthens and extends Reed's theorem to contraction semigroups  (i.e., the positivity requirement above can be dropped). 
\begin{thm} Let $\{{S}_k (t)\}$ be a family of strongly continuous contraction semigroups with generators $\{A_k \}$  defined on $\{{\mathcal{H}}_k \} $, and let $
g  =  \otimes _{k = 1}^\infty  g _k$ be a strong convergence vector for the family $\{A_k \}$.   Then ${\bf{S}}(t) = \hat  \otimes _{k = 1}^\infty  S_k (t)$
 is reduced on ${\mathcal{H}}_ \otimes ^2  (g)^s$ and   is a strongly continuous contraction semigroup.  If ${\bf{S}}(t) = \hat  \otimes _{k = 1}^\infty  S_k (t)$ is reduced on ${\mathcal{H}}_ \otimes ^2  (g )^s$  and is a strongly continuous contraction semigroup on ${\mathcal{H}}_ \otimes ^2  (g )^s$, then there exists a strong convergence vector $f  =  \otimes _{k = 1}^\infty  f _k  \in \mathcal{H}_ \otimes ^2  (g )^s$ for the family $\{A_k \}$.
\end{thm}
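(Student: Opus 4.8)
The plan is to handle both implications through the finite truncations $\mathbf{S}_n(t)=\hat\otimes_{k=1}^n S_k(t)\,\hat\otimes_{k>n}I_k$, which by Theorem 3.4 are $C_0$-contraction semigroups on $\mcH_\otimes^2$ with generator $\sum_{k=1}^n\mcA_k$ on the algebraic tensor product. For the forward direction I would first note that $\mathbf{S}_n(t)$ is the product $\mcS_1(t)\cdots\mcS_n(t)$ of the $\mcH_\otimes^2$-extensions of the bounded operators $S_k(t)$, each lying in the algebra $L[\mcH(k)]\subset L^\#[\mcH_\otimes^2]$, so $\mathbf{S}_n(t)\in L^\#[\mcH_\otimes^2]$ and Theorem 3.1 gives $\mathbf{P}_g^s\mathbf{S}_n(t)=\mathbf{S}_n(t)\mathbf{P}_g^s$; hence every $\mathbf{S}_n(t)$ reduces on $\mcH_\otimes^2(g)^s$ and restricts there to a $C_0$-contraction semigroup. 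The real content is that $\mathbf{S}_n(t)\chi$ converges as $n\to\infty$ for $\chi$ in the dense subspace $\mathbf{D}_g$ of vectors $\otimes\chi_i$ with $\chi_i=g_i$ for $i$ beyond some finite $L$, uniformly for $t$ in compact intervals. For such $\chi$ and $n>m>L$, the difference $\mathbf{S}_n(t)\chi-\mathbf{S}_m(t)\chi$ factors as a fixed ``head'' vector of bounded norm tensored with a difference of two tail product vectors; by orthogonality of the untouched coordinates its squared norm collapses to
\[
\prod_{m<i\le n}\big\|S_i(t)g_i\big\|^2-2\,\mathrm{Re}\!\prod_{m<i\le n}\langle S_i(t)g_i,g_i\rangle+1,
\]
so the whole question reduces to the (locally uniform in $t$) quasi-convergence of $\prod_i\langle S_i(t)g_i,g_i\rangle$ and $\prod_i\|S_i(t)g_i\|^2$, equivalently to $\sum_i|1-\langle S_i(t)g_i,g_i\rangle|<\infty$.

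Extracting this summability from the hypotheses is the step I expect to be the main obstacle. The natural tool is the orthogonal splitting $A_ig_i=c_ig_i+\tilde w_i$ with $c_i=\langle A_ig_i,g_i\rangle$, $\tilde w_i\perp g_i$, $\|\tilde w_i\|^2=\|A_ig_i\|^2-|c_i|^2$: read through this splitting, Definition 3.5 (existence of $s\text{-}\lim_n\sum_{k=1}^n\mcA_kg$) is equivalent to convergence of $\sum_i c_i$ together with $\sum_i\|\tilde w_i\|^2<\infty$, while each $A_i$ being the generator of a contraction semigroup forces $\mathrm{Re}\,c_i\le 0$, $\|S_i(t)g_i\|\le 1$, and $\|S_i(t)g_i\|$ nonincreasing in $t$. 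Writing $1-\langle S_i(t)g_i,g_i\rangle=-\int_0^t\langle S_i(s)A_ig_i,g_i\rangle\,ds$ and substituting the splitting of $A_ig_i$, the goal is a bound on $|1-\langle S_i(t)g_i,g_i\rangle|$ by $|t c_i|$ plus a remainder summable in $i$ (governed by $\|\tilde w_i\|$), after which $\mathrm{Re}\,c_i\le 0$ and convergence of $\sum c_i$ close the estimate; the point is that the crude bound $|1-\langle S_i(t)g_i,g_i\rangle|\le t\|A_ig_i\|$ is not summable, so one must genuinely use the dissipative (not merely bounded) structure, and making the remainder summable is the delicate part. (A structurally cleaner route would be to recognize $\sum_k\mcA_k$ as essentially $m$-dissipative on $\mcH_\otimes^2(g)^s$ and invoke a Trotter--Kato argument, but the range condition is not transparent.) Granting the product estimate, $\mathbf{S}(t):=s\text{-}\lim_n\mathbf{S}_n(t)$ exists on all of $\mcH_\otimes^2(g)^s$ (extend off $\mathbf{D}_g$ by the uniform contraction bound), is a contraction, commutes with $\mathbf{P}_g^s$ as a strong limit of operators that do, satisfies $\mathbf{S}(t+s)=\mathbf{S}(t)\mathbf{S}(s)$ by passing to the limit in the truncated identity (using strong convergence of $\mathbf{S}_n(s)$ and equi-boundedness of the $\mathbf{S}_n(t)$), and is strongly continuous because $\mathbf{S}_n(t)\to\mathbf{S}(t)$ uniformly on compact $t$-intervals on the core while each $\mathbf{S}_n$ is $C_0$. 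This is precisely the statement that $\mathbf{S}(t)=\hat\otimes_{k=1}^\iy S_k(t)$ is reduced on $\mcH_\otimes^2(g)^s$ and a $C_0$-contraction semigroup there.

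For the converse, suppose $\mathbf{S}(t)=\hat\otimes_k S_k(t)$ is reduced on $\mcH_\otimes^2(g)^s$ and a $C_0$-contraction semigroup there. Reducedness says $\mathbf{S}(t)g=\otimes_kS_k(t)g_k\in\mcH_\otimes^2(g)^s$, i.e. $\sum_k|1-\langle S_k(t)g_k,g_k\rangle|<\infty$ for each fixed $t$, and strong continuity gives $\|\mathbf{S}(t)g-g\|\to 0$ as $t\to 0$. I would build the desired vector by time-averaging orbits: put $f_k:=\beta_k^{-1}\int_0^1 S_k(s)g_k\,ds$ with $\beta_k:=\big\|\int_0^1 S_k(s)g_k\,ds\big\|$, which is positive with $\beta_k\to 1$ (and one sets $f_k=g_k$ for the finitely many $k$ with $\beta_k=0$), so each $f_k$ is a unit vector in $D(A_k)$ with $A_kf_k=\beta_k^{-1}(S_k(1)g_k-g_k)$. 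The bound $\|S_k(1)g_k-g_k\|^2\le 2|1-\langle S_k(1)g_k,g_k\rangle|$ (from $\|S_k(1)g_k\|\le 1$) gives $\sum_k\|A_kf_k\|^2<\infty$, and a dominated-convergence argument applied to $1-\langle f_k,g_k\rangle=\beta_k^{-1}\int_0^1(1-\langle S_k(s)g_k,g_k\rangle)\,ds$ gives $\sum_k|1-\langle f_k,g_k\rangle|<\infty$, so $f:=\otimes_k f_k\equiv^s g$ lies in $\mcH_\otimes^2(g)^s$. Finally, running the orthogonal-decomposition identity of the first part in reverse,
\[
\Big\|\sum_{m<k\le n}\mcA_kf\Big\|^2=\Big|\sum_{m<k\le n}\langle A_kf_k,f_k\rangle\Big|^2+\sum_{m<k\le n}\big(\|A_kf_k\|^2-|\langle A_kf_k,f_k\rangle|^2\big),
\]
together with $\sum_k\|A_kf_k\|^2<\infty$, dissipativity $\mathrm{Re}\langle A_kf_k,f_k\rangle\le 0$, and the same kind of estimate used in the forward direction, forces the partial sums of $\sum_k\mcA_kf$ to be Cauchy; hence $f$ is a strong convergence vector for $\{A_k\}$ lying in $\mcH_\otimes^2(g)^s$, as required. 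Throughout, Reed's Theorem 3.9 and Corollary 3.8 (Streit) are the template, with dissipativity of the $A_k$ playing the role positivity played there.
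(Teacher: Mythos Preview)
Your forward argument stalls at precisely the point you flag: from the strong convergence vector hypothesis you extract only convergence of $\sum_i c_i$ and $\sum_i\|\tilde w_i\|^2<\infty$, and neither the crude bound $t|c_i|+t\|\tilde w_i\|$ for $|1-\langle S_i(t)g_i,g_i\rangle|$ nor any refinement you sketch is summable from that data alone. You then write ``Granting the product estimate\ldots'' and proceed, so the core step is left open. The paper bypasses the product calculus entirely. Since the truncations $\mathbf{S}_n(t)$ commute with one another, one has the interpolation identity
\[
[\mathbf{S}_n(t)-\mathbf{S}_m(t)]g=\int_0^1\frac{d}{d\lambda}\bigl\{\mathbf{S}_n(\lambda t)\,\mathbf{S}_m((1-\lambda)t)\bigr\}g\,d\lambda=t\int_0^1\mathbf{S}_n(\lambda t)\,\mathbf{S}_m((1-\lambda)t)\,[\mcA^n-\mcA^m]g\,d\lambda,
\]
and the contraction bound then gives $\|[\mathbf{S}_n(t)-\mathbf{S}_m(t)]g\|\le t\,\|(\mcA^n-\mcA^m)g\|$. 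The right-hand side is Cauchy, uniformly for $t$ in bounded intervals, by the very definition of $g$ being a strong convergence vector --- no orthogonal splitting of $A_ig_i$, no coordinate-wise product estimates are needed. The same bound works for any $\chi\in\mathbf{D}_g$, and the remainder of your wrap-up (extension by density, contraction and $C_0$ properties, reduction via Theorem~3.1) goes through as you wrote it.

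For the converse your time-averaging construction is more explicit than the paper's, but it inherits the same unresolved issue: to make $\sum_k\mcA_kf$ Cauchy via your displayed identity you still need convergence of $\sum_k\langle A_kf_k,f_k\rangle$, and you defer this to ``the same kind of estimate used in the forward direction'' --- i.e., to the step you did not complete. The paper instead argues non-constructively: the generator $\mcA$ of the reduced semigroup is $m$-dissipative, hence densely defined on $\mcH_\otimes^2(g)^s$; picking any product vector $f^l=\otimes_k f_k^l$ in $D(\mcA)$ and differentiating $\mathbf{S}(t)f^l=\otimes_k S_k(t)f_k^l$ at $t=0$ gives $\mcA f^l=\sum_k\mcA_kf^l$, so that $f^l$ is already a strong convergence vector.
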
  
\begin{proof} 
Let $g  =  \otimes _{k = 1}^\infty  g _k$  be a strong convergence vector for the family $\{A_k \}$.   Without loss, we can assume that $\left\| {g _k } \right\| = 1$.  Let ${\mathbf{S}}_n (t) = \hat  \otimes _{k = 1}^n S_k (t)\hat  \otimes ( \otimes _{k = n + 1}^\infty  I_k )$ and observe that ${\bf{S}}_n(t)$ is a contraction semigroup on ${\mathcal{H}}_ \otimes ^2  (g )^s$  for all finite $n$ .  Furthermore, its generator is the closure of ${\mcA}^n  = \sum\nolimits_{k = 1}^n {{\mcA}_k } $, where ${\mcA}_k  = A_k \hat  \otimes ( \otimes _{i \ne k}^\infty  I_i )$.   If $n$ and $m$  are arbitrary, then 
\[
\begin{gathered}
  \left[ {{\mathbf{S}}_n (t) - {\mathbf{S}}_m (t)} \right]g  = \int_0^1 {} \frac{d}
{{d\lambda }}\left\{ {{\mathbf{S}}_n [\lambda t]{\mathbf{S}}_m [(1 - \lambda )t]} \right\}g d\lambda  \hfill \\
  {\text{                         }} = t\int_0^1 {} {\mathbf{S}}_n [\lambda t]{\mathbf{S}}_m [(1 - \lambda )t]\left[ {{\mcA}^n  - {\mcA}^m } \right]g d\lambda , \hfill \\ 
\end{gathered}
\]
where we have used the fact that, if two semigroups commute, then their corresponding generators also commute.  It follows that:
\[
\left\| {\left[ {{\mathbf{S}}_n (t) - {\mathbf{S}}_m (t)} \right]g } \right\| \leqslant t\left\| {\left[ {{\mcA}^n  - {\mcA}^m } \right]g } \right\|.
\]
Since $g  =  \otimes _{k = 1}^\infty  g$   is a strong convergence vector for the family $\{A_k \}$, it follows that \\ $s-\lim _{n \to \infty } {\mathbf{S}}_n (t) = {\mathbf{S}}(t)$ exists on a dense set in ${\mathcal{H}}_ \otimes ^2  (g )^s$ and the convergence is uniform on  bounded $t$ intervals.    It follows that $S(t)$ extends to a bounded linear operator on ${\mathcal{H}}_ \otimes ^2  (g )^s$.  To see that the closure of $S(t)$ must be a contraction, for any $\e>0$, choose $n$  so large that $\left\| {\left[ {{\mathbf{S}}_n (t) - {\mathbf{S}}(t)} \right]g } \right\|_ \otimes   < \varepsilon \left\| g  \right\|_ \otimes  $.  It follows that 
\[
\left\| {{\mathbf{S}}(t)g } \right\|_ \otimes   \leqslant \left\| {{\mathbf{S}}_n (t)g } \right\|_ \otimes   + \left\| {\left[ {{\mathbf{S}}_n (t) - {\mathbf{S}}(t)} \right]g } \right\|_ \otimes < \left\| g  \right\|_ \otimes  (1 + \varepsilon ).
\]
Thus, ${\bf{S}}(t)$ is a contraction operator on ${\mathcal{H}}_ \otimes ^2  (g )^s$.  It is easy to check that it is a $C_0$-semigroup.

Now suppose that ${\mathbf{S}}(t) = \hat  \otimes _{k = 1}^\infty  S_k (t)$ is a strongly continuous contraction semigroup which is reduced on ${\mathcal{H}}_ \otimes ^2  (g )^s$.   It follows that the generator $\mcA$ of ${\bf{S}}(t)$ is m-dissipative, and hence defined on a dense domain $D(\mcA)$  in ${\mathcal{H}}_ \otimes ^2  (g )^s$   with ${\bf{S'}}(t)f  = {\bf{S}}(t){\mcA}f  = \mcA{\bf{S}}(t)f $ for all $f \in D(\mcA)$. Since any such $f$ is of the form $
f  = \sum _{l = 1}^\infty   \otimes _{k = 1}^\infty  f _k^l $, each $f ^l  =  \otimes _{k = 1}^\infty  f _k^l$ is in $D(\mcA)$.  A simple computation shows that ${\mcA}f ^l  = \sum _{k = 1}^\infty  {\mcA}_k f ^l $, so that any $f ^l $ is a strong convergence vector for the family $\{ A_k \} $.
\end{proof}
It is easy to see that, in the second part of the theorem, we cannot require that $g  =  \otimes _{k = 1}^\infty  g_k$   itself be a strong convergence vector for the family $\{A_k \}$ since it need not be in the domain of $\mcA$.  For example,  $g_1  \notin D(A_1 )$, while $g_k  \in D(A_k ),\;k \ne 1$. 
\section{Function Spaces}
 Let $\chi_{I_n }$ be the indicator (or characteristic) function of $I_n=\times_{k=n+1}^{\iy}I$.  If we let $\mathfrak{L}(\mathbb{R}^n)$ represent the class of measurable functions on  $\mathbb{R}^n$, then for each measurable function $f_n \in \mathfrak{L}(\mathbb{R}^n)$ we identify $f \in \mathfrak{L}(\mathbb{R}_I^n)$ by $f=f_n \otimes \chi_{I_n }$.
\begin{Def} A real-valued function $f$ defined on the measure space $\lt(\mathbb{R}_I^{\iy}, {\mathfrak{B}}[\mathbb{R}_I^{\iy}], \la_{\iy}\rt)$ is said to be measurable if $f^{-1}(A) \in {\mathfrak{B}}[\mathbb{R}_I^{\iy}]$ for every $A \in {\mathfrak{B}}[\mathbb{R}]$.
\end{Def}
In this section we develop those aspects of function space theory that will be of use later.  We note that all the standard theorems for Lebesgue measure apply.  (The proofs are the same as for integration on $\R^n$.)  
\subsection{$L^1$-Theory}
Let ${{L}}^1 [\R_I^{n}]$ be the class of integrable functions on $\R_I^{n}$. Since ${{L}}^1 (\R_I^n) \subset {{L}}^1 (\R_I^{n+1})$, we define ${{L}}^1 [{{\R}'}_I^{\infty}]=\bigcup_{n=1}^{\iy}{{L}}^1(\R_I^n)$ and let ${{L}}^1 [\R_I^{\infty}]$ be the norm closure of ${{L}}^1 [{{\R}'}_I^{\infty}]$.  It follows that every function in ${{L}}^1 [\R_I^{\infty}]$ is the limit of a sequence of functions in ${{L}}^1 [{{\R}}_I^{n_k}]$, for some sequence $\{n_k\} \subset \N$.

Let $\C_c(\R_I^n)$ be the class of continuous functions on $\R_I^n$ which vanish outside compact sets.   We define $\C_c(\R_I^{\iy})$ to be the closure of $\bigcup_{n=1}^{\iy}\C_c(\R_I^n)=\C_c({{\R}'}_I^{\iy})$ in the $\sup$ norm. 
Thus, for any $f \in C_c(\R_I^{\iy})$, there always exists a sequence of functions $\{f_{n_k}\} \in \C_c({\R}_I^{n_k})$ such that $f_{n_k} \to f$, for some sequence $\{n_k\} \subset \N$.
We define $\C_0(\R_I^{\iy})$, the functions that vanish at $\iy$,  in the same manner.
\begin{lem} If $f \in C_c(\R_I^{\iy}) \; {\rm or} \;  C_0(\R_I^{\iy})$, then $f$ is continuous.
\end{lem}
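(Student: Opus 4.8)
The plan is to reduce the lemma to two elementary facts: (i) a uniform limit of $\R$-valued continuous functions is continuous (on an arbitrary topological space, with no countability hypothesis), and (ii) each $f_n\in\C_c(\R_I^n)$, regarded as a function on $\R_I^\iy=\R^\iy$, is continuous for the $\tau$-topology. Granting these, the lemma follows quickly: $\C_c(\R_I^n)$ is $\sup$-norm dense in $\C_0(\R_I^n)$, so (i)+(ii) show every $f_n\in\C_0(\R_I^n)$ is $\tau$-continuous as well; and $\C_c(\R_I^\iy)$, $\C_0(\R_I^\iy)$ are by definition the $\sup$-norm closures of $\bigcup_n\C_c(\R_I^n)$, $\bigcup_n\C_0(\R_I^n)$, so any $f$ in either is a uniform limit of $\tau$-continuous functions, hence — by (i) again, via the usual $\tf{\e}{3}$ estimate (pick $k$ with $\|f-f_{n_k}\|_\infty<\tf{\e}{3}$, then use continuity of $f_{n_k}$ at the point in question) — is $\tau$-continuous.

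The substance is (ii). Recall $(\R_I^\iy,\tau)$ is the topological sum of $({\mfX}_1,\tau_1)=({\R'}_I^\iy,\tau_1)$ and $({\mfX}_2,\tau_2)$ with $\tau_2$ discrete, so a function on $\R_I^\iy$ is $\tau$-continuous iff its restrictions to ${\mfX}_1$ and to ${\mfX}_2$ are continuous. On ${\mfX}_2=\R^\iy\setminus{\R'}_I^\iy$ this is automatic ($\tau_2$ discrete; indeed $f_n\equiv 0$ there, since ${\mfX}_2$ is disjoint from $\R_I^n$). For ${\mfX}_1=\bigcup_k\R_I^k$, the key point is that $\R_I^n$ is \emph{clopen} in $({\mfX}_1,\tau_1)$: it is open because $\R_I^n=\R^n\times I_n\in\mfO_n\subset\mfO$, and its complement ${\mfX}_1\setminus\R_I^n=\bigcup_{j>n}(\R_I^j\setminus\R_I^{j-1})=\bigcup_{j>n}\big(\R^{j-1}\times(\R\setminus I)\times I_j\big)$ is a union of members of $\mfO$ (note $\R\setminus I$ is open in $\R$), hence open. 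Writing $f_n=g_n\otimes\chi_{I_n}$ with $g_n\in\C_c(\R^n)$, the function $f_n$ vanishes on the clopen set ${\mfX}_1\setminus\R_I^n$ and, on the clopen set $\R_I^n$, coincides with $g_n$ precomposed with the projection onto the first $n$ coordinates, which is $\mfO_n$-continuous and hence continuous for the finer subspace topology $\tau_1|_{\R_I^n}\supseteq\mfO_n$. A clopen pasting argument over this two-set cover of ${\mfX}_1$ then gives $\tau_1$-continuity of $f_n$; concretely, at $p\in\R_I^n$ one uses a neighborhood $U\times I_n$ with $U$ a ball in $\R^n$ on which $g_n$ varies by less than $\e$, and at $p$ with some $p_j\notin I$, $j>n$, one uses the neighborhood $\{y:\,y_j\notin I\}\times I_m$ (for any $m$ with $p\in\R_I^m$) on which $f_n\equiv 0$.

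I expect the main obstacle to be exactly this topological bookkeeping: keeping straight that $\C_c(\R_I^n)$ is defined relative to the non-Hausdorff topology $\mfO_n$, which only resolves the first $n$ coordinates; that the subspace topology $\R_I^n$ inherits from $\tau_1$ is \emph{finer} than $\mfO_n$, so continuity is transported in the right direction; and that $\R_I^n$ is clopen inside ${\mfX}_1$ — this last being where the $\tau$-topology, rather than the Tychonoff topology, is essential, since the indicator factor $\chi_{I_n}$ is not Tychonoff-continuous. Once these are nailed down, the remaining ingredients (the clopen pasting lemma and the $\tf{\e}{3}$ argument) are routine.
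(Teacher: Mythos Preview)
Your argument is correct and shares the paper's core idea: both reduce to the standard $\tfrac{\e}{3}$ estimate showing that a uniform limit of continuous functions is continuous. The paper's proof is a brief sequential version of this and simply takes for granted that each approximant $f_k\in\C_c(\R_I^{n_k})$ is continuous on $\R_I^\iy$; you actually supply this step via the clopen observation $\R_I^n\subset(\mfX_1,\tau_1)$ and the pasting argument, and you phrase continuity in terms of neighborhoods rather than sequences (which is safer here, since first countability of $(\mfX_1,\tau_1)$ is not established). So the route is the same, but your treatment fills a gap the paper leaves implicit.
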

\begin{proof}  Let $f({\bf x}) \in C_c(\R_I^{\iy})$ and let $\{{\bf x}_n\ | \ n=1,2,\dots \}$ be any sequence in $\R_I^n$ such that ${\bf x}_n \to {\bf x}$ as $ n\to \iy$.  If $\e>0$ is given, choose $K_1$ so that for $k \ge K_1$ and $f_k \in C_c(\R_I^{\iy}), \;  \lt|f_k({\bf x}_n)-f({\bf x}_n)\rt| <\tf{\e}{3}$.  Then choose $K_2$ so that for $k \ge K_2, \;  \lt|f_k({\bf x})-f({\bf x})\rt| <\tf{\e}{3}$.  Choose $N$ so that for $n \ge N, \;  \lt|f_k({\bf x}_n)-f_k({\bf x})\rt| <\tf{\e}{3}$.  If $n \ge N \; {\rm and} \; k \ge \max\{K_1,K_2\}$, we have: 
\[
\lt|f({\bf x}_n)-f({\bf x})\rt| \le \lt|f_k({\bf x}_n)-f({\bf x}_n)\rt| +\lt|f_k({\bf x})-f_k({\bf x}_n)\rt| +\lt|f_k({\bf x})-f({\bf x})\rt|<\e.
\]
The same proof applies to $ C_0(\R_I^{\iy})$
\end{proof}
\begin{thm} $\C_c(\R_I^{\iy})$ is dense in $L^1(\R_I^{\iy})$.
\end{thm}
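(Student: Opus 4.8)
The plan is to reduce the statement on $\R_I^\iy$ to a statement on each finite stage $\R_I^n$, where the classical theory applies verbatim, and then take a limit. Concretely, let $f \in L^1(\R_I^\iy)$ and let $\e > 0$. By the very definition of $L^1(\R_I^\iy)$ as the norm closure of $\bigcup_n L^1(\R_I^n)$, there is an $n$ and a function $g \in L^1(\R_I^n)$ with $\|f - g\|_1 < \e/2$. Write $g = g_n \otimes \chi_{I_n}$ for some $g_n \in L^1(\R^n)$ (using the identification from the start of Section 4), and recall from Theorem 2.7 that $\la_\iy$ restricted to $\mathfrak{B}(\R_I^n)$ is just $n$-dimensional Lebesgue measure under this identification, so $\|g\|_{L^1(\R_I^\iy)} = \|g_n\|_{L^1(\R^n)}$.

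The second step invokes the classical density theorem: $C_c(\R^n)$ is dense in $L^1(\R^n)$, so there is $\varphi_n \in C_c(\R^n)$ with $\|g_n - \varphi_n\|_{L^1(\R^n)} < \e/2$. Now set $\varphi = \varphi_n \otimes \chi_{I_n}$. Since $\varphi_n$ is continuous with compact support in $\R^n$ and $I_n = \times_{i=n+1}^\iy I$ is compact (it is a product of copies of the closed interval $I$, hence compact in the $\tau$-topology, which on $\R_I^n$ is the product topology), the set $\operatorname{supp}\varphi_n \times I_n$ is compact in $\R_I^n$, hence compact in $\R_I^\iy$; and by Lemma 4.3 (or directly, since it is a finite-stage function) $\varphi$ is continuous on $\R_I^\iy$. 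Thus $\varphi \in C_c(\R_I^n) \subset C_c(\R_I^\iy)$. The measure identification of Theorem 2.7 again gives $\|g - \varphi\|_{L^1(\R_I^\iy)} = \|g_n - \varphi_n\|_{L^1(\R^n)} < \e/2$, and the triangle inequality yields $\|f - \varphi\|_{L^1(\R_I^\iy)} < \e$.

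I expect the only genuine subtlety to be the measure-matching bookkeeping: one must be careful that the $L^1$-norm computed via $\la_\iy$ on $\R_I^\iy$ agrees with the $L^1$-norm computed via $\la_n$ on $\R^n$ for functions of the form $h_n \otimes \chi_{I_n}$, and that the integral behaves well under the identification $f \leftrightarrow f_n \otimes \chi_{I_n}$. This is precisely the content of Theorem 2.7 together with $\la_\iy(I_n) = \prod_{i=n+1}^\iy \la(I) = 1$, so it is already available; the verification that $\int_{\R_I^n} |h_n \otimes \chi_{I_n}|\, d\la_\iy = \int_{\R^n} |h_n|\, d\la_n$ then follows by the standard simple-function approximation argument. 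The remaining points — compactness of $I_n$, continuity of finite-stage functions, and the telescoping of the norm estimates — are routine. In short, the proof is a three-line reduction once one records that $C_c(\R^n)$ is dense in $L^1(\R^n)$ and that the finite-stage inclusions are isometric.
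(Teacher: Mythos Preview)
Your proof is correct and is a genuinely different---and cleaner---route than the one the paper takes. The paper argues intrinsically on $\R_I^\iy$: it reduces to nonnegative functions vanishing outside a compact set, then to simple functions, then to a single indicator $\chi_A$, and finally invokes the regularity of $\la_\iy$ (Theorem 2.16) together with a Lusin-type construction to approximate $\chi_A$. In other words, the paper reproves the classical density argument from scratch in the infinite-dimensional setting, using the regularity established earlier as the key input.

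Your argument instead exploits the very definition of $L^1(\R_I^\iy)$ as the norm closure of $\bigcup_n L^1(\R_I^n)$ and of $\C_c(\R_I^\iy)$ as the sup-norm closure of $\bigcup_n \C_c(\R_I^n)$: you pass to a finite stage, apply the classical $\R^n$ density theorem as a black box, and lift back via the isometric identification of Theorem 2.7. This is shorter and makes transparent that nothing new happens beyond finite dimensions. The paper's approach, by contrast, is more self-contained (it does not import the finite-dimensional result) and demonstrates that the standard machinery---regularity, Lusin---survives on $\R_I^\iy$, which is of some independent interest. One small remark: once you have $\varphi \in \C_c(\R_I^n)$, membership in $\C_c(\R_I^\iy)$ is immediate from the definition, so the appeal to the continuity lemma is not actually needed.
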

\begin{proof}
We prove this result in the standard manner, by reducing the proof to positive simple functions and then to one characteristic function and finally using the approximation theorem to approximate a measurable set which contains a closed set and is contained in an open set.

First note that, since $\lim_{k \to \iy} \lt\|f \chi_{B_I(0,k)}-f\rt\|_1=0$ for all $f \in L^1$ (by the DCT),  we can prove the result for functions that vanish outside a compact set.  In this case, as $f=f_+ - f_-$, we need only consider positive $f$.  However, this function can be approximated by simple functions in $S_+$.  Since each simple function is a finite sum of characteristic functions (of bounded measurable sets) multiplied by finite constants, it follows that we need only show that we can approximate the characteristic function of a bounded measurable set by a continuous function which vanishes outside a compact set.  Let $\e>0$ be given and let $g=\chi_A$, where $A$ is any bounded measurable set.  By the regularity of $\la_{\iy}$, there exists an open set $O$ and a compact set $H$ with $H \subset A \subset O$ and $\la_{\iy}(O \setminus H) < \e$. 

Let $\{ V_n \}$ be the class of open intervals with rational end points. For each $n \in \N$, let $F_n \subset g^{-1}[V_n]$ and $G_n \subset (O \setminus g^{-1}[V_n])$ be compact sets, such that $\la_\iy[(O \setminus F_n \cup G_n)] < \tf{\e}{2^n}$.  If $H=\cap_{n=1}^\iy{[F_n \cup G_n]}$, then $\la_{\iy}(O \setminus H) < \e$.

If $x \in H$, there is an $n$ such that $f(x) \in V_n$ and $x \in G_n^c$, so that $g[G_n^c \cap H] \subset V_n$.  It follows that $g$ restricted to $H$ is continuous and $\la_{\iy}(A \setminus H) \le \la_{\iy}(O \setminus H) < \e$.
\end{proof}
In a similar fashion we can define the $L^p$ spaces, $1<p < \iy$.  We should note that, each space is defined relative to the family of indicator functions for $I$.  Thus, each space is the canonical one for that particular class of spaces.  
\section{Fourier Transform Theory}
In this section, we study the implications of Lebesgue measure on $\R^\iy$ for the Fourier transform and discuss two different extensions of the Pontrjagin Duality theory for Banach spaces. 
\subsection*{Background}
Let $G$ be a locally compact abelian (LCA) group  (c.f., $\R^n$).  The following is a restatement of Theorem 0.1 (see Rudin \cite{RU1}).
\begin{thm}If G is a LCA group and ${\mathfrak B}(G)$ is the Borel $\s$-algebra of subsets of $G$, then there is a non-negative regular translation invariant measure $\mu$ (i.e., 
 $\mu(g + A) =  \mu(A), \; A \in {\mathfrak B}(G$).  The (Haar) measure $\mu$  is unique up to multiplication by a constant.
\end{thm}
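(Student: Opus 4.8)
This statement is the restatement of Haar's Theorem~0.1 specialized to the abelian case, with the uniqueness clause being von Neumann's result \cite{VN1}; accordingly the plan is to recall the classical construction rather than to devise anything new. For existence I would use the covering-function (Weil) method: fix once and for all a compact $K_0 \subset G$ with nonempty interior, and for every compact $K$ and every open $V$ containing the identity $e$ let $(K:V)$ denote the least number of left translates of $V$ that cover $K$. The normalized ratios $h_V(K) = (K:V)/(K_0:V)$ are finite, monotone, subadditive, left-invariant, and satisfy $0 \le h_V(K) \le (K : K_0^{\circ})$, so as $V$ ranges over a neighborhood base of $e$ they lie in the fixed compact product space $\prod_K [0,(K:K_0^{\circ})]$. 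Passing to a cluster point of the net $\{h_V\}$ as $V \to \{e\}$ produces a left-invariant \emph{content} on compact sets which is genuinely additive on disjoint compacta (the gain obtained in the limit). From this content one constructs the outer measure on open sets, then a regular left-invariant Borel measure $\mu$ — equivalently a positive translation-invariant linear functional on $C_c(G)$ to which the Riesz representation theorem applies.

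Since $G$ is moreover \emph{abelian}, there is a cleaner alternative I would mention: apply the Markov--Kakutani fixed point theorem to the weak-$*$ compact convex set of means on $C_c(G)$, on which the commuting family of left translations acts; a common fixed point is precisely a translation-invariant mean, and after the usual regularization this is a Haar measure. Either route delivers the existence and regularity asserted in the theorem.

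For uniqueness, let $\mu$ and $\nu$ both be nonzero regular translation-invariant Borel measures. Choosing $g \in C_c(G)$ with $\int g\, d\nu \ne 0$, and for arbitrary $f \in C_c(G)$, I would examine
\[
\left(\int f\, d\mu\right)\left(\int g\, d\nu\right) = \int\!\!\int f(x)\, g(y)\, d\mu(x)\, d\nu(y),
\]
and then, via a change of variables $x \mapsto xy$, an application of Fubini's theorem (legitimate because the integrand is compactly supported and continuous and both measures are $\sigma$-finite), and a further substitution $y \mapsto x^{-1}y$, deduce that $\int f\, d\mu \,/\int g\, d\nu$ equals a constant $c$ independent of $f$. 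Hence $\mu = c\,\nu$ on $C_c(G)$, and by regularity on all of ${\mathfrak B}(G)$. The abelian hypothesis is what keeps this clean: there is no modular function to carry through the substitutions.

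The main obstacle is the existence half — concretely, showing that the cluster point of the covering functionals is \emph{finitely additive} on disjoint compacta (the pre-limit $h_V$ is only subadditive) and that the set function it induces is an honest regular Borel measure; this is where Urysohn's lemma and the local compactness of $G$ do the real work. If one instead takes the Markov--Kakutani route, the obstacle migrates to the Riesz representation step, i.e.\ to showing that a translation-invariant mean on $C_c(G)$ regularizes to a genuine measure. In both cases the difficulty lies in the measure-theoretic bookkeeping rather than in the group structure.
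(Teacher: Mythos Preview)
Your sketch is a correct outline of the classical Haar existence/uniqueness argument. Note, however, that the paper does not supply a proof of this theorem at all: it is stated as background (a restatement of Theorem~0.1) with a reference to Rudin \cite{RU1}, so there is no ``paper's own proof'' to compare against. Your Weil covering-number construction for existence and the Fubini substitution argument for uniqueness are exactly the standard route one finds in Rudin or Weil, and the alternative Markov--Kakutani approach you mention is a legitimate shortcut in the abelian case; in short, you have reproduced the cited material rather than diverged from anything the authors do.
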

\begin{Def} A complex valued function $\al: G \to \C$ on a LCA group is called a character on $G$ provided that $\al$ is a homomorphism and $\lt| \al(g)\rt|=1$ for all $g \in G$.
\end{Def}  
The set of all continuous characters of $G$ defines a new group $\hat{G}$,
called the dual group of G and $(\al_1 + \al_2)(g)=\al_1(g) \cdot \al_2(g)$.  If we define a map $\g : G \to \hat{\hat{G}}$, by  $\g_g(\al)=\al(g)$, then the following theorem was proven by Pontryagin:
\begin{thm} {\rm (Pontryagin Duality Theorem)}
If $G$ is a LCA-group, then the mapping $\g : G \to \hat{\hat{G}}$ is an isomorphism of topological groups. 
\end{thm}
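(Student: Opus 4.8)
The plan is to establish, in turn, the three facts that together say $\gamma$ is an isomorphism of topological groups: that $\gamma$ is an injective continuous homomorphism, that it is a homeomorphism onto its image, and that it is surjective. The first fact is largely formal. For $g \in G$ the map $\gamma_g(\alpha) = \alpha(g)$ is clearly a homomorphism from $\hat G$ into the circle group $\mathbb{T} = \{z \in \C : |z| = 1\}$, and it is continuous for the compact--open topology on $\hat G$ (evaluation at a fixed point is continuous on $\hat G$), so $\gamma_g \in \hat{\hat G}$; that $g \mapsto \gamma_g$ respects the group operations is immediate from $(\alpha_1 + \alpha_2)(g) = \alpha_1(g)\,\alpha_2(g)$, and continuity of $\gamma$ follows from the definition of the compact--open topology together with joint continuity of the pairing $G \times \hat G \to \mathbb{T}$.

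For injectivity, the content is that the continuous characters of $G$ separate its points. I would obtain this from harmonic analysis built on the Haar measure supplied by Theorem 5.1: construct the Fourier transform on $L^1(G)$, prove the Fourier inversion theorem for functions whose transform is integrable (typically via Bochner's theorem on positive-definite functions and regularity of the Fourier algebra $A(G)$), and deduce that the Fourier transform is injective on $L^1(G)$. If two distinct points $x,y$ of $G$ were not separated by any character, the inversion formula would force every integrable function to take the same value at $x$ and at $y$, which is absurd; hence $\gamma$ is injective. (Alternatively one may invoke the Gelfand--Raikov theorem specialized to the abelian case.)

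Next, that $\gamma$ is a homeomorphism onto its image. Continuity is in hand, so the point is continuity of $\gamma^{-1}$ on $\gamma(G)$: I would show that $\gamma$ carries a neighborhood basis of the identity of $G$ onto relatively open subsets of $\gamma(G)$, using that a neighborhood basis of the identity in $\hat{\hat G}$ is indexed by the compact subsets of $\hat G$ and that $\hat G$ is itself locally compact. Then $\gamma(G)$ is locally compact in the subspace topology, and since a locally compact subgroup of a Hausdorff topological group is closed, $\gamma(G)$ is a closed subgroup of $\hat{\hat G}$.

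Surjectivity is the heart of the matter, and the step I expect to be the main obstacle: having shown $\gamma(G)$ is closed, one must rule out its being a proper subgroup. One route, following Rudin, remains within harmonic analysis and uses the Fourier inversion theorem on $\hat G$ to show directly that no point of $\hat{\hat G}$ can be missed; this is delicate because the naive argument (``every character of $\hat{\hat G}$ comes from $\hat G$'') is itself an instance of the theorem for the group $\hat G$, so the inductive bookkeeping must be set up with care. A second, more structural route is to verify the theorem by explicit computation for the building blocks $\R^n$, $\mathbb{Z}^n$, $(\R/\mathbb{Z})^n$ and finite abelian groups --- where the dual and the canonical map are written down by hand --- and then bootstrap to an arbitrary LCA group $G \cong \R^n \times G_1$ (with $G_1$ containing a compact open subgroup) via the structure theorem for LCA groups and the five lemma applied to short exact sequences, using that $G \mapsto \hat G$ is an exact contravariant functor. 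In either approach the genuine analytic input is the Fourier inversion theorem, together with the Plancherel and Bochner theorems underlying it, so in a self-contained treatment I would develop that machinery first; for the purposes of the present paper it suffices to cite Rudin for this whole circle of results.
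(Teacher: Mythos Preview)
Your outline is a competent sketch of the standard proof of Pontryagin duality, but there is nothing in the paper to compare it against: the paper does not prove this theorem at all. Theorem~5.3 is stated as classical background (``the following theorem was proven by Pontryagin'') and is simply cited, with Rudin~\cite{RU1} given as the ambient reference for this material. The paper's own contributions in Section~5 are the extensions in Theorems~5.4--5.9, not a re-proof of the classical duality theorem.

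So your proposal is not wrong, but it is misplaced: in the context of this paper no proof is expected here, and supplying one would be out of scope. If you want to include anything at this point, a one-line citation to Rudin suffices.
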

Thus, Pontrjagin Duality identifies those groups that are the character groups of their character groups.   If the group is not locally compact Theorem 5.1 does not hold (e.g., there is no Haar measure). However Kaplan \cite{KA1} has shown that the class of  topological abelian groups for which the Pontrjagin Duality holds is closed  under the operation of taking infinite products of groups. This result  immediately implies that  this class is larger than the class of locally compact abelian groups   because the infinite product of locally compact  groups (for example, $R^{\infty}$)  may be non-locally compact (see also \cite{KA2}).
\subsection{Pontryagin Duality Theory I}
In this section, we treat the Fourier transform as an operator.  As will be seen, this approach has the advantage of being constructive.  It also provides us with some insight into the problem that arises when we look at analysis on infinite dimensional spaces.

We define ${\mathfrak{F}}$ on $L^1[\R, \la]$ by
\[
{\hat{g}}(x)={\mathfrak{F}}(g)(x) =\int_{\mathbb{R} } {\exp \{- 2\pi ixy\} } g(y)dy.
\]
It is easy to check that  ${\mathfrak{F}}^{-1}$ is defined by
\[
g(y)={\mathfrak{F}}^{-1}({\hat{g}})(y) =\int_{\mathbb{R} } {\exp \{ 2\pi iyx\} } {\hat{g}}(x)dx.
\]
This representation is more convenient for the infinite-dimensional case, because we have no factors of $\sqrt{2\pi}$ to worry about.  

It is possible to  define ${\mathfrak{F}}$ as a mapping on $L^1[\R_I^n, \la]$ to $\C_0[\R_I^n, \la]$ for all $n$ as one fixed linear operator.  However, in the case of Hilbert spaces,Theorem 3.2(2) requires that we clearly specify our canonical domain and range space.  The same is also true for $L^1[\R_I^n, \la](h)$ and  $\C_0[\R_I^n, \la]({\hat{h}})$ (see \cite{GZ}). Since  $h=\otimes_{k=1}^\iy{\chi_I(x_k)}$, an easy calculation shows that ${\hat{h}}=\otimes_{k=1}^\iy{\tf{sin(\pi x_k)}{\pi x_k}}$.  Thus,  we can define  $\mathfrak{F}(f_n )({\bf x})$,  mapping  ${\mathbf{L}}^1[{\R}_I^{n}]({{h}})$ into ${\C}_0[{\R}_I^{n}]({\hat{h}})$ by
\beqn
{\mathfrak{F}}(f_n ) )({\bf x})  = { \otimes _{k = 1}^n \mathfrak{F}_k (f}_{(n)}) \otimes _{k = n + 1}^\infty  {\hat h}_k(x_k ).
\eeqn
\begin{thm}The operator $\mathfrak{F}$ extends to a bounded linear mapping of $L^1[{\R}_I^{\iy}]({{h}})$ into ${\C}_0[{\R}_I^{\iy}]({\hat{h}})$.
\end{thm}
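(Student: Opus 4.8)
The plan is to obtain $\mathfrak{F}$ on $L^1[\R_I^\iy](h)$ as the unique continuous extension of the coherent family of finite-dimensional operators already introduced in equation (5.1). Recall from Section 4 that $L^1[\R_I^\iy](h)$ is, by construction, the $\|\cdot\|_1$-completion of $\bigcup_{n=1}^\iy L^1[\R_I^n](h)$, and likewise $\C_0[\R_I^\iy](\hat h)$ is the sup-norm closure of $\bigcup_{n=1}^\iy \C_0[\R_I^n](\hat h)$. So it suffices to check that (5.1) defines a single linear contraction on the dense subspace $\bigcup_n L^1[\R_I^n](h)$ with values in $\bigcup_n \C_0[\R_I^n](\hat h)$, and then invoke the bounded linear transformation theorem.

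First I would verify coherence of the definitions in (5.1) across different $n$. If $f_n = f_{(n)}\otimes \chi_{I_n} \in L^1[\R_I^n]$ is re-read as an element of $L^1[\R_I^{n+1}]$, its $(n+1)$-st coordinate function is $\chi_I$, and by the elementary computation recorded just before (5.1) we have $\mathfrak{F}_{n+1}(\chi_I)(x_{n+1}) = \sin(\pi x_{n+1})/(\pi x_{n+1}) = \hat h_{n+1}(x_{n+1})$. Hence applying (5.1) at level $n$ or at level $n+1$ produces the same element of $\C_0[\R_I^{n+1}](\hat h)$; by induction the two definitions agree on all overlaps, so $\mathfrak{F}$ is unambiguously defined and linear on $\bigcup_n L^1[\R_I^n](h)$, with range in $\bigcup_n \C_0[\R_I^n](\hat h)$. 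This is precisely the bookkeeping that the canonical choices $h$ and $\hat h$ are designed to make consistent, cf. the remark following Theorem 3.2(2).

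Second I would establish that $\mathfrak{F}$ is a contraction. On $L^1[\R_I^n](h)$ the norm of $f_n$ equals $\|f_{(n)}\|_{L^1(\R^n)}$ by Theorem 2.8, and from (5.1), using the standard $L^1\!\to\!L^\infty$ bound for the $n$-dimensional Euclidean Fourier transform together with $|\sin(\pi t)/(\pi t)|\le 1$,
\[
\|\mathfrak{F}(f_n)\|_\infty \;\le\; \|\widehat{f_{(n)}}\|_\infty \prod_{k=n+1}^\infty \sup_{t}\Bigl|\tfrac{\sin \pi t}{\pi t}\Bigr| \;=\; \|\widehat{f_{(n)}}\|_\infty \;\le\; \|f_{(n)}\|_{L^1(\R^n)} \;=\; \|f_n\|_1 .
\]
Moreover the classical Riemann--Lebesgue lemma on $\R^n$ gives $\widehat{f_{(n)}}\in \C_0(\R^n)$, so $\mathfrak{F}(f_n)$ genuinely lies in $\C_0[\R_I^n](\hat h)$ and not merely in the bounded functions. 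Combining with the previous step, $\mathfrak{F}$ is a densely defined linear contraction between the Banach spaces $L^1[\R_I^\iy](h)$ and $\C_0[\R_I^\iy](\hat h)$; the bounded linear transformation theorem then produces a unique continuous (contractive) extension to all of $L^1[\R_I^\iy](h)$, and its range stays in $\C_0[\R_I^\iy](\hat h)$ because that space is sup-norm closed (and Lemma 4.2 confirms the limit functions are continuous and vanish at infinity).

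The step I expect to demand the most care is the coherence verification of the second paragraph --- in particular making sure that the tail factors $\otimes_{k>n}\hat h_k$ truly patch together under the embeddings $\R_I^n \hookrightarrow \R_I^{n+1}$ --- together with confirming that the sup-norm closure of $\bigcup_n \C_0[\R_I^n](\hat h)$ is exactly $\C_0[\R_I^\iy](\hat h)$, so that the extension lands in the asserted target space and not in a strictly larger one. Once these points are settled, the remainder is the routine extension-by-continuity of a contraction from a dense subspace.
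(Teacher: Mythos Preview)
Your proposal is correct and follows essentially the same route as the paper: both arguments use that $\bigcup_n L^1[\R_I^n](h)$ is dense in $L^1[\R_I^\iy](h)$, establish the contraction bound $\|\mathfrak{F}(f_n)\|_\infty \le \|f_n\|_1$ (the paper writes this pointwise as $|\mathfrak{F}(f_n-f_m)({\bf x})|\le \|f_n-f_m\|_1$), and then extend by continuity into the sup-norm closed space $\C_0[\R_I^\iy](\hat h)$. Your version is in fact more careful than the paper's, since you explicitly verify the coherence of (5.1) across the embeddings $\R_I^n\hookrightarrow\R_I^{n+1}$ and invoke Riemann--Lebesgue to confirm the range lies in $\C_0$ rather than merely in $L^\infty$; the paper leaves both of these points implicit.
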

\begin{proof}
Since    
\[
\mathop {\lim }\limits_{n \to \infty } L^1 [\mathbb{R}_I^n ]({{h}}) =  \bigcup _{n = 1}^\infty  L^1 [\mathbb{R}_I^n ]({{h}}) = L^1 [{\mathbb{R}'}_I^\infty ]({{h}})
\] 
and  $ L^1 [\mathbb{R}_I^\infty  ]({{h}})$ is the closure of  $L^1 [{\mathbb{R}'}_I^\infty  ]({{h}})$ in the  $L^1 {\text{ - norm}}$, it follows that $\mathfrak{F}$ is a bounded linear mapping of $L^1[{{\R}'}_I^{\iy}]({{h}})$ into ${\C}_0[{\R}_I^{\iy}]({\hat{h}})$.  

Supposed that $\{f_n \} \subset L^1 [{\mathbb{R}'}_I^\infty  ]({{h}})$, converges to $f \in L^1 [\mathbb{R}_I^\infty  ]({{h}})$.  Thus,  the sequence is Cauchy, so that $\left\| {f_n  - f_m } \right\|_1  \to 0$ as $m, \ n \to \infty$. It follows that
\[
\left| {\mathfrak{F}\left( {f_n ({\mathbf{x}}) - f_m ({\mathbf{x}})} \right)} \right| \leqslant \int_{\mathbb{R}_I^\infty  } {\left| {f_n ({\mathbf{y}}) - f_m ({\mathbf{y}})} \right|d\lambda _\infty  ({\mathbf{y}})}  = \left\| {f_n  - f_m } \right\|_1, 
\]
so that $\left| {\mathfrak{F}\left( {f_n ({\mathbf{x}}) - f_m ({\mathbf{x}})} \right)} \right|$ is a Cauchy sequence in ${\C}_0[{\R}_I^{\iy}]({\hat{h}})$. 
Since $L^1[{{\R}'}_I^{\iy}](h)$ is dense in $L^1[{\R}_I^{\iy}](h)$, it follows that $\mathfrak{F}$ has a bounded extension, mapping $L^1[{\R}_I^{\iy}](h)$ into ${\C}_0[{\R}_I^{\iy}]({\hat{h}})$.
\end{proof}
\subsection{$L^2$-Theory}
In the case of $L^2$, the Fourier transform is an isometric isomorphism from $L^2[{\R}^{n}]$ onto $L^2[{\R}^{n}]$.     
\begin{thm} The operator $\mathfrak{F}$ is an isometric isomorphism of ${{L}}^2[{\R}_I^{\infty}]({h})$ onto  ${{L}}^2[{\R}_I^{\infty}](\hat{h})$.
\end{thm}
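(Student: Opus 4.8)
The plan is to construct $\mathfrak{F}$ on $L^{2}[\R_I^{\infty}](h)$ by applying the one–dimensional Plancherel transform coordinatewise, to verify on a dense subspace that it preserves the inner product and actually lands in $L^{2}[\R_I^{\infty}](\hat h)$, and then to pass to the closure; surjectivity is then obtained by running the identical argument for $\mathfrak{F}^{-1}$.

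First I would assemble the one–variable input. By the classical Plancherel theorem each $\mathfrak{F}_{k}$ is a unitary operator on $L^{2}[\R,\la]$, and since $h_{k}=\chi_{I}\in L^{1}[\R]\cap L^{2}[\R]$ the integral definition of $\mathfrak{F}_{k}(h_{k})$ agrees with the $L^{2}$ one, so $\hat h_{k}(x_{k})=\tfrac{\sin(\pi x_{k})}{\pi x_{k}}$ and $\|\hat h_{k}\|_{2}=\|h_{k}\|_{2}=1$. Hence $\hat h=\otimes_{k=1}^{\infty}\hat h_{k}$ is a genuine unit vector of $\mcH_{\otimes}^{2}$, and the target $L^{2}[\R_I^{\infty}](\hat h)=\mcH_{\otimes}^{2}(\hat h)^{s}$ is a well-defined separable Hilbert space by Definition 1.5 and Theorem 1.6(5).

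Next, by Theorem 1.6(2), the linear span $\mathbf{D}_{h}$ of the decomposable vectors $g=\otimes_{k\in\N}g_{k}$ with $g_{k}=h_{k}$ for all but finitely many $k$ is dense in $L^{2}[\R_I^{\infty}](h)$, and likewise $\mathbf{D}_{\hat h}$ is dense in $L^{2}[\R_I^{\infty}](\hat h)$. On a decomposable generator I set $\mathfrak{F}(\otimes_{k}g_{k})=\otimes_{k}\mathfrak{F}_{k}g_{k}$; for each of the cofinitely many coordinates with $g_{k}=h_{k}$ this factor equals $\hat h_{k}$, so $\mathfrak{F}(g)$ is again a finite modification of $\hat h$, i.e. $\mathfrak{F}$ maps $\mathbf{D}_{h}$ linearly into $\mathbf{D}_{\hat h}$ and on each level $L^{2}[\R_I^{n}](h)$ it coincides with the coordinatewise operator used earlier (and with the $L^{1}$ transform of Theorem 5.4 on $L^{1}\cap L^{2}$). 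Using the tensor–product inner–product formula $(u,v)_{\otimes}=\prod_{k}\langle u_{k},v_{k}\rangle_{k}$ (Theorem 1.6(3)) together with $\langle \mathfrak{F}_{k}g_{k},\mathfrak{F}_{k}g_{k}'\rangle_{k}=\langle g_{k},g_{k}'\rangle_{k}$ (Plancherel on the $k$-th factor, with all but finitely many factors equal to $1$), I obtain
\[
\bigl(\mathfrak{F}g,\mathfrak{F}g'\bigr)_{\otimes}=\prod_{k\in\N}\langle \mathfrak{F}_{k}g_{k},\mathfrak{F}_{k}g_{k}'\rangle_{k}=\prod_{k\in\N}\langle g_{k},g_{k}'\rangle_{k}=\bigl(g,g'\bigr)_{\otimes},
\]
so $\mathfrak{F}$ is inner–product preserving on $\mathbf{D}_{h}$ and, being bounded there, extends uniquely to an isometry of $L^{2}[\R_I^{\infty}](h)$ into $L^{2}[\R_I^{\infty}](\hat h)$.

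For surjectivity I would repeat the construction with $\mathfrak{F}_{k}^{-1}$, which is again unitary on $L^{2}[\R,\la]$ and satisfies $\mathfrak{F}_{k}^{-1}(\hat h_{k})=h_{k}$; the resulting operator carries $\mathbf{D}_{\hat h}$ isometrically into $\mathbf{D}_{h}$ and extends to an isometry $L^{2}[\R_I^{\infty}](\hat h)\to L^{2}[\R_I^{\infty}](h)$. On the dense sets $\mathbf{D}_{h}$, $\mathbf{D}_{\hat h}$ this extension is a two–sided inverse of $\mathfrak{F}$, hence it is a two–sided inverse everywhere, so $\mathfrak{F}$ is onto. The one step I expect to require genuine care is the claim that the coordinatewise prescription is consistent across the finite levels and genuinely carries $\mcH_{\otimes}^{2}(h)^{s}$ into $\mcH_{\otimes}^{2}(\hat h)^{s}$ rather than into some orthogonal incomplete tensor product space (Theorem 1.6(3) shows mismatched classes would be annihilated by $\mathfrak{F}$); this is exactly secured by $\mathfrak{F}_{k}(h_{k})=\hat h_{k}$ with $\|\hat h_{k}\|_{2}=1$, which makes $\otimes_{k}\mathfrak{F}_{k}$ respect the strong–equivalence relation $\equiv^{s}$ of Definition 1.2 coordinate by coordinate, after which everything reduces to the standard density–and–closure argument.
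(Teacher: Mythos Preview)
Your argument is correct and follows essentially the same strategy as the paper: both define $\mathfrak{F}$ on a dense finite-level subspace where Plancherel gives an isometry, then extend by completeness. The paper phrases this via approximating $f$ by a Cauchy sequence $f_k\in L^2[\R_I^{n_k}]$ and invoking the finite-dimensional isometry to push the Cauchy property to $L^2[\R_I^\infty](\hat h)$, whereas you work with the decomposable generators $\mathbf{D}_h$ and the tensor inner-product formula; your version is slightly more explicit in that you actually verify surjectivity by running the construction for $\mathfrak{F}^{-1}$, a point the paper's proof leaves implicit.
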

\begin{proof}  Let $f \in {{L}}^2[{\R}_I^{\infty}]({h})$.  By construction, there exists a sequence of functions $\{f_k \in {{L}}^2[{\R}_I^{n_k}], \, n_k \in \N\}$ such that $
\mathop {\lim }\limits_{k \to \infty } \left\| {f - f_k } \right\|_2({h})  = 0$.  Furthermore, since the sequence converges, it is a Cauchy sequence.  Hence, given $\e >0$, there exists a $N(\e)$ such that $m,\,n \ge N(\e)$ implies that $\left\| {f_m - f_n } \right\|_2({h}) < \e$.  Since $\mathfrak{F}$ is an isometry, $\left\| {\mathfrak{F}(f_m) - \mathfrak{F}(f_n) } \right\|_2(\hat{h}) < \e$, so that the sequence $\mathfrak{F}(f_k)$ is also a Cauchy sequence in ${{L}}^2[{\R}_I^{\infty}](\hat{h})$.  Thus, there is a $\hat{f} \in {{L}}^2[{\R}_I^{\infty}](\hat{h})$ with $ \mathop {\lim }\limits_{k \to \infty }\left\| {\hat{f}  - \mathfrak{F}(f_k) } \right\|_2(\hat{h})=0$, and we can define $\mathfrak{F}(f)=\hat{f}$.  It is easy to see that $\hat{f}$ is unique.
\end{proof}

We can also prove a version of Theorems 5.4 and 5.5 for every separable Banach space (with a basis).  Fix  $\mcB$ and for each $n$, let ${\mcB}_I^n ={\mcB}\cap \R_I^n$.   It is  clear that ${\mcB}_I^n \subset {\mcB}_I^{n+1}$, so that $\mcB$ is the norm closure of $\mathop {\lim }\limits_{n \to \infty } \mcB_I^n$. The following have the same proofs as  Theorems 5.4 and 5.5.   
\begin{thm}The operator $\mathfrak{F}$ extends to a bounded linear mapping of $L^1[{\mcB}]({h})$ into ${\C}_0[{\mcB}](\hat{h})$.
\end{thm}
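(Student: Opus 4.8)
The plan is to repeat, essentially verbatim, the arguments used for Theorems 5.4 and 5.5, with $\mcB$ in place of $\R_I^\iy$ and the isometric isomorphism $T:\mcB\to\mcB_I$ used to carry the finite-dimensional structure across. For each $n$ put $\mcB_I^n=\mcB\cap\R_I^n$; then $\mcB_I^n\subset\mcB_I^{n+1}$ and, as noted above, $\mcB$ is the norm closure of $\bigcup_{n}\mcB_I^n$, so that $L^1[\mcB](h)$ is the $L^1$-norm closure of $L^1[{\mcB'}_I^\iy](h):=\bigcup_{n=1}^\iy L^1[\mcB_I^n](h)$. On the $n$-th level one defines $\mathfrak F$ exactly as in formula (5.1): apply the $n$-dimensional Fourier transform in the first $n$ coordinates and tensor with the fixed tail $\otimes_{k=n+1}^\iy\hat h_k(x_k)$, where $\hat h_k(x_k)=\sin(\pi x_k)/(\pi x_k)$ is the transform of $h_k=\chi_I$.

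First I would record the finite-level bound, just as in the discussion preceding Theorem 5.4: the classical Riemann--Lebesgue lemma on $\R^n$, together with the boundedness of each tail factor, gives that $\mathfrak F$ sends $L^1[\mcB_I^n](h)$ into $\C_0[\mcB_I^n](\hat h)$ with $\|\mathfrak F(f_n)\|_\infty\le\|f_n\|_1$; and since $h_k=\chi_I$ for every $k$, the definitions on consecutive levels agree under the inclusions $\mcB_I^n\hookrightarrow\mcB_I^{n+1}$, so $\mathfrak F$ is a single well-defined operator of norm at most $1$ on the dense subspace $L^1[{\mcB'}_I^\iy](h)$. Then comes the density/completeness step, identical to the one in Theorem 5.4: given $f\in L^1[\mcB](h)$, pick $\{f_n\}\subset L^1[{\mcB'}_I^\iy](h)$ with $\|f_n-f\|_1\to0$; the sequence is Cauchy, and
\[
\left\|\mathfrak F(f_n)-\mathfrak F(f_m)\right\|_\infty\le\int_{\mcB}\left|f_n(\mathbf y)-f_m(\mathbf y)\right|\,d\la_{\mcB}(\mathbf y)=\|f_n-f_m\|_1\longrightarrow 0,
\]
so $\{\mathfrak F(f_n)\}$ is Cauchy in the Banach space $\C_0[\mcB](\hat h)$ and converges to some $\hat f$. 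Setting $\mathfrak F(f)=\hat f$ and checking in the usual way that $\hat f$ does not depend on the approximating sequence yields the desired bounded extension, again of operator norm at most $1$.

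The main obstacle here is organizational rather than analytic: one has to be sure that $\C_0[\mcB](\hat h)$, defined as the sup-norm closure of $\bigcup_n\C_0[\mcB_I^n](\hat h)$, is genuinely a Banach space, and that the level-$n$ prescriptions (5.1) are mutually consistent under the inclusions, so that ``$\mathfrak F$ on $\mcB$'' is unambiguous. Once these identifications---already carried out for $\R_I^\iy$ in Theorem 5.4---are transported through $T$, the quantitative content is only the Riemann--Lebesgue lemma on $\R^n$ together with the triviality $\|\otimes_{k>n}\hat h_k\|_\infty=1$, and the argument is word for word that of Theorem 5.4.
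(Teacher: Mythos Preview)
Your proposal is correct and follows exactly the route the paper indicates: the paper gives no separate proof for this statement but simply declares that it ``has the same proof as Theorem~5.4,'' and what you have written is precisely that proof transported to $\mcB$ via the identifications $\mcB_I^n=\mcB\cap\R_I^n$. Your added remarks about consistency of the level-$n$ definitions and the operator norm bound are reasonable elaborations, but the core density-plus-Cauchy argument is identical to the paper's.
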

\begin{thm} The operator $\mathfrak{F}$ is an isometric isomorphism from ${{L}}^2[{\mcB}]({h})$ onto ${{L}}^2[{\mcB}](\hat{h})$.
\end{thm}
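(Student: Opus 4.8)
The plan is to run the proof of Theorem 5.5 almost verbatim, using the filtration $\mcB_I^n=\mcB\cap\R_I^n$, the classical Plancherel theorem at each finite level, and then a completion argument. First I would recall the structure of the two spaces: by construction $L^2[\mcB](h)$ is the norm closure of $\bigcup_{n=1}^\infty L^2[\mcB_I^n]$, where $L^2[\mcB_I^n]$ is embedded in $L^2[\mcB_I^{n+1}]$ by tensoring with $\chi_I$ in coordinate $n+1$ and the infinite tail of $I$-coordinates carries the reference vector $h=\otimes_k\chi_I$; likewise $L^2[\mcB](\hat h)$ is the norm closure of the same increasing union, but with tail reference vector $\hat h=\otimes_k\tfrac{\sin(\pi x_k)}{\pi x_k}$. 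Hence it suffices to exhibit one linear operator whose restriction to each $L^2[\mcB_I^n]$ is a surjective isometry onto the corresponding finite-level piece of $L^2[\mcB](\hat h)$, and then pass to the limit.

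Next I would treat the finite levels. With the normalization $\mathfrak{F}_k g(x)=\int_\R e^{-2\pi i x y}g(y)\,dy$ fixed in Section 5, the finite-dimensional Plancherel theorem says $\otimes_{k=1}^n\mathfrak{F}_k$ is an isometric isomorphism of $L^2[\R^n]$ onto $L^2[\R^n]$. By the tensor formula (5.1), $\mathfrak{F}$ sends $f_n\otimes(\otimes_{k>n}\chi_I)$ to $(\otimes_{k=1}^n\mathfrak{F}_k f_n)\otimes(\otimes_{k>n}\hat h_k)$, and since each $\mathfrak{F}_k$ is unitary on $L^2[\R]$ we have $\|\hat h_k\|_2=\|\chi_I\|_2=1$ and, by Parseval, $\langle\mathfrak{F}_k g_k,\hat h_k\rangle_k=\langle\mathfrak{F}_k g_k,\mathfrak{F}_k\chi_I\rangle_k=\langle g_k,\chi_I\rangle_k$ for every $g_k$. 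Therefore $\otimes_k g_k\equiv^s h$ if and only if $\otimes_k\mathfrak{F}_k g_k\equiv^s\hat h$, so $\mathfrak{F}$ carries the canonical space $L^2[\mcB_I^n](h)$ isometrically onto the canonical space $L^2[\mcB_I^n](\hat h)$; and these level maps are nested, i.e. they are restrictions of a single $\mathfrak{F}$ on $\bigcup_n L^2[\mcB_I^n]$.

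Then I would complete. Given $f\in L^2[\mcB](h)$, choose $\{f_k\}\subset L^2[\mcB_I^{n_k}]$ with $\|f-f_k\|_2(h)\to0$; the sequence is Cauchy, so by the previous step $(\mathfrak{F}f_k)$ is Cauchy in $L^2[\mcB](\hat h)$, and I define $\mathfrak{F}f$ to be its limit. Independence of the approximating sequence, linearity, and the isometry identity $\|\mathfrak{F}f\|_2(\hat h)=\lim\|\mathfrak{F}f_k\|_2(\hat h)=\lim\|f_k\|_2(h)=\|f\|_2(h)$ are then immediate, and injectivity follows from the isometry. For surjectivity I would run the same completion argument for $\mathfrak{F}^{-1}=\otimes_k\mathfrak{F}_k^{-1}$ against the dense subspace $\bigcup_n L^2[\mcB_I^n]$ of $L^2[\mcB](\hat h)$; then $\mathfrak{F}\circ\mathfrak{F}^{-1}$ and $\mathfrak{F}^{-1}\circ\mathfrak{F}$ are identities on dense subspaces, hence everywhere by continuity.

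The hard part is not analytic at all: it is the bookkeeping of reference vectors under the tensor tail, namely verifying that $\mathfrak{F}$ maps the $\equiv^s$-class of $h$ \emph{exactly} onto the $\equiv^s$-class of $\hat h$ and does not spill into an orthogonal incomplete-tensor-product sector (the phenomenon isolated in Theorem 3.2(2)). This is precisely what unitarity of each $\mathfrak{F}_k$ on $L^2[\R]$ delivers, through $\|\hat h_k\|_2=1$ and $\langle\mathfrak{F}_k g_k,\hat h_k\rangle_k=\langle g_k,\chi_I\rangle_k$; once that is in hand, the remainder of the argument is word for word the proof of Theorem 5.5.
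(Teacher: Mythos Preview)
Your proposal is correct and follows essentially the same approach as the paper, which simply defers to the proof of Theorem~5.5 (the completion argument from the finite levels $\mcB_I^n=\mcB\cap\R_I^n$ using the finite-dimensional Plancherel theorem). Your version is in fact more complete than the paper's: you explicitly check surjectivity via $\mathfrak{F}^{-1}$ and verify that the tail reference vector $h$ is carried exactly to the $\equiv^s$-class of $\hat h$, points the paper leaves implicit.
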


Theorems 5.4 - 5.7 show that $\otimes_{i=1}^\iy {\hat {h}}_i$ is a strong (product) convergence vector for the Fourier transform operator ${\mathfrak{F}}$.  In the $L^2$-theory, we know that ${{L}}^2[{\R_I^\iy}]({h})$ and ${{L}}^2[{\R_I^\iy}](\hat{h})$ are orthogonal subspaces of $\mcH_{\otimes}^2$.  Thus, in this approach, the natural interpretation is that the Fourier transform induces a Pontryagin duality like theory that does not depend on the group structure of $\R_I^\iy$, but depends on the pairing of different function spaces.  This approach is direct, constructive and applies to all separable Banach spaces (with a basis).   Thus, the group structure of the underlying measure space plays no role. 
\subsection{Pontryagin Duality Theory II}
In this section, we show that the standard form of Pontryagin duality  theory is also possible, using the underlying measure space group structure. It is constructive but restrictive, in that, it does not apply to every separable Banach space with a basis. 

Let $\mcB$ be any uniformly convex separable Banach space UCB over the reals, so that $\mcB = {\mcB}^{**}$ (second dual). The next  theorem follows from our theory of Lebesgue measure on Banach spaces.
\begin{thm}If $\la_{\mcB}$ is our version of Lebesgue measure on ${\mcB}$, then $\mcB$ and ${\mcB}^{*}$ are also duals as character groups (i.e., ${\mcB}^{*}=\hat{\mcB}$).
\end{thm}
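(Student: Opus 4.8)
The plan is to identify the continuous character group $\widehat{\mcB}$ of the additive topological group $(\mcB,+)$ with $\mcB^{*}$, and then to use reflexivity to close the duality loop.

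First I would write down the natural candidate map $\Phi:\mcB^{*}\to\widehat{\mcB}$. For $f\in\mcB^{*}$ put $\chi_{f}(x)=\exp\{2\pi i f(x)\}$. Since $f$ is linear and norm continuous, $\chi_{f}$ is a continuous homomorphism of $(\mcB,+)$ into $\{z\in\C:|z|=1\}$, i.e. a continuous character, and $\chi_{f+g}=\chi_{f}\chi_{g}$, so $\Phi$ is a group homomorphism. It is injective, for if $\chi_{f}\equiv 1$ then $f(x)\in\mathbb{Z}$ for every $x$, and $f(tx)=tf(x)$ for all $t\in\R$ then forces $f\equiv 0$.

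The substantive step is surjectivity of $\Phi$. Given a continuous character $\chi$ of $\mcB$, for each fixed $x\in\mcB$ the map $t\mapsto\chi(tx)$ is a continuous character of $\R$, hence of the form $\exp\{2\pi i t\,c(x)\}$ for a unique real number $c(x)$. Using that $\chi$ is a homomorphism together with the uniqueness of $c(\cdot)$, one checks $c(x+y)=c(x)+c(y)$ and $c(sx)=s\,c(x)$, so $c$ is $\R$-linear; and continuity of $\chi$ at $0$ produces $\delta>0$ with $|\chi(y)-1|<1$ whenever $\|y\|<\delta$, which keeps the path $t\mapsto\chi(tx)$, $0\le t\le 1$, inside a fixed arc about $1$ and thereby bounds $|c(x)|$ on the ball of radius $\delta$. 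Hence $c\in\mcB^{*}$ and $\chi=\chi_{c}=\Phi(c)$, so $\Phi$ is a bijection and $\widehat{\mcB}=\mcB^{*}$ as abstract groups. (This step, it should be noted, uses only the Banach space structure of $\mcB$.)

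Next I would match the topologies: equip $\widehat{\mcB}$ with the compact--open topology and check that $\Phi$ is a homeomorphism onto $\mcB^{*}$ with its norm topology, exactly as in the model case $\widehat{\R^{n}}\cong\R^{n}$ and consistently with Kaplan's stability of Pontryagin duality under products \cite{KA1}. This is where the measure $\la_{\mcB}$ does its work: regularity and $\sigma$-finiteness of $\la_{\mcB}$, together with the $L^{1}$-- and $L^{2}$--Fourier transform theory (Theorems 5.4--5.7, in particular the isometric isomorphism $\mathfrak{F}:L^{2}[\mcB](h)\to L^{2}[\mcB](\hat{h})$ of Theorem 5.7), supply the harmonic-analytic content that turns $\widehat{\mcB}=\mcB^{*}$ into an identification of topological groups and lets the Fourier transform on $\mcB$ act between $L^{2}(\mcB,\la_{\mcB})$ and $L^{2}(\mcB^{*},\la_{\mcB^{*}})$. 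Finally, since $\mcB$ is uniformly convex we have $\mcB=\mcB^{**}$; $\mcB^{*}$ is again separable and reflexive, so running the same identification for $\mcB^{*}$ gives $\widehat{\mcB^{*}}=\mcB^{**}=\mcB$, i.e. $\mcB$ and $\mcB^{*}$ are duals as character groups in the sense of Theorem 5.3.

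I expect the main obstacle to be the topological half of the statement --- verifying that uniform convergence of characters on compact subsets of $\mcB$ corresponds precisely to norm convergence of functionals in $\mcB^{*}$ --- together with checking that the construction of this paper genuinely applies to $\mcB^{*}$ (it is separable and reflexive, but one must ensure it carries a Schauder basis of the kind needed to build $\la_{\mcB^{*}}$) so that the second application of the identification is legitimate. By contrast, the purely algebraic identification $\widehat{\mcB}=\mcB^{*}$ via the line-integration argument above is routine.
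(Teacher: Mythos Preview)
Your argument is correct, and it takes a genuinely different route from the paper. The paper's proof is much shorter and purely analytic: it simply writes down the Fourier transform on $L^{2}[\mcB,\la_{\mcB}]$ using the dual pairing,
\[
\hat f({\mathbf{x}}^{*})=\int_{\mcB}\exp\{-2\pi i\langle{\mathbf{y}},{\mathbf{x}}^{*}\rangle\}f({\mathbf{y}})\,d\la_{\mcB}({\mathbf{y}}),
\]
invokes Plancherel to obtain $\|\hat f\|_{2}=\|f\|_{2}$, writes the inversion formula as an integral over $\mcB^{*}$ against $\la_{\mcB^{*}}$, and concludes from this Fourier duality that $\mcB$ and $\mcB^{*}$ are dual as character groups. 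No explicit verification that every continuous character of $(\mcB,+)$ has the form $\exp\{2\pi i\langle\,\cdot\,,{\mathbf{x}}^{*}\rangle\}$ is given; the identification is read off from the Fourier inversion.

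Your approach is the classical group-theoretic one: you produce the bijection $\mcB^{*}\to\widehat{\mcB}$ directly by the line-restriction and lifting argument, and then use reflexivity to close the loop. This buys you an honest proof that $\mcB^{*}$ exhausts the continuous characters, and it makes transparent that the algebraic identification needs no measure theory at all. The paper's route, by contrast, keeps the statement embedded in the $L^{2}$-Fourier framework developed in the section, which is its real purpose, but leaves the surjectivity of $\Phi$ implicit. Your flag about $\mcB^{*}$ needing a Schauder basis so that $\la_{\mcB^{*}}$ is defined is apt; the paper uses $\la_{\mcB^{*}}$ in the inversion formula without comment.
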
 
\begin{proof} If we consider the restriction to ${{L}}^2[\mcB, \la_{\mcB}]$,  we can define ${\mathfrak{F}}$ directly by: 
\beqn
[{\mathfrak{F}}(f)]({\mathbf{x}}^*)=\hat f({\mathbf{x}}^*) = \int_\mcB {\exp \{  - 2\pi i\left\langle {{\mathbf{y}},{\mathbf{x}}^*} \right\rangle \} f({\mathbf{y}})d\lambda_{\mcB} ({\mathbf{y}})} ,
\eeqn
where ${\left\langle {{\mathbf{y}},{\mathbf{x}}^*} \right\rangle }$ is the pairing between $\mcB$ and ${\mcB}^{*}$.  From Plancherel's Theorem, we have:
\[
\left\| {\hat f} \right\|_2^2  = \left( {\hat f,\hat f} \right)_2  = \left( {f,f} \right)_2  = \left\| f \right\|_2^2 .
\]
It follows that  $\mcB$ and ${\mcB}^{*}$ are duals as character groups and 
\[
f({\mathbf{y}}) = \int_{{\mcB}^{*}} {\exp \{ 2\pi i\left\langle {y,{\mathbf{x}}^* } \right\rangle \} \hat f({\mathbf{x}}^* )d\lambda_{{\mcB}^*} ({\mathbf{x}}^* )} .
\]
\end{proof}
If ${\mcB}_I^n ={\mcB}_I\cap \R_I^n$, we can represent $\hat{f}_n$ directly as a mapping from ${{L}}^2[\mcB_I^n, \la_{\mcB}] \to {{L}}^2[{\mcB}_I^{*,  n}, \lambda_{{\mcB}^*}]$, by
\beqa
[{\mathfrak{F}}(f_n)]({\mathbf{x}}^*)=\hat f_n({\mathbf{x}}^*) = \int_\mcB {\exp \{  - 2\pi i\left\langle {{\mathbf{y}},{\mathbf{x}}^*} \right\rangle_n \} f_n({\mathbf{y}})d\lambda_{\mcB} ({\mathbf{y}})} ,
\eeqa
where $\left\langle {{\mathbf{y}},{\mathbf{x}}^*} \right\rangle_n$ is the restricted pairing of ${\mathbf{y}}$ and ${\mathbf{x}}^*$ to $\mcB_I^n$ and ${\mcB}_I^{*,  n}$ respectively.  It follows that equations 5.1 and 5.2 provide two distinct definitions of the Fourier transform for ${\mcB}$. Thus, in this approach the  group structure of the underlying measure space changes.

It is clear that representation for $\hat f({\mathbf{x}}^*)$ also applies if we use ${{L}}^1[\mcB, \la_{\mcB}]$, but in this case $\hat f({\mathbf{x}}^*) \in \C_0[{\mcB}^*]$.

If we define ${\mathbf{y}}(\cdot)$ mapping ${\mcB} \to \C$, by ${\mathbf{y}}({\mathbf{x}}) =exp\{ -2\pi i \left\langle {{\mathbf{y}},{\mathbf{x}}^*} \right\rangle \}$, then ${\mathbf{y}}({\mathbf{x}})$ is a character of ${\mcB}$.  Furthermore, it is easy to see that $({\mathbf{y}}_1 + {\mathbf{y}}_2)({\mathbf{x}})={\mathbf{y}}_1({\mathbf{x}}) \cdot {\mathbf{y}}_2({\mathbf{x}})$. We now have the extension of the Pontryagin Duality Theorem to all UCB (with a basis).
\begin{thm}If ${\mcB}$ is a UCB,  then the mapping $\g_{\mathbf{x}} : {\mcB} \to \hat{\hat{{\mcB}}}$, defined by $\g_{\mathbf{x}}({\mathbf{y}})= {\mathbf{y}}({\mathbf{x}})$, is an isomorphism of topological groups. 
\end{thm}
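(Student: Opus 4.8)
The plan is to deduce the result by iterating the preceding theorem (the identification $\widehat{\mcB}=\mcB^*$) and then recognizing that the map of the statement is nothing but the canonical embedding of $\mcB$ into its bidual. First I would apply that theorem to $\mcB$ itself, obtaining $\widehat{\mcB}=\mcB^*$, where $\mathbf{x}^*\in\mcB^*$ is read as the character $\mathbf{x}\mapsto\exp\{-2\pi i\langle\mathbf{x},\mathbf{x}^*\rangle\}$. Next I would check that the same theorem applies with $\mcB$ replaced by $\mcB^*$: since $\mcB$ is separable and reflexive, $\mcB^*$ is separable; the biorthogonal functionals $(u_n^*)$ of a Schauder basis $(u_n)$ of $\mcB$ form a Schauder basis of $\mcB^*$ (every basis of a reflexive space is shrinking); and $\mcB^*$ is super-reflexive, hence carries an equivalent uniformly convex norm. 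Since the set $\mcB^*_I\subset\R_I^\iy$ and the measure $\la_{\mcB^*}$ depend only on the linear structure and the chosen basis — equivalent norms yield the same convergent series — this renorming is harmless, and the theorem gives $\widehat{\widehat{\mcB}}=\widehat{\mcB^*}=\mcB^{**}=\mcB$, the last equality by reflexivity. Under these identifications the evaluation map of the statement becomes the canonical embedding $J:\mcB\to\mcB^{**}$, $J(\mathbf{x})(\mathbf{x}^*)=\langle\mathbf{x},\mathbf{x}^*\rangle$.

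It then remains to see that this map is an isomorphism of topological groups. Algebraically this is immediate: $J$ is always injective, it is surjective precisely because $\mcB$ is reflexive (part of being a UCB, by Milman--Pettis), and it is a homomorphism because every character $\chi$ of $\mcB$ satisfies $\chi(\mathbf{x}_1+\mathbf{x}_2)=\chi(\mathbf{x}_1)\chi(\mathbf{x}_2)$, which in the character group $\widehat{\widehat{\mcB}}$ is exactly the identity $(\mathbf{y}_1+\mathbf{y}_2)(\mathbf{x})=\mathbf{y}_1(\mathbf{x})\cdot\mathbf{y}_2(\mathbf{x})$ recorded just before the statement. Thus the map is a bijective homomorphism.

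The topological part is where the real work lies, and I expect it to be the main obstacle. The character group $\widehat{\mcB}=\mcB^*$ carries the compact--open topology, i.e.\ uniform convergence on norm--compact subsets of $\mcB$; because norm--compact sets in an infinite-dimensional Banach space are thin, this topology is strictly coarser than the norm topology of $\mcB^*$, so bicontinuity of the evaluation map is not a formal consequence of the set-level identification $\widehat{\widehat{\mcB}}=\mcB$ and really uses reflexivity. The approach I would take is to argue layer by layer: for each $n$ the spaces $\mcB_I^n$ and $\mcB_I^{*,n}$ are finite-dimensional Euclidean, the duality there is the classical Pontryagin duality, and the evaluation map restricts on $\mcB_I^n$ to the classical bidual isomorphism, which is a topological isomorphism; then I would pass to $n\to\iy$, using that $\mcB$ is the norm closure of $\bigcup_n\mcB_I^n$ together with Kaplan's theorem (cited above) that the class of abelian topological groups for which Pontryagin duality holds is stable under the relevant products and limits. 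Assembling the levelwise homeomorphisms into a bicontinuous map on all of $\mcB$, while keeping control of the compact--open topology on the infinite-dimensional character groups, is the delicate step; everything else is bookkeeping on top of the preceding theorem.
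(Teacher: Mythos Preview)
The paper does not supply a separate proof of this theorem; it is stated as an immediate consequence of the preceding Theorem 5.8 (the identification $\widehat{\mcB}=\mcB^*$) together with the reflexivity $\mcB=\mcB^{**}$ built into the UCB hypothesis, after the sentence observing that $(\mathbf{y}_1+\mathbf{y}_2)(\mathbf{x})=\mathbf{y}_1(\mathbf{x})\cdot\mathbf{y}_2(\mathbf{x})$. Your approach is therefore exactly the intended one: iterate Theorem 5.8 and invoke reflexivity so that the evaluation map becomes the canonical embedding $J:\mcB\to\mcB^{**}$.

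You are, however, considerably more careful than the paper. Two points you raise are genuinely not addressed there: first, the verification that $\mcB^*$ again satisfies the hypotheses needed to apply Theorem 5.8 (separability, Schauder basis, uniform convexity up to renorming); and second, the bicontinuity of the evaluation map with respect to the compact--open topology on the character groups. The paper simply asserts ``isomorphism of topological groups'' without discussing the topology, so your honest flagging of this as the delicate step, and your proposed route via the finite-dimensional layers $\mcB_I^n$ together with Kaplan's closure result, goes beyond what the paper offers rather than diverging from it.
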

In case $\mcB= {\mcH}$, is a Hilbert space, we  can replace equation (5.2) by  
\beqn
\hat f({\mathbf{x}}) = \mathfrak{F}[f]({\mathbf{x}}) = \int_{\mcH } {\exp \{  - 2\pi i \left\langle {{\mathbf{x}},{\mathbf{y}}} \right\rangle_{\mcH} \} f({\mathbf{y}})d\lambda _{\mcH} ({\mathbf{y}})},
\eeqn
so that $\mcH $ is self-dual (as expected).  From equation (5.3), we also get the expected result that: 
\[
\mathfrak{F}\left[ {\exp \{  - \pi \left| {\bf x} \right|_{\mcH}^2 \} } \right] = \exp \{  - \pi \left| {\mathbf{x}} \right|_{\mcH}^2 \} .
\]

In closing, we observe that by Theorem 2.31 (see Example  2.32), if we use Gaussian measure on  ${\R}^{\infty}$,  the dual character groups are ${\R}^{\infty}$ and ${\hat{\R}}^{\infty} ={\R}_0^{\infty}$.  From this we see that probability  measures on $({\R}^{\infty}, \; \mathfrak{B}[{\R}^{\infty}])$ induce a different character theory compared to that induced by $\la_\iy$  on $({\R}_I^{\infty}, \; \mathfrak{B}[{\R}_I^{\infty}])$.
\subsection{$L^p$-Theory}
We can obtain $L^p[\R_I^\iy]$ as in the construction of $L^1[\R_I^\iy]$.  In this section we want to show the power of our approach to measure theory by establishing a version of Young's Theorem for every separable Banach space with a Schauder basis:
\begin{thm}  {\rm (Young)} Let $p,q,r \in [1, \iy]$ with
\[ 
\f{1}{r} =\f{1}{p} +\f{1}{q} -1.
\]
If $f \in L^p[\R_I^\iy]$ and $g \in L^q[\R_I^\iy]$, then the convolution of $f$ and $g, \; f * g$, exists  (a.s.), belongs to $L^r[\R_I^\iy]$ and
\[
\left\| {f * g} \right\|_r  \leqslant \left\| f \right\|_p \left\| g \right\|_q .
\]
\end{thm}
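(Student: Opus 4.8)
The plan is to run the classical proof of Young's inequality on $\R^n$ through the machinery that has been set up, using two features of $(\R_I^\iy,\mathfrak{B}[\R_I^\iy],\la_\iy)$ established above: it is a \emph{complete, $\s$-finite} measure space, so the full Fubini--Tonelli theorem is available on $\R_I^\iy\times\R_I^\iy$; and $L^p[\R_I^\iy]$ is by construction the $\|\cdot\|_p$-closure of $\bigcup_n L^p[\R_I^n]$, with $\la_\iy$ restricted to each $\R_I^n$ agreeing with $n$-dimensional Lebesgue measure on the $\R^n$ factor and being translation invariant there. So I would first prove all three assertions --- existence a.e., membership in $L^r$, and the norm bound --- for $f\in L^p[\R_I^{N}]$ and $g\in L^q[\R_I^{N}]$ (after replacing $n,m$ by a common level $N$), and then transfer them to arbitrary $f\in L^p[\R_I^\iy]$, $g\in L^q[\R_I^\iy]$ by density, using the norm bound itself to control the passage to the limit.

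For the level-$N$ step I would follow the textbook argument verbatim. Joint measurability of $(\mathbf{x},\mathbf{y})\mapsto f(\mathbf{x}-\mathbf{y})g(\mathbf{y})$ on $\R_I^N\times\R_I^N$ is routine. Put $p_1=(1/p-1/r)^{-1}$ and $q_1=(1/q-1/r)^{-1}$; the Young relation $1/r=1/p+1/q-1$ makes $p_1,q_1\in[1,\iy]$ with $1/r+1/p_1+1/q_1=1$, so the three-factor Hölder inequality applied to $|f(\mathbf{x}-\mathbf{y})g(\mathbf{y})|=\bigl(|f(\mathbf{x}-\mathbf{y})|^p|g(\mathbf{y})|^q\bigr)^{1/r}|f(\mathbf{x}-\mathbf{y})|^{1-p/r}|g(\mathbf{y})|^{1-q/r}$ gives, for a.e.\ $\mathbf{x}$,
\[
|(f*g)(\mathbf{x})|\le\Bigl(\int|f(\mathbf{x}-\mathbf{y})|^p|g(\mathbf{y})|^q\,d\la_\iy(\mathbf{y})\Bigr)^{1/r}\Bigl(\int|f(\mathbf{x}-\mathbf{y})|^p\,d\la_\iy(\mathbf{y})\Bigr)^{1/p_1}\|g\|_q^{q/q_1}.
\]
Raising to the power $r$, integrating in $\mathbf{x}$, using Tonelli to interchange the order in the first factor, and applying the elementary bound $\int_{\R_I^\iy}|f(\cdot-\mathbf{y})|^p\,d\la_\iy\le\|f\|_p^p$ (in either variable) yields $\|f*g\|_r^r\le\|f\|_p^{r}\|g\|_q^{r}$ after exactly the arithmetic used on $\R^n$. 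The endpoints $p=1$ (so $q=r$), $q=1$ (so $p=r$) and $r=\iy$ (so $1/p+1/q=1$) are handled separately by the degenerate forms, i.e.\ by Minkowski's integral inequality and by ordinary Hölder; the ``exists (a.s.)'' assertion is the same Tonelli computation applied to $|f|$ and $|g|$, which shows the defining integral is absolutely convergent for $\la_\iy$-a.e.\ $\mathbf{x}$.

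Finally, given $f\in L^p[\R_I^\iy]$ and $g\in L^q[\R_I^\iy]$, choose $f_k\in L^p[\R_I^{n_k}]$ and $g_k\in L^q[\R_I^{n_k}]$ (common levels $n_k$) with $f_k\to f$ and $g_k\to g$ in the respective norms; the level-$n_k$ case gives $\|f_k*g_k-f_j*g_j\|_r\le\|f_k-f_j\|_p\|g_k\|_q+\|f_j\|_p\|g_k-g_j\|_q\to0$, so $(f_k*g_k)$ converges in $L^r[\R_I^\iy]$, and we take $f*g$ to be the limit; then $\|f*g\|_r=\lim_k\|f_k*g_k\|_r\le\lim_k\|f_k\|_p\|g_k\|_q=\|f\|_p\|g\|_q$. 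The Banach-space statement follows by transporting everything through the norm-isomorphism $T\colon\mcB\to\mcB_I\subset\R_I^\iy$, since $\la_\mcB$ is the completion of the pullback of $\la_\iy$ under $T$. The main obstacle --- the only place the argument genuinely departs from the proof on $\R^n$ --- is the translate estimate $\int_{\R_I^\iy}|f(\cdot-\mathbf{y})|^p\,d\la_\iy\le\|f\|_p^p$ and the attendant interpretation of ``$f*g$ exists (a.s.)'': because $\la_\iy$ is only $\ell_1$-translation invariant, one cannot simply quote the usual ``$\int|f(\mathbf{x}-\cdot)|^p<\iy$ for a.e.\ $\mathbf{x}$'' step, but must read it off from the tensor description of $\la_\iy$ on each $\R_I^N$, where translation by \emph{any} vector is measure-nonincreasing (mass can only leave the tail cells $I$, never enter). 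One should keep in mind that this estimate is in fact far from sharp here --- since $\ell_1$ is $\la_\iy$-null, the translate integrals vanish for $\la_\iy$-a.e.\ $\mathbf{y}$ --- so the real content of the theorem is the well-definedness of $f*g$ across the levels $\R_I^n\subset\R_I^{n+1}$ and its membership in $L^r$, with the inequality then coming essentially for free.
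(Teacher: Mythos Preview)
Your three-factor H\"older decomposition with exponents $r$, $p_1=(1/p-1/r)^{-1}$, $q_1=(1/q-1/r)^{-1}$ is exactly what the paper uses. The paper does not, however, pass through finite levels and a density argument: it works directly on $\R_I^\iy$, first proving the $L^1$ case $\|f*g\|_1\le\|f\|_1\|g\|_1$ as a separate lemma (Theorem~5.13) via Fubini on $\R_I^\iy\times\R_I^\iy$, then observing from the H\"older estimate that $(f*g)^r\le f^p*g^q$ pointwise and invoking that $L^1$ bound with $f^p,g^q$ in place of $f,g$. No approximation or limiting step appears.

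The translation-invariance point you raise is real and the paper does not address it: the proof of Theorem~5.13 tacitly uses $\int g(x-y)\,d\la_\iy(x)=\int g\,d\la_\iy$, which fails for $y\notin\ell_1$. Your observation that translations are measure-\emph{nonincreasing} on $\R_I^\iy$ is correct and is precisely what is needed to salvage the inequality (though not the equality the paper writes). Your final remark --- that the pointwise convolution integral actually vanishes $\la_\iy$-a.e.\ --- is also correct and exposes that the theorem, read literally, is close to trivial; but be aware that the nontrivial object your density argument produces (the $L^r$-limit of the $\R^N$-level convolutions, re-embedded via $\otimes\,\chi_{I_N}$) is \emph{not} the same function as the pointwise $\int f(y)g(\cdot-y)\,d\la_\iy(y)$, so you should not conflate the two when you write ``we take $f*g$ to be the limit''.
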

\begin{cor}Let $\mcB$ be a separable Banach space with a Schauder basis and let $p,q,r \in [1, \iy]$ with
\[ 
\f{1}{r} =\f{1}{p} +\f{1}{q} -1.
\]
If $f \in L^p[\mcB]$ and $g \in L^q[\mcB]$, then the convolution of $f$ and $g, \; f * g$, exists  (a.s.), belongs to $L^r[\mcB]$ and
\[
\left\| {f * g} \right\|_r  \leqslant \left\| f \right\|_p \left\| g \right\|_q .
\]
\end{cor}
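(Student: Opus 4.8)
The plan is to deduce the corollary from the preceding theorem (Young's inequality on $\R_I^{\iy}$) by pushing the entire statement through the linear measure isomorphism $T$ of Section~2.3.

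First I would transfer the $L^p$-spaces. Recall that $T:\mcB\to\mcB_I=T[\mcB]\subset\R_I^{\iy}$ is a linear isomorphism onto a \emph{linear subspace} of $\R_I^{\iy}$, and that $\la_{\mcB}$ is, by construction, the completion of the pushforward $\hat\la_{\mcB}(A)=\la_{\iy}[T(A)]$; hence $T$ is measure preserving from $(\mcB,\mathfrak{B}_I[\mcB],\la_{\mcB})$ onto $(\mcB_I,\mathfrak{B}[\mcB_I],\la_{\iy}|_{\mcB_I})$. For $f\in L^p[\mcB]$ and $g\in L^q[\mcB]$ put $\tilde f=f\circ T^{-1}$ on $\mcB_I$ and $\tilde f=0$ on $\R_I^{\iy}\setminus\mcB_I$, and similarly $\tilde g$. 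By the change-of-variables formula and the fact that these functions vanish off $\mcB_I$, one gets $\|\tilde f\|_{L^s[\R_I^{\iy}]}=\|f\|_{L^s[\mcB]}$ for every $s\in[1,\iy]$, so $f\mapsto\tilde f$ is an isometric linear bijection of $L^s[\mcB]$ onto the subspace of $L^s[\R_I^{\iy}]$ of classes supported on $\mcB_I$. (One should also check that the canonical tail factors built into the definitions of $L^p[\mcB]$ and $L^p[\R_I^{\iy}]$ correspond under $T$, which is automatic from the construction of $\mcB_I$.)

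Next I would verify that this transfer commutes with convolution. Since $T$ is linear, $T(x-y)=Tx-Ty$; and since $\mcB_I$ is a linear subspace, $z\notin\mcB_I$ and $w\in\mcB_I$ force $z-w\notin\mcB_I$, so the convolution of two functions supported on $\mcB_I$ is again supported on $\mcB_I$. Changing variables $w=Ty$ under the integral, for $z=Tx$ one gets
\[
(\tilde f*\tilde g)(Tx)=\int_{\R_I^{\iy}}\tilde f(Tx-w)\,\tilde g(w)\,d\la_{\iy}(w)=\int_{\mcB}f(x-y)\,g(y)\,d\la_{\mcB}(y)=(f*g)(x),
\]
so $\widetilde{f*g}=\tilde f*\tilde g$ whenever either side exists; hence the a.e.\ existence, the measurability, and the $L^r$-membership of $f*g$ on $\mcB$ transfer, via $T$, to and from those of $\tilde f*\tilde g$ on $\R_I^{\iy}$. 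Now apply the preceding theorem: with $\tilde f\in L^p[\R_I^{\iy}]$, $\tilde g\in L^q[\R_I^{\iy}]$ and $\tfrac1r=\tfrac1p+\tfrac1q-1$, the convolution $\tilde f*\tilde g$ exists a.e., lies in $L^r[\R_I^{\iy}]$, and $\|\tilde f*\tilde g\|_r\le\|\tilde f\|_p\|\tilde g\|_q$. Pulling back through the isometry, $f*g$ exists a.e.\ on $\mcB$, lies in $L^r[\mcB]$, and $\|f*g\|_r=\|\tilde f*\tilde g\|_r\le\|\tilde f\|_p\|\tilde g\|_q=\|f\|_p\|g\|_q$, as claimed.

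The $L^p$-norm bookkeeping is routine once $T$ is recognized as measure preserving; the delicate point is the middle step — justifying $\widetilde{f*g}=\tilde f*\tilde g$ as measurable functions, with the attendant measurability and null-set caveats, and in particular accommodating the possibility that $\mcB_I$ is a proper $\la_{\iy}$-subset of $\R_I^{\iy}$ (the linear-subspace observation is exactly what keeps the convolution inside $\mcB_I$). Note also that $\la_{\iy}$ is only invariant under $\ell_1$-translations, but that subtlety has already been absorbed into the preceding theorem, so it does not resurface here. A more self-contained alternative would be to rerun the Fubini argument of the preceding theorem directly with $\la_{\mcB}$ in place of $\la_{\iy}$, but since $\la_{\mcB}$ is literally a pushforward of $\la_{\iy}$, the transfer argument is the economical route.
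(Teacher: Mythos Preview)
Your proposal is correct and follows essentially the same route the paper intends: the paper states the corollary immediately after Theorem~5.10 without a separate proof, treating it as the evident transfer of Young's inequality from $\R_I^{\iy}$ to $\mcB$ via the measure-preserving isomorphism $T$ of Section~2.3. You have simply made explicit the details the paper leaves implicit --- the isometry of $L^s$-spaces under $T$, the linearity of $T$ making convolution commute with the transfer, and the support considerations on $\mcB_I$ --- all of which are exactly the points one would need to check.
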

In order to prove Theorem 5.10, we first need the appropriate version of Fubini's Theorem.  Since  $\left( {\mathbb{R}_I^\infty  ,\mathfrak{L}_I^\infty  ,\lambda _\infty  } \right)$ is a complete $\s$-finite measure space, a proof of the following may be found in Royden \cite{RO} (see Theorems 19 and 20, pgs. 269-270):
\begin{thm} {\rm (Fubini)} If $f \in L^1[\R_I^\iy \times \R_I^\iy]$, then
\begin{enumerate}
\item for almost all $x \in \R_I^\iy$ the function $f_x$ defined by $f_x(y)=f(x,y) \in  L^1[\R_I^\iy](y)$:
\item for almost all $y \in \R_I^\iy$ the function $f_y$ defined by $f_y(x)=f(x,y) \in  L^1[\R_I^\iy](x)$:
\item $\int_{\mathbb{R}_I^\infty  } {f(x,y)d\lambda _\infty  (y)}  \in L[\mathbb{R}_I^\infty  ](x)$;
\item $\int_{\mathbb{R}_I^\infty  } {f(x,y)d\lambda _\infty  (x)}  \in L[\mathbb{R}_I^\infty  ](y)$;
\item
\[
\begin{gathered}
  \int_{\mathbb{R}_I^\infty   \times \mathbb{R}_I^\infty  } {f(x,y)d\left( {\lambda _\infty   \otimes \lambda _\infty  } \right)(x,y)}  \hfill \\
   = \int_{\mathbb{R}_I^\infty  } {\left[ {\int_{\mathbb{R}_I^\infty  } {f(x,y)d\lambda _\infty  (y)} } \right]d\lambda _\infty  (x)}  = \int_{\mathbb{R}_I^\infty  } {\left[ {\int_{\mathbb{R}_I^\infty  } {f(x,y)d\lambda _\infty  (x)} } \right]d\lambda _\infty  (y)} . \hfill \\ 
\end{gathered}
\] 
\end{enumerate}
\end{thm}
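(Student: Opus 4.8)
The plan is to deduce this from the classical Fubini--Tonelli theorem for complete $\sigma$-finite measure spaces, so the only real work is to check that our measure space meets the hypotheses and to form the right product.

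First I would record what Section 2 has already established: $\lambda_\infty$ is a complete, countably additive, $\sigma$-finite measure on $\mathfrak{L}_I^\infty$. Completeness is built in because $\mathfrak{L}_I^\infty$ is the completion of $\mathfrak{B}(\R_I^\infty)$ (every measurable set is a Borel set together with a null set); $\sigma$-finiteness follows from writing $\R_I^\infty=\bigcup_{n,m}\bigl([-m,m]^n\times I_n\bigr)$ together with the remaining null set $\R_I^\infty\setminus\bigcup_n\R_I^n$, each piece having finite $\lambda_\infty$-measure. Next I would pass to the product: form $\lambda_\infty\otimes\lambda_\infty$ on $\mathfrak{L}_I^\infty\otimes\mathfrak{L}_I^\infty$, take its completion, and interpret $L^1[\R_I^\infty\times\R_I^\infty]$ as $L^1$ of that completed product; the product of two $\sigma$-finite measures is again $\sigma$-finite, and its completion is exactly the setting in which Royden's theorems are stated.

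With these verifications in hand, assertions (1)--(5) are precisely the conclusions of the general Fubini theorem for the completed product of two complete $\sigma$-finite measures (Royden \cite{RO}, Thms.\ 19--20): for $f$ integrable over the completed product, almost every section $f_x$ and almost every section $f_y$ is measurable and integrable, the two marginal functions $x\mapsto\int_{\R_I^\infty}f(x,y)\,d\lambda_\infty(y)$ and $y\mapsto\int_{\R_I^\infty}f(x,y)\,d\lambda_\infty(x)$ are integrable, and both iterated integrals coincide with the double integral.

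There is no deep obstacle here; what is worth emphasizing is exactly why the earlier investment in the $\sigma$-algebra $\mathfrak{B}(\R_I^\infty)$ and its completion $\mathfrak{L}_I^\infty$ (rather than stopping at $\mathfrak{B}(\R^\infty)$) pays off: completeness is what lets the section and integrability conclusions hold for \emph{almost} every slice and forces the use of the completed product $\sigma$-algebra, while $\sigma$-finiteness is what makes the iterated integrals agree with the double integral. It is also worth noting that although $\lambda_\infty$ is not a Kolmogorov product measure on $\R^\infty$ --- indeed no $\sigma$-finite translation-invariant product measure exists there --- Fubini's theorem never uses that structure; it needs only abstract completeness and $\sigma$-finiteness, so it applies to $\lambda_\infty$ on $\R_I^\infty$ exactly as it does to Lebesgue measure on $\R^n$.
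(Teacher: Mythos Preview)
Your proposal is correct and follows exactly the paper's approach: the paper does not give a self-contained proof but simply observes that $(\mathbb{R}_I^\infty,\mathfrak{L}_I^\infty,\lambda_\infty)$ is a complete $\sigma$-finite measure space and refers to Royden \cite{RO}, Theorems 19 and 20, for the result. Your write-up is in fact more detailed than the paper's, since you spell out explicitly why completeness and $\sigma$-finiteness hold and how the completed product is formed.
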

\begin{thm}Let   $f,g \in L^1[\R_I^\iy]$, then $(f*g)(x)$ exists (a.s.); that is $f(y)g(x-y) \in L^1[\R_I^\iy]$.  In addition,  $f*g \in L^1[\R_I^\iy]$ and 
\[
\left\| {f * g} \right\|_1  \leqslant \left\| f \right\|_1 \left\| g \right\|_1 .
\]
\end{thm}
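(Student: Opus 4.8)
The plan is to reproduce the classical proof of Young's inequality for $L^1$ on $\R^n$, replacing the finite--dimensional ingredients by the infinite--dimensional analogues developed above: the completeness and $\s$--finiteness of $\lt(\R_I^\iy,\mathfrak{L}_I^\iy,\la_\iy\rt)$, the abstract Fubini theorem just stated, and the translation invariance of $\la_\iy$ (the $\mathfrak{T}_{\la_\iy}=\ell_1$ theorem). Write $F(x,y)=f(y)g(x-y)$, a function on $\R_I^\iy\times\R_I^\iy$.

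First I would check that $F$ is measurable on the product space $\R_I^\iy\times\R_I^\iy$. When $g$ is the indicator of a bounded ``cylinder'' set $B\times I_N$ this is transparent, since $\{(x,y):x-y\in B\times I_N\}$ is cut out by countably many conditions on the coordinate functionals $x_k-y_k$; for a general $g\in L^1[\R_I^\iy]$ one writes $g$ as an $L^1$--limit of such indicators (available from the very construction of $L^1[\R_I^\iy]$ as the closure of $\bu_n L^1[\R_I^n]$) and passes to an a.e.\ convergent subsequence. Then I would run the Tonelli--Fubini argument: Tonelli applied to the nonnegative function $|F|$ gives
\[
\int_{\R_I^\iy}\lt(\int_{\R_I^\iy}|f(y)|\,|g(x-y)|\,d\la_\iy(y)\rt)d\la_\iy(x)
=\int_{\R_I^\iy}|f(y)|\lt(\int_{\R_I^\iy}|g(x-y)|\,d\la_\iy(x)\rt)d\la_\iy(y).
\]
For each fixed $y$ the inner integral is $\le\|g\|_1$: on the finitely many ``$\R$''--coordinates translation by $y$ preserves Lebesgue measure, while on the remaining coordinates it can only shrink the defining $I$--factors, so that $\la_\iy(B\times I_N+y)\le\la_\iy(B\times I_N)$ for cylinders, and hence $\int_{\R_I^\iy}|g(x-y)|\,d\la_\iy(x)\le\|g\|_1$ for every $g\in L^1[\R_I^\iy]$ by density and monotone convergence (with equality when $y\in\ell_1$, by the translation invariance theorem). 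Consequently the iterated integral above is $\le\|f\|_1\|g\|_1<\iy$, so $F\in L^1[\R_I^\iy\times\R_I^\iy]$.

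At this point the Fubini theorem applies: for $\la_\iy$--almost every $x$ the slice $y\mapsto f(y)g(x-y)$ lies in $L^1[\R_I^\iy]$, which is precisely the statement that $(f*g)(x)$ exists a.s.; the function $x\mapsto(f*g)(x)$ belongs to $L^1[\R_I^\iy]$; and
\[
\|f*g\|_1=\int_{\R_I^\iy}|(f*g)(x)|\,d\la_\iy(x)\le\int_{\R_I^\iy}\int_{\R_I^\iy}|f(y)|\,|g(x-y)|\,d\la_\iy(y)\,d\la_\iy(x)\le\|f\|_1\|g\|_1 .
\]
The step requiring genuine care is the measurability of $(x,y)\mapsto g(x-y)$ for the completed product $\s$--algebra, together with the uniform bound $\int_{\R_I^\iy}|g(x-y)|\,d\la_\iy(x)\le\|g\|_1$ valid for \emph{every} $y$ (not only for $y\in\ell_1$); both are obtained by reducing to the finite--dimensional factors $\R_I^n\cong\R^n$, where the assertions are classical, and then passing to the limit. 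Everything else is the textbook computation.
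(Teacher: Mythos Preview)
Your argument is essentially the paper's: check measurability of $(x,y)\mapsto f(y)g(x-y)$ (which the paper waves off with ``no change from the case of $\R^n$''), then run Tonelli--Fubini to obtain both the a.e.\ existence of $(f*g)(x)$ and the bound $\|f*g\|_1\le\|f\|_1\|g\|_1$.

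You are actually more careful than the paper on one point. The paper's computation ends with the equality
\[
\int_{\R_I^\iy} d\la_\iy(y)\int_{\R_I^\iy} f(y)g(x-y)\,d\la_\iy(x)=\int_{\R_I^\iy} f(y)\,d\la_\iy(y)\cdot\int_{\R_I^\iy} g(x)\,d\la_\iy(x),
\]
which tacitly uses $\int g(x-y)\,d\la_\iy(x)=\int g(x)\,d\la_\iy(x)$ for \emph{all} $y$; but Theorem~2.26 only gives translation invariance for $y\in\ell_1$. Your observation that translating by an arbitrary $y$ can only shrink the tail $I$--factors, so that $\int_{\R_I^\iy}|g(x-y)|\,d\la_\iy(x)\le\|g\|_1$ for every $y$, is exactly what is needed to make the inequality go through without this gap.
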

\begin{proof}  First, it is easy to see that $f(y)g(x-y)$ is a measurable function on $\R_I^\iy$. (There is no change from the case of $\R^n$.)  We can apply Fubini's theorem to get that:
\[
\begin{gathered}
  \int_{\mathbb{R}_I^\infty  } {\left( {f * g} \right)(x)d\lambda _\infty  (x)}  \hfill \\
   = \int_{\mathbb{R}_I^\infty  } {d\lambda _\infty  (x)\left[ {\int_{\mathbb{R}_I^\infty  } {f(y)g(x - y)d\lambda _\infty  (y)} } \right]}  = \int_{\mathbb{R}_I^\infty  } {d\lambda _\infty  (y)\left[ {\int_{\mathbb{R}_I^\infty  } {f(y)g(x - y)d\lambda _\infty  (x)} } \right]}  \hfill \\
  \quad \quad \quad \quad \quad \quad  = \int_{\mathbb{R}_I^\infty  } {f(y)d\lambda _\infty  (y)}  \cdot \int_{\mathbb{R}_I^\infty  } {g(x)d\lambda _\infty  (x)} . \hfill \\ 
\end{gathered} 
\]
It follows from the last equality that $\left\| {f * g} \right\|_1  \leqslant \left\| f \right\|_1 \left\| g \right\|_1$.
\end{proof}
\subsubsection{\rm Proof of Young's Theorem}
\begin{proof}  First, assume that $f$ and $g$  are nonnegative and $\left\| f \right\|_p  = \left\| g \right\|_q  = 1$. Let $\tf{1}{q'} =1-\tf{1}{q}$ and $\tf{1}{p'} =1-\tf{1}{p}$.  Now note that
\[ 
\begin{gathered}
  \frac{1}
{r} + \frac{1}
{{q'}} + \frac{1}
{{p'}} = \frac{1}
{r} + \left( {1 - \frac{1}
{q}} \right) + \left( {1 - \frac{1}
{p}} \right) = 1; \hfill \\
  \left( {1 - \frac{p}
{r}} \right)q' = p\left( {\frac{1}
{p} - \frac{1}
{r}} \right)q' = p\left( {1 - \frac{1}
{q}} \right)q' = p; \hfill \\
  \left( {1 - \frac{q}
{r}} \right)p' = q\left( {\frac{1}
{q} - \frac{1}
{r}} \right)p' = q\left( {1 - \frac{1}
{p}} \right)p' = q. \hfill \\ 
\end{gathered} 
\]
If we use Holder's inequality (for three functions), we can write $(f*g)(x)$ as:
\[
\begin{gathered}
  \left( {f * g} \right)(x) = \int_{\mathbb{R}_I^\infty  } {\left[ {f(y)^{p/r} g(x - y)^{q/r} } \right]\left[ {f(y)^{1 - p/r} g(x - y)^{1 - q/r} } \right]d\lambda _\infty  (y)}  \hfill \\
   \leqslant \left[ {\int_{\mathbb{R}_I^\infty  } {f(y)^p g(x - y)^q d\lambda _\infty  (y)} } \right]^{1/r} \left[ {\int_{\mathbb{R}_I^\infty  } {f(y)^{\left( {1 - p/r} \right)q'} d\lambda _\infty  (y)} } \right]^{1/q'} \left[ {\int_{\mathbb{R}_I^\infty  } {g(x - y)^{\left( {1 - q/r} \right)p'} d\lambda _\infty  (y)} } \right]^{1/p'} . \hfill \\ 
\end{gathered} 
\]
This last inequality shows that
\[
\begin{gathered}
  \left( {f * g} \right)(x) \leqslant \left[ {\int_{\mathbb{R}_I^\infty  } {f(y)^p g(x - y)^q d\lambda _\infty  (y)} } \right]^{1/r}   \Rightarrow  \hfill \\
  \left( {f * g} \right)^r (x) \leqslant \left[ {\int_{\mathbb{R}_I^\infty  } {f(y)^p g(x - y)^q d\lambda _\infty  (y)} } \right]\quad  \Rightarrow \left( {f * g} \right)^r (x) \leqslant \left( {f^p  * g^q } \right)(x). \hfill \\ 
\end{gathered} 
\]
From Theorem 5.13, we have $
\left\| {\left( {f * g} \right)^r } \right\|_1  \leqslant \left\| {f^p } \right\|_1 \left\| {g^q } \right\|_1  = 1$.
In the general case, we know that $\left| f \right| * \left| g \right|$ exists (a.e.), so that $\left| {f(y)g(x - y)} \right| \in L^1 [\mathbb{R}_I^\infty  ]$.  But then, $f(y)g(x - y) \in L^1 [\mathbb{R}_I^\infty  ]$.
\end{proof}
In closing we note that, Beckner \cite{BE} and Brascamp-Lieb \cite{BL} have shown that on $\R^n$ we can write Young's inequality as  $\left\| {f * g} \right\|_r  \leqslant (C_{p,q,r;n})^n \left\| f \right\|_p \left\| g \right\|_q$, where $C_{p,q,r;n} \le1$ is sharp.  We conjecture that $1$ is the sharp constant for $\R_I^\iy$.
\section{Partial Differential Operators (Examples)}  
In this section, we give examples of strong product and sum vectors for differential operators that have found interest in infinite dimensional analysis.  

\begin{Def} For $x \in \R, \ 0 \le y < \iy$ and $1<a< \iy$ define $\bar{g}(x, y), \ \bar{h}(x)$ by:
\[
\begin{gathered}
  \bar{g}(x,y) = \exp \left\{ { - y^a e^{iax} } \right\}, \hfill \\
  \bar{h}(x) = \left\{ {\begin{array}{*{20}c}
   \begin{gathered}
  \int_0^\infty  {\bar{g}(x,y)dy} ,\;x \in [ - \tfrac{\pi }
{{2a}},\tfrac{\pi }
{{2a}}], \hfill \\
  0 \quad \quad {\text{            {\rm otherwise} }}. \hfill \\ 
\end{gathered}   \\
   {}  \\

 \end{array} } \right. \hfill \\ 
\end{gathered} 
\]
\end{Def}
The following properties of $\bar{g}$ are easy to check:
\begin{enumerate}
\item 
\[
\frac{{\partial \bar{g}(x,y)}}
{{\partial x}} =  - iay^a e^{iax} \bar{g}(x,y),
\]
\item
\[
\frac{{\partial \bar{g}(x,y)}}
{{\partial y}} =  - ay^{a - 1} e^{iax} \bar{g}(x,y),
\]
so that
\item
\[
iy\frac{{\partial \bar{g}(x,y)}}
{{\partial y}} = \frac{{\partial \bar{g}(x,y)}}
{{\partial x}}.
\]
\end{enumerate}
It is also easy to see that $\bar{h}(x)$ is in ${\bf{L}}^1[\R]$ for $x \in [- \tf{\pi}{2a}, \tf{\pi}{2a}]$ and,
\beqn
\frac{{d\bar{h}(x)}}
{{dx}} = \int_0^\infty  {\frac{{\partial \bar{g}(x,y)}}
{{\partial x}}dy}  = \int_0^\infty  {iy\frac{{\partial \bar{g}(x,y)}}
{{\partial y}}dy}.
\eeqn
Integration by parts in the last expression of equation (6.1) shows that $\bar{h}'(x) =  - i\bar{h}(x)$, so that $
\bar{h}(x) = \bar{h}(0)e^{ - ix}$ for $x \in [- \tf{\pi}{2a}, \tf{\pi}{2a}]$.  Since $
\bar{h}(0) = \int_0^\infty  {\exp \{  - y^a \} dy}$, an additional integration by parts shows that $\bar{h}(0)= \G(\tf{1}{a} +1)$. 

Let $a=\tf{\pi}{1-\e},\; \bar{h}(x)=\bar{h}_{\e}(x), \ x \in [- \tf{\pi}{2a}, \tf{\pi}{2a}]$, where $
0 < \varepsilon  \ll 1$, and define
\beqn
f_{\e} (x) = \left\{ {\begin{array}{*{20}c}
   {c \exp \left\{ {\frac{{\varepsilon^2 }}
{{\left| {2x }\right|^2- \varepsilon^2 }} }\right\},\quad \left| {x } \right| < \varepsilon/2 ,}  \\
   {0,\quad \quad \quad \quad \quad \quad \quad \quad \quad \quad \quad \left| {x} \right| \geqslant \varepsilon/2 ,}  \\

 \end{array} } \right.
\eeqn
where $c$ is the standard normalizing constant.  We now define $\xi{(x)}= (\bar{h}*f_{\e})(x)$, so that ${\rm{spt}}(\xi)=[-\tf{1}{2}, \tf{1}{2}]=I_\e$.  Thus, $\xi{(x)}=0,\;x \notin I_\e$ and otherwise,  
\[
\xi{(x)}  = \int_{ - \infty }^\infty  {\bar{h} [x - z]} f_{\e} (z)dz = e^{ - ix} \int_{ - \infty }^\infty  {e^{iz} f_{\e} (z)dz}= \al_\e e^{-ix}. 
\]
It follows from this that: 
\[
\al_\e^{-1}\xi (ix) = \left\{ {\begin{array}{*{20}c}
   {e^x ,\;x \in I_\e  }  \\
   {0,\;\,x \notin I_\e  }  \\

 \end{array} } \right.
\]
Define $\la_\e=\la$ and,
\[
I^\varepsilon   =  \times _{k = 1}^\infty  I_\varepsilon
,\;I_n^\varepsilon   =  \times _{k = n + 1}^\infty  \; {\rm and}, \;\lambda _\infty ^\varepsilon   =  \otimes _{k = 1}^\infty  \lambda _\varepsilon.  
\] 
\begin{ex}
In this example, we let  $h_k(x_k)=\al_\e^{-1}\xi (ix_k)$, for each $k \in \N$ so that $D_k h_k=h_k \; $, for $x_k \in I$. Let $L_\e^2[\mathbb{R}_I^n]= L^2[\mathbb{R}_{I}^n, \la_\iy^\e]$.  If  ${\bf D}^{\iy} = \prod _{k =1}^{\iy}{D_k}$ and  $f_n \in L_\e^2[\mathbb{R}_I^n] \cap D({\bf D}^{\iy})$, we can define ${\bf D}^{\iy}$ on $\mathbb{R}_{I}^n$ by  ${\bf D}^{\iy} f_n(x)={\bf D}^{n} f_n(x)=\prod _{l =1}^{n} D_lf_{(n)}(x)\otimes\lt(\otimes_{l=n+1}^{\iy}h_l\rt), \; (a.s)$. This operator  is well-defined and has a closed densely defined extension to $L_\e^2[\mathbb{R}_I^\infty](h)$, where $h=\otimes_{k=1}^{\iy}{h_k}$.  Thus, $h$ is a strong product vector for  ${\bf D}^{\iy}$.

The operator ${\bf D}^{\iy}$ is required if we want to obtain the probability density for a  distribution function.  (Note, by construction the density can be approximated from below by densities in a finite number of variables.)
\end{ex} 
In the following example, we construct a general elliptic operator on $L_\e^2[\mathbb{R}_I^\infty]$.
\begin{ex}
If \, $\nabla = \lt( D_1,D_2, \cdots  \rt)$ and  $\s_k: \R_I^{\iy} \to \R$ is a bounded analytic function for each $k \in \N$, then let ${\boldsymbol \s}(x)=\lt( \s_1(x),\s_2(x), \cdots \rt)$.  
We assume that
\[
\left\| {\sum\limits_{j,k = 1}^\infty  {\s_j(x) \s_k(x) }  + \sum\limits_{k = 1}^\infty  {b_k(x) } } \right\|_2  < \infty ,\;{\text{where}}\;b_k(x)  = \sum\nolimits_{j = 1}^\infty  {\s_j(x) D_j \s_k(x) } .
\]
We can now define $\Delta _\infty$ by: 
\[
\Delta _\infty   = \left( {{\boldsymbol \s}(x) \cdot \nabla } \right)^2  = \sum\limits_{j,k = 1}^\infty  {\s_j(x) \s_k(x) D_j D_k }  + \sum\limits_{k = 1}^\infty  {b_k(x) } D_k. 
\]
For the same version of $L_\e^2[\mathbb{R}_I^{\iy}]$ as in the last example, if $g_n \in L^2[\mathbb{R}_I^n] \cap D(\De_{\iy})$ and $c_n(x)  = \sum\nolimits_{j = n+1}^\infty  {\s_j(x) \left[ {D_j \s_k(x) } \right]}$,  then $\Delta _{\iy}$ is defined on $\mathbb{R}_I^n$  and  
\[
\De_{\iy} g_n(x) = \sum\limits_{j,k = 1}^n {\s_j(x)  \s_k(x)  D_j  D_k  g_n(x)}  + \sum\limits_{k = 1}^n {b_k } D_k \psi(x) +c_n(x) g_n(x).
\]
In order to obtain the same equation with $c_n(x)=0$,  we use the version of $L_\e^2[\mathbb{R}_I^\infty]$ defined with $h_k= \xi_I(0), \ k \ge n+1$. In this case, for $g_n(x) \in L_\e^2[\mathbb{R}_I^\infty] \cap D(\De_\infty)$, we see  that
\[
\De_{\iy} g_n(x) = \sum\limits_{j,k = 1}^n {\s_j(x)  \s_k(x)  D_j  D_k  g_n(x)}  + \sum\limits_{k = 1}^n {b_k(x) } D_k g_n(x).  
\]
In either case,  $\De_{\iy}$ is well-defined for each $n$ and has a closed densely defined extension to $L_\e^2[\mathbb{R}_I^\infty]$ and $g_n(x) \to g(x)$ implies that $lim_{n \to \iy}\De_{\iy} g_n(x) =\De_{\iy} g(x)$.  
 
It follows that different versions of $L^2[\mathbb{R}_I^\infty]$ offer advantages for particular types of differential operators.  (For other approaches, see \cite{BK},  \cite{GZ} and \cite{UM}.)
\end{ex}
The following special cases have appeared in the literature (all can be obtained from the last example):
\begin{enumerate}
\item
The natural infinite dimensional Laplacian:
\[
{\mathbf{A}} = \Delta _\infty   = \sum\nolimits_{i = 1}^\infty  {{{\partial ^2 } \mathord{\left/
 {\vphantom {{\partial ^2 } {\partial x_i^2 }}} \right.
 \kern-\nulldelimiterspace} {\partial x_i^2 }}} .
\]
\item
The nonterminating diffusion generator in infinitely many variables (also known as the Ornstein-Uhlenbeck operator):
\[
{\mathbf{A}} = \tfrac{1}
{2}\Delta _\infty   - B{\mathbf{x}} \cdot \nabla _\infty   = \tfrac{1}
{2}\sum\nolimits_{i = 1}^\infty  {{{\partial ^2 } \mathord{\left/
 {\vphantom {{\partial ^2 } {\partial x_i^2 }}} \right.
 \kern-\nulldelimiterspace} {\partial x_i^2 }}}  - \sum\nolimits_{i = 1}^\infty  {b_i x_i {\partial  \mathord{\left/
 {\vphantom {\partial  {\partial x_i }}} \right.
 \kern-\nulldelimiterspace} {\partial x_i }}}.
\]
The infinite dimensional Laplacian of Umemura \cite{UM}:
\[
{\mathbf{A}} = \sum\limits_{i = 1}^\infty  {\left( {\frac{{\partial ^2 }}
{{\partial x_i^2 }} - \frac{{x_i }}
{{c^2 }}\frac{\partial }
{{\partial x_i }}} \right)}.
\]
\end{enumerate}
Berezanskii and Kondratyev (\cite{BK}, pages 520-521) have also discussed operators analogous to (2) and (3). 
\subsection{Discussion}
In a very interesting paper, Phillip Duncan Thompson \cite{PDT}  used the amplitudes of a set of orthogonal modes as the co-ordinates in an infinite-dimensional phase space. This allowed him to derive the probability distribution for an ensemble of randomly forced two-dimensional viscous flows as the solution of the continuity equation for the phase flow.  He obtained the following equation for the probability density:
\beqn
 \frac{{\partial \rho{} }}
{{\partial t}} + \sum\limits_{k = 1}^\infty  {M_k ({\mathbf{x}})\frac{{\partial \rho{} }}
{{\partial x_k }}}  - \nu \sum\limits_{k = 1}^\infty  {\frac{\partial }
{{\partial x_k }}\left[ {\rho{} \alpha _k^2 x_k } \right]}  - \sum\limits_{k = 1}^\infty  {\frac{{\partial ^2 \rho{} }}
{{\partial x_k^2 }}}  = 0,
\eeqn
where 
\[
M_k ({\mathbf{x}}) = \sum\limits_{i = 1}^\infty  {\sum\limits_{j = 1}^\infty  {\frac{{\alpha _j^2 \beta _{ijk} }}
{{\alpha _i \alpha _j \alpha _k }}\frac{{\left( {\mu _i \mu _j \mu _k } \right)^{\tfrac{1}
{2}} }}
{{\mu _k }}x_i x_j } }. 
\]
The coefficients $\beta_{ijk}$ vanishes if any two indices are equal, is invariant under cyclic permutation of indices and reverses sign under non-cyclic permutation of indices, while the coefficients $\al_i$ and $\mu_i$ are positive constants, determined by the problem.  Thompson imposed the natural condition
\beqn
\int\limits_{ - \infty }^\infty  { \cdots \int\limits_{ - \infty }^\infty  {\rho ({\mathbf{x}},t)\prod\limits_{k = 1}^\infty  {dx_k } } }  = 1.
\eeqn
At that time, he ran into the obvious mathematical criticism because equation (6.4) was meaningless at the time.  He also derived the equilibrium density
\beqn
\rho _0 ({\mathbf{x}}) = C\exp \left\{ { - \tfrac{1}
{2}\nu \sum\limits_{k = 1}^\infty  {\alpha _k^2 x_k^2 } } \right\}.
\eeqn
The results in  section 2.5, see also equation (2.5), along with those in section 4.4, show that Thomson's paper was prescient.
\section{Conclusion}
In this paper we provided a reasonable version of Lebesgue measure on $R^{\infty}$, which together with the standard Gaussian measure on $R^{\infty}$, have allowed us to construct natural analogues of Lebesgue and Gaussian measure for every separable Banach space with a Schauder basis.  We have extended the Fourier transform to  $L^1[\R^{\iy}, \la_{\iy}], \; L^2[\R^{\iy}, \la_{\iy}]$, defined sums and products of unbounded operators, and presented a few constructive examples of partial differential operators in infinitely many variables.
\subsection*{Acknowledgments}
This work could not have been written without the generous help and critical remarks of Professor Frank Jones.  We thank Professor Anatoly Vershik for appraising us of recent work by the Russian school and correcting some of our historical remarks.  We would also like to thank an anonymous referee for corrections that have improve our presentation and for suggesting that we reconsider our approach to the Fourier transform and discuss its relationship to the Pontryagin duality theory, which led to a complete revision and extension of Section 5.

\end{document}